\newcommand{\norm}[1]{\left\lVert#1\right\rVert}
\DeclareMathOperator{\Binom}{Binom}
\newcommand\Bigger[2][7]{\left#2\rule{0mm}{#1truemm}\right.}
\newcommand\abs[1]{\ensuremath{\lvert#1\rvert}}
\DeclareMathOperator{\nnz}{nnz}
\DeclareMathOperator{\sgn}{sign}
\newcommand{\E}{\mathbb{E}}
\newcommand{\R}{\mathbb{R}}
\newcommand{\vect}{\mathbf}
\newcommand{\mycomment}[1]{}
\theoremstyle{plain}
  \newtheorem{theorem}{Theorem}
  \newtheorem{assumption}[theorem]{Assumption}
  \newtheorem{proposition}[theorem]{Proposition}
  \newtheorem{fact}[theorem]{Fact}
  \newtheorem{lemma}[theorem]{Lemma}
  \newtheorem{corollary}[theorem]{Corollary}
  \newtheorem{question}[theorem]{Question}
  \newtheorem{remark}[theorem]{Remark}
  \newtheorem{algorithm}[theorem]{Algorithm}
\theoremstyle{definition}
  \newtheorem{definition}[theorem]{Definition}
\begin{document}
\date{}

\title[Matrix Perturbation: Davis-Kahan in the Infinity Norm]{Matrix Perturbation: \\ Davis-Kahan in the Infinity Norm}

\author[Bhardwaj]{Abhinav Bhardwaj}
\address{Department of Mathematics, Yale University}
\email{abhinav.bhardwaj@yale.edu}


\author[Vu]{Van Vu }
\address{Department of Mathematics, Yale University}
\email{van.vu@yale.edu}
\thanks  {Van Vu is supported by NSF grant DMS 2311252.}
\thanks { An extended abstract of this paper appears in the proceeding of SODA 2024.}

\begin{abstract}
Perturbation theory is developed to analyze the impact of noise on data and has been an essential part of numerical analysis. Recently, it has played an important role in designing and analyzing matrix algorithms. One of the most useful tools in this subject, the Davis-Kahan sine theorem,  provides an $\ell_2$ error bound on the perturbation of the leading singular vectors (and spaces). 

We focus on the case when the signal matrix has low rank and the perturbation is random, which occurs often in practice. In an earlier paper, O'Rourke, Wang, and the second author showed that in this case, one can obtain an improved theorem. In particular, the noise-to-gap ratio condition in the original setting can be weakened considerably. 

In the current paper, we develop an infinity norm version of the O'Rourke-Vu-Wang result. The key ideas in the proof are a new bootstrapping argument and the so-called iterative leave-one-out method, which may be of independent interest. 

Applying the new bounds, we develop new, simple, and quick algorithms for several well-known problems, such as finding hidden partitions and matrix completion. The core of these new algorithms is the  fact that one is now able to quickly approximate certain key objects in the infinity norm, which has critical advantages over 
approximations in the $\ell_2$ norm, Frobenius norm, or spectral norm. 
\end{abstract} 

\maketitle

\section{Introduction}

\subsection{The classical Davis-Kahan theorem}
Perturbation theory is developed to analyze the impact of noise on 
 data and has been an essential part of numerical analysis. 
The general setting is that we have a (signal or data) matrix $A$, a noise matrix $E$, and a matrix functional $f$. Our goal is a compare $f(A)$ with $f(A+E)$. A typical perturbation bound provides a upper bound for the difference $ f(A+E)- f(A) $ in some norm. 
 
 For the sake of presentation, in most of the paper, we assume that  both $A$ and $E$ are symmetric and of dimension $n$. All results in this paper can be extended to the asymetric case by a simple symmetrization trick. 
We assume that $n$ is sufficiently large, whenever needed, and asymptotic notations are used under the assumption that $n$ tends to infinity. 

Assume that $A$ has rank $r$ and let   $\sigma_i$ be the non-trivial singular values of $A$ in decreasing order, for $1 \leq i \leq r$. 
Let $\vect u_{i}$ be the corresponding singular vector of $\sigma_i$ with entries $u_{il}$. 
For the sake of presentation we assume that $\sigma_i$ are different so $\vect u_i$ are well defined, up to 
sign. Let $\tilde A= A+E$, and  use notation $\tilde \sigma_i$, $\vect{\tilde u}_i$, and $\tilde{u}_{il}$. Notice that because $\vect u_i$ and $\vect{\tilde u}_i$ are 
unique up to sign, we may always choose  the signs so that the the angle between them is at most $\pi/2$. 
Let $\Delta_i = \sigma_i - \sigma_{i+1}$, and let $\delta_i = \min \{ \Delta_{i-1}, \Delta_i\} $ be the distance from $\sigma_i$ to the nearest singular value. We take $\sigma_0 = \infty$ for the sake of consistency.

One of the most useful tools in perturbation theory is the Davis-Kahan bound, which provides a perturbation bound 
for singular vectors.

\begin{theorem}[Davis-Kahan] \label{DKW}There is a constant $C >0$ such that, provided $\delta_i \ge 2\norm{E},$ 

\begin{equation} \label{DKW1} \| \vect u_i - \vect{\tilde u}_i \|_2 \le C \frac{ \| E\| } {\delta_i } . \end{equation}

\end{theorem}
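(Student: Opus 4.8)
The classical way to prove this is via contour integration of the resolvent, and that is the plan I would follow. The key point is that the spectral projector onto the $i$-th eigenspace can be written as a Cauchy integral, and subtracting the two projectors for $A$ and $\tilde A$ reduces everything to a resolvent identity plus a perturbation estimate on a cleverly chosen circle. Below I sketch this for the symmetric case (which the paper assumes), where "singular values/vectors" are just "eigenvalues/eigenvectors" up to sign, and I freely use that $\|E\|$ equals the operator norm $\|E\|_2$.

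First I would fix $i$ and choose a contour $\Gamma$ in the complex plane: the circle of radius $\delta_i/2$ centered at $\sigma_i$. Since $\delta_i = \min\{\Delta_{i-1},\Delta_i\}$ is the distance from $\sigma_i$ to the nearest other eigenvalue of $A$, the only eigenvalue of $A$ enclosed by $\Gamma$ is $\sigma_i$. The hypothesis $\delta_i \ge 2\|E\|$ guarantees, via Weyl's inequality $|\tilde\sigma_j - \sigma_j|\le \|E\|$, that $\tilde\sigma_i$ is the unique eigenvalue of $\tilde A$ inside $\Gamma$, and that on $\Gamma$ itself we have $\dist(z, \mathrm{spec}(A)) \ge \delta_i/2$ and $\dist(z,\mathrm{spec}(\tilde A)) \ge \delta_i/2 - \|E\| \ge \delta_i/4$ (tightening the constant in the hypothesis if needed, or absorbing it into $C$). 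Then
\[
\vect u_i \vect u_i^{\T} - \vect{\tilde u}_i \vect{\tilde u}_i^{\T}
= \frac{1}{2\pi i}\oint_\Gamma \big[(z-A)^{-1} - (z-\tilde A)^{-1}\big]\, dz
= \frac{1}{2\pi i}\oint_\Gamma (z-A)^{-1} E\, (z-\tilde A)^{-1}\, dz .
\]
Taking norms, bounding $\|(z-A)^{-1}\|\le 2/\delta_i$ and $\|(z-\tilde A)^{-1}\|\le 4/\delta_i$, and using that $\Gamma$ has length $\pi\delta_i$, gives $\|\vect u_i\vect u_i^{\T} - \vect{\tilde u}_i\vect{\tilde u}_i^{\T}\|_2 \le C' \|E\|/\delta_i$.

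Next I would convert the projector bound into the vector bound \eqref{DKW1}. Writing $P = \vect u_i\vect u_i^{\T}$ and $\tilde P = \vect{\tilde u}_i\vect{\tilde u}_i^{\T}$, a standard computation gives $\|\vect u_i - \vect{\tilde u}_i\|_2^2 = 2(1 - \vect u_i^{\T}\vect{\tilde u}_i) = 2 - 2\cos\theta \le 2(1-\cos^2\theta) = 2\sin^2\theta$, where $\theta \in [0,\pi/2]$ is the angle between the (sign-aligned) vectors, and $\sin\theta = \|P - \tilde P\|_2$ for rank-one projectors. Hence $\|\vect u_i - \vect{\tilde u}_i\|_2 \le \sqrt{2}\,\|P-\tilde P\|_2 \le \sqrt{2}\,C'\|E\|/\delta_i$, which is \eqref{DKW1} with $C = \sqrt 2\, C'$.

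The main obstacle — though it is a mild one — is the bookkeeping around the resolvent bound on $\tilde A$: one must be careful that the $2\|E\|$ slack in the hypothesis is exactly what keeps the contour a safe distance from $\mathrm{spec}(\tilde A)$, and one should double-check that the constant is handled cleanly rather than having the bound blow up as $\delta_i \downarrow 2\|E\|$ (it does not, since $\delta_i/2 - \|E\| \ge 0$, but a crude estimate there forces one to either shrink the contour radius to, say, $3\delta_i/4$, or simply note $\delta_i/2-\|E\|\ge \delta_i/4$ is false at equality and instead use radius $\delta_i/4$ so that $\dist \ge \delta_i/4 - \|E\|$ is still positive only if the hypothesis is strengthened). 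The clean fix is to take the contour radius $\rho = \delta_i - \|E\|$, which is $\ge \delta_i/2$ under the hypothesis and keeps distance $\ge \rho - \|E\| = \delta_i - 2\|E\| \ge 0$; one then needs strict inequality, so in practice I would prove it under $\delta_i \ge (2+c)\|E\|$ for a fixed small $c>0$ and absorb the rest, or use the sharper choice $\rho$ equal to half the distance from $\sigma_i$ to the nearest eigenvalue of $\tilde A$, which is automatically $\ge \delta_i/4$ and bounded below. Either way the whole argument is short; everything hard in this paper lies in the \emph{infinity-norm} refinement that comes later, not in this classical warm-up.
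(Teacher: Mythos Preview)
The paper does not actually prove Theorem~\ref{DKW}; it is stated as the classical Davis--Kahan result with citations to \cite{davis-kahan} and \cite{wedin1972}, and then used as background to motivate the new infinity-norm results. So there is no ``paper's own proof'' to compare against.

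Your resolvent/contour-integral argument is a standard and correct route to the statement, and your conversion from the projector bound $\|P-\tilde P\|$ to the vector bound via $\|\vect u_i-\vect{\tilde u}_i\|_2 \le \sqrt{2}\,\sin\theta = \sqrt{2}\,\|P-\tilde P\|$ is fine. The only genuine wrinkle is the one you already flagged: at the boundary case $\delta_i = 2\|E\|$ the naive radius choice $\rho=\delta_i/2$ gives zero clearance from $\mathrm{spec}(\tilde A)$, so the resolvent bound for $\tilde A$ degenerates. Your proposed fixes (shrink the radius, or prove under $\delta_i \ge (2+c)\|E\|$ and absorb into $C$) are both acceptable; alternatively one can bypass the contour entirely and argue directly from the eigenvector equations, writing $(\tilde\sigma_i I - A)(\vect{\tilde u}_i - \vect u_i) = E\vect{\tilde u}_i + (\sigma_i - \tilde\sigma_i)\vect u_i$ and projecting onto the orthogonal complement of $\vect u_i$, which avoids the contour-clearance bookkeeping altogether and gives the bound with $C=2$ or so. Either way, as you note, this is a routine warm-up; nothing here bears on the paper's main contributions.
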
 

The first version of this theorem, by Davis and Kahan 
\cite{davis-kahan} was stated for eigenvectors (and eigenspaces). Later, Wedin \cite{wedin1972} extended the results to singular vectors. In this paper, we use singular vectors, which are  simpler to handle. The more general version of the Davis-Kahan theorem gives a perturbation bound for the spaces spanned by a set of singular vectors. In this paper, we focus on individual singular vectors, but the results can be extended into that direction with simple modifications.

\noindent It is important to notice  that for the RHS of \eqref{DKW1} be small, one needs

\begin{equation} \label{delta1} \gamma\| E \| \le \delta_i,  \end{equation}  for some large $\gamma >0$. In other words, the noise to gap ratio 
$\frac{\|E\| }{\delta _i} $ has to be small. We will refer to this as the {\it noise-to-gap ratio} assumption.

\vskip2mm 

{\it Notation.} We use the conventional asymptotic notations, such as $o, O, \Omega, \Theta$.  We will also use the notation $f(n) = \tilde{O}(g(n))$ if there exists some absolute $c$ such that $f(n) = O(g(n)\log^c n)$;  $\tilde{\Theta}$ and $\tilde{\Omega}$ are defined similarly.

\subsection{Low rank data with random noise and an improved version of Davis-Kahan theorem} 

In modern studies, the following two assumptions come up frequently. First the data matrix $A$ has low rank, and second, the noise matrix $E$ is random. 

The low rank (or approximately low rank) phenomenon is automatic in a number of theoretical settings, such as the clustering problem discussed in Section \ref{applications}. It also occurs in so many real life problems that researchers have even tried to give a theoretical explanation for this; see \cite{udell2018big}.

Under the assumption that  $A$ has low rank $r$ and $E$ is random, the second author discovered  that one can improve the Davis-Kahan bound  \cite{Vu1}. In particular, we can replace the noise-to-gap ratio assumption \eqref{delta1} by much weaker ones; for related results, see  \cite{Vu1,OVW1,OVW2,mbsfjqrobust,zhong2017eigenvector,abbesurvey,ellptheory,eldridge,ling_2022,bao_ding_wang_2021,cape_tang_priebe_2019,koltchinskii_xia_2016}.

\begin{assumption} In what follows, we assume that $A$ is a symmetric, deterministic matrix with rank $r$.
 $E$ will be a random symmetric matrix  with independent
(but not necessarily iid) upper triangular entries $\xi_{ij}$. The $\xi_{ij}$ will be $K$-bounded random variables with mean 0. A random variable $\xi$ is $K$-bounded if $|\xi| \le K$ with probability 1. 
\end{assumption}

\noindent  Following \cite{Vu1}, about 10 years ago,  O'Rourke,  Wang, and the second author \cite{OVW1} obtained the following theorem.

\begin{theorem}
    \label{OVW1eigenvector}
For any constants $\tau, r >0$, there exists  a constant $C_0$ such that with probability at least $1 - \tau$, 

\begin{equation}
\label{OVW1bound}
    \norm{\tilde {\vect u}_1 - \vect u_1} _2 \leq C_{0}\Big[\frac{Kr^{1/2}}{\delta_1} + \frac{\norm{E}}{\sigma_1} + \frac{\norm{E}^{2}}{\delta_1\sigma_1}\Big].
\end{equation}  
\end{theorem}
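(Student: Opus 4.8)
The plan is to exploit the low rank of $A$ by compressing the perturbation problem onto $V := \operatorname{range}(A)$, an $r$-dimensional subspace on which the noise $E$ is much tamer than it is globally. Write $U=[\vect u_1,\dots,\vect u_r]\in\R^{n\times r}$, so that (after replacing $A,E$ by $-A,-E$ if needed) $A=U\Sigma U^{\T}$ with $\Sigma=\diag(\lambda_1,\dots,\lambda_r)$, $\lambda_1=\sigma_1$ and $|\lambda_j|\le\sigma_2$ for $j\ge 2$; put $P=UU^{\T}$ and $Q=I-P$. First I would dispose of the easy cases: if any one of the three terms on the right of \eqref{OVW1bound} exceeds a fixed small absolute constant $c_0$, then since $\vect u_1,\vect{\tilde u}_1$ are unit vectors with angle at most $\pi/2$ we have $\norm{\vect u_1-\vect{\tilde u}_1}_2\le\sqrt2$ and the bound holds with $C_0$ large. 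So I may assume the ``good regime'' $\norm E\le c_0\sigma_1$, $\norm E^2\le c_0\delta_1\sigma_1$, $\delta_1\ge c_0^{-1}Kr^{1/2}$. Next split $\vect{\tilde u}_1=\vect x+\vect y$ with $\vect x=P\vect{\tilde u}_1$, $\vect y=Q\vect{\tilde u}_1$, and let $\lambda$ be the eigenvalue of $\tilde A$ with $|\lambda|=\tilde\sigma_1$ (in the good regime $\lambda=\lambda_1(\tilde A)\ge\sigma_1-\norm E$). Applying $Q$ to $(A+E)\vect{\tilde u}_1=\lambda\vect{\tilde u}_1$ and using $QA=0$ gives $QE\vect{\tilde u}_1=\lambda\vect y$, hence $\norm{\vect y}_2\le\norm E/\tilde\sigma_1\lesssim\norm E/\sigma_1$, which already accounts for the middle term of \eqref{OVW1bound}.

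The heart of the argument is to control $\vect x$, equivalently $\vect z:=U^{\T}\vect{\tilde u}_1\in\R^r$ (note $\vect x=U\vect z$ and $\norm{\vect z}_2=\norm{\vect x}_2=\sqrt{1-\norm{\vect y}_2^2}$), and to show it is close to $\vect e_1$. Applying $P$ to $(A+E)\vect{\tilde u}_1=\lambda\vect{\tilde u}_1$ and using $PA=A$, $A\vect y=0$, I get the compressed identity
\[
(\Sigma+M)\vect z=\lambda\vect z-U^{\T}E\vect y,\qquad M:=U^{\T}EU,
\]
so $\vect z$ is an approximate eigenvector of the symmetric $r\times r$ matrix $\Sigma+M$ with residual $\norm{U^{\T}E\vect y}_2\le\norm E\,\norm{\vect y}_2\lesssim\norm E^2/\sigma_1$. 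The only probabilistic input — and the reason low rank helps — is that the compressed noise is small: $\norm M_2=\norm{U^{\T}EU}_2=O(Kr^{1/2})$ with probability at least $1-\tau$. Since $r$ is constant this is elementary: writing $M=\sum_{k\le l}\xi_{kl}A_{kl}$ with fixed symmetric $A_{kl}$ assembled from the rows of $U$ (each of norm at most $1$), one finds $\norm{\sum_{k\le l}\E\xi_{kl}^2A_{kl}^2}_2=O(K^2r)$ and $\max_{k,l}\norm{A_{kl}}_2=O(1)$, so matrix Bernstein gives $\E\norm M_2=O(K\sqrt{r\log r}+K\log r)$ and Markov finishes; as $r,\tau$ are constants these factors fold into $C_0$. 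In particular, shrinking $c_0$, I may assume $\norm M_2\le\delta_1/8$.

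With these in hand I proceed in two steps. First, classical Davis--Kahan (Theorem~\ref{DKW}) applied to $\Sigma$ and $\Sigma+M$: the real-line gap of $\Sigma$ at $\lambda_1$ is at least $\delta_1$, hence at least $\delta_1/2$ for $\Sigma+M$, so the top unit eigenvector $\vect w_1$ of $\Sigma+M$ (signed with $\langle\vect w_1,\vect e_1\rangle\ge0$) satisfies $\norm{\vect w_1-\vect e_1}_2=O(\norm M_2/\delta_1)=O(Kr^{1/2}/\delta_1)$, the first term. Second, I claim $\lambda$ nearly equals the top eigenvalue $\mu_1$ of $\Sigma+M$: on one hand $\lambda=\lambda_1(\tilde A)\ge\max_{\norm{\vect c}_2=1}\vect c^{\T}(\Sigma+M)\vect c=\mu_1$ (using the test vectors $U\vect c\in V$); on the other, expanding $\lambda=\vect{\tilde u}_1^{\T}(A+E)\vect{\tilde u}_1$ with $A\vect y=0$ gives $\lambda=\vect z^{\T}(\Sigma+M)\vect z+2\vect x^{\T}E\vect y+\vect y^{\T}E\vect y\le\mu_1+O(\norm E^2/\sigma_1)$. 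Thus $|\lambda-\mu_1|=O(\norm E^2/\sigma_1)\le c_0\delta_1$, so $\lambda$ sits at distance $\ge\delta_1/4$ from every eigenvalue of $\Sigma+M$ other than $\mu_1$. Feeding this into the compressed identity in the standard way (expand $\vect z$ in the eigenbasis of $\Sigma+M$; the components off the $\mu_1$-direction are at most residual over gap) gives $\dist(\vect z,\Span\{\vect w_1\})\le\frac{\norm{U^{\T}E\vect y}_2}{\delta_1/4}=O(\norm E^2/(\delta_1\sigma_1))$, which a short sign check (using $\langle\vect z,\vect e_1\rangle\ge0$ and $\norm{\vect z}_2\approx1$) promotes to $\norm{\vect z-\vect w_1}_2=O(\norm E^2/(\delta_1\sigma_1))$, up to a lower-order term absorbed into $\norm E/\sigma_1$; this is the third term. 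Finally $\norm{\vect u_1-\vect{\tilde u}_1}_2\le\norm{\vect z-\vect e_1}_2+\norm{\vect y}_2\le\norm{\vect z-\vect w_1}_2+\norm{\vect w_1-\vect e_1}_2+\norm{\vect y}_2$, which is the asserted bound.

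I expect two places to carry the weight. The conceptual one is the sandwich $\mu_1\le\lambda\le\mu_1+O(\norm E^2/\sigma_1)$: a crude estimate only gives $|\lambda-\mu_1|\le\norm E$, which would force the Davis--Kahan noise-to-gap hypothesis \eqref{delta1} back in, whereas the quadratic improvement — available because $\vect{\tilde u}_1$ leaks only an $O(\norm E/\sigma_1)$-fraction of its mass into $V^{\perp}$ — is exactly what decouples the estimate from $\norm E/\delta_1$. The technical one is the bound $\norm{U^{\T}EU}_2=O(Kr^{1/2})$, which replaces the global $\norm E=\Theta(K\sqrt n)$ and produces the first term; with $r$ constant it is routine, but it is the crucial exploitation of low rank. (If $A$ is indefinite one must also check that $\tilde\sigma_1$ and $\sigma_1$ correspond to eigenvalues of the same sign; this is automatic in the good regime, since a crossing would require $\norm E>\delta_1/2$.)
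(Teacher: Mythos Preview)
The paper does not give its own proof of Theorem~\ref{OVW1eigenvector}; the result is quoted from \cite{OVW1}, with only the statement (in its concentration-property form) reproduced in Appendix~\ref{l2-appendix}. Your strategy---split $\tilde{\vect u}_1=P\tilde{\vect u}_1+Q\tilde{\vect u}_1$, bound the leak into $V^{\perp}$ by $O(\norm{E}/\sigma_1)$, show the compressed noise $M=U^{\T}EU$ has norm $O_r(K)$ by concentration, then pin $\lambda$ near the top eigenvalue of $\Sigma+M$ via a two-sided estimate and apply Davis--Kahan inside $\R^r$---is correct in substance and is exactly the approach of \cite{OVW1}.

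Your parenthetical about the indefinite case, however, does not close. You correctly note that a sign crossing (i.e.\ $\tilde\sigma_1=-\tilde\lambda_n$ rather than $\tilde\lambda_1$) forces $\norm{E}>\delta_1/2$ via Weyl, but $\norm{E}>\delta_1/2$ is \emph{not} excluded by your good regime: the three conditions $\norm{E}\le c_0\sigma_1$, $\norm{E}^2\le c_0\delta_1\sigma_1$, and $Kr^{1/2}\le c_0\delta_1$ are perfectly compatible with, say, $\delta_1$ of order $K\sqrt{\log n}$, $\norm{E}$ of order $K\sqrt n$, and $\sigma_1$ of order $Kn$. So as written you have not ruled out $\lambda=\tilde\lambda_n<0$, whereupon the lower bound $\lambda\ge\mu_1$ fails and the sandwich collapses. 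The repair is to run your own machinery once more on the bottom eigenvector $\vect v$ of $\tilde A$: were there a crossing, one would have $|\tilde\lambda_n|\ge\tilde\lambda_1\ge\mu_1\ge\sigma_1-\norm{M}$, so the same decomposition yields $\norm{Q\vect v}_2=O(\norm{E}/\sigma_1)$ and hence
\[
-\tilde\lambda_n \le \max(0,-\mu_r)+O(\norm{E}^2/\sigma_1)\le\sigma_2+\norm{M}+O(\norm{E}^2/\sigma_1),
\]
which in the good regime (using $\norm{M}\le\delta_1/8$ and $\norm{E}^2\le c_0\delta_1\sigma_1$) is strictly below $\sigma_1-\norm{M}\le\tilde\lambda_1$, a contradiction. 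This costs no additional failure probability beyond what you already pay for the event $\norm{M}\le\delta_1/8$.
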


\noindent The theorem holds for other singular vectors as well, with simple modifications. A more quantitative form of this theorem \cite{OVW1} allows one to take $\tau \rightarrow 0$ with $n$.

The key point in Theorem \ref{OVW1eigenvector} is that for the RHS to be small, we only need to assume 
 $\frac{\| E\|^2 }{ \sigma_i \delta_1 } $  
and  $\frac{1}{\delta_1}$ are small, 
which is much weaker than  \eqref{delta1},  where we   need to 
require that $\frac{\| E\| }{\delta_1}$ is small. (In standard settings, $\|E\|$ is of order $\sqrt n$). 

In many applications
(see \cite{OVW1}), the gap $\delta_1$ is   smaller than $\| E \|$, so the "noise to gap ratio is small" assumption \eqref{delta1} is violated. On the other hand, even if $\delta_1 \le \| E\|$, it is still often the case that   the product $\delta_1 \sigma_1$ is larger than $\| E\| ^2$, as $\sigma_1$ can be way larger than both $\delta_1$ and $\| E \| $, and our bound applies. Let us illustrate this with an example. 

\vskip2mm 

{\it \noindent Example.} Let $A$ be a matrix whose entries are of order $\Theta (1)$ with constant rank $r$, and $E$ be a matrix whose entries are iid standard Gaussian. 
Since $\sum_{i=1}^r \sigma_i^2 = \| A\|_F^2= \Theta (n^2)$, we expect that the non-trivial singular values of $A$ are of order $\Theta (n)$.
On the other hand, it is well known that 
(with high probability), $\| E \| = (2+o(1)) \sqrt n$. Furthermore, by a  simple truncation trick, we can set $K=20 \sqrt {\log n}$, as this holds with overwhelming probability. Thus, in Theorem \ref{OVW1eigenvector}, we only need to require the gap 
$\delta_i$ to be $\Omega (\sqrt {\log n})$ to have a meaningful conclusion (making the RHS of \eqref{OVW1bound} going to zero). On the other hand, an application of Davis-Kahan theorem would require 
$\delta_i =\Omega (\sqrt n)$, a significantly stronger assumption,   to achieve the same conclusion. 

\vskip2mm 

For more recent progress in this direction, see \cite{OVW2}.

\subsection {The infinity norm version}

Theorem \ref{DKW} provides an an $\ell_2$ estimate. It  is  natural and important to obtain similar results in the infinity norm. Going from $\ell_2$ to $\ell_{\infty} $ is always a non-trivial task and progress has only been made in the last 10 years or so. There are many papers in this topic considering either the infinity norm or the $\ell_{2 \rightarrow \infty}$ norm \cite{fan2017ellinfty,abbefanentrywise,eldridge,Agterberg2021EntrywiseEO,Cape2017TheTN,caietal}. However, in all of these papers, one needs to use the original noise to gap ratio assumption \eqref{delta1}. Our study in this paper will go  beyond this setting, as our goal is to obtain an infinity 
norm version of Theorem \ref{OVW1eigenvector}, which holds under  weaker assumptions. 

\vskip2mm

While finishing this paper,  we became aware of a result in 
\cite{asymmetriccheng2}. In this paper, the authors studied a {\it hybrid} model where a {\it symmetric}  low rank matrix is perturbed with {\it asymmetric} random noise. This paper also studied the infinity norm and does not need to assume \eqref{delta1}.  On the other hand, the  analysis  relies strongly on the  hybrid model and is totally different from the methods in this paper; see \cite{asymmetriccheng2}  for details. The hybrid model does not seem to occur very often in applications, as it is natural to assume that $A$ and $E$ have the same type of symmetry.

\section {New results } 
\label{newresults}

\subsection{An optimal guess}

Our goal is to find an infinity norm analogue of Theorem \ref{OVW1eigenvector}, the improved version of Davis-Kahan theorem, with a significantly weakened noise to gap assumption. Consider a singular vector ${\bf u}_i$, its perturbed counterpart 
$\tilde {\bf u}_i$, and the infinity norm difference  $  \| \tilde{\vect u}_i - \vect u_i  \| _{\infty} $. To start, let us raise a question. 

\begin{question} What is the best possible bound we can hope to achieve for $  \| \tilde{\vect u}_i - \vect u_i  \| _{\infty} $?
\end{question}

Consider the $\ell_2$ norm difference $\| \tilde{\vect u}_i - \vect u_i  \|_2 $.
It is apparent that 

$$ \| \tilde{\vect u}_i - \vect u_i  \| _{\infty} \ge \frac{1} {\sqrt n} 
\| \tilde{\vect u}_i - \vect u_i  \|_2. $$ Thus, the best bound one would hope for (up to a poly-logarithmic factor, which is usually unavoidable in a random setting)  is 
\begin{equation} \label{best1}
 \| \tilde{\vect u}_i - \vect u_i  \| _{\infty} = \tilde O ( \frac{1} {\sqrt n}\| \tilde{\vect u}_i - \vect u_i  \|_2  ). \end{equation}
However, \eqref{best1} may be too optimistic. In practice, it is natural to expect that coordinate-wise errors are proportional to the magnitude of the coordinates. Simply speaking, the error at a higher magnitude coordinate is likely to be larger. Thus, a more realistic version of \eqref{best1} is

\begin{equation} \label{best2}
\| \tilde{\vect u}_i - \vect u_i  \| _{\infty} = \tilde O ( \|\vect u _i \| _{\infty} \| \tilde{\vect u}_i - \vect u_i  \|_2 ). \end{equation} We are going  to prove that under certain conditions,  
 a slightly weaker version  of \eqref{best2} holds. The 
 precise form of the result is a bit technical, but in essence it shows
 (see Remark \ref{promise} for discussion)

\begin{equation} \label{newbound}   \| \tilde{\vect u}_i - \vect u_i \| _{\infty}  = \tilde O (  \| U \|_{\infty} \| \tilde{\vect u}_i - \vect u_i  \|_2 ).  \end{equation} The parameter $\| U \| _{\infty}$ is not an adhoc one. It has played an important role in many applications of spectral methods in statistics, through the notion of {\it incoherence}  \cite{candestaomatrixcompletion,recht} \cite{compressivesampling,chensurvey}. Our main theorem roughly asserts that under a modest  condition on the singular values and the gaps, \eqref{newbound} holds. Let us  illustrate with a corollary of our main results.

\begin{theorem}[Leading singular vector perturbation]
\label{main-corollary}
Let $E$ be a symmetric, $K$-bounded random matrix with independent upper triangular entries. Then with probability $1 - o(1)$,

\begin{equation} \label{main-corollary-eq} 
        \norm{\tilde{\vect u}_1 - \vect{u}_1}_{\infty}  \le c\norm{U}_{\infty} \Big[ \frac{\norm{E}}{\sigma_1} + \frac{K\sqrt{\log n}}{\delta_1} + \frac{\norm{E}^2}{\sigma_1\delta_1}\Big]+\frac{cK\sqrt{\log n}}{\sigma_1}.
\end{equation}

\end{theorem}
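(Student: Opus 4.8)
\medskip
\noindent\textbf{Proof outline.} Since $E$ is symmetric we argue with eigenvectors, and we take $\sigma_1$ to be the top eigenvalue of $A$; adjusting the constant $c$ we may also assume $\norm{E}\le\sigma_1/2$, so that Weyl's inequality gives $\abs{\tilde\sigma_1-\sigma_1}\le\norm{E}$ and $\tilde\sigma_1\asymp\sigma_1$. Let $P$ be the orthogonal projection onto $\Span(\vect u_1,\dots,\vect u_r)$, so that $A=\sum_{j\le r}\sigma_j\vect u_j\vect u_j^{\T}$ and $(I-P)A=0$. The starting point is the fixed-point identity $\tilde\sigma_1\tilde{\vect u}_1=(A+E)\tilde{\vect u}_1$: expanding $A\tilde{\vect u}_1=\sum_{j\le r}\sigma_j(\vect u_j^{\T}\tilde{\vect u}_1)\vect u_j$ and subtracting $\vect u_1$ gives
\[
\tilde{\vect u}_1-\vect u_1=\Big(\tfrac{\sigma_1\vect u_1^{\T}\tilde{\vect u}_1}{\tilde\sigma_1}-1\Big)\vect u_1+\tfrac{1}{\tilde\sigma_1}\sum_{j=2}^{r}\sigma_j(\vect u_j^{\T}\tilde{\vect u}_1)\vect u_j+\tfrac{1}{\tilde\sigma_1}E\tilde{\vect u}_1 .
\]
Using $\vect u_1^{\T}\tilde{\vect u}_1=1-\tfrac12\norm{\tilde{\vect u}_1-\vect u_1}_2^2$, the bound $\sum_{j\ge2}(\vect u_j^{\T}\tilde{\vect u}_1)^2\le\norm{\tilde{\vect u}_1-\vect u_1}_2^2$, the incoherence inequality $\norm{\vect u_j}_\infty\le\norm{U}_\infty$, and the $\ell_2$ estimate $\norm{\tilde{\vect u}_1-\vect u_1}_2\lesssim \epsilon:=\tfrac{Kr^{1/2}}{\delta_1}+\tfrac{\norm E}{\sigma_1}+\tfrac{\norm E^2}{\sigma_1\delta_1}$ supplied by Theorem~\ref{OVW1eigenvector}, the $\ell_\infty$-norm of the first two ``signal'' terms is $\lesssim\norm{U}_\infty\bigl(\tfrac{\norm E}{\sigma_1}+\sqrt r\,\epsilon\bigr)$, which is already absorbed by the right-hand side of \eqref{main-corollary-eq}. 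Everything therefore reduces to bounding $\tfrac1{\sigma_1}\norm{E\tilde{\vect u}_1}_\infty$.

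\smallskip
For this I use the leave-one-out device. Fix a coordinate $k$, let $E^{(k)}$ be $E$ with its $k$-th row and column set to $0$, put $\tilde A^{(k)}=A+E^{(k)}$, and let $\tilde{\vect u}_1^{(k)}$ be its leading eigenvector. Then $\tilde A^{(k)}$, hence $\tilde{\vect u}_1^{(k)}$, is independent of $\{\xi_{k\ell}\}_\ell$, and $\norm{E^{(k)}}\le\norm E$ (conjugating by a coordinate projection cannot increase the operator norm), so Theorem~\ref{OVW1eigenvector} applies verbatim to $\tilde A^{(k)}$ and yields $\norm{\tilde{\vect u}_1^{(k)}-\vect u_1}_2\lesssim\epsilon$ and $\tilde\sigma_1^{(k)}\asymp\sigma_1$. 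Decompose $(E\tilde{\vect u}_1)_k=\langle E_{k\cdot},\tilde{\vect u}_1^{(k)}\rangle+\langle E_{k\cdot},\tilde{\vect u}_1-\tilde{\vect u}_1^{(k)}\rangle$. The first summand is a sum of independent mean-zero $K$-bounded terms with weight vector of unit length, so Bernstein's inequality and a union bound over $k$ give $\max_k\abs{\langle E_{k\cdot},\tilde{\vect u}_1^{(k)}\rangle}\lesssim K\sqrt{\log n}$ with overwhelming probability; this is exactly the stand-alone term $\tfrac{cK\sqrt{\log n}}{\sigma_1}$. For the second summand I split $\tilde{\vect u}_1-\tilde{\vect u}_1^{(k)}=\sum_{j\le r}c_j\vect u_j+(I-P)(\tilde{\vect u}_1-\tilde{\vect u}_1^{(k)})$: the $c_j$ satisfy $\sum_j\abs{c_j}\le r^{1/2}\norm{\tilde{\vect u}_1-\tilde{\vect u}_1^{(k)}}_2\lesssim r^{1/2}\epsilon$ while $\abs{\langle E_{k\cdot},\vect u_j\rangle}\lesssim K\sqrt{\log n}$ for each fixed $j$ (Bernstein again, union over pairs $(k,j)$), so the projected piece contributes $\lesssim r^{1/2}\epsilon\,K\sqrt{\log n}$; the orthogonal piece has $\ell_2$-norm $\le 2\norm E/\sigma_1$ since $(I-P)\tilde{\vect u}_1=\tilde\sigma_1^{-1}(I-P)E\tilde{\vect u}_1$ (and likewise for $\tilde{\vect u}_1^{(k)}$), and combining Cauchy--Schwarz with $\norm{E_{k\cdot}}_2\le\norm E$, together with a further Bernstein step against the independent vector $(I-P)\tilde{\vect u}_1^{(k)}$, bounds its contribution by $\lesssim \norm E^2/\sigma_1+K\sqrt{\log n}\,\norm E/\sigma_1$. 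One crucial input here is the incoherence of $\tilde{\vect u}_1^{(k)}$ at coordinate $k$: the $k$-th row of $\tilde A^{(k)}$ is the \emph{deterministic} vector $A_{k\cdot}$, so reading off the $k$-th coordinate of $\tilde\sigma_1^{(k)}\tilde{\vect u}_1^{(k)}=\tilde A^{(k)}\tilde{\vect u}_1^{(k)}$ gives $\abs{(\tilde{\vect u}_1^{(k)})_k}=\bigl(\tilde\sigma_1^{(k)}\bigr)^{-1}\abs{\langle A_{k\cdot},\tilde{\vect u}_1^{(k)}\rangle}\lesssim\norm{U}_\infty$, i.e.\ the incoherence of the noiseless matrix is transferred to the noisy eigenvector.

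\smallskip
These estimates are not yet self-contained: the version of the bound on $\norm{\tilde{\vect u}_1-\tilde{\vect u}_1^{(k)}}_2$ that one actually needs --- one whose leading term carries a factor $\norm{U}_\infty$ rather than the bare $\epsilon$ --- must be extracted from the fixed-point identities for $\tilde A$ and $\tilde A^{(k)}$, and this re-introduces precisely the entrywise quantities being estimated. The remedy is a bootstrap: set $\Phi:=\max_k\bigl(\norm{\tilde{\vect u}_1-\vect u_1}_\infty\vee\norm{\tilde{\vect u}_1^{(k)}-\vect u_1}_\infty\bigr)$ and, carrying out the decomposition above (iterated once more inside each $\tilde A^{(k)}$ whenever a second coordinate must be decoupled, which is the ``iterative'' aspect), derive on one high-probability event an inequality of the shape
\[
\Phi\le C\norm{U}_\infty\Big[\tfrac{\norm E}{\sigma_1}+\tfrac{K\sqrt{\log n}}{\delta_1}+\tfrac{\norm E^2}{\sigma_1\delta_1}\Big]+\tfrac{CK\sqrt{\log n}}{\sigma_1}+\rho\,\Phi ,
\]
where $\rho$ is a product of the small parameters $\norm E/\sigma_1$, $r^{1/2}\epsilon$ and similar ratios; under the modest spectral hypotheses inherited from our main theorem one has $\rho\le\tfrac12$, and rearranging gives the claim (equivalently, feed the trivial bound $\Phi\le\sqrt2$ into this inequality and iterate $O(\log n)$ times until the $\rho\,\Phi$ term is negligible). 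I expect the principal obstacle to be exactly this step: obtaining a bound on $\norm{\tilde{\vect u}_1-\tilde{\vect u}_1^{(k)}}_2$ --- the effect of restoring the $k$-th row and column --- and on the out-of-range component $(I-P)\tilde{\vect u}_1$ that is sharp enough yet does \emph{not} invoke the classical noise-to-gap hypothesis \eqref{delta1}, so that the only gap ever used is $\delta_1$, the gap of the noiseless $A$, and the budget spent per factor of $E$ is $\norm E/\sigma_1$ rather than $\norm E/\delta_1$. Finally, all randomness is consumed through $O(n^2)$ Bernstein events, each holding with overwhelming probability, so a union bound still leaves probability $1-o(1)$; absorbing the regime where the right-hand side of \eqref{main-corollary-eq} exceeds a small absolute constant into the constant $c$ completes the argument.
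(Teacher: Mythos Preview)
The paper's own proof of Theorem~\ref{main-corollary} is a single sentence: it is an immediate corollary of Theorem~\ref{main-result} (which bounds $\norm{\tilde{\vect u}_1-\vect u_1}_\infty$ in terms of $\norm{\tilde{\vect u}_1-\vect u_1}_2$) together with Theorem~\ref{OVW1eigenvector} (which supplies the $\ell_2$ bound). What you have written is not a proof of the corollary per se but a direct sketch of Theorem~\ref{main-result} in the case $i=1$, so the comparison should really be with Sections~\ref{deterministicproof}--\ref{mainresultproof}.

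At that level your approach is essentially the paper's, organized slightly differently. You work from the eigenvector identity and reduce to controlling $(E\tilde{\vect u}_1)_k$; the paper instead writes $|\tilde u_{1l}-u_{1l}|\le|u^{\{l\}}_{1l}-u_{1l}|+\norm{\tilde{\vect u}_1-\vect u^{\{l\}}_1}_2$ and treats the two pieces as Lemmas~\ref{firststep} and~\ref{secondstep}. Both routes lead to the same bootstrap inequality $\Phi\le C(\cdots)+\rho\,\Phi$ with $\rho<1/2$, and both hinge on the observation you flagged: the $k$th row of $\tilde A^{(k)}$ equals $A_{k\cdot}$, which transfers incoherence to $(\tilde{\vect u}_1^{(k)})_k$. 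The step you single out as the principal obstacle --- a sharp bound on $\norm{\tilde{\vect u}_1-\tilde{\vect u}_1^{(k)}}_2$ that avoids the classical noise-to-gap assumption~\eqref{delta1} --- is precisely Lemma~\ref{secondstep}. There the paper exploits the rank-two structure $H_{\{l\}}=\vect x\vect e_l^{\T}+\vect e_l\vect x^{\T}$: after a Pythagorean split \eqref{sinedecomposition}, the component along the nearby singular directions is handled by a Davis--Kahan-type identity that produces the factor $\epsilon_2(i)=1/\delta_i$ multiplied only by $a_l=\norm{U^{\{l\}T}\vect x}_2$ (which is $O(K\sqrt{\log n})$ by Hoeffding), while the orthogonal component carries the factor $\norm E/\sigma_i$ --- exactly the budget you anticipate. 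Your sketch gestures at this decomposition but does not isolate the mechanism that puts $a_l$ (rather than $\norm E$) next to $1/\delta_1$; that is the one place where the paper's argument is genuinely more explicit than yours.
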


\vskip2mm 
{\noindent}Notice that in the main term $c\norm{U}_{\infty} \Big[ \frac{\norm{E}}{\sigma_1} + \frac{K\sqrt{\log n}}{\delta_1} + \frac{\norm{E}^2}{\sigma_1\delta_1}\Big]$, the term  $\Big[ \frac{\norm{E}}{\sigma_1} + \frac{K\sqrt{\log n}}{\delta_1} + \frac{\norm{E}^2}{\sigma_1\delta_1}\Big]$ is essentially the RHS of 
the bound \eqref{OVW1bound} for $\| \tilde{ \vect u}_1 - \vect u_1 \| _2$. In the case $K$ grows slowly with $n$, the second term $\frac{cK\sqrt{\log n}}{\sigma_1}$ is often negligible compared to the main term. 

\vskip2mm 

In the next three sections, we present our main theorems.

\subsection {Main theorems: The deterministic setting} 
Our first main theorem is  a deterministic one (Theorem \ref{coordinatetheorem}) where we can measure the difference of the eigenvectors between two matrices $A$ and $A+H$, where both $A$ and $H$ are deterministic. This theorem asserts a relation between the eigenvectors of $A$ and $A+H$ through information on the eigenvalues of $A, A+H$ and  those of the principal minors of $A+H$. This is somewhat  close, in spirit, to the eigenvector-eigenvalue identity, discovered several times in linear algebra, most recently through the study of neutrino oscillations by 
Denton,  Parke, Tao, and Zhang \cite{dpz}; see  \cite{Taosurvey} for a survey.

We keep the definition of all  parameters such as $\epsilon_1(i), \epsilon_2(i)$
 (with $H$ playing the role of $E$).  We denote by $H^{\{l\}}$ the matrix obtained by zeroing out the $l$th row and column of $H$. 

  Let $A$ and $H$ be  symmetric matrices of size $n$, where $A$ has rank $r$.  For any $1\le l \le n$,  set $ A^{\{l\}} = A + H^{\{l\}}$ and $\tilde A= A+H$. Notations such as $\sigma_i, \tilde \sigma_i$ and $\sigma_i ^{\{l\}}$ are self-explanatory.

\begin{theorem}
\label{coordinatetheorem}
Consider $A,H, \tilde A, A^{\{l\}}, H^{\{l\}}$ as above. 
     Let  $U^{\{l\}}$ denote the $n \times r$ matrix of $r$ leading singular vectors of $A^{\{l\}}$ and  $\vect x = \vect x(l)$ be the $l$th row of $H$, except with the lth entry of $\vect x$ reduced to $H_{ll}/2$. Set $C_0 := 272\times 4r^{3/2}$, and define $a_l := \norm{U^{\{l\}T} \vect x}_{2}$. Suppose that 
    \begin{itemize}
        \item $\sigma_i > C_0\norm{H}$
        \item $\delta_i > C_0\max\{a_l, \kappa_i\norm{H}\norm{U}_{\infty}\}$
        \item $\min\{\abs{\tilde{\sigma}_i - \sigma_{i+1}^{\{l\}}}, \abs{\tilde{\sigma}_i - \sigma_{i- 1}^{\{l\}}}\} > \delta_i/2 .$
    \end{itemize}
    
      Then, 
    \begin{equation}
    \label{coordinatebound}
        \abs{\tilde{u}_{il} - u_{il}} \le C_0\norm{U_{l, \cdot}}_{\infty}\Big[\kappa_i\norm{\tilde{\vect u}_i - \vect{u}_i}_2 + \epsilon_1(i) + a_l\kappa_i\epsilon_2(i)\Big] + 256r\frac{\abs{\langle \vect u^{\{l\}}_i, \vect x \rangle}}{\sigma_i},
    \end{equation}

where $\tilde{\vect u}_i$ is the $i$th singular vector of $A + H$. 
\end{theorem}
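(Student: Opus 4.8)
The plan is to convert the eigen-equation $\tilde A\tilde{\vect u}_i=\tilde\sigma_i\tilde{\vect u}_i$ into an exact scalar identity for the coordinate $\tilde u_{il}$ and then bound each piece. Writing $A=U\Sigma U^{\T}$ with $\Sigma=\diag(\sigma_1,\dots,\sigma_r)$ (I suppress the harmless signs of the leading eigenvalues, which do not flip since $\sigma_i>C_0\norm{H}$), reading off row $l$ of $(A+H)\tilde{\vect u}_i=\tilde\sigma_i\tilde{\vect u}_i$, and using $(H\tilde{\vect u}_i)_l=\langle\vect x,\tilde{\vect u}_i\rangle+\tfrac{H_{ll}}{2}\tilde u_{il}$ for the vector $\vect x$ of the statement, one gets
\[
\tilde u_{il}=\frac{1}{\tilde\sigma_i}\Big(U_{l,\cdot}\Sigma U^{\T}\tilde{\vect u}_i+\langle\vect x,\tilde{\vect u}_i\rangle+\tfrac{H_{ll}}{2}\tilde u_{il}\Big),
\]
and the same identity with $H=0$ gives $u_{il}=\sigma_i^{-1}U_{l,\cdot}\Sigma U^{\T}\vect u_i$. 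Subtracting decomposes $\tilde u_{il}-u_{il}$ into a signal part $\tfrac1{\tilde\sigma_i}U_{l,\cdot}\Sigma U^{\T}(\tilde{\vect u}_i-\vect u_i)+\tfrac{\sigma_i-\tilde\sigma_i}{\tilde\sigma_i}u_{il}$, a diagonal part $\tfrac{H_{ll}}{2\tilde\sigma_i}\tilde u_{il}$, and the genuinely hard noise part $\tfrac1{\tilde\sigma_i}\langle\vect x,\tilde{\vect u}_i\rangle$.

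The signal and diagonal parts are routine. By Cauchy--Schwarz $\abs{U_{l,\cdot}\Sigma U^{\T}(\tilde{\vect u}_i-\vect u_i)}\le\sigma_1\sqrt r\,\norm{U_{l,\cdot}}_{\infty}\norm{\tilde{\vect u}_i-\vect u_i}_2$, while $U_{l,\cdot}\Sigma U^{\T}\vect u_i=\sigma_iu_{il}$ together with Weyl's inequality $\abs{\sigma_i-\tilde\sigma_i}\le\norm{H}$ (and $\sigma_i>C_0\norm{H}$, hence $\tilde\sigma_i\asymp\sigma_i$) reduces the second signal term to $O(\epsilon_1(i)\norm{U_{l,\cdot}}_{\infty})$; this accounts for the $\norm{U_{l,\cdot}}_{\infty}[\kappa_i\norm{\tilde{\vect u}_i-\vect u_i}_2+\epsilon_1(i)]$ contribution. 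For the diagonal part, $\abs{H_{ll}}\le\norm{H}\ll\tilde\sigma_i$, so after writing $\tilde u_{il}=(\tilde u_{il}-u_{il})+u_{il}$ the $u_{il}$ piece is again $O(\epsilon_1(i)\norm{U_{l,\cdot}}_{\infty})$ and the remaining tiny multiple of $\tilde u_{il}-u_{il}$ is carried to the left-hand side.

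The heart of the proof is the noise part $\tfrac1{\tilde\sigma_i}\langle\vect x,\tilde{\vect u}_i\rangle$, and this is where the leave-one-out structure enters: $\tilde A=A^{\{l\}}+R$ with $R=e_l\vect x^{\T}+\vect x e_l^{\T}$ of rank $2$, so I would expand $\tilde{\vect u}_i$ in the eigenbasis $\{\vect u^{\{l\}}_j\}$ of $A^{\{l\}}$ and handle three regimes of $j$. The $j=i$ term is $\langle\vect u^{\{l\}}_i,\vect x\rangle\langle\vect u^{\{l\}}_i,\tilde{\vect u}_i\rangle$, hence at most $\abs{\langle\vect u^{\{l\}}_i,\vect x\rangle}$, which after the factor $1/\tilde\sigma_i\asymp1/\sigma_i$ is the final term $256r\,\abs{\langle\vect u^{\{l\}}_i,\vect x\rangle}/\sigma_i$ (the constant $r$ soaking up the Cauchy--Schwarz losses). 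For $j\le r$ with $j\ne i$ I would use the scalar relation $(\tilde\sigma_i-\sigma^{\{l\}}_j)\langle\vect u^{\{l\}}_j,\tilde{\vect u}_i\rangle=u^{\{l\}}_{jl}\langle\vect x,\tilde{\vect u}_i\rangle+\langle\vect u^{\{l\}}_j,\vect x\rangle\tilde u_{il}$, the gap hypothesis (the third bullet, upgraded by an interlacing check to $\abs{\tilde\sigma_i-\sigma^{\{l\}}_j}\ge\delta_i/2$ for all such $j$), and Cauchy--Schwarz against $a_l=\norm{U^{\{l\}\T}\vect x}_2$, producing a bound of shape $\tfrac{a_l}{\delta_i}\big(\sqrt r\,\norm{U_{l,\cdot}}_{\infty}\abs{\langle\vect x,\tilde{\vect u}_i\rangle}+a_l\abs{\tilde u_{il}}\big)$ (passing from $\norm{U^{\{l\}}_{l,\cdot}}$ to $\norm{U_{l,\cdot}}$ costs only a Davis--Kahan error for $A$ versus $A^{\{l\}}$). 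Lastly the trailing component $(I-\Pi^{\{l\}})\tilde{\vect u}_i$, $\Pi^{\{l\}}:=U^{\{l\}}U^{\{l\}\T}$, is small because $\tilde\sigma_i\gtrsim\sigma_i\gg\norm{H}\ge\sigma^{\{l\}}_{r+1}$, so $(\tilde\sigma_iI-A^{\{l\}})^{-1}$ has norm $\lesssim1/\sigma_i$ there; with $\norm{\vect x}_2\le\norm{H}$ this contributes $\lesssim\tfrac{\norm{H}}{\sigma_i}\big(\abs{\langle\vect x,\tilde{\vect u}_i\rangle}+\norm{H}\abs{\tilde u_{il}}\big)$. Adding the three regimes, $\abs{\langle\vect x,\tilde{\vect u}_i\rangle}$ satisfies a self-referential inequality $\abs{\langle\vect x,\tilde{\vect u}_i\rangle}\le\theta\abs{\langle\vect x,\tilde{\vect u}_i\rangle}+(\text{terms in }\abs{\langle\vect u^{\{l\}}_i,\vect x\rangle}\text{ and }\abs{\tilde u_{il}})$ with $\theta<\tfrac12$ precisely by the hypotheses $\sigma_i>C_0\norm{H}$, $\delta_i>C_0\max\{a_l,\kappa_i\norm{H}\norm{U}_{\infty}\}$ and the choice $C_0=272\cdot4r^{3/2}$. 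I would then solve for $\abs{\langle\vect x,\tilde{\vect u}_i\rangle}$, split each surviving $\abs{\tilde u_{il}}\le\abs{\tilde u_{il}-u_{il}}+\norm{U_{l,\cdot}}_{\infty}$ (the first part absorbed into the left side once more, the second folded into the $\epsilon_1(i)$ and $a_l\kappa_i\epsilon_2(i)$ terms), and collect to reach \eqref{coordinatebound}.

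The step I expect to be the real obstacle is the bookkeeping of this bootstrap: one has to check that every coefficient that ends up multiplying a ``wrong-side'' quantity (namely $\abs{\langle\vect x,\tilde{\vect u}_i\rangle}$ or $\abs{\tilde u_{il}-u_{il}}$) is uniformly below a fixed constant $<1$, and it is exactly this requirement that forces the three hypotheses and the numerical value $C_0=272\cdot4r^{3/2}$ (the $r^{3/2}$ absorbing the repeated $\sqrt r$'s from Cauchy--Schwarz and from moving between $U$ and $U^{\{l\}}$). A secondary point is the short interlacing argument needed to derive the uniform gap $\abs{\tilde\sigma_i-\sigma^{\{l\}}_j}\ge\delta_i/2$ for all $j\le r$, $j\ne i$ from the third bullet together with the first two.
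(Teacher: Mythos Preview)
Your overall plan is sound and the bootstrap structure is correct, but the organization differs from the paper's. The paper does not bootstrap on the scalar $\langle\vect x,\tilde{\vect u}_i\rangle$; instead it inserts the leave-one-out vector $\vect u^{\{l\}}_i$ as an intermediary, writing $|\tilde u_{il}-u_{il}|\le|u^{\{l\}}_{il}-u_{il}|+\|\tilde{\vect u}_i-\vect u^{\{l\}}_i\|_2$ and bounding the two pieces by separate lemmas (Lemmas~\ref{firststep} and~\ref{secondstep}). The second lemma decomposes $\|\tilde{\vect u}_i-\vect u^{\{l\}}_i\|_2^2=2\|V^{\{l\}\T}_p\tilde{\vect u}_i\|_2^2+2\|P^{\{l\}}\tilde{\vect u}_i\|_2^2$ and bootstraps on $\|\tilde{\vect u}_i-\vect u^{\{l\}}_i\|_2$ itself (the terms correspond to your ``middle'' and ``trailing'' regimes); the self-reference in $|\tilde u_{il}|$ is then resolved at the very end. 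Your route is more economical in that you never introduce this auxiliary $\ell_2$ distance, but the computations inside each regime are essentially the same as the paper's.

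There is one step where your sketch would fail as written. You need the $l$th row of $U^{\{l\}}$ to satisfy $\|U^{\{l\}}_{l,\cdot}\|_2\lesssim\sqrt r\,\kappa_i\|U_{l,\cdot}\|_\infty$ (this is what makes the middle-regime coefficient of $|\langle\vect x,\tilde{\vect u}_i\rangle|$ small). You say this follows from ``a Davis--Kahan error for $A$ versus $A^{\{l\}}$'', but it does not: Davis--Kahan only gives $|u^{\{l\}}_{jl}-u_{jl}|\lesssim\|H\|/\delta_j$, and since the entire point of the theorem is that the gaps can be much smaller than $\|H\|$, this error term swamps $\|U_{l,\cdot}\|_\infty$. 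The correct argument (the paper's Proposition~\ref{leaveoneoutrowbound}) exploits instead that $A^{\{l\}}$ and $A$ share row $l$: one writes $u^{\{l\}}_{jl}=(\sigma^{\{l\}}_j)^{-1}e_l^{\T}A^{\{l\}}\vect u^{\{l\}}_j=(\sigma^{\{l\}}_j)^{-1}U_{l,\cdot}\Sigma U^{\T}\vect u^{\{l\}}_j$ and bounds directly. This is also where the factor $\kappa_i$ you dropped enters, and it is the reason the paper sets the cutoff between ``middle'' and ``trailing'' at $p=\min\{j:\sigma_{j+1}<\sigma_i/4\}$ rather than at $r$: the argument needs $\sigma^{\{l\}}_j\gtrsim\sigma_i$ in the denominator, which is not guaranteed for all $j\le r$.
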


\begin{remark} \label{stability-deterministic}
The first assumption $\sigma_i > C_0\norm{H}$ is a signal to noise assumption.

The second assumption $\delta_i > C_0\kappa_ir^{1/2}\max\{a_l, \norm{H}\norm{U}_{\infty}\}$ is a gap assumption (replacing the stronger assumption \eqref{delta1} 
from the original Davis-Kahan theorem). In many applications (including all applications in this paper), $\kappa_i =O(1)$ and $\| U\| _{\infty} = n^{-1/2 +o(1) }$, thus $\kappa_i \| H\| \| U\| _{\infty} = \| H\| n^{-1/2 +o(1) } $, improving \eqref{delta1}  by a factor of $n^{-1/2 +o(1)} $. 
The term $a_l$  measures the correlation between $H$ and $A$. This is small if $\vect x$ does not align with any non-trivial eigenvector of $A^{(l)}$. If $H$ is random then $\vect x$ is a random vector independent of $A$ and this holds trivially.

The last assumption is a stability assumption. Intuitively, we expect that 
$\tilde \sigma_i$ is close to $\sigma_i$ and $\sigma_{i+1} ^{\{l\}} $ close to $\sigma_{i+1} $, which would imply that $\tilde \sigma_i - \sigma_{i+1} ^{\{l\}} $ is close to $\sigma_i -\sigma_{i+1} \ge \delta _i$. Our stability assumption guarantees a weaker bound $|\tilde \sigma_i - \sigma_{i+1} ^{\{l\}} | \ge \delta_i/2$.
 
\end{remark}

\subsection {Main theorems: The random setting with small $K$ } 
Now we consider the random model $A+E$, where $E$ is random matrix whose entries are $K$ bounded random variable.  The result in this section holds for any $K$, but for large $K$, we have a better result (under a slightly stronger assumption) in the next section.

To ease the presentation, we introduce the following definition. 

 


\begin{definition}
\label{stable}
For a matrix $A$, we say that a singular value and its gap $(\sigma_i, \delta_i)$ is $(c, \tau, \nu)$ {\it stable} under E if the following are all true.  Let $T = \inf\{t > 0: \mathbb{P}(\norm{E} > t) \le \tau\}$.
 
\begin{enumerate}[(a)]
\item $\sigma_i > cT$.
\item $\delta_i > c(K\log^{\nu/2}n + \sigma_{i}^{-1}T^2)$. 
\item $\delta_i > c\kappa_iT\norm{U}_{\infty}$; where $\kappa_i := \sigma_1/\sigma_i$.\
\end{enumerate}

\end{definition}

The conditions 
in this definition will guarantee that $\sigma_i, \delta_i$
are stable, in that they do not vary too much after the perturbation by $E$. This gives us control on $\tilde \sigma_i, \tilde \delta_i$, which is important in the analysis. Most importantly, it guarantees that  the stability assumption in Theorem \ref{coordinatetheorem} hold; see Remark 
\ref{stability-deterministic}.

\vskip2mm 

We will assume that our signal matrix $A$ has a singular value and gap $(\sigma_i, \delta_i)$ that is $(c , \tau, \nu)$ stable under $E$ for properly chosen $c, \tau, \nu$, where $\tau$ is a parameter that goes to zero,  $\nu$ is a small constant (like 1 or 2), and $c$ is a large constant.

Let us comment on each condition. We can think of $T$ as basically $\norm{E}$, since $T$ is a stand-in for a high probability bound of $\norm{E}$, which is often strongly concentrated;  see \cite{Vu2005SpectralNO}.

\begin{remark}[Interpretation of Stability] \label{stability-condition} The conditions here run parallel with those in Theorem \ref{coordinatetheorem}.
\begin{itemize}
\item
The first condition is simply the assumption that the signal-to-noise ratio is large. This is absolutely necessary because if the intensity of the noise  is larger than that of the signal, the signal will most likely be destroyed \cite{BBP}.

\item
The second condition  essentially asks for the gap to be at least 
polylogrithmic and the product  $\delta_i \sigma_{i+1} $ to dominate $\| E\|^2$ (which is consistent with the improved $\ell_2$ bound in Theorem \ref{OVW1eigenvector}).  

\item
The third condition requires 
the gap to be at least $\kappa_i \| U\|_{\infty}\| E\| $. This is better than \eqref{delta1} by a factor $\kappa_i \|U\| _{\infty}$, which can be as small as $n^{-1/2 }$, a large improvement.  As a matter of fact, in all applications in this paper, $\kappa_i= O(1)$ and $\| U\|_{\infty} = n^{-1/2+o(1)}$.

\end{itemize}
\end{remark}{\noindent}As the  role of $E$ is consistent through the paper,  we will simply say $(\sigma_i, \delta_i)$ is  $(c, \tau, \nu)$ stable, instead of saying  that  $(\sigma_i, \delta_i)$ is  $(c, \tau, \nu)$ stable under $E$.

\vskip2mm 

\noindent Set $\epsilon_1(i) := \| E\|/ \sigma_i$, $\epsilon_2(i) := 1/ \delta_i$,  $C(r) = 1000 \times 9^{2r}$,  $c_1(r) = 2500r^{3/2}$. Big constants like 1000 are for definiteness and are rather adhoc. The function $9^{2r} $ in the definition of $C(r)$ can be replaced by a polynomial function of $r$. However, for the sake of a simpler presentation, we make no attempt to optimize  these parameters here, and are going to use them throughout the paper. 

\begin{theorem} \label{main-result} 
Let $c_0, \tau > 0$, where $c_0$ is a constant and $\tau$ may tend to zero with $n$. Set $c= 2^{11}(c_0+1)r^3$. Assume that $(\sigma_i, \delta_i)$ is $(c, \tau, 1)$ stable. Then with probability at least $1 - C(r)n^{-c_0} - 2\tau$,
\begin{equation} \label{main-result-eq} 
        \norm{\tilde{\vect u}_i - \vect{u}_i}_{\infty}  \le c\norm{U}_{\infty} (\kappa_i\norm{\tilde{\vect{u}}_i - \vect{u}_i}_2 + \epsilon_1(i) + \kappa_i\epsilon_2(i)K\sqrt{\log n})+\frac{cK\sqrt{\log n}}{\sigma_i}.
\end{equation}
\end{theorem}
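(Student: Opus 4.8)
The plan is to obtain Theorem~\ref{main-result} as a consequence of the deterministic Theorem~\ref{coordinatetheorem}, applied with $H = E$ separately at each coordinate $l \in \{1,\dots,n\}$, by checking that its three hypotheses hold simultaneously for all $l$ with the stated probability and by controlling its two ``random'' ingredients, $a_l = \norm{U^{\{l\}\mathrm{T}}\vect x}_2$ and $\langle \vect u_i^{\{l\}}, \vect x\rangle$. Once these are shown to be $O(K\sqrt{r\log n})$ and $O(K\sqrt{\log n})$ respectively, uniformly in $l$, substituting them into \eqref{coordinatebound}, using $\norm{U_{l,\cdot}}_\infty \le \norm{U}_\infty$, and taking the maximum over $l$ yields exactly \eqref{main-result-eq}, with all absolute and $r$-dependent constants (including $C_0 = 272\cdot 4 r^{3/2}$) absorbed into the generously chosen $c = 2^{11}(c_0+1)r^3$.

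The first step is the leave-one-out independence. By construction $E^{\{l\}}$ — and hence $A^{\{l\}}$, the span of $U^{\{l\}}$, and the vector $\vect u_i^{\{l\}}$ — is a function only of the entries $\xi_{ab}$ with $a,b \ne l$, whereas $\vect x = \vect x(l)$ is a function of the entries $\xi_{lm}$; these two families are disjoint, hence independent. Conditioning on $E^{\{l\}}$, each $\langle \vect u_j^{\{l\}},\vect x\rangle$, $1\le j\le r$, is a fixed linear form in independent, mean-zero, $K$-bounded variables whose coefficient vector has $\ell_2$-norm at most one, so Hoeffding's inequality gives $\abs{\langle\vect u_j^{\{l\}},\vect x\rangle} = O(K\sqrt{\log n})$ with probability $\ge 1 - n^{-c_0-2}$; a union bound over $j\le r$ and $l\le n$ then gives, simultaneously, $a_l \le \sqrt r\,\max_j\abs{\langle\vect u_j^{\{l\}},\vect x\rangle} = O(K\sqrt{r\log n})$ and $\abs{\langle\vect u_i^{\{l\}},\vect x\rangle} = O(K\sqrt{\log n})$ for every $l$, outside an event of probability $O(rn^{-c_0-1})$, which the cruder term $C(r)n^{-c_0}$ comfortably covers.

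It remains to verify, on the event $\{\norm{E}\le T\}$ (probability $\ge 1-\tau$ by the definition of $T$) together with the Hoeffding event, the three hypotheses of Theorem~\ref{coordinatetheorem} for every $l$. The first, $\sigma_i > C_0\norm{E}$, is part~(a) of $(c,\tau,1)$-stability since $c \gg C_0$; the second, $\delta_i > C_0\max\{a_l,\kappa_i\norm{E}\norm{U}_\infty\}$, follows from the bound $a_l = O(K\sqrt{r\log n})$ together with part~(b) (whose polylogarithmic term for $\nu=1$ is $cK\sqrt{\log n}$) and from part~(c), $\delta_i > c\kappa_iT\norm{U}_\infty \ge c\kappa_i\norm{E}\norm{U}_\infty$. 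The delicate one is the gap-stability hypothesis $\min\{\abs{\tilde\sigma_i - \sigma_{i+1}^{\{l\}}},\abs{\tilde\sigma_i - \sigma_{i-1}^{\{l\}}}\} > \delta_i/2$: here I would bound $\abs{\tilde\sigma_i - \sigma_i}$ and $\abs{\sigma_j^{\{l\}} - \sigma_j}$ by combining Weyl's inequality (noting $A^{\{l\}} = A + H^{\{l\}}$ with $\norm{H^{\{l\}}}\le\norm{E}\le T$, the nonzero block of $H^{\{l\}}$ being a principal submatrix of $E$) with the standard second-order perturbation bound for the top singular values of a low-rank matrix, which for $j\le r$ gives an error $O(K\sqrt{\log n} + T^2/\sigma_j)$. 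For $j = i-1$ this is $< \delta_i/4$ directly from part~(b) since $\sigma_{i-1}>\sigma_i$, so $\sigma_{i-1}^{\{l\}}$ and $\tilde\sigma_i$ stay within $\delta_i/4$ of $\sigma_{i-1}$ and $\sigma_i$, giving $\abs{\sigma_{i-1}^{\{l\}} - \tilde\sigma_i} \ge \Delta_{i-1} - \delta_i/2 \ge \delta_i/2$. For $j = i+1$ the argument must branch on the size of $\sigma_{i+1}$: when $\sigma_{i+1}$ exceeds the signal-to-noise threshold the expansion applies and $T^2/\sigma_{i+1}$ is tamed either by part~(b) (if $\sigma_{i+1}$ is comparable to $\sigma_i$) or by the largeness of $\Delta_i$ otherwise, again yielding $\sigma_{i+1}^{\{l\}}$ within $\delta_i/4$ of $\sigma_{i+1}$ and hence the $\delta_i/2$ separation; when $\sigma_{i+1}$ is below threshold one instead uses the crude bound $\sigma_{i+1}^{\{l\}} \le \sigma_{i+1}+T$ together with the facts that $\Delta_i = \sigma_i - \sigma_{i+1}$ is then large compared to $T$ (as $\sigma_i > cT$) and $\delta_i \le \Delta_i$. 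The case $i = r$ uses $\sigma_{r+1}^{\{l\}} \le \norm{H^{\{l\}}} \le T$ and $\delta_r \le \sigma_r$; throughout, conditions (a)--(c) of Definition~\ref{stable} are precisely calibrated so each error term is a small fraction of $\delta_i$.

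I expect this last step — establishing the gap-stability hypothesis uniformly over $l$ without any noise-to-gap assumption — to be the main obstacle, since it is exactly where the improvement over the classical Davis--Kahan regime is extracted and where the interplay among the three stability conditions (and the case split on $\sigma_{i+1}$ versus $\norm{E}$) is subtle; the remaining pieces are the routine Hoeffding and union-bound estimates above. If a single leave-one-out pass does not control $\vect u_i^{\{l\}}$ finely enough to close the loop, one iterates — a further leave-one-out on $A^{\{l\}}$ — which is the ``iterative leave-one-out'' method advertised in the abstract; but since Theorem~\ref{coordinatetheorem} already isolates the deterministic core, one level should suffice here.
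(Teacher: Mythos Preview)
Your proposal is correct and follows essentially the same route as the paper: define a bad event covering (i) $\norm{E}>T$, (ii) large $a_l$ via Hoeffding on each $\langle \vect u_j^{\{l\}},\vect x\rangle$, and (iii) failure of the gap-stability hypothesis, then verify the three assumptions of Theorem~\ref{coordinatetheorem} on its complement and substitute the resulting uniform bounds into \eqref{coordinatebound}. The paper packages (iii) exactly as you anticipate, invoking the singular-value perturbation estimate $|\tilde\sigma_k-\sigma_k|\lesssim r(K\sqrt{\log n}+\norm{E}^2/\tilde\sigma_k)$ from \cite{OVW1} (their Theorem~\ref{ovw-singular-K}) together with a union bound over $l$.

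One small point: your case split for the $(i+1)$-side of the gap is on the size of $\sigma_{i+1}$ relative to the threshold, whereas the paper splits on $\Delta_i\gtrless 4T$. The paper's split is a bit cleaner, since $\Delta_i>4T$ lets Weyl alone give the $\delta_i/2$ separation, while $\Delta_i\le 4T$ forces $\sigma_{i+1}\ge \tfrac{9}{10}\sigma_i$ automatically, so $T^2/\sigma_{i+1}$ is controlled directly by part~(b) without a further sub-split. Your sub-case 1b (``$\sigma_{i+1}$ above threshold but not comparable to $\sigma_i$'') lands inside the paper's $\Delta_i>4T$ regime, so the claim that $\sigma_{i+1}^{\{l\}}$ stays within $\delta_i/4$ of $\sigma_{i+1}$ is not what one actually proves there---only the weaker $|\tilde\sigma_i-\sigma_{i+1}^{\{l\}}|\ge \Delta_i-2T>\delta_i/2$ via Weyl---but the conclusion you draw is the same.
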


Theorem \ref{main-corollary} follows easily from Theorem \ref{main-result} and Theorem \ref{OVW1eigenvector}. 


\begin{remark} [Optimality] \label{promise}

Consider the bound in Theorem \ref{main-result} for the first singular vector. Let us  compare it to the desired bound \eqref{newbound}, which is $\norm{\vect{\tilde{u}}_{1} - \vect u_{1}}_{\infty} = \tilde O(
\| U\|_{\infty}\norm{\vect{\tilde{u}}_{1} - \vect u_{1}}_{2} ). $
We notice that the  bound \eqref{main-result-eq} is off by 2 terms: 
$\frac{cK\sqrt{\log n}}{\sigma_i} $ and 
$\tilde O( \| U\| _{\infty} (\epsilon_1 + \epsilon_2)). $

\vskip2mm

In many applications, $\sigma_1$ is sufficiently large and $K$ is sufficiently small  that the first term 
$\frac{cK\sqrt{\log n}}{\sigma_1} $ is negligible.  Furthermore, when applying \eqref{main-result-eq}, we typically do not know $\norm{\vect{\tilde{u}}_{1} - \vect u_{1}}_{2}$. In this case, the best we can do is to use the bound from Theorem \ref{OVW1eigenvector}, which contains both $\epsilon_1$ and $\epsilon_2$. Moreover, a new study \cite{OVW2} reveals that both $\epsilon_1 $ and $\epsilon_2$  are necessary in \eqref{OVW1bound}. Thus, technically \eqref{main-result-eq} has achieved what \eqref{newbound} promised. The case when $K$ is relatively large will be discussed in the next section. 

\vskip2mm 

Our proof also reveals that  if we consider the local estimate $|\tilde u_{il} - u_{il}|$, then we can replace $\| U\|_{\infty}$ by $\norm{U_{l, \cdot}}_{\infty}$ where 
$U_{l, \cdot}$ is the $l$th row of $U$. So for a local estimate, we only need 
local information from $U$.

\end{remark}


\subsection{Main theorems: Random setting with large $K$} 
\label{largeK}
Let us discuss the last term   $ K\sqrt {\log n}/\sigma_i $
in \eqref{main-result-eq}. We stated that in many cases, this term is negligible. It is indeed so when the entries of $E$ have a fixed distribution, which does not depend on $n$, as illustrated by the following two examples. 

\vskip2mm 

{\it \noindent  Example.} If the entries of $E$ are iid Rademacher ($\pm 1$), then $K=1$.

\vskip2mm 

{\it \noindent Example.} If the entries of $E$ are iid $N(0,1)$, then 
we can use the following simple truncation argument. Notice that with probability $1- o(n^{-100})$, a standard gaussian variable is bounded by $20 \sqrt {\log n} $ (with room to spare). Thus, we can replace $N(0,1)$ by 
its truncation at $20 \sqrt {\log  n}$, and set $K =20 \sqrt {\log n}$ and pay an extra term $n^{-100}$ in the probability bound. 
One can apply this trick to any distribution with a light tail. 

\vskip2mm

However, for some important applications, the entries of  $E$ 
can be reasonably large. 
A typical example here is the matrix completion problem, a fundamental problem in data science (see Section \ref{applications} for more details). 
In this case, $E$ is not really noise in the traditional sense, but a random matrix created artificially
from the setting of the problem. In this setting, the magnitude of $K$ will make the error term in question become too big. 

\vskip2mm

{\it \noindent Example: Matrix completion.} Let $A$ be matrix with rank $r$ and 
non-zero entries of order $O(1)$.  Let $B$ be a matrix obtained by keeping each entry of $A$ with probability $p$, independently. 
We call these entries {\it observed}. For a non-observed entry, write 0. 
The task is  to recreate $A$ from $B$.

If we consider $\tilde A = \frac{1}{p} B$, then $\tilde A$ is a 
random matrix with expectation equalling $A$, thanks to the normalization. Thus, we can write  $\tilde A= A +E$, where $E$ is a random matrix with independent entries with mean 0.  The entries $\xi_{ij}$  of $E$ have different distributions. For the $ij$ entry, $\xi_{ij}= a_{ij}$ with probability $1-p$, and $(1-1/p) a_{ij}$ with probability $p$. Thus $K$ is roughly $\max_{ij} |a_{ij}|/p$ which is of order $1/p$. In the matrix completion problem, one often wants to make $p$ as small as possible, typically $n^{-1 +\epsilon}$ or even $\log n/n$. 
Thus, $K$ can be close to $n$  and the error in question becomes too big.

\vskip2mm

In this section, we develop new bounds to overcome this 
deficiency.  We will need the notion of strong stability,  which is a refinement of the notion of stability introduced earlier. 

\begin{definition} \label{strong-stable} We say that $(\sigma_i, \delta_i)$  is $(c,\tau, \nu)$ {\it strongly stable} under $E$ if in addition to being $(c, \tau, \nu)$ stable, $\sigma_i$ satisfies  $\sigma_i > c\sqrt{Kn}\log^{\nu + 0.01}n$.
\end{definition}


\begin{assumption}
\label{assumption-K}
We assume for the rest of this  section that $\E[\xi_{ij}^2] \leq K \leq n$.\end{assumption}

This assumption is satisfied by random variables which take a large value $K$ with a small probability of order $1/K$. This is exactly the situation with the matrix completion problem discussed above.

\begin{remark} 
For a random matrix of size $n$, whose entries 
have zero mean and variance $K$, the spectral 
norm is typically $\Omega (\sqrt {Kn})$. Thus, in this case, the last (new) condition  in  Definition \ref{strong-stable} is only marginally stronger than the signal to noise assumption 
$\sigma_i \gg \| E\|.$
\end{remark}

\begin{theorem}
\label{refined-main}
Let $c_0, \tau > 0$, with $c_0$ constant and $\tau$ potentially tending to $0$ with n. Set $c= 2^{17}(c_0+1) r^3$. Assume that
 $(\sigma_i, \delta_i)$ is $(c, \tau, 2)$ strongly stable. Then with probability at least $1 - C(r)n^{-c_0} - \tau\log n$, 

\begin{equation} \label{refined-result-1} 
        \norm{\tilde{\vect u}_i - \vect{u}_i}_{\infty}  \le c\norm{U}_{\infty} (\kappa_i\norm{\tilde{\vect{u}}_i - \vect{u}_i}_2 + \epsilon_1(i) + \kappa_i\epsilon_2(i)K\sqrt{\log n})+\frac{c\sqrt{Kn}\kappa_i\norm{U}_{\infty}\log n}{\sigma_i}.
\end{equation}
\end{theorem}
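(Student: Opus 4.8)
The plan is to reopen the proof of Theorem~\ref{main-result} and improve the single lossy step in it: there the stand-alone error term is bounded by $cK\sqrt{\log n}/\sigma_i$, and everything else is already of the right shape. Running the deterministic Theorem~\ref{coordinatetheorem} with $H=E$ gives, for each coordinate $l$,
\[
\abs{\tilde u_{il}-u_{il}}\le C_0\norm{U_{l,\cdot}}_\infty\big[\kappa_i\norm{\tilde{\vect u}_i-\vect u_i}_2+\epsilon_1(i)+a_l\kappa_i\epsilon_2(i)\big]+256r\,\frac{\abs{\langle\vect u^{\{l\}}_i,\vect x\rangle}}{\sigma_i},
\]
and I would treat the bracket exactly as in Theorem~\ref{main-result} (in particular $a_l\le\sqrt r\max_s\abs{\langle\vect u^{\{l\}}_s,\vect x\rangle}\lesssim\sqrt r\,K\sqrt{\log n}$ by Hoeffding, which is why the gap hypothesis now carries $\nu=2$, i.e.\ $\log n$ rather than $\sqrt{\log n}$). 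The whole gain comes from the last term. Since $\vect u^{\{l\}}_i$ is a function of the entries of $E$ outside row and column $l$, it is independent of $\vect x$, so $\langle\vect u^{\{l\}}_i,\vect x\rangle$ is a sum of independent mean-zero variables with total variance $\sum_m (u^{\{l\}}_{im})^2\E[\xi_{lm}^2]\le K$ (using $\E[\xi_{ij}^2]\le K$) and entrywise bound $\norm{\vect u^{\{l\}}_i}_\infty K$, so Bernstein's inequality gives, off an event of probability $n^{-c_0-1}$,
\[
\abs{\langle\vect u^{\{l\}}_i,\vect x\rangle}\lesssim\sqrt{K\log n}+\norm{\vect u^{\{l\}}_i}_\infty K\log n .
\]
Because $\norm{U}_\infty\ge n^{-1/2}$ and, by Assumption~\ref{assumption-K}, $K\le\sqrt{Kn}$, the first term is already at most $\sqrt{Kn}\,\norm{U}_\infty\log n$; hence it suffices to establish the a~priori bound $\norm{\vect u^{\{l\}}_i}_\infty\lesssim\kappa_i\norm{U}_\infty$, after which dividing the second term by $\sigma_i$ produces exactly the new error in \eqref{refined-result-1}.

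The a~priori bound is obtained by an iterative leave-one-out bootstrap. For $S\subseteq\{1,\dots,n\}$ let $E^{\{S\}}$ be $E$ with all rows and columns indexed by $S$ zeroed, let $\vect u^{\{S\}}_i$ be the $i$th singular vector of $A+E^{\{S\}}$ (so $\vect u^{\{\emptyset\}}_i=\tilde{\vect u}_i$), and set $\Lambda_j:=\max_{\abs S=j}\norm{\vect u^{\{S\}}_i}_\infty$. Applying Theorem~\ref{coordinatetheorem} with $H=E^{\{S\}}$ to estimate the $m$th coordinate of $\vect u^{\{S\}}_i$ — leaving out the one further index $m$ — and combining with the Bernstein bound above at level $\abs S+1$ yields a recursion
\[
\Lambda_j\le G+\theta\,\Lambda_{j+1},\qquad G\lesssim\kappa_i\norm{U}_\infty,\qquad \theta=C'\,\frac{K\log^2 n}{\sigma_i}.
\]
Here the three hypotheses of Theorem~\ref{coordinatetheorem} hold for every $S$ with $\abs S\le\log n$: $\norm{E^{\{S\}}}\le\norm{E}$, and zeroing at most $\log n$ rows moves each $\sigma_k$ by only $O(\log n\cdot\norm{U}_\infty\sqrt K\,\mathrm{polylog}(n))=o(\delta_i)$ under the stability conditions, so the refined eigenvalue estimates from the proof of Theorem~\ref{main-result} still supply the stability hypothesis. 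The extra power of $\log n$ inside $\theta$ (relative to the single-level estimate) is the cost of a union bound over the $\le n^{\log n}$ subsets $S$, and it is absorbed precisely by the $\log^{\nu+0.01}n=\log^{2.01}n$ in the definition of strong stability: since $\sigma_i>c\sqrt{Kn}\log^{2.01}n$ and $K\le n$, we get $\theta\le C'/(c\log^{0.01}n)<1/2$ for $n$ large. Unwinding the recursion $m:=\lceil\log_2 n\rceil$ times and using the trivial bound $\Lambda_m\le1$ gives $\Lambda_0,\Lambda_1\le 2G+\theta^m\lesssim\kappa_i\norm{U}_\infty$, as required.

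It then remains to assemble: take the maximum over $l$ of the coordinatewise estimate, bound its bracket by the right-hand side of \eqref{OVW1bound} as in Theorem~\ref{main-result}, and bound the last term by $256r(\sqrt{K\log n}+\Lambda_1 K\log n)/\sigma_i\lesssim c\sqrt{Kn}\,\kappa_i\norm{U}_\infty\log n/\sigma_i$ using $\Lambda_1\lesssim\kappa_i\norm{U}_\infty$ and $K\le n$; the probability loss is $C(r)n^{-c_0}$ from the Bernstein and Weyl events plus $\tau\log n$ from the $O(\log n)$ invocations of high-probability spectral bounds along the bootstrap. The main obstacle is this bootstrap step: one must run the deterministic Theorem~\ref{coordinatetheorem} on an exponentially large family of nested leave-one-out matrices, verify its three hypotheses uniformly over that family (which is exactly what the strong-stability condition and $K\le n$ are calibrated for), and keep the logarithmic factors generated by the attendant union bound inside the $\log^{2.01}n$ slack rather than letting them degrade the stated bound.
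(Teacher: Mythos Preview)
Your proposal is correct and follows essentially the same approach as the paper: the iterative leave-one-out bootstrap to delocalize the $\vect u^{\{S\}}_i$ (your $\Lambda_j$ recursion with contraction $\theta$), combined with Bernstein's inequality in place of Hoeffding to control $\langle\vect u^{\{l\}}_i,\vect x\rangle$, is exactly the content of the Delocalization Lemma~\ref{delocalization-result-1} and the argument of Sections~\ref{section: delocalizationproof1}--\ref{refined-proof}. One small correction: the sentence ``zeroing at most $\log n$ rows moves each $\sigma_k$ by only $O(\log n\cdot\norm{U}_\infty\sqrt K\,\mathrm{polylog}(n))$'' is not right as stated---the eigenvalue stability for each $A+E^{\{S\}}$ that the third hypothesis of Theorem~\ref{coordinatetheorem} requires comes from applying the refined singular-value bound Theorem~\ref{ovw-singular-K} to each $E^{\{S\}}$ (as you correctly indicate in the next clause), not from any deterministic Weyl-type statement about zeroing rows.
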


\begin{remark} Results in random matrix theory show that  $\norm{E}$ often concentrates  around $\sim \sqrt{Kn}$ \cite{Vu2005SpectralNO}. In this case, the last term on the RHS of \eqref{refined-result-1} is basically $\kappa_i\norm{U}_{\infty}\epsilon_1(i)$. Thus, the bound essentially  becomes \begin{equation}
        \norm{\tilde{\vect u}_i - \vect{u}_i}_{\infty}  = \tilde{O}\Big(\kappa_i\norm{U}_{\infty} (\norm{\tilde{\vect{u}}_i - \vect{u}_i}_2 + \epsilon_1(i) + K\epsilon_2(i))\Big).
\end{equation} By the discussion in Remark \ref{promise}, the $\epsilon_1(i)$ and $\epsilon_2(i)$ terms are necessary to bound $\norm{\tilde{\vect u}_i - \vect u_i}_2$. The bound is thus \begin{equation}
        \norm{\tilde{\vect u}_i - \vect{u}_i}_{\infty}  = \tilde{O}\Big( \kappa_i\norm{U}_{\infty}\norm{\tilde{\vect{u}}_i - \vect{u}_i}_2\Big),
\end{equation} which is only off from \eqref{newbound} by a factor of $\kappa_i$. Thus, we basically achieve \eqref{newbound} even if $K$ is large.
\end{remark}


In a setting where $\| U\| _{\infty}= n^{-1/2 +o(1)}$ and  the condition number $\kappa_i= O(1)$, this new result essentially reduces $K$ to $\sqrt K$. The bound on $\| U\|_{\infty}$ (incoherence bound) is typical and necessary  in the analysis of the matrix completion problem; see for instance \cite{candesnoise,candestaomatrixcompletion,recht,kmo}.

The key step in our analysis, the so-called {\it delocalization} lemma below, is new and could be of independent interest.  

\begin{lemma}[Delocalization Lemma]
\label{delocalization-result-1}
Let $c_0, \tau > 0$, with $c_0$ constant. Set $c= 2^{11}(c_0+1)r^3$.  If $(\sigma_i, \delta_i)$ is $(c, \tau, 2)$ strongly stable, then 

\begin{equation}
\norm{\tilde{\vect u}_i}_{\infty} \leq c_1\kappa_i\norm{U}_{\infty},
\end{equation}
with probability at least $1 - C(r)n^{-c_0} - \tau\log n$.
\end{lemma}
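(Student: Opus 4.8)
\emph{Proof proposal.} The plan is to reduce the bound to a self-referential (bootstrap) inequality for one coordinate and then solve it by iterating a leave-one-out comparison. Since $A,E$ are symmetric, so is $\tilde A=A+E$, and $\tilde{\vect u}_i$ is, up to sign, an eigenvector of $\tilde A$ with eigenvalue $\tilde\lambda_i$, $|\tilde\lambda_i|=\tilde\sigma_i$. The eigen-equation gives, for each $l$, the coordinate identity $\tilde u_{il}=\tilde\lambda_i^{-1}\big[(A\tilde{\vect u}_i)_l+\langle E_{l,\cdot},\tilde{\vect u}_i\rangle\big]$, $E_{l,\cdot}$ being the $l$-th row of $E$. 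On the event $\{\norm E\le T\}$ (probability $\ge1-\tau$), stability~(a) gives $|\tilde\lambda_i|\ge\sigma_i-T\ge\sigma_i/2$, and since $\rank A=r$, Cauchy--Schwarz gives $|(A\tilde{\vect u}_i)_l|\le\norm{U_{l,\cdot}}_2\,\sigma_1\le\sqrt r\,\sigma_1\norm U_\infty$; hence the ``signal'' part of $\tilde u_{il}$ is $\le 2\sqrt r\,\kappa_i\norm U_\infty$ for all $l$, and it remains to bound $\sigma_i^{-1}\max_l|\langle E_{l,\cdot},\tilde{\vect u}_i\rangle|$.

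Since $\tilde{\vect u}_i$ depends on $E_{l,\cdot}$, I would bring in the leave-one-out matrix $A^{\{l\}}=A+E^{\{l\}}$, with $i$-th singular vector $\tilde{\vect u}_i^{\{l\}}$ and corresponding eigenvalue $\lambda_i^{\{l\}}$, which is measurable in $\{A,E^{\{l\}}\}$ and so independent of $E_{l,\cdot}$. Because $\norm{E^{\{l\}}}\le\norm E$, the noise $E^{\{l\}}$ is again strongly stable for $(\sigma_i,\delta_i)$ (with a slightly worse constant), so $\lambda_i^{\{l\}}$ is separated from the rest of the spectrum of $A^{\{l\}}$ by $\gtrsim\delta_i$, and the reduced resolvent $R^{\{l\}}:=(A^{\{l\}}-\lambda_i^{\{l\}})^{\dagger}$ on $(\tilde{\vect u}_i^{\{l\}})^{\perp}$ obeys $\norm{R^{\{l\}}}\lesssim\delta_i^{-1}$, $|\tr R^{\{l\}}+n/\lambda_i^{\{l\}}|=O(r/\delta_i)$, $\norm{R^{\{l\}}}_F^2\lesssim n/\sigma_i^2+r/\delta_i^2$. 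Split $\langle E_{l,\cdot},\tilde{\vect u}_i\rangle=\langle E_{l,\cdot},\tilde{\vect u}_i^{\{l\}}\rangle+\langle E_{l,\cdot},\tilde{\vect u}_i-\tilde{\vect u}_i^{\{l\}}\rangle$. The first summand pairs the random vector $E_{l,\cdot}$ (independent mean-zero, $K$-bounded entries, $\E[\xi_{lm}^2]\le K$) with the independent unit vector $\tilde{\vect u}_i^{\{l\}}$, so Bernstein gives $|\langle E_{l,\cdot},\tilde{\vect u}_i^{\{l\}}\rangle|\lesssim\sqrt{K\log n}+K(\log n)\norm{\tilde{\vect u}_i^{\{l\}}}_\infty$ off probability $n^{-c_0-1}$; dividing by $\sigma_i$ and using the strong-stability bound $\sigma_i>c\sqrt{Kn}\log^{2.01}n$ together with $K\le n$, the first piece is $o(\norm U_\infty)$ and the second is $\le\tfrac{1}{4}\norm{\tilde{\vect u}_i^{\{l\}}}_\infty$.

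The correction $\langle E_{l,\cdot},\tilde{\vect u}_i-\tilde{\vect u}_i^{\{l\}}\rangle$ is the crux, and where the \emph{iterative} leave-one-out is needed. With $\vect x=\vect x(l)$ (the $l$-th row of $E$, diagonal entry halved), $E-E^{\{l\}}=e_l\vect x^{\T}+\vect x e_l^{\T}$, and from $(A^{\{l\}}-\tilde\lambda_i)\tilde{\vect u}_i=-(E-E^{\{l\}})\tilde{\vect u}_i$ one gets, after projecting off $\tilde{\vect u}_i^{\{l\}}$ and inverting, $\tilde{\vect u}_i-\langle\tilde{\vect u}_i,\tilde{\vect u}_i^{\{l\}}\rangle\tilde{\vect u}_i^{\{l\}}=-R_{\tilde\lambda_i}^{\{l\}}\big(e_l\langle\vect x,\tilde{\vect u}_i\rangle+\vect x\,\tilde u_{il}\big)$. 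I would then pair with $E_{l,\cdot}$ and, by successively applying the leave-one-out trick to the resolvent (trading $R_{\tilde\lambda_i}^{\{l\}}$ for the $E_{l,\cdot}$-independent $R^{\{l\}}$) and to the scalars $\tilde u_{il},\langle\vect x,\tilde{\vect u}_i\rangle$ (trading them for $\tilde u_{il}^{\{l\}},\langle\vect x,\tilde{\vect u}_i^{\{l\}}\rangle$), reduce the correction---up to controlled secondary errors---to quadratic and bilinear forms in $\vect x$ with $E_{l,\cdot}$-independent kernels. The dominant one, $\vect x^{\T}R^{\{l\}}\vect x$, is handled by Hanson--Wright: it concentrates near $\sum_m\E[\xi_{lm}^2]R^{\{l\}}_{mm}\le K\tr R^{\{l\}}=O(Kn/\sigma_i)$ with fluctuation $\lesssim K\norm{R^{\{l\}}}_F\sqrt{\log n}+K\norm{R^{\{l\}}}\log n$, and once multiplied by $|\tilde u_{il}^{\{l\}}|$ and divided by $\tilde\lambda_i$, the strong-stability bounds $\sigma_i^2\gtrsim Kn\log^{4}n$, $\sigma_i\delta_i\gtrsim T^2$ and $\delta_i\gtrsim K\log n$ make it $\le\tfrac{1}{4}\norm{\tilde{\vect u}_i^{\{l\}}}_\infty$; the remaining bilinear and secondary terms are likewise $o(\kappa_i\norm U_\infty)$ or small multiples of $\norm{\tilde{\vect u}_i^{\{l\}}}_\infty$ by further Bernstein bounds. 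Taking a union bound over the $n$ choices of $l$ (the factor $n$ is absorbed because $c$ is proportional to $c_0$), we obtain simultaneously for all $l$ that $|\tilde u_{il}|\le 2\sqrt r\,\kappa_i\norm U_\infty+o(\kappa_i\norm U_\infty)+\tfrac{1}{2}\norm{\tilde{\vect u}_i^{\{l\}}}_\infty$.

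Finally I close the bootstrap. The last inequality holds with the same signal $A$ but with $E$ replaced by the (still strongly stable) noise $E^{\{l\}}$, bounding $\norm{\tilde{\vect u}_i^{\{l\}}}_\infty$ through $\max_{l'}\norm{\tilde{\vect u}_i^{\{l,l'\}}}_\infty$ (the $i$-th singular vector of $A+E^{\{l,l'\}}$), and so on; iterating $O(\log n)$ levels, then truncating with the trivial bound $\norm{\tilde{\vect u}_i^{S}}_\infty\le 1$, the geometric series $\sum_{t}2^{-t}$ yields $\norm{\tilde{\vect u}_i}_\infty\le 4\sqrt r\,\kappa_i\norm U_\infty+o(\kappa_i\norm U_\infty)\le c_1\kappa_i\norm U_\infty$; the $O(\log n)$ leave-one-out levels give the $\tau\log n$ term and the compounded $n^{-c_0-1}$ bounds give the $C(r)n^{-c_0}$ term in the failure probability. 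The hard part will be the third step: making the iterated resolvent expansion rigorous and checking that all Hanson--Wright, Bernstein, and---above all---\emph{secondary} errors (from replacing $R_{\tilde\lambda_i}^{\{l\}}$, $\tilde u_{il}$, $\langle\vect x,\tilde{\vect u}_i\rangle$ by $E_{l,\cdot}$-independent surrogates) are genuinely absorbed by the polylogarithmic margin built into \emph{strong} stability, while keeping the self-referential contraction factor strictly below $1$ so that the bootstrap converges. The other steps are linear algebra and standard concentration.
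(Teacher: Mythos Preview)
Your overall strategy---iterated leave-one-out, starting from the trivial bound $\norm{\tilde{\vect u}_i^{S}}_\infty\le 1$ at the deepest level and contracting back to level $0$---is the same as the paper's. The technical route differs: where you propose a resolvent expansion and Hanson--Wright for the correction $\langle E_{l,\cdot},\tilde{\vect u}_i-\tilde{\vect u}_i^{\{l\}}\rangle$, the paper instead invokes its deterministic Theorem~\ref{coordinatetheorem} (which has already absorbed the rank-$2$ perturbation $E-E^{\{l\}}$ into a self-referential bound) and needs only a single Bernstein bound on $\langle\vect u_i^{\beta},\vect x(\alpha,l)\rangle$ per step. Your route is plausible but has more moving parts.

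There is, however, a genuine gap in the probability accounting that you have not flagged. At level $j$ of the iteration you must control $\max_{|\alpha|=j}\norm{\vect u_i^{\alpha}}_\infty$, a maximum over $\sim n^{j}$ leave-$j$-out configurations. You propose Bernstein bounds with failure probability $n^{-c_0-1}$ and then iterate $O(\log n)$ levels; but the union bound at level $j$ then costs $n^{j}\cdot n^{-c_0-1}=n^{j-c_0-1}$, which is useless as soon as $j>c_0$, and the compounded failure over $j\le O(\log n)$ diverges badly. The phrase ``the factor $n$ is absorbed because $c$ is proportional to $c_0$'' only handles level~$1$.

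The paper repairs this with two coupled choices that exploit the $\nu=2$ in strong stability. First, it takes the per-event concentration at confidence $\exp(-c_2\log^2 n)$ rather than $n^{-c_0-1}$: Bernstein with $t\sim\sqrt{Kn}\,f_{j+1}\log^2 n$ gives exponent $\Theta(\log^2 n)$, and the assumptions $\sigma_i>c\sqrt{Kn}\log^{2.01}n$ and $\delta_i>cK\log n$ supply exactly the polylog slack to absorb this larger $t$. Second, this same slack makes the contraction factor $256r\,t/\sigma_i\le 1/\log^{0.01}n$ (not $1/2$), so only $j^{*}=O(\log n/\log\log n)=o(\log n)$ levels are needed, and $n^{j^{*}}\exp(-c_2\log^2 n)$ is summable to $n^{-c_0}$. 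If you keep your resolvent/Hanson--Wright route, you will have to make the same two adjustments: push every concentration bound to confidence $\exp(-\Theta(\log^2 n))$ (which costs an extra $\log n$ in each deviation, so check all your ``$o(\kappa_i\norm{U}_\infty)$'' claims still hold), and track the true contraction to reduce the depth to $o(\log n)$. The ``secondary errors'' you worry about in your last paragraph are a separate issue and are in fact easier; the union-bound budget is the step that currently fails.
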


\subsection{The rectangular case} 
The results are easy to generalize to the rectangular case, where $A \in \R^{m \times n}$, by a standard symmetrization trick. Let $N = m+n$.  Define the $N \times N$ matrix $S(A) := 
\begin{pmatrix}
0& A \\
A^{T} & 0 
\end{pmatrix}.
$ Notice that $S(A)$ is symmetric, and it is easy to show that if $A^{T}\vect u = \sigma \vect v$ and $A\vect v = \sigma \vect u$, then $S(A)(\vect u, -\vect v^{T}) = \sigma (\vect u, -\vect v^{T})$. Thus, we can apply the main result to $S(A)$ to obtain $\ell_{\infty}$ perturbation bounds for $\vect u_i$ and $\vect v_i$, the $i$th left and right singular vectors of $A$. For this theorem, we will assume $A$ has singular value decomposition $A = U\Sigma V$. We have the following rectangular analogue of Theorem \ref{main-result}.

\begin{theorem}
\label{rectangular1}
Let $m_{2}(i) = \max\{\norm{\tilde{\vect u}_i - \vect u_i }_{2}, \norm{\tilde{\vect v}_i - \vect v_i}_{2}\}$, and let $W = [U, V]$, the concatenation of $U$ and $V$.  Let $c_0, \tau > 0$, with $c_0$ constant and $\tau$ potentially tending to $0$ with $n$. Set $c = 2^{12}(c_0+1)r^3$. If $(\sigma_i, \delta_i)$ is $(c, \tau, 1)$ stable (with $\norm{W}_{\infty}$ instead of $\norm{U}_{\infty}$), we have with probability at least $1 - C(r)N^{-c_0} - 2\tau$,

\begin{equation} \label{rectangular} 
\norm{\tilde{\vect u}_i - \vect u_i}_{\infty} \le c
\norm{U}_{\infty}[\kappa_i m_2(i) + \epsilon_1(i) + \kappa_i\epsilon_2(i)K\sqrt{\log N}] + \frac{cK\sqrt{\log N}}{\sigma_i}
\end{equation}

The same holds for $\tilde{\vect v}_i - \vect{v}_i$, with $\norm{U}_{\infty}$ replaced with $\norm{V}_{\infty}$.  
\end{theorem}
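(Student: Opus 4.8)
The plan is to deduce the statement from Theorem \ref{main-result} applied to the symmetric dilation $S(\cdot)$. Writing $\tilde A = A + E$, we have $S(\tilde A) = S(A) + S(E)$, and $S(E)$ is a symmetric $N\times N$ matrix whose strictly upper triangular entries are exactly the $mn$ independent, $K$-bounded, mean-zero entries of $E$ together with deterministic zeros on the two diagonal blocks; thus $S(E)$ is of the type required in Theorem \ref{main-result} with the same $K$, and $\norm{S(E)} = \norm{E}$ because the nonzero eigenvalues of $S(E)$ are $\pm$ the singular values of $E$. As recalled before the statement, $S(A)$ has nonzero eigenvalues $\pm\sigma_1,\dots,\pm\sigma_r$ with unit eigenvectors $\tfrac{1}{\sqrt2}(\vect u_i;\pm\vect v_i)$; hence $S(A)$ has rank $2r$, the eigenvalue $\sigma_i$ keeps the gap $\delta_i$ and condition number $\kappa_i=\sigma_1/\sigma_i$, and the $N\times 2r$ leading-eigenvector matrix $W'$ of $S(A)$ has $\ell$-th row of $\ell_\infty$-norm $\tfrac{1}{\sqrt2}\norm{U_{\ell,\cdot}}_\infty$ for $\ell\le m$ and $\tfrac{1}{\sqrt2}\norm{V_{\ell-m,\cdot}}_\infty$ for $\ell>m$, so $\norm{W'}_\infty=\tfrac{1}{\sqrt2}\norm{W}_\infty$.

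The first step would be to check that $(\sigma_i,\delta_i)$, viewed as an eigenvalue/gap pair of $S(A)$, is stable under $S(E)$: since $\norm{S(E)}=\norm{E}$ the parameter $T$ is unchanged, and since $\sigma_i,\delta_i,\kappa_i$ are unchanged and $\norm{W'}_\infty\le\norm{W}_\infty$, conditions (a)--(c) of Definition \ref{stable} (with $\nu=1$, dimension $N$, incoherence $\norm{W'}_\infty$) follow from the assumed $(c,\tau,1)$-stability of $(\sigma_i,\delta_i)$ with $\norm{W}_\infty$ after enlarging the absolute constant. I would then apply Theorem \ref{main-result} to $S(A)=S(\tilde A)-S(E)$ — replacing $r$ by $2r$ throughout its constants, which is absorbed into $c=2^{12}(c_0+1)r^3$ and into $C(r)$ — in the row-local form recorded in Remark \ref{promise} and Theorem \ref{coordinatetheorem}, obtaining, on an event of probability at least $1-C(r)N^{-c_0}-2\tau$,
\[
\abs{\tilde w_{i\ell}-w_{i\ell}} \le c'\norm{W'_{\ell,\cdot}}_\infty\big(\kappa_i\norm{\tilde{\vect w}_i-\vect w_i}_2 + \epsilon_1(i) + \kappa_i\epsilon_2(i)K\sqrt{\log N}\big) + \frac{c'K\sqrt{\log N}}{\sigma_i}
\]
for every $\ell$, where $\vect w_i=\tfrac{1}{\sqrt2}(\vect u_i;\vect v_i)$, $\tilde{\vect w}_i$ is the $i$-th eigenvector of $S(\tilde A)$, and $\epsilon_1(i)=\norm{S(E)}/\sigma_i=\norm{E}/\sigma_i$, $\epsilon_2(i)=1/\delta_i$ coincide with the quantities attached to $A$.

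Finally I would translate back: since $S(\tilde A)$ has the same block form, its $i$-th eigenvector is $\tilde{\vect w}_i=\pm\tfrac{1}{\sqrt2}(\tilde{\vect u}_i;\tilde{\vect v}_i)$ for the left/right singular vectors $\tilde{\vect u}_i,\tilde{\vect v}_i$ of $\tilde A$, and under the stability hypotheses the $\ell_2$ theory behind Theorem \ref{main-result} makes $\norm{\tilde{\vect w}_i-\vect w_i}_2$ small, hence $\norm{\tilde{\vect u}_i-\vect u_i}_2$ and $\norm{\tilde{\vect v}_i-\vect v_i}_2$ small, so one sign choice is consistent with the $\le\pi/2$-angle normalization of all three pairs simultaneously. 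For $\ell\le m$ one has $\abs{\tilde w_{i\ell}-w_{i\ell}}=\tfrac{1}{\sqrt2}\abs{\tilde u_{i\ell}-u_{i\ell}}$, $\norm{W'_{\ell,\cdot}}_\infty=\tfrac{1}{\sqrt2}\norm{U_{\ell,\cdot}}_\infty$, and $\norm{\tilde{\vect w}_i-\vect w_i}_2=\tfrac{1}{\sqrt2}\sqrt{\norm{\tilde{\vect u}_i-\vect u_i}_2^2+\norm{\tilde{\vect v}_i-\vect v_i}_2^2}\le m_2(i)$; substituting, the $\sqrt2$'s cancel or are absorbed into $c$, and taking the maximum over $\ell\le m$ (with $\norm{U_{\ell,\cdot}}_\infty\le\norm{U}_\infty$) gives \eqref{rectangular}, while restricting to $\ell>m$ gives the bound for $\tilde{\vect v}_i-\vect v_i$ with $\norm{V}_\infty$. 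The main thing requiring care, rather than mechanical manipulation, is the upgrade of stability from $A$ (with $\norm{W}_\infty$) to the dilation $S(A)$ — where the rank doubles and $n$ becomes $N$; everything else is routine bookkeeping of the $\sqrt2$-factors and the block structure, and the adhoc constants in $c$ and $C(r)$ are taken with enough slack to absorb the $r\mapsto 2r$ substitution.
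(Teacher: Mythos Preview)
Your proposal is correct and follows exactly the approach the paper intends: the paper does not give a standalone proof of Theorem~\ref{rectangular1} but simply says to apply Theorem~\ref{main-result} to the symmetrized matrix $S(A)$, and you have correctly unpacked this. Your observation that one must use the row-local form from Remark~\ref{promise} (so that for $\ell\le m$ the factor is $\norm{U_{\ell,\cdot}}_\infty$ rather than $\norm{W'}_\infty$) is the one non-mechanical point needed to land on $\norm{U}_\infty$ instead of $\norm{W}_\infty$ in the final bound, and you handle it properly; the remaining details (rank $2r$, unchanged $T$, $\sqrt 2$-factors, sign normalization) are bookkeeping, as you say.
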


Here is the analogue for Theorem \ref{refined-main}.

\begin{theorem}
\label{rectangular2}
Let $c_0, \tau > 0$, with $c_0$ constant and $\tau$ potentially tending to $0$ with $n$. Set $c = 2^{18}(c_0+1)r^3$. If $(\sigma_i, \delta_i)$ is $(c, \tau, 2)$ strongly stable (replacing $n$ in the definition of stable pair with $N$, and $\norm{U}_{\infty}$ with $\norm{W}_{\infty}$), then with probability at least $1 - C(r)N^{-c_0} - \tau\log N$, 

\begin{equation} \label{main1} 
\norm{\tilde{\vect u}_i - \vect u_i}_{\infty} \le c
\norm{U}_{\infty}[\kappa_im_2(i) + \epsilon_1(i) + \kappa_i\epsilon_2(i)K\sqrt{\log N}] + \frac{c\sqrt{KN}\kappa_i\norm{W}_{\infty}\log N}{\sigma_i}.
\end{equation}

The same holds for $\tilde{\vect v}_i - \vect{v}_i$, with $\norm{U}_{\infty}$ replaced with $\norm{V}_{\infty}$.  
\end{theorem}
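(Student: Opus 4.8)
The plan is to obtain Theorem~\ref{rectangular2} from Theorem~\ref{refined-main} and the Delocalization Lemma~\ref{delocalization-result-1} by applying them to the symmetrized matrix $S(A)$ with noise $S(E)$, using the local ($l$-by-$l$) form of those results. Writing $\tilde A=A+E$ one has $S(\tilde A)=S(A)+S(E)$, and $S(E)$ is symmetric of size $N=m+n$ whose strict upper triangle consists of deterministic zeros (in the two diagonal blocks) together with the independent variables $\xi_{ij}$ (in the off-diagonal block); hence $S(E)$ again has independent, $K$-bounded, mean-zero upper-triangular entries, and $\E[(\text{entry})^2]\le K\le n\le N$, so Assumption~\ref{assumption-K} holds with $N$ in place of $n$. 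First I would record the ``dictionary'': $\norm{S(E)}=\norm{E}$; the nonzero eigenvalues of $S(A)$ are the simple values $\pm\sigma_1,\dots,\pm\sigma_r$ with unit eigenvectors $\tfrac1{\sqrt2}(\vect u_j,\pm\vect v_j)$, while $0$ has multiplicity $N-2r$, so $S(A)$ has rank $2r$, its $i$th leading singular vector (for $i\le r$) is $\vect w_i:=\tfrac1{\sqrt2}(\vect u_i,\vect v_i)$, the matrix of its $2r$ leading singular vectors has $l$th row $\tfrac1{\sqrt2}U_{l,\cdot}$ for $l\le m$ and $\tfrac1{\sqrt2}V_{(l-m),\cdot}$ for $l>m$ (so its incoherence is $\tfrac1{\sqrt2}\norm{W}_\infty$), the gap of $\sigma_i$ inside the spectrum of $S(A)$ is again $\delta_i$, the condition number is again $\kappa_i$, and the quantity $T$ of Definition~\ref{stable} is unchanged. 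The $i$th leading singular vector of $S(\tilde A)$ is $\tilde{\vect w}_i=\tfrac1{\sqrt2}(\tilde{\vect u}_i,\tilde{\vect v}_i)$, where the sign of the pair $(\vect u_i,\vect v_i)$ is chosen so that $\langle\vect u_i,\tilde{\vect u}_i\rangle+\langle\vect v_i,\tilde{\vect v}_i\rangle\ge0$; under the hypotheses the perturbation is small enough that this agrees with the individual sign conventions for $\vect u_i$ and $\vect v_i$.

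With this dictionary in hand, the hypothesis that $(\sigma_i,\delta_i)$ is $(c,\tau,2)$ strongly stable for $A$ under $E$ after replacing $n$ by $N$ and $\norm{U}_\infty$ by $\norm{W}_\infty$ is exactly the statement that $(\sigma_i,\delta_i)$ is $(c,\tau,2)$ strongly stable for $S(A)$ under $S(E)$ (the extra factor $\tfrac1{\sqrt2}$ in the incoherence only helps), so I would run the proof of Theorem~\ref{refined-main} on $S(A)+S(E)$. Two refinements of that proof are used. First, as pointed out in Remark~\ref{promise}, the argument actually delivers a per-coordinate estimate in which $\norm{U}_\infty$ is replaced by the $\ell_\infty$ norm of the relevant \emph{row} of the leading singular-vector matrix; for a coordinate $l\le m$ that row is $\tfrac1{\sqrt2}U_{l,\cdot}$, so the main term is controlled by $\tfrac1{\sqrt2}\norm{U_{l,\cdot}}_\infty\le\tfrac1{\sqrt2}\norm{U}_\infty$. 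Second, the residual ``large-$K$'' term is handled through Lemma~\ref{delocalization-result-1}, which applied to $S(A)$ gives $\norm{\tilde{\vect w}_i}_\infty\le c_1\kappa_i\tfrac1{\sqrt2}\norm{W}_\infty$; and because for $l\le m$ the $l$th row $\vect x$ of $S(E)$ is supported on the complementary block $\{m+1,\dots,N\}$, the inner products $\langle\vect u^{\{l\}}_i,\vect x\rangle$ appearing in \eqref{coordinatebound} only involve the $V$-block of $\tilde{\vect w}^{\{l\}}_i$, and the random analysis of Theorem~\ref{refined-main} bounds the resulting contribution by a constant times $\sqrt{KN}\,\kappa_i\norm{W}_\infty\log N/\sigma_i$ — precisely the last term of \eqref{main1}.

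Assembling the pieces, for every $l\le m$ simultaneously, on the single high-probability event of Theorem~\ref{refined-main},
\[
|\tilde u_{il}-u_{il}|=\sqrt2\,\bigl|(\tilde{\vect w}_i-\vect w_i)_l\bigr|\le c\norm{U}_\infty\bigl[\kappa_i\norm{\tilde{\vect w}_i-\vect w_i}_2+\epsilon_1(i)+\kappa_i\epsilon_2(i)K\sqrt{\log N}\bigr]+\frac{c\sqrt{KN}\kappa_i\norm{W}_\infty\log N}{\sigma_i},
\]
and since $\norm{\tilde{\vect w}_i-\vect w_i}_2^2=\tfrac12\bigl(\norm{\tilde{\vect u}_i-\vect u_i}_2^2+\norm{\tilde{\vect v}_i-\vect v_i}_2^2\bigr)\le m_2(i)^2$, this is exactly \eqref{main1} once the extra factor $\sqrt2$, the change of rank $r\to2r$, and the change $\log n\to\log N$ are absorbed into the stated constants $c=2^{18}(c_0+1)r^3$ and $C(r)$. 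Taking coordinates $l>m$ and the rows $\tfrac1{\sqrt2}V_{(l-m),\cdot}$ instead yields the same bound for $\norm{\tilde{\vect v}_i-\vect v_i}_\infty$ with $\norm{U}_\infty$ replaced by $\norm{V}_\infty$; the probability bound $1-C(r)N^{-c_0}-\tau\log N$ is just that of the single invocation of Theorem~\ref{refined-main} and Lemma~\ref{delocalization-result-1}, so no further union bound is required.

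The one genuinely non-routine point — and the step I expect to be the main obstacle — is verifying that the proof of Theorem~\ref{refined-main} (equivalently, of the deterministic Theorem~\ref{coordinatetheorem}) is ``block-local'' in exactly the sense used above: that estimating coordinate $l$ draws on the incoherence of $U$ only through row $l$, while the delicate residual term legitimately absorbs the cross-block quantity $\norm{W}_\infty$ because the perturbing row $\vect x$ is concentrated entirely on the opposite block and the delocalization bound is a statement about the whole vector $\tilde{\vect w}^{\{l\}}_i$. Everything else — the eigen-data dictionary under $S(\cdot)$, the translation of the stability hypothesis, the harmlessness of the large kernel of $S(A)$, and the constant bookkeeping for rank $2r$ and $\log N$ — is routine.
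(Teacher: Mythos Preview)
Your proposal is correct and follows exactly the approach the paper indicates: the paper's ``proof'' of Theorem~\ref{rectangular2} is simply the sentence in \S2.6 that one applies the main symmetric result to the dilation $S(A)$, and your write-up fleshes out precisely that reduction, including the per-coordinate refinement from Remark~\ref{promise} (equivalently, the $\norm{U_{l,\cdot}}_\infty$ in Theorem~\ref{coordinatetheorem}) that yields $\norm{U}_\infty$ rather than $\norm{W}_\infty$ in the main term. The one point you flag as ``non-routine''---block-locality of the deterministic bound---is exactly what Theorem~\ref{coordinatetheorem} already states explicitly, so there is nothing further to verify there.
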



\vskip2mm

\subsection{Sketch of the proofs and main new ideas} \label{mainideas} 

One can derive Theorem \ref{main-result} from Theorem \ref{coordinatetheorem} by verifying that the assumptions of Theorem \ref{coordinatetheorem} hold with high probability in the setting of Theorem \ref{main-result}. This part requires  a technical, but rather routine, computation.  

In order to prove Theorem \ref{coordinatetheorem}, we apply the leave-one-out strategy, which is a popular method to control the coordinates of an eigenvector. The starting  observation here is the following. Let $H^{\{l\}}$ be the matrix obtained from $H$ by zeroing out its $l$th row and column, then the $l$th row and column of $A+H^{\{l\}}$ and $A$ are the same. On the other hand, the $l$th row of the matrix, thanks to the eigenvector equation $Av =\lambda v$, has a direct influence on the $l$th coordinate of any eigenvector $v$. From here, it is not hard to deduce a strong bound  for the difference between the $l$th coordinates of an eigenvector of $A$ and its counterpart of $A+ H^{\{l\}}$. 

By the triangle inequality, it remains to bound  for the difference between the $l$th coordinates of the eigenvector   of $A+ H^{\{l\}}$
and its counterpart of $A+H$. It is often enough to just replace it by the $\ell_2$ distance between the vectors.  In many previous treatments, authors used the original Davis-Kahan to obtain a bootstrapping inequality \cite{abbefanentrywise,chensurvey}. Our new idea here is to exploit the special structure of the difference matrix $H- H^{\{l\}}$, which has exactly one non-trivial row and column. Using a series of linear algebra manipulations, we obtain a  more effective bound, laying the  ground for a 
stronger bootstrap argument which results in the conclusion of Theorem \ref{coordinatetheorem}.

To prove Theorem \ref{refined-main}, we introduce  the so-called 
{\it iterative leave-one-out} argument, which is a refinement 
of the original leave-one-out argument and could be of independent interest. The basic idea is as follows. Starting with the deterministic Theorem \ref{coordinatetheorem}, we observe that quality of the bound provided by this theorem  depends on 
the $\ell_{\infty}$ norm of the eigenvectors of the (leave-one-out) matrix
$A+ H^{\{l\}}$.  To control this later quantity, we apply Theorem \ref{coordinatetheorem} again, but now on $A+ H^{\{l\}}$. It will lead to a leave-two-out matrix, obtained by zeroing out two rows and columns of $H$. We keep continuing  this process (leave-three-out and so on)  and obtain a series of improvements, which converges to the desired bound. The key point here is that the more rows and columns we  leave out, the weaker the requirements on the $\ell_{\infty}$ bound become, until a point that it is automatically satisfied.


Using this new argument, we first prove the Delocalization Lemma \ref{delocalization-result-1}. It is then relatively simple to derive Theorem \ref{refined-main} from 
this lemma.

\section{Applications} 
\label{applications}

In this section, we apply our new results to a  number of  well known algorithmic problems, leading to fast and very simple algorithms. 

\subsection{Finding hidden partition}
Finding hidden partition is a popular problem in statisitics and theoretical computer science (also goes under the name of statistical block model). Here is the setting:  a
vertex set $V$ of size $n$ is partitioned into $r$ subsets $V_1, \dots V_r$, and between each pair $V_i, V_j$ we draw edges independently with probability $p_{ij} $  (we allow $i=j$). The task is to find 
a particular subset $V_j$ or all the subsets $V_1, \dots, V_r$  given one instance of the random graph. See  \cite{drineas2004,fernandez,Feige2005SpectralTA,feige2000,feige,mcsherry2001,alon,blumspencer,Condon1999AlgorithmsFG,vuhidden,KVempala} and the references therein. We think of $r$ as a constant and $n$ tends to infinity. 

The most popular approach to this problem is the spectral method (see \cite{KVempala} for a survey), which typically 
consists of two steps. In the first step, one considers the coordinates of an eigenvector of the adjacency matrix of the graph (or more generally the projection of the row vectors of the adjacency matrix onto a low dimensional eigenspace), and runs a standard clustering algorithm on these low dimensional $n$ points. The output of this step is an approximation of the truth. In the second step, one applies adhoc combinatorial techniques to clean the output to recover the mis-classified  vertices.

The input of the problem is the adjacency matrix of the (random) graph. 
Let us call this matrix $\tilde A$. Now let $A$ be the matrix of expectation
(thus the entries will be $p_{ij})$. Since there are $r$ vertex sets in the partition, this matrix  has $r$ identical blocks and thus has rank at most 
$r$. The difference  $E= \tilde A- A$ is a random matrix with independent 
upper diagonal entries. Since $p_{ij}$ is the expectation of the $ij$ entry of $\tilde A$, $E$ has zero mean.

It has been speculated  that in many cases, the cleaning step is not necessary. Our result makes a contribution towards solving this problem. The critical point here is that the existence of mis-classified vertices, in many settings, is just an artifact of the analysis in the first step, which typically relies on $\ell_2$ norm estimates. It is clear that any $\ell_2$ norm estimate, even sharp, could only imply that a majority of the vertices are well classified, which  leads to the necessity of the second step. On the other hand, if  we have a strong $\ell_{\infty}$  norm estimate, 
then we can classify all the vertices at once.
Our new infinity norm estimates will enable us to do exactly this in a number of settings, resulting in simple and fast new algorithms. In Section \ref{clustering},  we apply this idea to 
many problems in this area, including the hidden clique problem, 
the planted coloring problem, the hidden bipartition problem, and the general hidden partition problem. 

All of these problems 
have been studied heavily, with numerous treatments using different tools. 
On the other hand, our  treatment is very simple and universal for all settings considered. Moreover, in certain ranges,
the algorithm works under the weakest assumption known to date.

\vskip2mm 
Let us close this section with an illustrative example. 

\vskip2mm 

\noindent {\it The hidden clique problem.} The (simplest form) of the hidden clique problem is the following: Hide a clique $X$ of size $k$ in  the random graph 
 $G(n,1/2)$. Can we find $X$ in polynomial time? 

 Notice that the largest clique in $G(n,1/2)$, with overwhelming probability, has size approximately 
$2 \log n$ \cite{ASbook}. Thus, for any $k$ bigger than $(2+\epsilon) \log n$, 
with any constant $\epsilon >0$, X would be abnormally large and therefore detectable, by brute-force at least.  For instance, one can check all vertex sets of size $k$ to see if any of them form a clique. However, finding $X$ in polynomial time is a different matter, and the best current bound for $k$ is $k \ge c \sqrt n$, for any constant $c >0$. This was first  achieved by Alon, Krivelevich, and Sudakov  \cite{alon}; see also 
 \cite{feige}\cite{dekel} for later developments concerning faster algorithms for certain values of $c$. 

The Alon-Krivelevich-Sudakov algorithm runs as follows. It first finds $X$ when $c$ is sufficiently large, then  uses a simple  sampling  trick to 
reduce the case of small $c$ to this case. 

To find the clique for a large $c$, they first compute the second eigenvector of the adjacency matrix of the graph and  locate  the first largest $k$  coordinates in absolute value. Call this set $Y$. This is an approximation of the clique $X$, but not yet totally accurate. 
In the second, cleaning, step, they  define  $X$ as the  vertices in the graph with at least $3/4k$ neighbors in $Y$. The authors then proved that with high probability, $X$ is indeed the hidden clique. 

With our new results,  we can find $X$ immediately  by a slightly modified version of the first step, omitting the cleaning step, as promised. Before starting the main step of the algorithm, we change all zeros in the adjacency matrix to $-1$.

 \vskip2mm 

\begin{algorithm}[First singular vector clustering-FSC]

Compute the first singular vector. Let $x$ be the largest value of the coordinates and let $X$ be the set of all coordinates with  value at least $x/2$.
    
\end{algorithm}

\vskip2mm 
This is perhaps the simplest algorithm for this problem. Implementation is trivial as 
computing the first singular vector of a large matrix 
is a routine operation that appears in all standard numerical linear algebra packages.

 \begin{theorem} 
There is a constant $c_0$ such that for all $k \ge c_0 \sqrt n$, FSC outputs the hidden clique correctly with probability at least $.99$. 
\end{theorem}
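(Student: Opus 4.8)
The plan is to recast the instance in the low-rank-plus-random-noise model and apply Theorem~\ref{main-result} (equivalently the packaged Theorem~\ref{main-corollary}) with $r=1$. After replacing the zero off-diagonal entries of the adjacency matrix by $-1$, let $\tilde A$ be the resulting symmetric $\pm1$ matrix; the point of this substitution is that outside the clique block $X\times X$ the entries now have mean $0$, so the planted clique becomes the \emph{leading} rank-one component of the model. Write $\tilde A = A + E$ with $A := \mathbf{1}_X\mathbf{1}_X^{\T}$ (rank one, hence $r=1$) and $E := \tilde A - A$; then $E$ has independent, $\{\pm1,0\}$-valued, mean-zero upper-triangular entries with $K = 1$ (entries inside $X\times X$ vanish, and the only deterministic part of $E$ is a diagonal of magnitude $\le 1$, absorbed as an $O(1)$-norm perturbation that affects nothing). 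For this $A$ one has $\sigma_1 = k$, $\sigma_2 = 0$, hence $\delta_1 = k$, $\kappa_1 = 1$, leading singular vector $\vect u_1 = \mathbf{1}_X/\sqrt k$, and $\norm{U}_\infty = 1/\sqrt k$; moreover $\norm{E}\le 3\sqrt n$ with probability $1 - e^{-\Omega(n)}$ by the classical spectral-norm bound, so $T \le 3\sqrt n$ in Definition~\ref{stable} for any small constant $\tau$.

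First I would verify $(c,\tau,1)$-stability with $c$ the value arising from probability exponent $1$ in Theorem~\ref{main-result} (so $c = 2^{12}$ for $r=1$): condition~(a) reads $k \gg \sqrt n$, condition~(b) reads $k \gg \sqrt{\log n} + n/k$, and condition~(c) reads $k \gg \sqrt{n/k}$ --- all of which hold once $k \ge c_0\sqrt n$ for a sufficiently large absolute constant $c_0$, the constant in the statement. Feeding these quantities into Theorem~\ref{main-result}/\ref{main-corollary}, the bracketed factor is
\[
\frac{\norm{E}}{\sigma_1} + \frac{K\sqrt{\log n}}{\delta_1} + \frac{\norm{E}^2}{\sigma_1\delta_1} \;=\; O\!\Big(\tfrac{\sqrt n}{k} + \tfrac{\sqrt{\log n}}{k} + \tfrac{n}{k^2}\Big) \;=\; O(1/c_0),
\]
the $\ell_2$ error $\norm{\tilde{\vect u}_1 - \vect u_1}_2$ (from Theorem~\ref{OVW1eigenvector}) is likewise $O(1/c_0)$, and the leftover term $cK\sqrt{\log n}/\sigma_1 = \Theta(\sqrt{\log n}/k) = o(1/\sqrt k)$ since $\sqrt k \ge \sqrt{c_0}\,n^{1/4}$. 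Hence, with $c_0$ large and $n$ large,
\[
\norm{\tilde{\vect u}_1 - \vect u_1}_\infty \;\le\; \frac{1}{10\sqrt k}
\]
with probability $1 - C(1)n^{-1} - 2\tau - e^{-\Omega(n)} \ge 0.99$, where $\tilde{\vect u}_1$ is the leading singular vector of $\tilde A$ with sign fixed so that $\langle \tilde{\vect u}_1,\vect u_1\rangle \ge 0$.

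Finally I would run the rounding step of FSC against this $\ell_\infty$ bound. Since $u_{1l} = 1/\sqrt k$ for $l\in X$ and $u_{1l} = 0$ for $l\notin X$, we get $\tilde u_{1l} \ge \tfrac{9}{10\sqrt k}$ for every $l\in X$ and $\abs{\tilde u_{1l}} \le \tfrac{1}{10\sqrt k}$ for every $l\notin X$. Thus the largest coordinate $x$ lies in $[\tfrac{9}{10\sqrt k},\tfrac{11}{10\sqrt k}]$ and is attained inside $X$, so the threshold $x/2$ lies strictly between $\tfrac1{10\sqrt k}$ and $\tfrac{9}{10\sqrt k}$; consequently the set of coordinates with value $\ge x/2$ equals exactly $X$, and FSC outputs the hidden clique. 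The sign convention is not a real restriction: with the opposite sign the largest coordinate is only $O(1/\sqrt k)$ and is attained outside $X$, so the algorithm would output an $\Omega(n)$-size subset of $V\setminus X$, which a.a.s.\ is not a clique --- equivalently one fixes the sign for which the returned set is a clique of size $\ge c_0\sqrt n$. Intersecting the two relevant events gives success probability $1 - o(1) \ge 0.99$ for $n$ large.

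The substantive input is Theorem~\ref{main-result}; no new perturbation estimate is needed. The routine parts are the $(2+o(1))\sqrt n$ spectral-norm bound for $E$ and the arithmetic verifying stability; the only point that needs a little care is the bookkeeping that turns $k \ge c_0\sqrt n$ into an $\ell_\infty$ error below $\tfrac1{10}\norm{\vect u_1}_\infty$ with the correct absolute constant, and --- cosmetically --- pinning down the sign of the computed singular vector.
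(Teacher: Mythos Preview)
Your proposal is correct and follows essentially the same route as the paper: write $\tilde A = A + E$ with $A$ the rank-one clique indicator, invoke the spectral-norm bound $\|E\|\le 3\sqrt n$, apply Theorem~\ref{OVW1eigenvector} for the $\ell_2$ error and then Theorem~\ref{main-result} (equivalently Theorem~\ref{main-corollary}) to get an $\ell_\infty$ error below $\tfrac14 k^{-1/2}$, and conclude that the threshold separates the clique vertices from the rest. You are slightly more careful than the paper in explicitly checking the stability conditions and addressing the sign of $\tilde{\vect u}_1$, but the argument is the same.
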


\subsection{Matrix Completion} \label{completion-description}

A major problem in data science is the  matrix completion problem, which  asks to recover a large matrix from  
a sparse, random, set of observed entries. Formally speaking, let $A$ be a large $m \times n$
matrix where each entry is revealed with probability $p$, independently (thus roughly 
$pmn $ entries are observed). The goal is to recover $A$ from the set of observed entries. 

One of the key motivations for this problem is to build  rating/recommendation systems.  Assume  that 
a company wants to know customers' opinions
about the entire catalog of their products. 
They can achieve this by constructing the rating matrix of their products,  where the rows of represent customers and the columns are indexed by products, and each entry represents a rating.
Clearly, entries of high ratings suggest a natural recommendation strategy. 

The problem here is that  only part of 
the matrix is known, as most customers have used and rated only few products. Thus, one needs to complete the matrix based on these few observed entries.
A famous example here is the
Netflix problem where the entries are the ratings of movies (from 1 to 5). In fact, matrix completion has become  a public event thanks to the Netflix competition; see \cite{matrixrecommender}. 

It is clear that the task is feasible only if there is some condition on the matrix, and the most popular 
condition is that $A$ has low rank. There is a vast literature on the problem with this 
assumption; see  \cite{Chatterjee_2015, candestaomatrixcompletion, candestaomatrixcompletion, recht} and the references therein. 

A natural try for matrix completion is to find the matrix of minimal rank agreeing with the observed entries. However, 
this problem is NP-hard. 
The  idea here is to use the following relaxation 

    \begin{equation}
    \label{nuclearnormprogram}
    \text{minimize     }  \norm{X}_{*}
    \newline
    \text{ subject to.  }  X_{ij} = [P(A)]_{ij}, \text{ for all observed $(i,j)$}
    \end{equation}
    where $\norm{X}_{*}$ is the sum of the singular values of $X$. In words, the task is: among all matrices whose entries agree with the observed matrix entries, find the one with the smallest nuclear norm.  A series of papers \cite{candesrecht,candestaomatrixcompletion,candesnoise,recht},
    by Cand\`es and many coauthors show that (under various assumptions) the solution to the convex program \eqref{nuclearnormprogram} recovers $A$ exactly, with high probability.



Another idea is to use  the spectral method. Consider a matrix $\tilde A$, where $\tilde A_{ij} = p^{-1} A_{ij}$  if the entry 
$A_{ij}$ is observed, and 0 otherwise. Thus, $\tilde A_{ij}$ is a random variable with mean $A_{ij}$, as each entry is observed with probability $p$. So we can write $\tilde A = A +E$, where $E$ is a random matrix  with independent entries having zero mean. 
One can see $\tilde A$ as an unbiased estimator of $A$.   A well known work in this direction is \cite{kmo}.
In this paper, Keshavan, Montanari, and Oh first use a low rank approximation of $\tilde A$ to obtain 
an approximation of $A$ in Frobenius norm. Next, they solve an optimization problem to 
clean the output, and achieve exact recovery with high probability.
This step relies on gradient descent  performed over the cross product of two Grassmann manifolds.
See Table \ref{mc-table} for a summary of the discussed  results.

In both approaches above, one needs to solve a non-trivial  optimization problem. For a more detailed  discussion,  we refer to \cite{Li2019ASO}. 

As an application of our new results, we design a  simple spectral algorithm, whose cleaning step is simply rounding the output of the spectral step. Assume for a moment that the entries, as in the Netflix problem, are non-zero integers. We are going to show that a properly chosen low rank approximation  $B$ of $\tilde A$ satisfies 
$\| A- B \| _{\infty} < 1/2$. Thus, one can recover 
$A$ from $B$ by simply rounding the entries to the nearest integer. 
This is thanks to the fact that we now can prove that 
 a properly defined low rank approximation of $\tilde A$  approximates  $A$ in the {\it infinity norm}, compared to approximation in Frobenius norm or spectral norm in previous works. 

\begin{table}[!h]
\resizebox{\textwidth}{!}{%
\begin{tabular}{||c | c | c ||} 
\hline
Result & Algorithm & Lowest possible density\\
\hline
Cand\'es, Recht '09 \cite{candesrecht} & convex optimization & $p = \Omega(N^{-0.8}\log N)$\\
 \hline
 Cand\'es, Tao '10 \cite{candestaomatrixcompletion} & convex optimization & $p = \Omega(N^{-1}\log^2N)$  \\ 
 \hline
  Recht '11 \cite{recht} & convex optimization & $p = \Omega(N^{-1}\log^2 n)$ \\ 
  \hline
  Keshavan, Montanari, Oh '10 \cite{kmo} & spectral + cleaning &  $p = \Omega(N^{-1}\log N)$ \\ 
 \hline
  \end{tabular}}
\caption[A survey of exact matrix completion.]{A survey of results for {\it exact recovery} in the matrix completion problem for a $m \times n$ matrix, with $N = m + n$. All results are stated under the assumptions that $r = O(1)$ and $\norm{U}_{\infty} = O(n^{-1/2})$ in order to minimize the sampling density $p$.}
\label{mc-table}
\end{table}

We now describe the algorithm. 
 First compute the leading singular values and singular vectors of $\tilde A$, $(\tilde \sigma_1, \tilde u_1, \tilde v_1), 
(\tilde \sigma_2 , \tilde u_2, \tilde v_2) , \dots, (\tilde \sigma_{\tilde{s}}, \tilde u_{\tilde{s}}, 
\tilde v_{\tilde{s}})$ 
where $\tilde{s} := \max_i\{i: \tilde{\sigma}_{i} \ge  \frac{1}{8r}\| W\|_{\infty}^{-2}\}$. Let $s := \max_i\{i: \sigma_{i} \ge  \frac{1}{16r}\| W\|_{\infty}^{-2}\}.$ Observe that $\tilde{s}$ is random because it is computed from the observed $\tilde{A}$, while $s$ is deterministic. $B$ will be the low rank approximation given by

$$B:= \sum_{j=1}^{\tilde s} \tilde \sigma_j \tilde u_j \tilde v_j^T. $$The formal code is as follows.
\begin{algorithm}[Approximate-and-Round]
\label{alg:thresholdandround}
\leavevmode
\begin{enumerate}
    \item Take SVD of $\tilde{A} = \sum_{i = 1}^{\min\{m,n\}}\tilde{\sigma}_{i}\tilde{u}_{i}\tilde{v}_{i}^{T}$.
    \item Approximate: let $B = \sum_{i \leq \tilde{s}} \tilde{\sigma}_{i}\tilde{u}_{i}\tilde{v}_{i}^{T}$
    \item Round:  round the entries of  $B$ to the nearest integer.
\end{enumerate} 
\end{algorithm}

\begin{theorem}
\label{threshould-round-theorem}
Let $A$ be a $m \times n$ matrix of rank $r$ whose entries are non-zero integers, where both $ r, \norm{A}_{\infty} = O(1).$
    Let $N = m + n$ and $\overline{\delta} = \min_{i \leq s}\delta_i$.  Then, there exists $c = c( r, \norm{A}_{\infty})$ such that if 
    \begin{itemize}

    \item (signal to noise) $\sigma_s > c(\sqrt{Np^{-1}})\log^{2.01}N$ 
\vspace{2mm}
    \item (gap) $\overline{\delta} > cp^{-1}\log N$.
\vspace{2mm}
    \item (incoherence) $\norm{W}_{\infty} \le c N^{-1/2}$
\vspace{2mm}
    \item (density) $p > N^{-1}\log^{4.03}N$,
\vspace{2mm}
    \end{itemize}  then Algorithm \ref{alg:thresholdandround} recovers all of the entries of $A$ exactly with probability at least $1 - N^{-1}$.
\end{theorem}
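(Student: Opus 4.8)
The plan is to write $\tilde{A} = A + E$, where $E$ has independent, mean-zero entries: $\xi_{ij}$ equals $(1-p^{-1})A_{ij}$ with probability $p$ and $A_{ij}$ with probability $1-p$. Then $\abs{\xi_{ij}} \le p^{-1}\norm{A}_{\infty}$ and $\E\xi_{ij}^2 = p^{-1}(1-p)A_{ij}^2 \le p^{-1}\norm{A}_{\infty}^2$, so taking $K = \Theta(p^{-1})$ (with constant depending on $\norm{A}_{\infty}$) we get $\E\xi_{ij}^2 \le K$, while the density hypothesis $p > N^{-1}\log^{4.03}N$ forces $K \le N$; hence Assumption \ref{assumption-K} holds. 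A standard spectral-norm concentration bound for random matrices with independent bounded entries (matrix Bernstein, or \cite{Vu2005SpectralNO}) gives $\norm{E} = O(\sqrt{Np^{-1}\log N})$ with probability $1 - N^{-c_0-1}$, which fixes the parameter $T$ of Definition \ref{stable} with $\tau$ a small power of $N^{-1}$.

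The first real step is to verify that each pair $(\sigma_i, \delta_i)$ with $i \le s$ is $(c,\tau,2)$ strongly stable in the rectangular sense. The crucial observation is that the incoherence hypothesis, together with the definition of $s$, pins down the relevant singular values: $\sigma_i \ge \sigma_s \ge \frac{1}{16r}\norm{W}_{\infty}^{-2} \gtrsim N$, while $\sigma_i \le \norm{A}_F \le \sqrt{mn}\,\norm{A}_{\infty} = O(N)$, so $\sigma_i \asymp N$ and $\kappa_i = O(1)$ for every $i \le s$. Given this, the signal-to-noise hypothesis delivers condition (a) of Definition \ref{stable} together with the extra condition $\sigma_i > c\sqrt{Kn}\log^{2.01}n$ of Definition \ref{strong-stable}, while the gap hypothesis $\overline{\delta} > cp^{-1}\log N$ delivers conditions (b) and (c) once one notes $K\log n \asymp p^{-1}\log n$, $\sigma_i^{-1}T^2 \asymp p^{-1}\log N$, and $\kappa_i T\norm{W}_{\infty} \asymp \sqrt{p^{-1}\log N}$. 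Absorbing the $O(1)$-many constants into the constant $c$ of the theorem yields strong stability.

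Next I would apply the rectangular infinity-norm bound, Theorem \ref{rectangular2}, to each $i \le s$ (using the Delocalization Lemma \ref{delocalization-result-1} to also bound $\norm{\tilde{\vect u}_i}_{\infty}, \norm{\tilde{\vect v}_i}_{\infty} = O(\norm{W}_{\infty})$), and control $m_2(i)$ via the $\ell_2$ bound of Theorem \ref{OVW1eigenvector} in its rectangular form. Since $\sigma_i \asymp N$, $\delta_i \gtrsim p^{-1}\log N$, $\norm{E} = \tilde O(\sqrt{Np^{-1}})$ and $\norm{W}_{\infty}^2 \le c^2N^{-1}$, every term of the bound \eqref{main1}, after multiplication by $\sigma_i\norm{W}_{\infty}^2$ (or $\norm{E}\norm{W}_{\infty}^2$), is $\tilde O(\log^{-1/2}N)$ --- and the exponent $4.03$ in the density hypothesis is exactly what makes the contribution of $\sqrt{KN}\kappa_i\norm{W}_{\infty}^2\log N$ negligible. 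Combining this with the telescoping identity $\tilde\sigma_i\tilde{\vect u}_i\tilde{\vect v}_i^T - \sigma_i\vect u_i\vect v_i^T = (\tilde\sigma_i - \sigma_i)\tilde{\vect u}_i\tilde{\vect v}_i^T + \sigma_i(\tilde{\vect u}_i - \vect u_i)\tilde{\vect v}_i^T + \sigma_i\vect u_i(\tilde{\vect v}_i - \vect v_i)^T$ and Weyl's inequality $\abs{\tilde\sigma_i - \sigma_i} \le \norm{E}$ gives $\norm{\sigma_i\vect u_i\vect v_i^T - \tilde\sigma_i\tilde{\vect u}_i\tilde{\vect v}_i^T}_{\infty} < \frac{1}{8r}$ for each $i \le s$, provided $c$ is large. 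It remains to reconcile the random cutoff $\tilde s$ with the deterministic $s$: since $\sigma_i = 0$ for $i > r$ and $\norm{E} \ll \frac{1}{8r}\norm{W}_{\infty}^{-2}$ (again by the density and incoherence hypotheses), Weyl gives $\tilde\sigma_i \le \norm{E} < \frac{1}{8r}\norm{W}_{\infty}^{-2}$ for $i > r$, so $\tilde s \le r$; the same estimate shows $i \le \tilde s$ implies $\sigma_i \ge \tilde\sigma_i - \norm{E} \ge \frac{1}{16r}\norm{W}_{\infty}^{-2}$, so $\tilde s \le s$. Writing
\[
A - B \;=\; \sum_{i \le \tilde s}\bigl(\sigma_i\vect u_i\vect v_i^T - \tilde\sigma_i\tilde{\vect u}_i\tilde{\vect v}_i^T\bigr) \;+\; \sum_{\tilde s < i \le r}\sigma_i\vect u_i\vect v_i^T,
\]
the first sum has $\norm{\cdot}_{\infty} < \tilde s/(8r) \le 1/8$ by the previous estimate, while each term of the second sum is $\le \sigma_i\norm{U}_{\infty}\norm{V}_{\infty} \le \sigma_i\norm{W}_{\infty}^2 < \frac{1}{4r}$ because $i > \tilde s$ forces $\sigma_i < \tilde\sigma_i + \norm{E} < \frac{1}{4r}\norm{W}_{\infty}^{-2}$, so it is $< 1/4$. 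Hence $\norm{A-B}_{\infty} < 1/2$ and rounding the entries of $B$ recovers $A$; a union bound over the $O(1)$ indices $i \le s$ and the spectral-norm event gives probability $1 - N^{-1}$ for a suitable choice of $c_0, \tau$.

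The main obstacle will be the \emph{uniform} control over the whole index range: proving strong stability simultaneously for all $i \le s$ (which hinges entirely on extracting $\sigma_i \asymp N$ and $\kappa_i = O(1)$ from the incoherence hypothesis) and handling the random truncation level $\tilde s$, since $B$ is built from the random spectrum of $\tilde A$ rather than that of $A$. Everything else is a matter of inserting the matrix-completion parameters $K \asymp p^{-1}$, $\sigma_i \asymp N$, $\norm{E} = \tilde O(\sqrt{Np^{-1}})$, $\norm{W}_{\infty} = \tilde O(N^{-1/2})$ into Theorems \ref{rectangular2} and \ref{OVW1eigenvector} and checking that the powers of $\log N$ are absorbed by the density hypothesis.
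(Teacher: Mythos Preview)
Your proposal is correct and follows essentially the same architecture as the paper's proof: verify strong stability for all $i\le s$ using $\sigma_i\asymp N$ and $\kappa_i=O(1)$ (this is the paper's Lemma~\ref{stability-mc}), apply the rectangular large-$K$ bound Theorem~\ref{rectangular2} together with the $\ell_2$ bound (the paper's Lemma~\ref{m-infty-bound}), show $\tilde s\le s$ via Weyl (the paper's Proposition~\ref{prop:tildeslessthans}), and then split $A-B$ into the first $\tilde s$ rank-one terms and the tail, bounding the tail directly by $\sigma_i\norm{W}_\infty^2<\frac{1}{4r}$ (the paper's Lemma~\ref{MC:secondterm}).

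Two cosmetic differences worth noting. First, your telescoping places $\tilde{\vect u}_i,\tilde{\vect v}_i$ in the Weyl term $(\tilde\sigma_i-\sigma_i)\tilde{\vect u}_i\tilde{\vect v}_i^T$, which is why you invoke the Delocalization Lemma explicitly; the paper instead expands $\tilde u_{i1}=u_{i1}+\Delta u_{i1}$ so that the Weyl term carries $u_{i1}v_{i2}$ and no delocalization is needed separately. Second, you bound $\norm{E}$ by matrix Bernstein, picking up an extra $\sqrt{\log N}$; the paper uses Bandeira--van~Handel (Lemma~\ref{lemma:normE-MC}) to get $\norm{E}=O(\sqrt{Np^{-1}})$ directly. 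Both differences are immaterial: the spare powers of $\log N$ in the hypotheses absorb the looser constant, and the Delocalization Lemma is already a theorem in the paper that you may cite.
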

The optimal value for the density is $p = O(\log N/N) $, which has been essentially achieved in \cite{kmo} (under various assumptions). 
In  this paper, we focus on the simplicity of both the algorithm and the proof, so do not try do optimize $p$. A more sophisticated analysis
will bring us close to the optimal bound, while keeping the algorithm essentially the same. This will be the topic of a future paper. 

Our algorithm does not require the knowledge of the rank $r$. 
Low rank approximation is a routine operation and run very fast in practice. With an input matrix of size 20,000, our algorithm takes a few minutes on a laptop; see Figure 1. 

Finally, let us comment on our 
new assumptions. The assumption that 
the entries are integers is common for  recommendation systems, as we have alluded to. Furthermore, in real life most data matrices become integral by multiplying by a relatively small constant.
For instance, if all entries have at most 2 decimal places, then 
$100 A$ is integral, and our algorithm  works with an obvious re-scaling. 

The assumption that the entries are non-zero is for convenience, and can be achieved by 
simply shifting the matrix. If we know that all entries are in the interval 
$[-L, L]$, for some integer $L >0$,  then $A + (L+1) J $ 
(where $J$ is the all-one matrix) have non-zero
entries in the interval $[1, 2L+1]$. Furthermore, the rank would change by at most 1. Thus,  the shifted matrix basically has the same parameters as the original one. 

\subsection{ Matrix completion with noise} 

In a  more realistic setting, many authors considered a model when the data matrix $A$ is already corrupted by (random) noise, and we only observe a few entries from the corrupted matrix \cite{chenchifanma,kmonoisy,candesnoise,abbefanentrywise}.

This problem looks more technical than the original (noiseless) one. On the other hand, with respect to our approach, it is still exactly the same problem.  Assume that each entry $a_{ij}$ from  $A$ is corrupted by noise $x_{ij}$ with mean zero. Thus, the corrupted matrix is $A' = A+X$. As argued before, the  observed matrix is 

\begin{equation}\label{mc-corrupted} \tilde A = A' + E', \end{equation}
where $E'$ is a random matrix with independent entries $\xi'_{ij}$ which are equal $(\frac{1}{p}-1) (a_{ij} + x_{ij}) $ with probability $p$, and 
$- (a_{ij} + x_{ij} )$ with probability $1-p$. Since $x_{ij}$ are independent bounded random variables with mean zero, $\xi_{ij}'$ are independent, zero mean and $O(1/p)$ bounded. This is still under the assumption of Theorem \ref{threshould-round-theorem}. 
Thus,  we can easily deduce the following "noisy" version. 

\begin{theorem}
\label{threshould-round-theorem-withnoise}
Let $A$ be a $m \times n$ matrix of rank $r$ whose entries are non-zero integers, where both $ r, \norm{A}_{\infty} = O(1).$ Let $X$ be a random $m \times n$ matrix with entries  $x_{ij}$ being  $B$-bounded  independent random variables with zero mean, where $B = O(1)$. Let $N = m + n$ and $\overline{\delta} = \min_{i \leq s}\delta_i$.  Then, there exists $c = c(r, \norm{A}_{\infty}, B)$ such that if 
    \begin{itemize}

    \item (signal to noise) $\sigma_s > c\sqrt{Np^{-1}}\log^{2.01}N$ 
\vspace{2mm}
    \item (gap) $\overline{\delta} > cp^{-1}\log N$.
\vspace{2mm}
    \item (incoherence) $\norm{W}_{\infty} \le c N^{-1/2}$
\vspace{2mm}
    \item (density) $p > N^{-1}\log^{4.03}N$,
\vspace{2mm}
    \end{itemize}  then Algorithm \ref{alg:thresholdandround} (given $A+X+E$ as input) recovers all of the entries of $A$ exactly with probability at least $1 - N^{-1}$.
\end{theorem}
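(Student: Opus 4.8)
The plan is to reduce Theorem~\ref{threshould-round-theorem-withnoise} to the noiseless statement, Theorem~\ref{threshould-round-theorem}, by folding the entrywise corruption $X$ into the sampling perturbation and checking that neither the signal matrix $A$ nor any required hypothesis changes. By \eqref{mc-corrupted} the input to Algorithm~\ref{alg:thresholdandround} is $\tilde A = A + X + E'$, so setting $E := X + E'$ we have $\tilde A = A + E$ with the \emph{same} rank-$r$ integer matrix $A$, and
\[
E_{ij} \;=\;
\begin{cases}
p^{-1}(a_{ij}+x_{ij}) - a_{ij}, & \text{with probability } p,\\
-a_{ij}, & \text{with probability } 1-p .
\end{cases}
\]
The first step is to verify that $E$ is admissible in the sense of Assumption~\ref{assumption-K}: conditioning on $x_{ij}$, the entry $E_{ij}$ has mean $x_{ij}$, hence $\E E_{ij} = \E x_{ij} = 0$; the $E_{ij}$ are independent because each is a deterministic function of the independent pair (value $x_{ij}$, observation indicator of $(i,j)$); and since $\abs{a_{ij}},\abs{x_{ij}} = O(1)$ we get $\abs{E_{ij}} = O(1/p)$ together with $\E E_{ij}^2 = p\cdot O(p^{-2}) + (1-p)\cdot O(1) = O(1/p)$. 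Thus $E$ is a zero-mean, independent-entry, $K$-bounded matrix with $\E \xi_{ij}^2 \le K$ for $K = O(1/p)$, exactly the regime in which Theorem~\ref{threshould-round-theorem} was established. Note in particular that the corrupted matrix $A' = A+X$ never enters the analysis: it need not be low rank and plays no role; the low-rank object being perturbed is always $A$, and this is precisely why the output is rounded to recover $A$ (not $A'$).

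The second step is routine bookkeeping. The four displayed hypotheses of Theorem~\ref{threshould-round-theorem-withnoise} are word-for-word those of Theorem~\ref{threshould-round-theorem}, with $p^{-1}$ in the same places, so they are exactly what is needed to certify that every pair $(\sigma_i,\delta_i)$ with $i \le s$ is $(c,\tau,2)$ strongly stable under $E$ in the sense of Definition~\ref{strong-stable}: here $K = O(1/p)$, the high-probability spectral bound $\norm{E} = \tilde O(\sqrt{KN}) = \tilde O(\sqrt{N/p})$ follows from standard estimates for random matrices with independent bounded entries, and the incoherence bound on $\norm{W}_\infty$, the gap lower bound on $\overline{\delta}$, and the density lower bound on $p$ are unchanged. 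Feeding this into the rectangular infinity-norm bound Theorem~\ref{rectangular2}, together with the $\ell_2$ Davis--Kahan-type bound of Theorem~\ref{OVW1eigenvector}, the delocalization estimate Lemma~\ref{delocalization-result-1}, and the argument that the data-dependent truncation index $\tilde s$ equals the deterministic $s$ with high probability, reproduces the key estimate underlying Theorem~\ref{threshould-round-theorem}: the low-rank approximant produced in Step~2 of Algorithm~\ref{alg:thresholdandround} agrees with $A$ up to entrywise error below $1/2$. Rounding then recovers the integer matrix $A$ exactly, with probability at least $1 - N^{-1}$. The only quantitative change is that the constant $c$ now also absorbs the noise bound $B$, entering solely through the $O(1/p)$ control of $\abs{E_{ij}}$ and $\E E_{ij}^2$, which is harmless because $B = O(1)$.

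I do not expect a genuinely new obstacle: the entire content is the observation that the noisy model is the noiseless model with a wider — but still $O(1/p)$-bounded, mean-zero, independent-entry — perturbation. The one point deserving care is the mean-zero verification, which relies on $\tilde A$ being an unbiased estimator of $A$ rather than of the corrupted matrix $A'$; this is where the hypothesis $\E x_{ij} = 0$ is consumed. A secondary bookkeeping point is confirming that replacing the noiseless sampling matrix by $E = X + E'$ does not inflate $\norm{E}$ beyond the $\tilde O(\sqrt{N/p})$ scale assumed throughout, which again follows from $\E E_{ij}^2 = O(1/p)$ and the standard spectral-norm bound for independent-entry random matrices.
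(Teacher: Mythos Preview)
Your proposal is correct and follows the same approach as the paper: write $\tilde A = A + E$ with $E = X + E'$, verify that $E$ is still a mean-zero, independent-entry, $O(1/p)$-bounded matrix with $\E E_{ij}^2 = O(1/p)$, and observe that this places us exactly in the setting of Theorem~\ref{threshould-round-theorem}. One small slip: in the downstream argument the paper shows $\tilde s \le s$ (Proposition~\ref{prop:tildeslessthans}) rather than $\tilde s = s$, but this does not affect your reduction.
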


See Figure \ref{figure:approx-round-X} for a numerical example.

\begin{figure}
  \centering
\begin{subfigure}{.475\textwidth}
\centering
  \includegraphics[width=.8\linewidth]{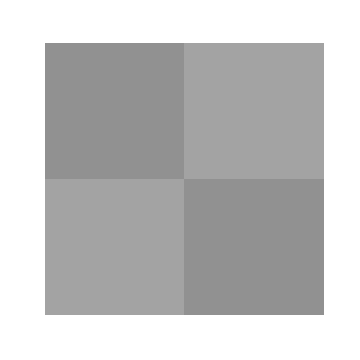}
  \caption{Original $A$}
  \label{original}
\end{subfigure}%
\begin{subfigure}{.475\textwidth}
\centering
  \includegraphics[width=.8\linewidth]{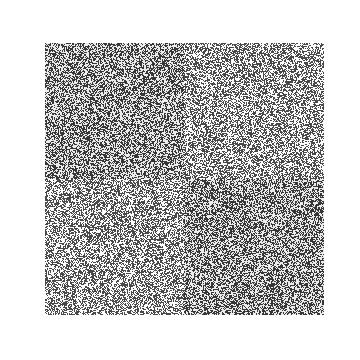}
  \caption{Noisy  $A' = A + X$}
  \label{corrupted}
\end{subfigure} \\
\begin{subfigure}{.475\textwidth}
\centering
  \includegraphics[width=.8\linewidth]{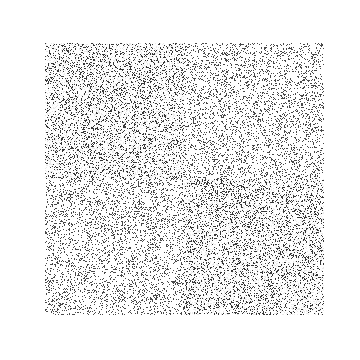}
  \caption{Sampled version of $A'$}
  \label{samples}
\end{subfigure}
\begin{subfigure}{.475\textwidth}
\centering
  \includegraphics[width=.8\linewidth]{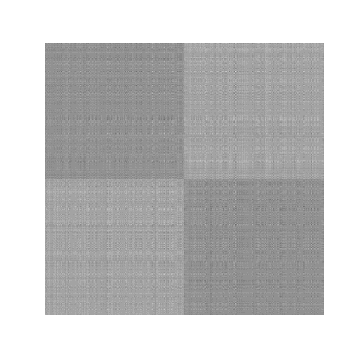}
  \caption{Recovered}
  \label{recovered}
\end{subfigure}
\caption[Numerical results for Approximate-and-Round.]{Approximate-and-Round run on $\tilde{A} = A' + E'$ as defined in \eqref{mc-corrupted}, where $A$ is a $N \times N$ block matrix of $16$'s and $18$'s and $X$ is a $\pm 10$, mean zero, random matrix. Here, $N = 20,000$ and the sampling density is $p = 0.35$.}
\label{figure:approx-round-X}
\end{figure}


\subsection*{The outline of the rest of the paper.}
We first  collect preparatory tools from linear algebra and probability in Section \ref{preparation}. Then, we prove the deterministic Theorem \ref{coordinatetheorem} in Sections \ref{deterministicproof}, \ref{sectionfirststep}, and \ref{sectionsecondstep}. The proof of Theorem \ref{main-result} from Theorem \ref{coordinatetheorem} is given in Section \ref{mainresultproof}. Afterward, we present the proof of the delocalization lemma, Lemma \ref{delocalization-result-1}, in Sections \ref{section: delocalizationproof1}, \ref{section: delocalizationproof2}, and \ref{section: delocalizationproof3}. Next, we  demonstrate in Section \ref{refined-proof} that Theorem \ref{refined-main} follows fairly easily from the proof of Lemma \ref{delocalization-result-1}. To conclude, we discuss in detail (with proofs) our applications in Sections \ref{clustering} and \ref{completion}.

\section{Preparation}
\label{preparation}
Throughout the paper, we will make repeated use of a few facts from linear algebra and probability. To make the exposition easier, we will collect these facts and some of their consequences here.

\subsection{Linear Algebra}
\begin{fact}[Weyl inequality]
\label{weylfacts}
Let $A$ be a symmetric matrix with eigenvalues $\lambda_1 \geq \lambda_2 \geq ... \geq \lambda_n$ and singular values $\sigma_1 \geq \sigma_2 \geq ... \geq \sigma_n$. Define $\tilde{A} = A + H$ for any symmetric matrix $H$. Assume that $\tilde{A}$ has eigenvalues and singular values $\tilde{\lambda}_i$ and $\tilde{\sigma}_i$, again ordered decreasingly. For all $1 \leq i \leq n$,

\begin{equation}
\begin{split}
\sigma_{i} - \norm{H} &\leq \tilde{\sigma}_{i} \leq \sigma_{i} + \norm{H}, \text{ and } \\
\lambda_{i} - \norm{H} &\leq \tilde{\lambda}_{i} \leq \lambda_{i} + \norm{H}.
\end{split}
\end{equation}

As an immediate consequence, we have the following two results.
\begin{enumerate}
\item If $\sigma_i \geq 2\norm{H}$, then $\tilde{\sigma}_i \geq \frac{\sigma_i}{2}$. 

\item If $|\lambda_i| > \norm{H}$, then $\tilde{\lambda}_i$ has  the same sign as $\lambda_i$.

\end{enumerate}
\end{fact}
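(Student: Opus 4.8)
The plan is to deduce both chains of inequalities from the Courant--Fischer min--max characterization of eigenvalues (and the analogous variational characterization of singular values), using nothing beyond the identity $\|H\| = \sup_{\|x\|=1}\|Hx\|$ and the triangle inequality; the two numbered consequences are then immediate arithmetic.

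First I would recall that for a symmetric $n\times n$ matrix $A$ with eigenvalues $\lambda_1\ge\cdots\ge\lambda_n$, Courant--Fischer gives both
\begin{equation*}
\lambda_i \;=\; \max_{\substack{V\subseteq\R^n\\ \dim V = i}}\ \min_{\substack{x\in V\\ \|x\|=1}} x^{\T}Ax
\;=\; \min_{\substack{V\subseteq\R^n\\ \dim V = n-i+1}}\ \max_{\substack{x\in V\\ \|x\|=1}} x^{\T}Ax .
\end{equation*}
For any unit vector $x$ one has $x^{\T}\tilde A x = x^{\T}Ax + x^{\T}Hx$ with $|x^{\T}Hx|\le\|H\|$, hence $x^{\T}Ax-\|H\|\le x^{\T}\tilde A x\le x^{\T}Ax+\|H\|$ uniformly in $x$. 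Inserting the lower estimate into the max--min form of $\tilde\lambda_i$ yields $\tilde\lambda_i\ge\lambda_i-\|H\|$, and inserting the upper estimate into the min--max form yields $\tilde\lambda_i\le\lambda_i+\|H\|$. This proves the eigenvalue half of the statement.

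For the singular values I would use the parallel characterization $\sigma_i(A)=\max_{\dim V=i}\min_{x\in V,\|x\|=1}\|Ax\| = \min_{\dim V=n-i+1}\max_{x\in V,\|x\|=1}\|Ax\|$ (which follows by applying Courant--Fischer to $A^{\T}A$). For a unit vector $x$ the triangle inequality gives $\|Ax\|-\|H\|\le\|\tilde A x\|\le\|Ax\|+\|H\|$, and feeding this into the max--min and min--max forms exactly as above produces $\sigma_i-\|H\|\le\tilde\sigma_i\le\sigma_i+\|H\|$. (Alternatively, one could derive the singular-value bound from the eigenvalue bound applied to the symmetric dilation $S(A)=\begin{pmatrix}0 & A\\ A^{\T} & 0\end{pmatrix}$, whose nonzero eigenvalues are $\pm\sigma_i$ and for which $\|S(H)\|=\|H\|$.)

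Finally, for consequence (1): if $\sigma_i\ge 2\|H\|$ then $\tilde\sigma_i\ge\sigma_i-\|H\|\ge\sigma_i-\tfrac{\sigma_i}{2}=\tfrac{\sigma_i}{2}$. For consequence (2): if $|\lambda_i|>\|H\|$ then either $\lambda_i>\|H\|$, so $\tilde\lambda_i\ge\lambda_i-\|H\|>0$, or $\lambda_i<-\|H\|$, so $\tilde\lambda_i\le\lambda_i+\|H\|<0$; in both cases $\tilde\lambda_i$ shares the sign of $\lambda_i$. There is no genuine obstacle in this argument --- the only points needing care are matching the correct variational formula (max--min versus min--max) to the direction of the bound being established, and, for the singular-value case, invoking the triangle inequality for $\|\cdot\|$ rather than a quadratic-form identity.
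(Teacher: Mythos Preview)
Your proof is correct and follows the standard Courant--Fischer route to Weyl's inequality. The paper itself does not prove this statement: it is recorded as a preparatory ``Fact'' in the linear algebra section and simply invoked throughout, so there is no paper proof to compare against. Your argument is exactly the textbook one and would be an appropriate supplement.
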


 For any symmetric matrix $H$ and index set $\alpha$, let $H^{\alpha}$ be equal be the matrix obtained from $H$ by 
 zeroing out the rows and columns indexed by $\alpha$ (replacing all entries in these rows and columns by zeros). 
 The following fact is well known and easy to prove.

\begin{fact}
\label{interlacingfacts}
For any index set $\alpha$, $\norm{H^{\alpha}} \leq \norm{H}$. 
\end{fact}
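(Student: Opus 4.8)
\textbf{Proof proposal for Fact~\ref{interlacingfacts}.} The plan is to realize $H^{\alpha}$ as a compression of $H$ sandwiched between coordinate projections, and then invoke the fact that such projections are contractions in the operator norm. Concretely, let $P$ denote the orthogonal projection onto the coordinate subspace spanned by $\{e_j : j \notin \alpha\}$, and let $Q = I - P$ be the projection onto the coordinates indexed by $\alpha$. The first step is to observe that zeroing out the rows \emph{and} columns indexed by $\alpha$ is exactly the operation $H \mapsto PHP$; one checks this entrywise, since $(PHP)_{jk} = H_{jk}$ when $j,k \notin \alpha$ and $(PHP)_{jk} = 0$ otherwise, which matches the definition of $H^{\alpha}$.

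The second step is the elementary operator-norm bound $\norm{PHP} \le \norm{P}\,\norm{H}\,\norm{P} = \norm{H}$, using submultiplicativity of the spectral norm together with $\norm{P} = 1$ (a nonzero orthogonal projection has operator norm exactly $1$). Combining the two steps gives $\norm{H^{\alpha}} = \norm{PHP} \le \norm{H}$, which is the claim. Note the argument does not use symmetry of $H$ at all, though $H$ is symmetric in our setting, and it gives the statement for an arbitrary index set $\alpha$ in one stroke.

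Since this is a standard fact, an alternative one-line justification is simply the variational characterization: for symmetric $H$, $\norm{H^{\alpha}} = \max_{\norm{v}=1,\ \mathrm{supp}(v)\subseteq \alpha^c} \abs{v^{\T} H v}$ is a maximum over a \emph{subset} of the unit sphere compared with $\norm{H} = \max_{\norm{v}=1}\abs{v^{\T}Hv}$, hence is no larger. There is essentially no obstacle here; the only thing to be slightly careful about is writing the projection identity $H^{\alpha} = PHP$ correctly (it is the two-sided compression, not a one-sided one), after which the norm bound is immediate from $\norm{P} \le 1$.
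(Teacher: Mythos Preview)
Your proof is correct. The paper does not actually prove this fact---it merely states that it is ``well known and easy to prove''---so your compression argument $H^{\alpha}=PHP$ together with $\norm{PHP}\le\norm{P}\,\norm{H}\,\norm{P}\le\norm{H}$ is exactly the kind of one-line justification the authors had in mind, and your alternative via the variational characterization is equally valid.
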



\subsection{Probability}

\begin{lemma}[Hoeffding's Inequality, Theorem 2.2.6 in \cite{vershyninHighdimensionalProbabilityIntroduction2018}]
\label{Hoeffding}

Let $X_{1}, X_{2}, ... X_{n}$ be independent zero-mean 
random variables such that $a_{i} < X_{i} < b_{i}$ with probability $1$. Then, 

\begin{equation}
    \mathbb{P}\Big\{\abs{\sum_{i = 1}^{n}X_{i}} > t\Big\} \leq 2\exp\Big(\frac{-2t^{2}}{\sum_{i = 1}^{n}(b_{i} - a_{i})^{2}}\Big).
\end{equation}

\end{lemma}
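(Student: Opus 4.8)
The plan is the standard exponential-moment (Chernoff) argument combined with Hoeffding's lemma. First I would fix $\lambda > 0$ and combine the exponential Markov inequality with independence of the $X_i$:
\[
\mathbb{P}\Big\{\sum_{i=1}^n X_i > t\Big\} \le e^{-\lambda t}\,\E\Big[e^{\lambda \sum_i X_i}\Big] = e^{-\lambda t}\prod_{i=1}^n \E\big[e^{\lambda X_i}\big].
\]
So everything reduces to a good bound on the moment generating function of a single bounded, centered variable.

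The key step is Hoeffding's lemma: if $\E X = 0$ and $a \le X \le b$ almost surely, then $\E[e^{\lambda X}] \le \exp\big(\lambda^2 (b-a)^2/8\big)$. To prove it I would use convexity of $x \mapsto e^{\lambda x}$ on $[a,b]$ to write, pointwise, $e^{\lambda x} \le \frac{b-x}{b-a}e^{\lambda a} + \frac{x-a}{b-a}e^{\lambda b}$, take expectations, and use $\E X = 0$ to get $\E[e^{\lambda X}] \le \frac{b}{b-a}e^{\lambda a} - \frac{a}{b-a}e^{\lambda b}$. Writing $p := -a/(b-a)$ and $h := \lambda(b-a)$, the right-hand side equals $e^{\varphi(h)}$ with $\varphi(h) := -ph + \log(1-p+pe^h)$. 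A direct computation gives $\varphi(0)=\varphi'(0)=0$ and $\varphi''(h) = \frac{p(1-p)e^h}{(1-p+pe^h)^2} \le \frac14$, the last inequality being $uv \le (u+v)^2/4$ with $u = 1-p$, $v = pe^h$. Taylor's theorem with remainder then gives $\varphi(h) \le h^2/8 = \lambda^2(b-a)^2/8$.

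Plugging this into the product bound yields
\[
\mathbb{P}\Big\{\sum_{i=1}^n X_i > t\Big\} \le \exp\Big(-\lambda t + \frac{\lambda^2}{8}\sum_{i=1}^n (b_i-a_i)^2\Big),
\]
and optimizing over $\lambda$ (taking $\lambda = 4t/\sum_i (b_i-a_i)^2$) produces the one-sided bound $\exp\big(-2t^2/\sum_i(b_i-a_i)^2\big)$. Finally I would apply the same inequality to the variables $-X_i$, which are again centered and satisfy $-b_i \le -X_i \le -a_i$ with the same range lengths, to control the lower tail, and combine the two tails by a union bound, which produces the factor $2$ and the absolute value. The only genuinely delicate point is the estimate $\varphi'' \le 1/4$ in Hoeffding's lemma; the rest is routine.
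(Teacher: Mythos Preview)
Your proof is correct and is exactly the standard Chernoff--Hoeffding argument. Note, however, that the paper does not give its own proof of this lemma: it is simply quoted from Vershynin's textbook (Theorem~2.2.6) as a preparatory tool, so there is no paper proof to compare against.
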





\begin{corollary}
\label{hoeffcor}

Let $\bf x$ be a random vector whose entries are independent, zero-mean,
K-bounded random variables. Then for any fixed unit vector  $\vect u$ and any $C >0$,

\begin{equation*}
 \mathbb{P} \{ |  {\bf x} ^T \vect u |  \ge C K\sqrt{\log n} \} \le 2\exp(-\frac{C^2}{2}\log n) = 2n^{-C^{2}/2}.
\end{equation*}

The same bound holds if $\vect u$ is a random unit vector from which $\vect x$ is independent.

\end{corollary}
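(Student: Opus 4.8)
The plan is to apply Hoeffding's inequality (Lemma~\ref{Hoeffding}) directly to the linear form $\vect x^{T}\vect u = \sum_{i=1}^{n} x_{i}u_{i}$. Writing $X_{i} := x_{i}u_{i}$, the variables $X_{1},\dots,X_{n}$ are independent (the $u_{i}$ are deterministic scalars and the $x_{i}$ are independent), have mean zero, and are bounded: since $|x_{i}| \le K$ almost surely, we have $-K|u_{i}| \le X_{i} \le K|u_{i}|$, so one may take $a_{i} = -K|u_{i}|$ and $b_{i} = K|u_{i}|$ in Lemma~\ref{Hoeffding}.

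Next I would compute the normalizing sum in the exponent: $\sum_{i=1}^{n}(b_{i}-a_{i})^{2} = \sum_{i=1}^{n}(2K|u_{i}|)^{2} = 4K^{2}\sum_{i=1}^{n}u_{i}^{2} = 4K^{2}$, using that $\vect u$ is a unit vector. Plugging $t = CK\sqrt{\log n}$ into Hoeffding's bound then gives
\[
\mathbb{P}\{|\vect x^{T}\vect u| > CK\sqrt{\log n}\} \le 2\exp\!\left(\frac{-2\,(CK\sqrt{\log n})^{2}}{4K^{2}}\right) = 2\exp\!\left(-\tfrac{C^{2}}{2}\log n\right) = 2n^{-C^{2}/2},
\]
which is exactly the claimed estimate; the passage from the strict inequality that Hoeffding produces to the non-strict one in the statement follows from monotone continuity of the distribution function (or can simply be absorbed), so this causes no difficulty.

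For the random-unit-vector version, I would condition on $\vect u$. Because $\vect x$ is independent of $\vect u$, for each fixed realization $\vect u = u$ the bound just proved applies verbatim, and the right-hand side $2n^{-C^{2}/2}$ is independent of $u$; averaging over the distribution of $\vect u$ via the tower property yields the same unconditional bound. There is no genuine obstacle in this argument; the only point requiring a moment's care is that each summand lives in an interval of length $2K|u_{i}|$ rather than $2K$, and it is precisely the normalization $\sum_{i}u_{i}^{2} = 1$ that collapses the $K$-dependence to a clean $K^{2}$ in the exponent instead of $nK^{2}$, which is what makes the bound useful in the applications later in the paper.
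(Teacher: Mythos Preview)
Your proof is correct and is exactly the intended argument: the paper states this as an immediate corollary of Hoeffding's inequality without giving a proof, and your direct application with $a_i=-K|u_i|$, $b_i=K|u_i|$ and $\sum_i(b_i-a_i)^2=4K^2$ is precisely what is meant. The conditioning argument for the random-$\vect u$ case is likewise standard and correct.
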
.

\begin{lemma}[Bernstein's Inequality, Theorem 2.8.4 in \cite{vershyninHighdimensionalProbabilityIntroduction2018}]
\label{bernstein}
Let $X_1, \dots X_n$ be independent, $K$ bounded, mean zero,  random variables. Then

\begin{equation}
\mathbb{P}\Big\{\abs{\sum_{i = 1}^n a_iX_i} \geq t\Big\} \leq 2\exp\Big(-\frac{t^2/2}{\sum_{i = 1}^{n}\E[X_{i}^2] + Kt/3}\Big).
\end{equation}
\end{lemma}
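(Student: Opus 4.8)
The plan is the classical Cram\'er--Chernoff exponential-moment argument. Fix a parameter $\lambda > 0$, write $S := \sum_{i=1}^n a_i X_i$, and apply Markov's inequality to the nonnegative random variable $e^{\lambda S}$; using independence of the $X_i$ this gives
\[
\mathbb{P}\{S \ge t\} \;\le\; e^{-\lambda t}\,\mathbb{E}\big[e^{\lambda S}\big] \;=\; e^{-\lambda t}\prod_{i=1}^n \mathbb{E}\big[e^{\lambda a_i X_i}\big].
\]
Thus the whole problem reduces to bounding the per-variable moment generating functions $\mathbb{E}[e^{\lambda a_i X_i}]$, after which one takes the product, a logarithm, and optimizes over $\lambda$.

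For the per-variable estimate I would expand the exponential as a power series and use the hypotheses: $\mathbb{E}[X_i]=0$ kills the linear term, and $|X_i| \le K$ gives $\lambda^k|\mathbb{E}[X_i^k]| \le \lambda^k K^{k-2}\mathbb{E}[X_i^2]$ for $k \ge 2$ (here also $|a_i| \le 1$, which is implicit in the statement and can always be arranged by rescaling). Combining this with the elementary inequality $k! \ge 2\cdot 3^{k-2}$ ($k \ge 2$) and summing the resulting geometric series, one obtains, for every $0 < \lambda < 3/K$ and with $v_i := \mathbb{E}[X_i^2]$,
\[
\mathbb{E}\big[e^{\lambda a_i X_i}\big] \;\le\; 1 + \frac{\lambda^2 v_i/2}{1 - \lambda K/3} \;\le\; \exp\!\Big(\frac{\lambda^2 v_i/2}{1 - \lambda K/3}\Big).
\]
Multiplying over $i$ and writing $\sigma^2 := \sum_{i=1}^n \mathbb{E}[X_i^2]$ yields $\mathbb{P}\{S \ge t\} \le \exp\!\big(-\lambda t + \tfrac{\lambda^2\sigma^2/2}{1 - \lambda K/3}\big)$ for all $0 < \lambda < 3/K$.

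The last step is to optimize the exponent. The choice $\lambda = t/(\sigma^2 + Kt/3)$, which one checks does lie in $(0, 3/K)$, makes $1 - \lambda K/3 = \sigma^2/(\sigma^2 + Kt/3)$, and after a one-line cancellation the bracket collapses exactly to $-\tfrac{1}{2}\,t^2/(\sigma^2 + Kt/3)$; this is the one-sided bound. Applying the same estimate to the variables $-X_i$ and taking a union bound over the two tails produces the factor $2$ and the stated two-sided inequality. The only point requiring care is purely bookkeeping: one must keep the constant $1/3$ consistent so that the geometric series in the per-variable step converges (i.e.\ stay in the regime $\lambda K < 3$) and verify that the near-optimal $\lambda$ stays in that regime. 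There is no conceptual obstacle here, which is precisely why the estimate is normally quoted directly from a reference such as \cite{vershyninHighdimensionalProbabilityIntroduction2018}.
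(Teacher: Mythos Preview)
The paper does not give its own proof of this lemma; it is stated as a standard fact and attributed to Vershynin's textbook. Your Cram\'er--Chernoff argument is the classical proof and is correct as written (including the verification that the optimizing $\lambda$ lies in $(0,3/K)$ and the handling of the $a_i$ via $|a_i|\le 1$), so there is nothing to compare.
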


The following is a corollary of a result 
from \cite{OVW1}; see Appendix \ref{sing-val-perturb} for the proof. 

\begin{theorem}
\label{ovw-singular-K}
Suppose that $E$ is a symmetric random matrix with $K$-bounded, mean zero, independent entries above the diagonal. Suppose that $A$ has rank $r$, and let $1 \leq k \leq r$ be an integer. Then, for any $t \geq 0$, the following hold.

\begin{equation}
\begin{split}
    \mathbb{P}\{\tilde{\sigma}_k < \sigma_k - t\} &\leq 4 \times 9^k\exp\Big(-\frac{t^{2}}{128K^2}\Big), \text{ and } \\
    \mathbb{P}\Bigg\{\tilde{\sigma}_k > \sigma_k + t\sqrt{r} + 2\sqrt{k}\frac{\norm{E}^{2}}{\tilde{\sigma}_k} + k\frac{\norm{E}^3}{\tilde{\sigma}^2_k}\Bigg\}& \leq  4 \times 9^{2r}\exp\Big(-r\frac{t^2}{128K^2}\Big).
    \end{split}
\end{equation}

In particular, 

$$\mathbb{P}\Big\{\abs{\tilde{\sigma}_k - \sigma_k} > 2r\Big(t + \frac{\norm{E}^2}{\tilde{\sigma_k}} + \frac{\norm{E}^3}{\tilde{\sigma}_k^2}\Big)\Big\} \leq 8\times 9^{2r}\exp\Big(-\frac{t^2}{128K^2}\Big).$$
\end{theorem}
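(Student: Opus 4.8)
The plan is to deduce both one‑sided bounds from the singular‑value perturbation estimate of \cite{OVW1}, the only new point being that for $K$‑bounded entries the relevant concentration scale is $K$: a $K$‑bounded mean‑zero variable has sub‑Gaussian norm at most $K$, and — what is actually used — Hoeffding's inequality (Lemma \ref{Hoeffding}) applies with that scale. Throughout, write $U_k = [\vect u_1,\dots,\vect u_k]$, $P_k = U_kU_k^\T$, and note that since $\rank(A)=r\ge k$ the compression $P_kAP_k = \sum_{i\le k}\lambda_i\vect u_i\vect u_i^\T$ has nonzero singular values exactly $\sigma_1,\dots,\sigma_k$ (here $\lambda_i$ are the eigenvalues of $A$ with $|\lambda_i|=\sigma_i$). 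I would prove the three inequalities in turn.

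\textbf{Lower tail.} Compression does not increase singular values, so $\tilde\sigma_k = \sigma_k(A+E) \ge \sigma_k\bigl(P_k(A+E)P_k\bigr)$, and Weyl's inequality (Fact \ref{weylfacts}) applied on $\mathrm{range}(P_k)$ gives $\sigma_k\bigl(P_k(A+E)P_k\bigr) \ge \sigma_k - \norm{U_k^\T EU_k}$. Hence $\{\tilde\sigma_k < \sigma_k - t\} \subseteq \{\norm{U_k^\T EU_k} > t\}$. The $k\times k$ symmetric matrix $U_k^\T EU_k$ has operator norm at most $2\max_{v\in\mathcal N}\abs{(U_kv)^\T E(U_kv)}$ over a $(1/4)$-net $\mathcal N$ of $S^{k-1}$ with $\abs{\mathcal N}\le 9^k$, and for each fixed unit vector $w=U_kv$ the quadratic form $w^\T Ew$ is a sum of independent, $K$‑bounded, mean‑zero terms whose coefficients have squares summing to $O(1)$, so Hoeffding bounds $\mathbb{P}\bigl(\abs{w^\T Ew} > t/2\bigr)$ by $2\exp(-t^2/(128K^2))$ once the adhoc constants are absorbed; a union bound over $\mathcal N$ gives the first inequality (with the prefactor $4\times 9^k$ and the constant $128$ matching \cite{OVW1}).

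\textbf{Upper tail.} Write $A+E$ in block form with respect to $\mathrm{range}(P_r)\oplus\mathrm{range}(I-P_r)$: the diagonal blocks are $U_r^\T(A+E)U_r$ (singular values near $\sigma_1,\dots,\sigma_r$) and $(I-P_r)E(I-P_r)$ (norm at most $\norm{E}$). The singular‑value estimate of \cite{OVW1} eliminates the lower‑right block by a Schur‑complement/resolvent expansion at scale $\asymp\tilde\sigma_k$, yielding $\sigma_k(A+E) \le \sigma_k\bigl(U_r^\T AU_r + U_r^\T EU_r\bigr) + 2\sqrt k\,\norm{E}^2/\tilde\sigma_k + k\,\norm{E}^3/\tilde\sigma_k^2$, the $\sqrt k$ and $k$ coefficients coming from iterating the expansion and the regime $\tilde\sigma_k\lesssim\norm{E}$ being trivial since then the correction dominates. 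Applying Weyl on the $r$‑dimensional block reduces the task to $\norm{U_r^\T EU_r}\le t\sqrt r$, which by the net‑and‑Hoeffding estimate above applied on $S^{r-1}$ holds with probability at least $1 - 4\times 9^{2r}\exp(-rt^2/(128K^2))$; the $\sqrt r$ scale of the allowed deviation is what produces the factor $r$ in the exponent. Every step is the argument of \cite{OVW1} with its sub‑Gaussian parameter replaced by $K$.

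\textbf{Combined bound, and the main obstacle.} The ``in particular'' statement follows by a union bound: since $1\le k\le r$ we have $2r\ge\max\{1,\sqrt r,2\sqrt k,k\}$, so the event $\{\abs{\tilde\sigma_k-\sigma_k} > 2r(t + \norm{E}^2/\tilde\sigma_k + \norm{E}^3/\tilde\sigma_k^2)\}$ is contained in the union of the lower‑tail event at level $t$ and the upper‑tail event at level $t$; adding the two probabilities and using $\exp(-rt^2/(128K^2))\le\exp(-t^2/(128K^2))$ gives $8\times 9^{2r}\exp(-t^2/(128K^2))$. The one genuinely substantial ingredient is the Schur‑complement control of the off‑diagonal blocks $P_rE(I-P_r)$ and $(I-P_r)E(I-P_r)$ that yields precisely the $\norm{E}^2/\tilde\sigma_k$ and $\norm{E}^3/\tilde\sigma_k^2$ corrections with the stated coefficients; since we are entitled to quote \cite{OVW1} for this, what remains for the appendix is the routine but careful bookkeeping of checking that every estimate there goes through verbatim with the uniform bound $K$ in place of a sub‑Gaussian norm.
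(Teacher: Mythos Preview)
Your proposal is correct and follows the same underlying logic as the paper, but the paper's treatment is more modular. Rather than re-tracing the net/Hoeffding and Schur-complement arguments of \cite{OVW1} with $K$ inserted for the sub-Gaussian scale, the paper simply observes (via Hoeffding, Lemma~\ref{Hoeffding}) that a symmetric $K$-bounded matrix satisfies the abstract $(C,c,\gamma)$-\emph{concentration property} of \cite{OVW1} with $(C,c,\gamma)=(2,\,1/(8K^2),\,2)$, and then invokes their general singular-value theorem (Theorem~\ref{ovw-singular}) as a black box; substituting those parameters yields exactly the constants $4\times 9^k$, $4\times 9^{2r}$, and $128K^2$. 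Your sketch of the compression/net argument for the lower tail and the block/resolvent expansion for the upper tail is precisely what \cite{OVW1} does to prove Theorem~\ref{ovw-singular}, so what you wrote is a faithful unpacking of the cited result rather than a different proof; the paper just avoids repeating that work.
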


\section{Proof of Theorem \ref{coordinatetheorem}}
\label{deterministicproof}

In this proof, both $A$ and $H$ are deterministic. We will  examine the effect of the full perturbation $H$ on entry $u_{il}$ by first considering the auxiliary perturbation $H^{\{l\}}$ (which is obtained from $H$ by leaving out the $l$th row and column). This is an example of the so-called leave-one-out strategy,  which has been used by many researchers in recent studies \cite{abbefanentrywise,leaveoneoutcompletion,chensurvey,zhang2022leaveoneout,zhong2017eigenvector}. Next, we need to add the $l$th row and column back 
and consider the impact of these. This is the more technical part of the proof,  which requires a careful analysis.

 Let $H_{\{l\}} = H - H^{\{l\}}$. By definition, the entries outside the $l$th row and column of $H_{\{l\}}$ are all zero. We set $A^{\{l\}} := A + H^{\{l\}}$ and call the singular values and  singular vectors of this matrix $\sigma^{\{l\}}_i$ and  $\vect u^{\{l\}}_{i}$, respectively. 
The $l$th entry of $\vect u^{\{l\}}_i$ is $u^{\{l\}}_{il}$.

 First, we show that the effect of $H^{\{l\}}$ on $u_{il}$ is extremely small. This is the content of Lemma \ref{firststep}. 
Once this is established, we view  $\tilde{A}$ as a perturbation of $A^{\{l\}}$, $\tilde A=  A^{\{l\}}+ H_{\{l\}}$. 
The structure of $H_{\{l\}}$ (now viewed as noise) will allow us to deduce a strong $\ell_{2}$ bound for the leading singular vectors of $\tilde{A}$. The key here is that this bound will be so strong that even when we use it to upper bound 
the entry-wise perturbation, the result still leads to the 
claim of our theorem. 
This bound is the content of Lemma \ref{secondstep}, which is the most technical part of the proof and requires some novel ideas, going 
far beyond applying the standard Davis-Kahan bound.


\begin{lemma} \label{firststep} 
Under the conditions of Theorem \ref{coordinatetheorem}, for any $1 \leq l \leq n$,
$$\abs{u^{\{l\}}_{il} - u_{il}} \le 2r\norm{U_{l, \cdot}}_{\infty}\Big[\kappa_i\norm{\tilde{\vect u}_i - \vect u_i}_2 + \kappa_i\norm{\tilde{\vect u}_i - \vect u^{\{l\}}_i}_2 + \epsilon_1(i)\Big].$$
\end{lemma}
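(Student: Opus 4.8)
\textbf{Proof plan for Lemma \ref{firststep}.} The whole point is that $A$ and $A^{\{l\}} = A + H^{\{l\}}$ have the \emph{same} $l$th row, since $H^{\{l\}}$ zeroes out row and column $l$. So I would start from the eigenvector identity for the $l$th coordinate. Write $\lambda_i$ for the eigenvalue of $A$ with $|\lambda_i| = \sigma_i$ and eigenvector $\vect u_i$, and $\lambda_i^{\{l\}}$ for the corresponding eigenvalue of $A^{\{l\}}$; by Fact \ref{interlacingfacts} and Fact \ref{weylfacts} (using the signal-to-noise hypothesis $\sigma_i > C_0\norm{H}$) these have the same sign and $|\lambda_i^{\{l\}}| = \sigma_i^{\{l\}} \ge \sigma_i - \norm{H^{\{l\}}} \ge \sigma_i/2$. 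From $A\vect u_i = \lambda_i \vect u_i$ and $A^{\{l\}}\vect u_i^{\{l\}} = \lambda_i^{\{l\}}\vect u_i^{\{l\}}$, reading off coordinate $l$ and using that the two matrices share their $l$th row $A_{l,\cdot}$ gives $u_{il} = \lambda_i^{-1} A_{l,\cdot}\vect u_i$ and $u_{il}^{\{l\}} = (\lambda_i^{\{l\}})^{-1} A_{l,\cdot}\vect u_i^{\{l\}}$.

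Next I would exploit that $A$ has rank $r$: $A = \sum_{k=1}^r \lambda_k \vect u_k\vect u_k^{\T}$, so $A_{l,\cdot}\vect w = \sum_{k=1}^r \lambda_k u_{kl}\langle \vect u_k,\vect w\rangle$ for any $\vect w$. Choose the sign of $\vect u_i^{\{l\}}$ so its angle with $\vect u_i$ is at most $\pi/2$, and set $\vect w := \vect u_i^{\{l\}} - \vect u_i$. Plugging $\vect u_i^{\{l\}} = \vect u_i + \vect w$ into the second identity and using orthonormality of $\{\vect u_k\}$ to get $A_{l,\cdot}\vect u_i = \lambda_i u_{il}$, I obtain the clean decomposition
\[ u_{il}^{\{l\}} - u_{il} = u_{il}\cdot\frac{\lambda_i - \lambda_i^{\{l\}}}{\lambda_i^{\{l\}}} + \frac{1}{\lambda_i^{\{l\}}}\sum_{k=1}^r \lambda_k u_{kl}\langle \vect u_k,\vect w\rangle . \]

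Then it is just estimation. For the first term: $|\lambda_i - \lambda_i^{\{l\}}| \le \norm{H^{\{l\}}} \le \norm{H}$ (Facts \ref{interlacingfacts}, \ref{weylfacts}) and $|\lambda_i^{\{l\}}| \ge \sigma_i/2$, so it is at most $2|u_{il}|\norm{H}/\sigma_i = 2|u_{il}|\epsilon_1(i) \le 2\norm{U_{l,\cdot}}_{\infty}\epsilon_1(i)$. For the second term: bound $|u_{kl}| \le \norm{U_{l,\cdot}}_{\infty}$ and $|\lambda_k| \le \sigma_1$ for all $k \le r$, and apply Cauchy--Schwarz over the $r$ orthonormal directions, $\sum_{k=1}^r |\langle \vect u_k,\vect w\rangle| \le \sqrt r\,\norm{\vect w}_2$; dividing by $|\lambda_i^{\{l\}}| \ge \sigma_i/2$ gives at most $2\sqrt r\,\kappa_i\norm{U_{l,\cdot}}_{\infty}\norm{\vect u_i^{\{l\}} - \vect u_i}_2$. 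Finally pass to $\tilde{\vect u}_i$ via the triangle inequality $\norm{\vect u_i^{\{l\}} - \vect u_i}_2 \le \norm{\tilde{\vect u}_i - \vect u_i}_2 + \norm{\tilde{\vect u}_i - \vect u_i^{\{l\}}}_2$, and absorb $\sqrt r \le r$, $1 \le \kappa_i$ to reach exactly the claimed bound.

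This argument is essentially just linear algebra, so there is no deep obstacle; the only points needing care are (i) the sign/normalization bookkeeping for $\vect u_i^{\{l\}}$ (needed so that $\norm{\vect w}_2$ is genuinely the small quantity, though the algebraic identity itself is sign-free), and (ii) the lower bound $|\lambda_i^{\{l\}}| \ge \sigma_i/2$, which is where the signal-to-noise hypothesis $\sigma_i > C_0\norm{H}$ enters; everything else is constant-chasing.
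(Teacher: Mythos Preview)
Your proposal is correct and follows essentially the same approach as the paper's proof: both exploit that $A$ and $A^{\{l\}}$ share the $l$th row, expand $A_{l,\cdot}$ via the rank-$r$ spectral decomposition of $A$, and then estimate using Weyl's inequality together with $\sigma_i^{\{l\}} \ge \sigma_i/2$ before applying the triangle inequality to pass through $\tilde{\vect u}_i$. The only cosmetic differences are that you keep the $k=i$ term inside the sum (the paper separates it out first) and you use Cauchy--Schwarz to get $\sqrt r$ rather than summing to $r$; neither affects the argument or the stated bound.
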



\begin{lemma} \label{secondstep} 
Under the conditions of Theorem \ref{coordinatetheorem}, for any $1 \leq l \leq n$,

\begin{equation*}
\begin{split}
     \norm{\tilde{\vect u}_i - \vect u^{\{l\}}_i}_{2}&\leq 68r^{1/2}\Big[[\epsilon_1(i) + a_l\epsilon_2(i)]\abs{\tilde{u}_{il}}  + \kappa_ia_l\norm{U_{l, \cdot}}_{\infty}\epsilon_2(i)\Big] + 32\frac{\abs{\langle \vect u^{\{l\}}_i, \vect x \rangle} }{\sigma_i}.
\end{split}
\end{equation*}
\end{lemma}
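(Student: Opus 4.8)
\emph{Proof proposal for Lemma~\ref{secondstep}.} The crucial structural fact is that $H_{\{l\}}:=H-H^{\{l\}}$ is supported on row and column $l$ only, so $H_{\{l\}}=\vect{e}_l\vect{x}^{T}+\vect{x}\vect{e}_l^{T}$ with $\vect{x}=\vect{x}(l)$ exactly the vector in the statement (row $l$ of $H$, with its diagonal entry halved). Thus $\tilde A=A^{\{l\}}+\vect{e}_l\vect{x}^{T}+\vect{x}\vect{e}_l^{T}$ is a rank-$\le 2$ perturbation of $A^{\{l\}}$, and this --- not Davis--Kahan --- is what I would exploit. Writing $\tilde\lambda_i=\pm\tilde\sigma_i$ for the eigenvalue of $\tilde A$ carrying $\tilde{\vect u}_i$, the relation $\tilde A\tilde{\vect u}_i=\tilde\lambda_i\tilde{\vect u}_i$ rearranges to $(\tilde\lambda_i I-A^{\{l\}})\tilde{\vect u}_i=\langle\vect{x},\tilde{\vect u}_i\rangle\vect{e}_l+\tilde u_{il}\vect{x}$. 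Let $P=\vect u^{\{l\}}_i(\vect u^{\{l\}}_i)^{T}$ and let $Q$ be the orthoprojector onto the span of the $r$ leading singular vectors of $A^{\{l\}}$, so $P\le Q$. Fixing the sign of $\vect u^{\{l\}}_i$ so that $\langle\vect u^{\{l\}}_i,\tilde{\vect u}_i\rangle\ge 0$ and using $1-\sqrt{1-s^2}\le s^2$, one gets $\norm{\tilde{\vect u}_i-\vect u^{\{l\}}_i}_2\le\sqrt 2\,s$ with $s:=\norm{(I-P)\tilde{\vect u}_i}_2$, provided $s<1$ (which will fall out of the bound); so the task reduces to estimating $s$. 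Applying $I-P$ to the displayed identity and inverting $\tilde\lambda_i I-A^{\{l\}}$ on the range of $I-P$ gives
\[
(I-P)\tilde{\vect u}_i=\langle\vect{x},\tilde{\vect u}_i\rangle\,R\vect{e}_l+\tilde u_{il}\,R\vect{x},\qquad R:=(\tilde\lambda_i I-A^{\{l\}})^{-1}(I-P).
\]

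The key step is to control $R$ by splitting $I-P=(I-Q)+(Q-P)$. On the bulk $\Span(I-Q)$ the eigenvalues of $A^{\{l\}}$ lie within $\norm{H}$ of $0$ (Weyl, using $\rank A=r$), while $\abs{\tilde\lambda_i}\ge\sigma_i-\norm{H}\ge\sigma_i/2$ by the signal-to-noise hypothesis, so $R$ has norm $\le 2/\sigma_i$ there. On $\Span(Q-P)$ --- the $r-1$ remaining outlier directions --- I claim $\dist\big(\tilde\lambda_i,\{\lambda^{\{l\}}_j:j\le r,\ j\ne i\}\big)>\delta_i/2$. This uses the third hypothesis of Theorem~\ref{coordinatetheorem} together with Cauchy interlacing: since $H^{\{l\}}$ and $H_{\{l\}}$ both vanish off row and column $l$, the $(n-1)\times(n-1)$ principal submatrix of $\tilde A$ obtained by deleting coordinate $l$ coincides with that of $A^{\{l\}}$, whence $\lambda^{\{l\}}_{i+1}\le\tilde\lambda_i\le\lambda^{\{l\}}_{i-1}$, and the gap hypothesis $\min\{\abs{\tilde\sigma_i-\sigma^{\{l\}}_{i\pm1}}\}>\delta_i/2$ then propagates to the whole outlier set. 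Hence $R$ has norm $\le 2/\delta_i$ on $\Span(Q-P)$.

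It remains to estimate the source data. One has $\norm{(I-Q)\vect{e}_l}_2\le 1$, $\norm{(I-Q)\vect{x}}_2\le\norm{\vect{x}}_2\le\norm{H}$ (a row of a symmetric matrix has $\ell_2$-norm at most its operator norm), $\norm{(Q-P)\vect{x}}_2\le\norm{U^{\{l\}T}\vect{x}}_2=a_l$; and, crucially, since row $l$ of $A^{\{l\}}$ equals row $l$ of $A$, the identity $\lambda^{\{l\}}_j(\vect u^{\{l\}}_j)_l=A_{l,\cdot}\vect u^{\{l\}}_j$ (together with Weyl, and Davis--Kahan for the pair $(A,A^{\{l\}})$) lets one bound the $(Q-P)$-component of $R\vect{e}_l$ by $\lesssim r^{1/2}\kappa_i\norm{U_{l,\cdot}}_\infty\epsilon_2(i)$, while $\norm{(Q-P)\vect{e}_l}_2\le 1$ holds trivially --- this last is the version used in the feedback terms. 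For $\langle\vect{x},\tilde{\vect u}_i\rangle$, writing $\tilde{\vect u}_i=P\tilde{\vect u}_i+(Q-P)\tilde{\vect u}_i+(I-Q)\tilde{\vect u}_i$ gives $\abs{\langle\vect{x},\tilde{\vect u}_i\rangle}\le\abs{\langle\vect u^{\{l\}}_i,\vect{x}\rangle}+a_l\norm{(Q-P)\tilde{\vect u}_i}_2+\norm{H}\norm{(I-Q)\tilde{\vect u}_i}_2$. Substituting everything into the two displayed identities, taken separately on $\Span(I-Q)$ and $\Span(Q-P)$, yields a $2\times2$ system of inequalities for $\alpha:=\norm{(I-Q)\tilde{\vect u}_i}_2$ and $\beta:=\norm{(Q-P)\tilde{\vect u}_i}_2$. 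The point that closes it is that the only $\norm{H}$-sized feedback is routed through $\alpha$ (the bulk, where the resolvent scale is the favourable $1/\sigma_i$), whereas the feedback into $\beta$ carries only factors $a_l/\delta_i$ or $\kappa_i\norm{H}\norm{U}_\infty/\delta_i$; by hypotheses (1)--(2) of Theorem~\ref{coordinatetheorem} (so $\norm{H}/\sigma_i$, $a_l/\delta_i$, $\kappa_i\norm{H}\norm{U}_\infty/\delta_i$ are all $\le 1/C_0$, with $C_0=272\cdot 4r^{3/2}$) every self-feedback coefficient is $\le 1/4$, and the system solves to $s\le\alpha+\beta\lesssim r^{1/2}\big[(\epsilon_1(i)+a_l\epsilon_2(i))\abs{\tilde u_{il}}+\kappa_i a_l\norm{U_{l,\cdot}}_\infty\epsilon_2(i)\big]+\abs{\langle\vect u^{\{l\}}_i,\vect{x}\rangle}/\sigma_i$ (using $\abs{\langle\vect u^{\{l\}}_i,\vect{x}\rangle}\le a_l$ wherever convenient). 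In particular $s<1$, and $\sqrt 2\,s$ is the claimed bound, the loose constants $68r^{1/2}$ and $32$ leaving ample room.

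The main obstacle is the resolvent control of the previous paragraph: one must be careful with the sign of $\tilde\lambda_i$ (top versus bottom of the spectrum) and with the reindexing between singular-value and eigenvalue order so that the interlacing sandwich $\lambda^{\{l\}}_{i+1}\le\tilde\lambda_i\le\lambda^{\{l\}}_{i-1}$ is legitimate, and one must verify that the gap hypothesis --- which a priori only separates $\tilde\sigma_i$ from $\sigma^{\{l\}}_{i\pm1}$ --- separates $\tilde\lambda_i$ from the entire rest of the spectrum of $A^{\{l\}}$. The second delicate part is the bookkeeping: deciding term by term whether to use $\norm{(Q-P)\vect{e}_l}_2\le 1$ or the refined $\lesssim r^{1/2}\kappa_i\norm{U_{l,\cdot}}_\infty$, keeping the $\norm{H}$-feedback away from $\beta$, and then untangling the coupled system --- this is precisely where the $r^{3/2}$ in $C_0$ is spent. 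Everything rests on the feature, special to the leave-one-out perturbation, that $A$, $A^{\{l\}}$ and $\tilde A$ agree off row and column $l$.
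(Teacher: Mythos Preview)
Your approach matches the paper's in structure: both split $(I-P)\tilde{\vect u}_i$ into an outlier piece and a bulk piece, exploit the rank-$2$ structure $H_{\{l\}}=\vect e_l\vect x^{T}+\vect x\vect e_l^{T}$, and then solve a coupled feedback system for the two components. Your Cauchy-interlacing observation --- that $\tilde A$ and $A^{\{l\}}$ share the principal submatrix obtained by deleting row and column $l$, so that their spectra are sandwiched by a common sequence --- is a clean way to propagate the neighbour-only gap hypothesis $\min\{|\tilde\sigma_i-\sigma^{\{l\}}_{i\pm1}|\}>\delta_i/2$ to the full outlier set; the paper invokes this step tersely in Proposition~\ref{reduction}.

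The one substantive difference is your choice of threshold. You take $Q$ to project onto all $r$ leading singular vectors of $A^{\{l\}}$, whereas the paper uses only the first $p=\min\{j:\sigma_{j+1}<\sigma_i/4\}\le r$. This matters precisely at your claim $\|(Q-P)R\vect e_l\|_2\lesssim r^{1/2}\kappa_i\|U_{l,\cdot}\|_\infty\epsilon_2(i)$: the identity $\lambda^{\{l\}}_j(\vect u^{\{l\}}_j)_l=A_{l,\cdot}\vect u^{\{l\}}_j$ gives $|(\vect u^{\{l\}}_j)_l|\le r^{1/2}\sigma_1\|U_{l,\cdot}\|_\infty/\sigma^{\{l\}}_j$, which delivers the $\kappa_i\|U_{l,\cdot}\|_\infty$ factor only when $\sigma^{\{l\}}_j\gtrsim\sigma_i$, i.e.\ only for $j\le p$. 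Nothing in the hypotheses of Theorem~\ref{coordinatetheorem} forces $\sigma_r$ to be large, so if $p<r$ the directions $p<j\le r$ may have $\sigma^{\{l\}}_j$ arbitrarily small and your identity-based bound blows up. The paper's fix is exactly the threshold: directions with $\sigma_j<\sigma_i/4$ are thrown into the bulk, where the resolvent scale is $1/\sigma_i$ and the trivial bound $|(\vect u^{\{l\}}_j)_l|\le 1$ suffices --- this is Proposition~\ref{leaveoneoutrowbound} and Lemma~\ref{leaveoneoutprojectionbound}. Replace your $Q$ by the projection onto $\vect u^{\{l\}}_1,\dots,\vect u^{\{l\}}_p$ and the rest of your argument goes through and matches the paper's.
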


\begin{proof}[Proof of Theorem \ref{coordinatetheorem} given the lemmas] 

By the triangle inequality and the fact that $\ell_2$ norm dominates the  $\ell_{\infty}$ norm, we have 
\begin{equation*}
\begin{split}\abs{\tilde{u}_{il} - u_{il}} &\leq \abs{u^{\{l\}}_{il} - u_{il}} + \norm{\tilde{\vect u}_i - \vect u^{\{l\}}_i}_{2}. \\
\end{split}
\end{equation*} By Lemma \ref{firststep}, we have \begin{equation*}
\begin{split}\abs{\tilde{u}_{il} - u_{il}} &\leq \abs{u^{\{l\}}_{il} - u_{il}} + \norm{\tilde{\vect u}_i - \vect u^{\{l\}}_i}_{2} \\
&\leq 2r\norm{U_{l, \cdot}}_{\infty}(\kappa_i\norm{\tilde{\vect{u}}_i - \vect{u}_i}_2 + \kappa_i\norm{\tilde{\vect{u}}_{i} - \vect u^{\{l\}}_i}_2 + \epsilon_1(i)) +  \norm{\tilde{\vect u}_i - \vect u^{\{l\}}_i}_{2} \\
&\leq 2r\norm{U_{l, \cdot}}_{\infty}(\kappa_i\norm{\tilde{\vect{u}}_i - \vect{u}_i}_2 + \epsilon_1(i)) +  4r\norm{\tilde{\vect u}_i - \vect u^{\{l\}}_i}_{2}. 
\end{split}
\end{equation*} Now using Lemma \ref{secondstep} to bound $\norm{\tilde{\vect u}_i - \vect u^{\{l\}}_i}_{2}$, we obtain, with $C_0 = 4*272r^{3/2}$,

\begin{equation*}
\abs{\tilde{u}_{il} - u_{il}} \le  \frac{C_0}{4}\Big[\kappa_i\norm{U_{l, \cdot}}_{\infty} \norm{\tilde{\vect u}_i - \vect u_i}_2 + [\epsilon_1(i) + a_l\kappa_i\epsilon_2(i)]\norm{U_{l, \cdot}}_{\infty} +  [\epsilon_1(i) + a_l\epsilon_2(i)]\abs{\tilde{u}_{il}}\Big] + 128r\frac{\abs{\langle \vect u^{\{l\}}_i, \vect x\rangle}}{\sigma_{i}}.
\end{equation*} By the triangle inequality, we can bound $\abs{\tilde{u}_{il}} \leq \abs{u_{il}} + \abs{\tilde{u}_{il} - u_{il}}$. This gives, letting $b = \epsilon_1(i) + a_l\epsilon_2(i)$, 

\begin{equation*}
\abs{\tilde{u}_{il} - u_{il}} \le  \frac{C_0}{4}\Big[\kappa_i\norm{U_{l, \cdot}}_{\infty} \norm{\tilde{\vect u}_i - \vect u_i}_2 + b\norm{U_{l, \cdot}}_{\infty} +  b(\abs{u_{il}} + \abs{\tilde{u}_{il} - u_{il}})\Big] + 128r\frac{\abs{\langle \vect u^{\{l\}}_i, \vect x\rangle}}{\sigma_{i}}.
\end{equation*} Then, the coefficient in front of $\abs{\tilde{u}_{il} - u_{il}}$ on the RHS is $\frac{C_0b}{4}$. 
Recall that $\epsilon_1(i) = \frac{\norm{E}}{\sigma_i}$ and $\epsilon_2(i) = \frac{1}{\delta_i}$. Therefore, because $\sigma_i \geq C_0\norm{E}$ and $\delta_i \geq C_0 a_l$, it must be the case that $b < \frac{2}{C_0}$. Then we can move all the terms with $\abs{\tilde{u}_{il} - u_{il}}$ to the left with coefficient at most $1/2$. This gives

\begin{equation} \label{bootstrap1}
 \frac{1}{ 2} |u_{ij} - \tilde u_{ij} |  \le  \frac{C_0}{4}\Big[\kappa_i\norm{U_{l, \cdot}}_{\infty} \norm{\tilde{\vect u}_i - \vect u_i}_2 + b\norm{U_{l, \cdot}}_{\infty} +  b\abs{u_{il}}\Big] + 128r\frac{\abs{\langle \vect u^{\{l\}}_i, \vect x\rangle}}{\sigma_{i}}.\end{equation}



\noindent Bounding the term  $\abs{u_{il}} $ on the RHS by $ \norm{U_{l, \cdot}}_{\infty}$,  we obtain

\begin{equation*}
\frac{1}{2}\abs{\tilde{u}_{il} - u_{il}} \le \frac{C_0\norm{U_{l, \cdot}}_{\infty} }{4}\Big[\kappa_i\norm{\tilde{\vect u}_i - \vect u_i}_2 + 2b\Big] + 128r\frac{\abs{\langle \vect u^{\{l\}}_i, \vect x\rangle}}{\sigma_{i}}.
\end{equation*}

\noindent Multiplying both sides  by $2$, we obtain the desired inequality (with room to spare). 

\end{proof}

In the next two sections, we  establish Lemmas \ref{firststep} and \ref{secondstep}, respectively.

\section{Proof of Lemma \ref{firststep}}
\label{sectionfirststep}
Recall that for a matrix $M$, 
$M_{l, \cdot}$ denotes  the  $l$th row of $M$, viewed as a vector. The key point of the leave-one-out analysis is that by definition, $A^{\{l\}}_{l, \cdot} = A _{l, \cdot} $. Furthermore, as $\vect u^{\{l\}}_i $ is a singular vector of $A^{\{l\}}$, 
$A^{\{l\}}\vect u^{\{l\}}_i $ is either $ \sigma^{\{l\}}_i \vect u^{\{l\}}_i$ or  $ -  \sigma^{\{l\}}_i \vect u^{\{l\}}_i$ and we will use the shorthand 
$A^{\{l\}}\vect u^{\{l\}}_i= \pm \sigma^{\{l\}}_i \vect u^{\{l\}}_i$. 
In all estimates where this shorthand appears, the sign does not matter. Since $A^{\{l\}}\vect{u}^{\{l\}}_i = \pm \sigma_{i}^{\{l\}}\vect{u}^{\{l\}}_i$,

$$u^{\{l\}}_{il}= \frac{\langle A^{\{l\}}_{l, \cdot}, \vect u^{\{l\}}_i\rangle}{\pm \sigma_{i}^{\{l\}}}= \frac{\langle A_{l, \cdot}, \vect u^{\{l\}}_i\rangle}{\pm \sigma_{i}^{\{l\}}}.$$

\noindent By the spectral decomposition of $A$, the RHS can be written as 

$$ \frac{\langle A_{l, \cdot}, \vect u^{\{l\}}_i\rangle }{\pm \sigma_{i}^{\{l\}}} = \frac{1}{\pm \sigma_{i}^{\{l\}}}\Big[\pm \sigma_{1} u_{1l}\vect u_1^{T} \pm \sigma_{2}u_{2l}\vect u^{T}_{2} +  \dots 
\pm \sigma_{r}u_{rl}\vect u^{T}_{r}
\Big]\vect u^{\{l\}}_i,$$ which implies, via the triangle inequality, that 

$$\lvert u^{\{l\}}_{il} - u_{il} \rvert \leq \Bigg\lvert \frac{\sigma_{i}}{\sigma_{i}^{\{l\}}}u_{il}\vect u_i^{T}\vect u^{\{l\}}_i- u_{il}\Bigg\rvert + 
\Bigg\lvert\frac{1}{\sigma_{i}^{\{l\}}}\sum_{j \neq i}^{r}\pm \sigma_{j}u_{jl}\vect u_j^T \vect u^{\{l\}}_i\Bigg\rvert.
$$In the first term on the RHS, we have eliminated the signs in front of $\sigma_{i}$ and $\sigma^{\{l\}}_i$. This is because their signs correspond to the signs of the corresponding eigenvalues of $A$ and $A^{\{l\}}$ respectively.  It must be the case that these eigenvalues have the same sign by Fact \ref{weylfacts}.
Studying the second term on the RHS, write $\vect u^{\{l\}}_i = \vect u_i + (\vect u^{\{l\}}_i - \vect u_i)$. By the orthogonality of $\vect u_i$ with $\vect u_j$ for $j \neq i$, it follows that 

\begin{equation}
\label{entrywiseexpansion1}
\lvert u^{\{l\}}_{il} - u_{il} \rvert \leq  \lvert \frac{\sigma_i}{\sigma^{\{l\}}_i}u_{il}\vect u_i^{T} \vect u^{\{l\}}_i- u_{il}\rvert + \frac{2}{\sigma_{i}}\sum_{j \neq i}^{r} \sigma_{j} \lvert u_{jl} \rvert \norm{\vect u^{\{l\}}_i - \vect u_i}_{2}.
\end{equation} To bound  first term on the right hand side of \eqref{entrywiseexpansion1}, write 
\begin{equation}
\begin{split}
\frac{\sigma_{i}}{\sigma_{i}^{\{l\}}}u_{il}\vect u_i^T \vect u^{\{l\}}_i- u_{il} &=  u_{il}\Bigg(\frac{\sigma_{i} - \sigma_{i}^{\{l\}}}{\sigma_{i}^{\{l\}}}\Bigg)\vect u_i^T \vect u^{\{l\}}_i + u_{il}\vect u_i^T(\vect u^{\{l\}}_i- \vect u_i).
\end{split}
\end{equation} Since $\sigma_{i}^{\{l\}} > \sigma_{i}/2$ by Facts \ref{weylfacts} and \ref{interlacingfacts}, the triangle inequality gives


\begin{equation}
    \begin{split}
        \lvert \frac{\sigma_{i}}{\sigma_{i}^{\{l\}}}u_{il}\vect u_i^T\vect u^{\{l\}}_i- u_{il}\rvert &\leq \abs{u_{il}}\frac{\abs{\sigma_{i} - \sigma_{i}^{\{l\}}}}{\sigma_{i}^{\{l\}}}\norm{\vect u_i}_{2}\norm{\vect u^{\{l\}}_i}_{2} + \abs{u_{il}}\norm{\vect u_i}_{2}\norm{\vect u^{\{l\}}_i- \vect u_i}_{2} \\
        & \leq 2\abs{u_{il}}\frac{\norm{H}}{\sigma_{i}} + \abs{u_{il}}\norm{ \vect u^{\{l\}}_i - \vect u_i}_{2} \\
        & \leq 2\epsilon_1(i)\norm{U_{l, \cdot}}_{\infty} + \norm{ \vect u^{\{l\}}_i - \vect u_i}_{2} \norm{U_{l, \cdot}}_{\infty}.
    \end{split}
\end{equation} The second line uses Fact \ref{weylfacts} to bound $\abs{\sigma_{i} - \sigma_{i}^{\{l\}}} \leq \norm{H^{\{l\}}}$, and Fact \ref{interlacingfacts} to get $\norm{H^{\{l\}}} \leq \norm{H}$. To bound the second term on the RHS of \eqref{entrywiseexpansion1}, bound $\abs{u_{il}} \leq \norm{U_{l, \cdot}}_{\infty}$ to obtain

$$\frac{2}{\sigma_{i}}\sum_{j \neq i}^{r} \sigma_{j} \lvert u_{jl} \rvert \norm{\vect u^{\{l\}}_i - \vect u_i }_{2} \leq 2\norm{U_{l, \cdot}}_{\infty}\frac{\norm{ \vect u^{\{l\}}_i - \vect u_i}_{2} \sigma_{1}(r-1)}{\sigma_{i}}.$$ We can bound this last term by $2\kappa_i\norm{U_{l, \cdot}}_{\infty}\norm{ \vect u^{\{l\}}_i - \vect u_i}_{2} (r - 1)$ to conclude that

\begin{equation}
\label{laststeplemma13}
    \begin{split}
\abs{u^{\{l\}}_{il} - u_{il}} &\leq 2\epsilon_1(i)\norm{U_{l, \cdot}}_{\infty} + \norm{ \vect u^{\{l\}}_i - \vect u_i}_{2} \norm{U_{l, \cdot}}_{\infty}  + 2\kappa_i\norm{U_{l, \cdot}}_{\infty}\norm{ \vect u^{\{l\}}_i - \vect u_i}_{2} (r - 1)\\
& \leq 2r\kappa_i\norm{U_{l, \cdot}}_{\infty}\norm{ \vect u^{\{l\}}_i - \vect u_i}_{2} + 2\epsilon_1(i)\norm{U_{l, \cdot}}_{\infty} \\
&\leq 2r\Big[\kappa_i\norm{U_{l, \cdot}}_{\infty}\norm{\tilde{\vect u}_i - \vect u_i}_2 + \kappa_i\norm{U_{l, \cdot}}_{\infty}\norm{\tilde{\vect u}_i - \vect{u}^{\{l\}}_i}_2 + \epsilon_1(i)\norm{U_{l, \cdot}}_{\infty} \Big].
    \end{split}
\end{equation} where the last step uses the triangle inequality. This concludes the proof of Lemma \ref{firststep}.

\section{Proof of Lemma \ref{secondstep}}
\label{sectionsecondstep}

In this section, we will view $\tilde{A}$ as a perturbation of $A^{\{l\}}$ with the perturbing matrix $H_{\{l\}}$. The main idea is that $H_{\{l\}}$ is only supported on one row and one column. By leveraging this and $\norm{U_{l, \cdot}}_{\infty}$, we can obtain a strong bound for $\norm{\tilde{\vect u}_i - \vect u^{\{l\}}_i}_2$. We begin with the following decomposition which will prove useful throughout:

\begin{equation}
\label{decomposition}
H_{\{l\}} = \vect x \vect e_l^{T} + \vect e_l \vect x^T.
\end{equation}

Recall that $\vect x$ is the $l$th row of $H$, but with the $l$th entry set to $H_{ll}/2$, as $H_{\{l\}}$ is  $H - H^{\{l\}}$. Define $p = \min\{j: \sigma_{j +1} < \sigma_{i}/4\}$. Let $P^{\{l\}}$ be the orthogonal projection to the orthogonal complement of the columns of  $U^{\{l\}}_{p}$, and let $V^{\{l\}}_{p}$ be the $n \times (p - 1)$ matrix whose columns are $\vect u^{\{l\}}_{1}, \dots , \vect u^{\{l\}}_{i-1},  \vect u^{\{l\}}_{i+1}, \dots, \vect u^{\{l\}}_{p}$. Expanding $\tilde{\vect u}_i$ in the coordinates of the orthonormal basis $\{\vect u^{\{l\}}_k\}_{1 \leq k \leq n}$,

$$ \tilde {\bf u}_i = \sum_{k=1}^p \alpha_k  {\bf u}_k ^{\{l\}} + P^{\{l\}} \tilde{\vect u}_i.$$ It follows that

\begin{equation}
\label{pythag}
    \begin{split}
\norm{\tilde{\vect u}_i - \vect u^{\{l\}}_i}^2_2 &= \langle \tilde{\vect u}_i, \tilde{\vect u}_i \rangle  + \langle \vect u^{\{l\}}_i, \vect u^{\{l\}}_i \rangle  - 2\langle \tilde{\vect u}_i , \vect u^{\{l\}}_i \rangle \\
&= 2(1 - \alpha_{i}^{2})\\
&= 2\sum_{k \neq i}^{n}\alpha_{k}^{2} \\
&= 2\sum_{k \neq i}^{p} \alpha_{k}^{2} + 2\norm{P^{\{l\}}\tilde{\vect u}_i}_2^{2} \\
&= 2\norm{V^{\{l\}T}_{p}\tilde{\vect u}_i}_2^{2} + 2\norm{P^{\{l\}}\tilde{\vect u}_i}_2^{2}.
    \end{split}
\end{equation} 
Therefore,
\begin{equation}
\label{sinedecomposition}
    \norm{\tilde{\vect u}_i - \vect u^{\{l\}}_i}_{2} \leq \sqrt{2}\Big[\norm{V^{\{l\}T}_{p}\tilde{\vect u}_i}_2 + \norm{P^{\{l\}}\tilde{\vect u}_i}_2\Big].
\end{equation} Proving Lemma \ref{secondstep} reduces to bounding the two terms on the RHS.
It is possible that the first term in the fourth line of \eqref{pythag} is a sum over an empty set (say $i = 1, p = 1$). In this case, $\norm{\tilde{\vect u}_i - \vect u^{\{l\}}_i}_{2} \leq \sqrt{2}\norm{P^{\{l\}}\tilde{\vect u}_i}_2.$ 

\begin{lemma}[$\norm{P^{\{l\}}\tilde{\vect u}_i}_{2} $ Bound]
\label{leaveoneoutprojectionbound}

\begin{equation}\norm{P^{\{l\}}\tilde{\vect u}_i}_{2} \leq 24\epsilon_1(i)(\abs{\tilde{u}_{il}} + \norm{ \tilde{\vect u}_i - \vect u^{\{l\}}_i}_{2}) + 8\frac{\abs{\langle \vect x, \vect u^{\{l\}}_i \rangle} }{\sigma_i}.
\end{equation}
\end{lemma}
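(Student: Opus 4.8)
\emph{Proof plan.} The strategy is a resolvent/spectral-gap argument: view $\tilde A = A^{\{l\}} + H_{\{l\}}$ as a perturbation of $A^{\{l\}}$, use that $P^{\{l\}}$ is the spectral projector of the symmetric matrix $A^{\{l\}}$ onto its eigenvectors $\vect u^{\{l\}}_{p+1},\dots,\vect u^{\{l\}}_n$ (an invariant subspace on which $\pm\tilde\sigma_i$ lies far from the spectrum), and finally exploit the rank-two structure \eqref{decomposition} of $H_{\{l\}}$. Concretely, from $\tilde A\tilde{\vect u}_i = \pm\tilde\sigma_i\tilde{\vect u}_i$ I would write $(\pm\tilde\sigma_i I - A^{\{l\}})\tilde{\vect u}_i = H_{\{l\}}\tilde{\vect u}_i$; since $P^{\{l\}}$ commutes with $A^{\{l\}}$, applying it gives $(\pm\tilde\sigma_i I - A^{\{l\}})\,P^{\{l\}}\tilde{\vect u}_i = P^{\{l\}}H_{\{l\}}\tilde{\vect u}_i$.

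The quantitative core is to show that $\pm\tilde\sigma_i$ is uniformly separated from the spectrum of $A^{\{l\}}$ restricted to $\operatorname{range}(P^{\{l\}})$. There, every eigenvalue $\mu$ of $A^{\{l\}}$ obeys $\abs{\mu}\le \sigma^{\{l\}}_{p+1}$, and by Fact~\ref{weylfacts} and Fact~\ref{interlacingfacts} one has $\sigma^{\{l\}}_{p+1}\le \sigma_{p+1}+\norm{H^{\{l\}}}\le \sigma_{p+1}+\norm{H}$; the definition $p=\min\{j:\sigma_{j+1}<\sigma_i/4\}$ together with $\sigma_i>C_0\norm{H}$ makes this $<\sigma_i/4+\sigma_i/C_0<\sigma_i/3$. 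Meanwhile $\tilde\sigma_i\ge\sigma_i-\norm{H}>\sigma_i/2$ by Fact~\ref{weylfacts}, so $\abs{\pm\tilde\sigma_i-\mu}\ge\tilde\sigma_i-\abs{\mu}\ge\sigma_i/6$ — the sign in $\pm\tilde\sigma_i$ and the sign of $\mu$ never matter, only $\abs{\pm\tilde\sigma_i}=\tilde\sigma_i$ and $\abs{\mu}$ enter. Hence $(\pm\tilde\sigma_i I - A^{\{l\}})$ is invertible on $\operatorname{range}(P^{\{l\}})$ with inverse of operator norm $\le 6/\sigma_i$, and, $P^{\{l\}}$ being a contraction, $\norm{P^{\{l\}}\tilde{\vect u}_i}_2\le \tfrac{6}{\sigma_i}\norm{H_{\{l\}}\tilde{\vect u}_i}_2$. (Equivalently, expand $\tilde{\vect u}_i$ in the orthonormal basis $\{\vect u^{\{l\}}_k\}$ and bound the coefficients $\alpha_k$ with $k>p$ one at a time.)

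It then remains to estimate $\norm{H_{\{l\}}\tilde{\vect u}_i}_2$. By \eqref{decomposition}, $H_{\{l\}}\tilde{\vect u}_i=\tilde u_{il}\,\vect x+\langle\vect x,\tilde{\vect u}_i\rangle\,\vect e_l$, so $\norm{H_{\{l\}}\tilde{\vect u}_i}_2\le\abs{\tilde u_{il}}\norm{\vect x}_2+\abs{\langle\vect x,\tilde{\vect u}_i\rangle}$. I would use $\norm{\vect x}_2\le\norm{H\vect e_l}_2\le\norm{H}$ (replacing the diagonal entry $H_{ll}$ by $H_{ll}/2$ only shrinks the norm of the $l$th row), and split $\abs{\langle\vect x,\tilde{\vect u}_i\rangle}\le\abs{\langle\vect x,\vect u^{\{l\}}_i\rangle}+\norm{\vect x}_2\norm{\tilde{\vect u}_i-\vect u^{\{l\}}_i}_2\le\abs{\langle\vect x,\vect u^{\{l\}}_i\rangle}+\norm{H}\norm{\tilde{\vect u}_i-\vect u^{\{l\}}_i}_2$ by Cauchy--Schwarz. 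Substituting and recalling $\epsilon_1(i)=\norm{H}/\sigma_i$ yields $\norm{P^{\{l\}}\tilde{\vect u}_i}_2\le 6\epsilon_1(i)\big(\abs{\tilde u_{il}}+\norm{\tilde{\vect u}_i-\vect u^{\{l\}}_i}_2\big)+6\abs{\langle\vect x,\vect u^{\{l\}}_i\rangle}/\sigma_i$, which already beats the asserted bound (with $24$ and $8$ in place of $6$), so the lemma follows with room to spare.

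There is no genuine obstacle here; the only step needing attention is the spectral-gap bookkeeping in the second paragraph — tracking how the definition of $p$ and the signal-to-noise hypothesis $\sigma_i>C_0\norm{H}$ combine to keep $\tilde\sigma_i$ bounded away from those eigenvalues of $A^{\{l\}}$ that survive $P^{\{l\}}$ — but this is routine and never close to tight.
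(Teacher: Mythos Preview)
Your proof is correct and follows essentially the same route as the paper's own argument: both use the eigenvector equation $\tilde A\tilde{\vect u}_i=\pm\tilde\sigma_i\tilde{\vect u}_i$ together with $\tilde A=A^{\{l\}}+H_{\{l\}}$, project onto the low-singular-value subspace of $A^{\{l\}}$, exploit the spectral separation guaranteed by the definition of $p$ and Weyl's inequality, and finish with the rank-two decomposition \eqref{decomposition}. The only cosmetic differences are that you phrase the gap step as inverting $(\pm\tilde\sigma_i I - A^{\{l\}})$ on $\operatorname{range}(P^{\{l\}})$ rather than bounding the quadratic form, and you expand $H_{\{l\}}\tilde{\vect u}_i$ directly before splitting $\langle\vect x,\tilde{\vect u}_i\rangle$ (whereas the paper first writes $H_{\{l\}}\tilde{\vect u}_i=H_{\{l\}}\vect u^{\{l\}}_i+H_{\{l\}}(\tilde{\vect u}_i-\vect u^{\{l\}}_i)$); these are equivalent and your slightly sharper constants are immaterial.
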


As we previously observed, there is no contribution from $\norm{V^{\{l\}T}_{p}\tilde{\vect u}_i}_2$ when the aforementioned sum is empty. So we assume without loss of generality that it is not. We first establish that it is sufficient to bound the quantity $\norm{U^{\{l\}T}_{p}H_{\{l\}}\tilde{\vect u}_i}_2$ by using the perturbation technique of \cite{OVW1}. This is the content of the following proposition, which is where we use the gap stability condition. 

\begin{proposition}
\label{reduction}
\begin{equation}
    \norm{V^{\{l\}T}_{p}\tilde{\vect u}_i}_2 \leq 2\epsilon_2(i)\norm{U^{\{l\}T}_{p}H_{\{l\}}\tilde{\vect u}_i}_2. 
\end{equation}
\end{proposition}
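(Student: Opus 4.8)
The plan is to exploit the eigenvector equation for $\tilde{A} = A^{\{l\}} + H_{\{l\}}$ together with the fact that $V^{\{l\}}_p$ spans the eigenvectors of $A^{\{l\}}$ whose singular values are at least $\sigma_i/4$ \emph{except} $\vect u^{\{l\}}_i$ itself, so that on this block the resolvent of $A^{\{l\}}$ at $\tilde{\sigma}_i$ is well-separated from a singularity. Concretely, I would start from $\tilde{A}\tilde{\vect u}_i = \pm\tilde{\sigma}_i \tilde{\vect u}_i$, write $\tilde{A} = A^{\{l\}} + H_{\{l\}}$, and project onto the columns of $V^{\{l\}}_p$. Since these columns are eigenvectors of $A^{\{l\}}$ with singular values $\sigma^{\{l\}}_k$ for $k \in \{1,\dots,p\}\setminus\{i\}$, the projection gives something of the form $(\pm\tilde{\sigma}_i I - D)\,V^{\{l\}T}_p \tilde{\vect u}_i = \pm V^{\{l\}T}_p H_{\{l\}}\tilde{\vect u}_i$, where $D$ is the diagonal matrix of the relevant (signed) singular values of $A^{\{l\}}$. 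Rearranging, $\norm{V^{\{l\}T}_p \tilde{\vect u}_i}_2 \le \norm{(\pm\tilde{\sigma}_i I - D)^{-1}}\,\norm{V^{\{l\}T}_p H_{\{l\}}\tilde{\vect u}_i}_2$, and the operator norm of the inverse is $1/\min_k |\pm\tilde{\sigma}_i - (\pm\sigma^{\{l\}}_k)|$ over the indices $k$ appearing in $V^{\{l\}}_p$.

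The next step is to show this minimum gap is at least $\delta_i/2$, which is exactly where the stability hypotheses of Theorem \ref{coordinatetheorem} enter. For $k = i\pm 1$ the third assumption $\min\{|\tilde{\sigma}_i - \sigma^{\{l\}}_{i+1}|, |\tilde{\sigma}_i - \sigma^{\{l\}}_{i-1}|\} > \delta_i/2$ directly handles the two neighbors. For the remaining $k \le p$ with $k \neq i-1, i, i+1$, I would argue that $\sigma^{\{l\}}_k$ is separated from $\sigma_i$ by at least $\delta_i$ up to small perturbations: by Weyl (Fact \ref{weylfacts}) and Fact \ref{interlacingfacts}, $|\sigma^{\{l\}}_k - \sigma_k| \le \norm{H^{\{l\}}} \le \norm{H}$ and $|\tilde\sigma_i - \sigma_i| \le \norm{H}$, while $|\sigma_i - \sigma_k| \ge \delta_i$ by definition of $\delta_i$ for $k \ne i\pm 1$; combined with the signal-to-noise assumption $\sigma_i > C_0\norm{H}$ (so $\norm{H}$ is tiny relative to $\sigma_i$, and one checks also relative to $\delta_i$ via the gap assumption $\delta_i > C_0\kappa_i\norm{H}\norm{U}_\infty$ — actually $\delta_i \gg \norm H$ follows since $\kappa_i \ge 1$ only weakly, so care is needed here), one gets $|\tilde\sigma_i - \sigma^{\{l\}}_k| \ge \delta_i - 2\norm{H} \ge \delta_i/2$. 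Also one must handle signs: if $\sigma_i$ and $\sigma_k$ correspond to eigenvalues of the \emph{same} sign the above is the relevant quantity, and if of opposite sign then $|\pm\tilde\sigma_i - (\pm\sigma^{\{l\}}_k)|$ is even larger (a sum rather than a difference), so the bound only improves. Hence $\norm{(\pm\tilde{\sigma}_i I - D)^{-1}} \le 2/\delta_i = 2\epsilon_2(i)$.

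Putting these together yields $\norm{V^{\{l\}T}_p \tilde{\vect u}_i}_2 \le 2\epsilon_2(i)\,\norm{V^{\{l\}T}_p H_{\{l\}}\tilde{\vect u}_i}_2$, and since the columns of $V^{\{l\}}_p$ are a subset of the columns of $U^{\{l\}}_p$, we have $\norm{V^{\{l\}T}_p H_{\{l\}}\tilde{\vect u}_i}_2 \le \norm{U^{\{l\}T}_p H_{\{l\}}\tilde{\vect u}_i}_2$, giving the claimed inequality. The main obstacle I anticipate is the bookkeeping in the second step: making sure that the definition of $p$ (the last index with $\sigma_{p+1} < \sigma_i/4$) is compatible with claiming a gap of at least $\delta_i/2$ for \emph{all} $k \le p$, $k \neq i$, and correctly tracking the signed-eigenvalue versus singular-value distinction so that the resolvent bound is valid; the choice $\sigma_{p+1} < \sigma_i/4$ is presumably there precisely to guarantee that for $k \le p$ the perturbations $\norm{H}$ cannot move $\sigma^{\{l\}}_k$ across $\tilde\sigma_i$, while indices $k > p$ are instead absorbed into the $P^{\{l\}}$ projection term handled by Lemma \ref{leaveoneoutprojectionbound}. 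Verifying that these two ranges are glued together correctly, with the constant $2$ surviving, is the delicate part; the resolvent estimate itself is routine once the gap is in hand.
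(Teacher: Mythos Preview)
Your approach is essentially the same as the paper's: project the identity $\tilde A = A^{\{l\}} + H_{\{l\}}$ applied to $\tilde{\vect u}_i$ onto $V^{\{l\}}_p$, obtain the diagonal system $(\pm\tilde\sigma_i I - D^{\{l\}})V^{\{l\}T}_p\tilde{\vect u}_i = V^{\{l\}T}_p H_{\{l\}}\tilde{\vect u}_i$, bound the inverse of the diagonal matrix, and finally pass from $V^{\{l\}}_p$ to $U^{\{l\}}_p$ using that the former's columns are a subset of the latter's. The paper does exactly this, in three lines.

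Where you diverge is in the justification of $\min_{k\ne i,\,k\le p}|\tilde\sigma_i - \sigma^{\{l\}}_k|\ge \delta_i/2$. You attempt a Weyl-based argument for $k\notin\{i-1,i,i+1\}$, arriving at $|\tilde\sigma_i - \sigma^{\{l\}}_k|\ge \delta_i - 2\|H\|$, and then correctly flag that $\delta_i\ge 4\|H\|$ is \emph{not} a consequence of the stated hypotheses of Theorem~\ref{coordinatetheorem}. The paper sidesteps this entirely: it simply invokes the third hypothesis and the monotonicity of the $\sigma^{\{l\}}_k$. The point is that since $\sigma^{\{l\}}_1\ge\cdots\ge\sigma^{\{l\}}_p$, the nearest element of $\{\sigma^{\{l\}}_k : k\ne i\}$ to $\tilde\sigma_i$ is one of $\sigma^{\{l\}}_{i-1},\sigma^{\{l\}}_{i+1}$ whenever $\tilde\sigma_i$ lies between them, in which case the third hypothesis directly gives the bound for \emph{all} $k$ at once --- no separate Weyl argument is needed. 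Your more elaborate case analysis is thus unnecessary, and the route you sketch (requiring $\delta_i\gtrsim\|H\|$) is not the one taken. Your caution about the sandwiching $\sigma^{\{l\}}_{i+1}\le\tilde\sigma_i\le\sigma^{\{l\}}_{i-1}$ is fair --- the paper leaves this implicit --- but this is the intended regime and is what the stability hypothesis is designed to capture.
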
Having established this, when we go to bound $\norm{U^{\{l\}T}_{p}H_{\{l\}}\tilde{\vect u}_i}_2$, the structure of $H_{\{l\}}$ will bring the $l$th row of $U^{\{l\}}_p$ into play. It is important that this row not be too large in norm.

\begin{proposition}[The $l$th row of $U^{\{l\}}_p$ is small] Let $\vect r^{\{l\}}$ denote the $l$th row of $U^{\{l\}}_p$ viewed as a column vector. Then 
\label{leaveoneoutrowbound}

\begin{equation}
\begin{split}
\norm{\vect r^{\{l\}}}_2 \le 8r^{1/2}\kappa_i\norm{U_{l, \cdot}}_{\infty}.
\end{split}
\end{equation}
\end{proposition}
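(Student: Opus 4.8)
\textbf{Proof proposal for Proposition \ref{leaveoneoutrowbound}.}

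The plan is to exploit the defining property of the leave-one-out matrix, namely that $A^{\{l\}}$ agrees with $A$ in the $l$th row and column, together with the eigenvector equation $A^{\{l\}}\vect u^{\{l\}}_j = \pm\sigma^{\{l\}}_j\vect u^{\{l\}}_j$. Concretely, for each $j$ with $1\le j\le p$ and $j \ne i$ (and also $j=i$, though that column is excluded from $U^{\{l\}}_p$), the $l$th coordinate satisfies $u^{\{l\}}_{jl} = \langle A^{\{l\}}_{l,\cdot}, \vect u^{\{l\}}_j\rangle / (\pm\sigma^{\{l\}}_j) = \langle A_{l,\cdot}, \vect u^{\{l\}}_j\rangle/(\pm\sigma^{\{l\}}_j)$, exactly as in the proof of Lemma \ref{firststep}. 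Expanding $A_{l,\cdot} = \sum_{k=1}^r (\pm\sigma_k) u_{kl}\vect u_k^T$ via the spectral decomposition of $A$, we get $|u^{\{l\}}_{jl}| \le \frac{1}{\sigma^{\{l\}}_j}\sum_{k=1}^r \sigma_k |u_{kl}|\, |\langle \vect u_k, \vect u^{\{l\}}_j\rangle|$. Since $j\le p$ means $\sigma^{\{l\}}_j \ge \sigma_{p}^{\{l\}}$, which by Weyl (Fact \ref{weylfacts}) and the definition of $p$ (where $\sigma_{p+1}<\sigma_i/4$, so $\sigma_p \ge \sigma_i/4$) is at least, say, $\sigma_i/8$ under the signal-to-noise assumption $\sigma_i > C_0\norm{H}$, the prefactor $1/\sigma^{\{l\}}_j$ is $O(1/\sigma_i)$. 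Bounding $|u_{kl}| \le \norm{U_{l,\cdot}}_\infty$ and $\sigma_k \le \sigma_1$ gives $|u^{\{l\}}_{jl}| \lesssim \frac{\sigma_1}{\sigma_i}\norm{U_{l,\cdot}}_\infty \sum_{k=1}^r |\langle \vect u_k, \vect u^{\{l\}}_j\rangle| = \kappa_i \norm{U_{l,\cdot}}_\infty \sum_{k=1}^r |\langle \vect u_k, \vect u^{\{l\}}_j\rangle|$.

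Next I would sum the squares over $j$ running over the $p-1$ columns of $U^{\{l\}}_p$. We have $\norm{\vect r^{\{l\}}}_2^2 = \sum_j |u^{\{l\}}_{jl}|^2 \lesssim \kappa_i^2 \norm{U_{l,\cdot}}_\infty^2 \sum_j \big(\sum_{k=1}^r |\langle \vect u_k,\vect u^{\{l\}}_j\rangle|\big)^2$. By Cauchy--Schwarz, $\big(\sum_{k=1}^r |\langle \vect u_k,\vect u^{\{l\}}_j\rangle|\big)^2 \le r\sum_{k=1}^r |\langle \vect u_k,\vect u^{\{l\}}_j\rangle|^2$, so the double sum is at most $r\sum_{k=1}^r \sum_j |\langle \vect u_k,\vect u^{\{l\}}_j\rangle|^2$. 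For each fixed $k$, the inner sum $\sum_j |\langle \vect u_k,\vect u^{\{l\}}_j\rangle|^2 \le \norm{\vect u_k}_2^2 = 1$ because the $\vect u^{\{l\}}_j$ are part of an orthonormal system (Bessel's inequality). Hence the double sum is bounded by $r\cdot r = r^2$, giving $\norm{\vect r^{\{l\}}}_2^2 \lesssim r^2 \kappa_i^2 \norm{U_{l,\cdot}}_\infty^2$, i.e. $\norm{\vect r^{\{l\}}}_2 \lesssim r\kappa_i\norm{U_{l,\cdot}}_\infty$. Tracking the absolute constants carefully (the $1/\sigma^{\{l\}}_j \le 8/\sigma_i$ factor is the main source, plus the Cauchy--Schwarz $\sqrt r$ and the Bessel step) should yield the stated bound $8r^{1/2}\kappa_i\norm{U_{l,\cdot}}_\infty$; note that the claimed exponent on $r$ is $1/2$ rather than $1$, so one must be slightly more careful — likely one does not both Cauchy--Schwarz in $k$ and also bound $r$ terms crudely, but instead uses $\sum_{k=1}^r\sum_j|\langle\vect u_k,\vect u^{\{l\}}_j\rangle|^2 = \sum_j \norm{U^T\vect u^{\{l\}}_j}_2^2 \le \sum_j 1 \le r$ directly (since there are at most $r$ relevant indices $j$ contributing, as $\vect u_k$ only has nonzero correlation budget $r$ total), avoiding one factor of $\sqrt r$.

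The main obstacle I anticipate is bookkeeping the $r$-dependence to land exactly at $r^{1/2}$: the naive chain of Cauchy--Schwarz and Bessel inequalities tends to produce $r$ rather than $r^{1/2}$, so one needs to organize the sum as $\sum_j \norm{U^T \vect u^{\{l\}}_j}_2^2$ and observe this is at most $\norm{U^T \cdot}_{\text{op}}^2$ times the number of $j$'s, or more sharply that $\sum_{\text{all } j} \norm{U^T\vect u^{\{l\}}_j}_2^2 = \norm{U^T}_F^2$-type quantity equal to $r$ exactly when summing over a full orthonormal basis — then restricting to $j\le p$ only decreases it. Combined with the single Cauchy--Schwarz turning $\sum_k|\cdot|$ into $\sqrt{r}\,(\sum_k|\cdot|^2)^{1/2}$, this gives $\norm{\vect r^{\{l\}}}_2 \le O(1)\cdot\kappa_i\norm{U_{l,\cdot}}_\infty\cdot\sqrt r\cdot\sqrt{r}$... so in fact one must be even more economical, perhaps not invoking Cauchy--Schwarz in $k$ at all but instead noting $|u^{\{l\}}_{jl}|^2 \le \frac{1}{(\sigma^{\{l\}}_j)^2}\norm{A_{l,\cdot}}_2^2\,|\cdot|$... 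In any case the structural content (eigenvector equation $+$ leave-one-out identity $+$ Bessel) is routine; the only real work is constant- and exponent-chasing, and I would handle the $r$-power by expressing everything through $\norm{U^{\{l\}T}_p U}$ type Frobenius norms whose columns/rows are controlled by orthonormality.
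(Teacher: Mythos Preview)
Your approach is structurally the same as the paper's --- eigenvector equation, leave-one-out identity $A^{\{l\}}_{l,\cdot}=A_{l,\cdot}$, spectral decomposition of $A$ --- but you are tangling yourself in the $r$-bookkeeping by expanding $A_{l,\cdot}$ into its spectral sum \emph{before} summing over $j$ and then reaching for Cauchy--Schwarz in $k$. That detour is what produces the spurious extra $\sqrt r$.

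The clean fix within your entry-wise framework is to apply Bessel's inequality directly to the unexpanded inner product:
\[
\norm{\vect r^{\{l\}}}_2^2 \;=\; \sum_{j\le p}\frac{1}{(\sigma^{\{l\}}_j)^2}\,\big|\langle A_{l,\cdot},\vect u^{\{l\}}_j\rangle\big|^2
\;\le\; \frac{1}{(\sigma^{\{l\}}_p)^2}\sum_{j\le p}\big|\langle A_{l,\cdot},\vect u^{\{l\}}_j\rangle\big|^2
\;\le\; \frac{\norm{A_{l,\cdot}}_2^2}{(\sigma^{\{l\}}_p)^2},
\]
and only \emph{then} compute $\norm{A_{l,\cdot}}_2^2 = \sum_{k=1}^r \sigma_k^2 u_{kl}^2 \le \sigma_1^2\norm{U_{l,\cdot}}_2^2 \le r\,\sigma_1^2\norm{U_{l,\cdot}}_\infty^2$. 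Combined with $\sigma^{\{l\}}_p \ge \sigma_i/8$ this gives exactly $8r^{1/2}\kappa_i\norm{U_{l,\cdot}}_\infty$. The paper does precisely this computation in matrix form: it writes $\vect r^{\{l\}T} = \vect e_l^T U^{\{l\}}_p = \vect e_l^T A\, U^{\{l\}}_p (\Sigma^{\{l\}}_p)^{-1} = U_{l,\cdot}\Sigma\, U^T U^{\{l\}}_p (\Sigma^{\{l\}}_p)^{-1}$ and bounds operator norms, so that the Bessel step becomes the single observation $\norm{U^T U^{\{l\}}_p}\le 1$, and the only source of $\sqrt r$ is $\norm{U_{l,\cdot}}_2\le \sqrt r\,\norm{U_{l,\cdot}}_\infty$. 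Your scalar and the paper's matrix versions are the same argument; the matrix packaging just makes the constant-tracking automatic.
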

The two propositions can be shown to give us a bound for $\norm{V^{\{l\}T}_{p}\tilde{\vect u}_i}_2$.

\begin{lemma}
\label{vbound}
\begin{equation}
   \norm{V^{\{l\}T}_{p}\tilde{\vect u}_i}_2 \leq  16r^{1/2}\epsilon_2(i)\Big[a_l(\abs{\tilde{u}_{il}} + \kappa_i\norm{U_{l, \cdot}}_{\infty})+ \kappa_i\norm{H}\norm{U_{l, \cdot}}_{\infty}\norm{\tilde{\vect u}_i- \vect u^{\{l\}}_i}_{2}\Big].
   \end{equation}
\end{lemma}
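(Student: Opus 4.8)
The plan is to feed Propositions~\ref{reduction} and \ref{leaveoneoutrowbound} into the rank-one structure of $H_{\{l\}}$. By Proposition~\ref{reduction} it suffices to bound $\norm{U^{\{l\}T}_p H_{\{l\}}\tilde{\vect u}_i}_2$ and then multiply by $2\epsilon_2(i)$. Using the decomposition \eqref{decomposition}, $H_{\{l\}} = \vect x\vect e_l^{T} + \vect e_l\vect x^{T}$, together with $\vect e_l^{T}\tilde{\vect u}_i = \tilde u_{il}$ and $\vect x^{T}\tilde{\vect u}_i = \langle \vect x, \tilde{\vect u}_i\rangle$, I would write
$$H_{\{l\}}\tilde{\vect u}_i = \tilde u_{il}\,\vect x + \langle \vect x, \tilde{\vect u}_i\rangle\,\vect e_l,$$
so that $U^{\{l\}T}_p H_{\{l\}}\tilde{\vect u}_i = \tilde u_{il}\, U^{\{l\}T}_p\vect x + \langle \vect x, \tilde{\vect u}_i\rangle\, \vect r^{\{l\}}$, where $\vect r^{\{l\}} = U^{\{l\}T}_p\vect e_l$ is exactly the $l$th row of $U^{\{l\}}_p$ appearing in Proposition~\ref{leaveoneoutrowbound}. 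The triangle inequality then gives
$$\norm{U^{\{l\}T}_p H_{\{l\}}\tilde{\vect u}_i}_2 \le \abs{\tilde u_{il}}\,\norm{U^{\{l\}T}_p\vect x}_2 + \abs{\langle \vect x, \tilde{\vect u}_i\rangle}\,\norm{\vect r^{\{l\}}}_2.$$

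Next I would control each factor. Since $i \le p \le r$ (indeed $\sigma_j \ge \sigma_i > \sigma_i/4$ for $j\le i$ forces $p\ge i$, while $\sigma_{r+1}=0<\sigma_i/4$ forces $p\le r$), the columns of $U^{\{l\}}_p$ form a subset of the columns of the $n\times r$ matrix $U^{\{l\}}$; hence $\norm{U^{\{l\}T}_p\vect x}_2 \le \norm{U^{\{l\}T}\vect x}_2 = a_l$, and since $\vect u^{\{l\}}_i$ is one of these columns, also $\abs{\langle \vect x,\vect u^{\{l\}}_i\rangle}\le a_l$. For $\abs{\langle \vect x, \tilde{\vect u}_i\rangle}$ I would split $\tilde{\vect u}_i = \vect u^{\{l\}}_i + (\tilde{\vect u}_i - \vect u^{\{l\}}_i)$, apply Cauchy--Schwarz, and use $\norm{\vect x}_2 \le \norm{H_{l,\cdot}}_2 \le \norm{H}$ to get $\abs{\langle \vect x, \tilde{\vect u}_i\rangle} \le a_l + \norm{H}\norm{\tilde{\vect u}_i - \vect u^{\{l\}}_i}_2$. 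Plugging in the bound $\norm{\vect r^{\{l\}}}_2 \le 8r^{1/2}\kappa_i\norm{U_{l,\cdot}}_\infty$ from Proposition~\ref{leaveoneoutrowbound}, and weakening the bare $a_l\abs{\tilde u_{il}}$ to $8r^{1/2}a_l\abs{\tilde u_{il}}$ (legitimate since $r\ge 1$), yields
$$\norm{U^{\{l\}T}_p H_{\{l\}}\tilde{\vect u}_i}_2 \le 8r^{1/2}\Big[a_l\big(\abs{\tilde u_{il}} + \kappa_i\norm{U_{l,\cdot}}_\infty\big) + \kappa_i\norm{H}\norm{U_{l,\cdot}}_\infty\norm{\tilde{\vect u}_i-\vect u^{\{l\}}_i}_2\Big].$$
Multiplying through by $2\epsilon_2(i)$ as in Proposition~\ref{reduction} produces the constant $16r^{1/2}$ and the stated inequality.

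There is essentially no genuine obstacle here: once Propositions~\ref{reduction} and \ref{leaveoneoutrowbound} are in hand, Lemma~\ref{vbound} is pure bookkeeping built on the rank-one identity for $H_{\{l\}}$. The only points that need a moment's care are the index bound $i\le p\le r$, which is what makes $U^{\{l\}}_p$ a column-submatrix of $U^{\{l\}}$ and thereby legitimizes $\norm{U^{\{l\}T}_p\vect x}_2\le a_l$ and $\abs{\langle\vect x,\vect u^{\{l\}}_i\rangle}\le a_l$, and the elementary estimate $\norm{\vect x}_2\le\norm{H}$; everything else is the triangle inequality, Cauchy--Schwarz, and constant-chasing.
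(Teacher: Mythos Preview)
Your proof is correct and follows essentially the same approach as the paper: both use Proposition~\ref{reduction} to reduce to $\norm{U^{\{l\}T}_pH_{\{l\}}\tilde{\vect u}_i}_2$, expand via the rank-one decomposition $H_{\{l\}}=\vect x\vect e_l^T+\vect e_l\vect x^T$, split $\tilde{\vect u}_i=\vect u^{\{l\}}_i+(\tilde{\vect u}_i-\vect u^{\{l\}}_i)$ in the $\vect x^T\tilde{\vect u}_i$ term, and invoke Proposition~\ref{leaveoneoutrowbound} for $\norm{\vect r^{\{l\}}}_2$. The only cosmetic difference is that the paper packages the two pieces as matrices $X=U^{\{l\}T}_p\vect x\vect e_l^T$ and $Y=\vect r^{\{l\}}\vect x^T$ before bounding, whereas you work directly with the resulting vectors; your explicit verification that $p\le r$ (hence $\norm{U^{\{l\}T}_p\vect x}_2\le a_l$) is a detail the paper glosses over.
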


We are now ready to prove Lemma \ref{secondstep}.

\begin{proof}[Proof of Lemma \ref{secondstep} given Lemmas \ref{leaveoneoutprojectionbound} and \ref{vbound}]

Lemmas \ref{leaveoneoutprojectionbound} and \ref{vbound} can be used to bound the RHS of \eqref{sinedecomposition}. In particular, temporarily setting $\beta := \norm{\tilde{\vect u}_i - \vect u^{\{l\}}_i}_{2}$ for brevity, 

\begin{equation}
\begin{split}
     \beta &\leq \sqrt{2}\Big[\norm{V^{\{l\}T}_{p}\tilde{\vect u}_i}_2 + \norm{P^{\{l\}}\tilde{\vect u}_i}_2\Big] \\
    &\leq 34r^{1/2}\Big[\epsilon_1(i)\abs{\tilde{u}_{il}} + a_l\epsilon_2(i)(\abs{\tilde{u}_{il}} + \kappa_i\norm{U_{l, \cdot}}_{\infty}) + [\epsilon_1(i) + \kappa_i\norm{H}\norm{U_{l, \cdot}}_{\infty}\epsilon_2(i)]\beta\Big] +  16\frac{\abs{\langle \vect x, \vect u^{\{l\}}_i \rangle} }{\sigma_i}.
\end{split}
\end{equation} The main observation is that $\beta$ appears on both the LHS and RHS of the inequality. The coefficient of $\beta$ in the RHS is $34r^{1/2}[\epsilon_1(i) + \kappa_i\norm{H}\norm{U_{l, \cdot}}_{\infty}\epsilon_2(i)]$. 
By the definition of $\epsilon_1(i) $ and $\epsilon_2(i)$ (see the discussion preceding Theorem \ref{coordinatetheorem}), this equals $34r^{1/2}(\frac{\norm{H}}{\sigma_i} + \kappa_i\norm{H}\norm{U_{l, \cdot}})$. By the definition of $C_0$, and the assumption that $\sigma_i > C_0\norm{E}$ and $\delta_i > C_0\kappa_i\norm{H}\norm{U}_{\infty}$, we have the following estimate.

\begin{equation}
\begin{split}
     \beta &\leq 34r^{1/2}\Big[\epsilon_1(i)\abs{\tilde{u}_{il}} + a_l\epsilon_2(i)(\abs{\tilde{u}_{il}} + \kappa_i\norm{U_{l, \cdot}}_{\infty})\Big] + \frac{1}{2}\beta + 16\frac{\abs{\langle \vect x, \vect u^{\{l\}}_i \rangle} }{\sigma_i}.
\end{split}
\end{equation} Therefore, moving the terms involving $\beta$ to the left and multiplying both sides by $2$ gives

\begin{equation*}
\begin{split}
     \norm{\tilde{\vect u}_i - \vect u^{\{l\}}_i}_{2}&\leq 68r^{1/2}\Big[[\epsilon_1(i) + a_l\epsilon_2(i)]\abs{\tilde{u}_{il}}  + \kappa_ia_l\norm{U_{l, \cdot}}_{\infty}\epsilon_2(i)\Big] + 32\frac{\abs{\langle \vect x, \vect u^{\{l\}}_i \rangle} }{\sigma_i}.
\end{split}
\end{equation*}
    
\end{proof}

\begin{proof}[Proof of Lemma \ref{leaveoneoutprojectionbound}]
Since $\tilde{A} - A^{\{l\}} = H_{\{l\}}$, it follows that 
 
\begin{equation}
\label{projectionexpansion}
(P^{\{l\}}\tilde{\vect u}_i)^{T}\tilde{A}\tilde{\vect u}_i - (P^{\{l\}}\tilde{\vect u}_i )^{T}A^{\{l\}}\tilde{\vect u}_i  = (P^{\{l\}}\tilde{\vect u}_i)^{T}H_{\{l\}}\tilde{\vect u}_i.
\end{equation} By the definition of $P^{\{l\}}$, we have \begin{equation}
\label{lambdarplus1}
\abs{(P^{\{l\}}\tilde{\vect u}_i)^{T}A^{\{l\}}\tilde{\vect u}_i} = \abs{\langle P^{\{l\}}\tilde{\vect u}_i, A^{\{l\}}\tilde{\vect u}_i \rangle} = \abs{\langle \tilde{\vect u}_i, P^{\{l\}}A^{\{l\}}\tilde{\vect u}_i \rangle}  \leq \sigma_{p+1}^{\{l\}}\norm{P^{\{l\}}\vect u_i}_{2}^{2}.
\end{equation} By Fact \ref{weylfacts}, $\sigma_{p+1}^{\{l\}} \le \sigma_{p+1} + \| H^{\{l\}}\| .$ Furthermore, by Fact \ref{interlacingfacts}, $\| H^{\{l\}}\|
\le \| H\| $, so we have $\sigma^{\{l\}}_{p+1} \leq \sigma_{p+1} + \norm{H}$. Because $\tilde{\vect u}_i$ is a singular vector of $\tilde{A}$, we have $$(P^{\{l\}}\tilde{\vect u}_i )^{T}\tilde{A}\tilde{\vect u}_i  = \pm \tilde{\sigma}_i\norm{P^{\{l\}}\tilde{\vect u}_i}_2 ^{2}.$$ It thus follows that 

\begin{equation}
\tilde{\sigma}_{i}\norm{P^{\{l\}}\tilde{\vect u}_i }_{2}^{2} - (\sigma_{p+1} + \norm {H})\norm{P^{\{l\}}\tilde{\vect u}_i }_2^{2} \le | (P^{\{l\}}\tilde{\vect u}_i)^{T}H_{\{l\}}\tilde{\vect u}_i |.
\end{equation} Applying Cauchy-Schwarz on the RHS, we obtain

\begin{equation}
\label{lemma10left}
\tilde{\sigma}_{i}\norm{P^{\{l\}}\tilde{\vect u}_i }_2^{2} - (\sigma_{p+1} + \norm {H})\norm{P^{\{l\}}\tilde{\vect u}_i }_2^{2} \leq \norm{P^{\{l\}}\tilde{\vect u}_i }_2\norm{H_{\{l\}}\tilde{\vect u}_i }_2.
\end{equation} By Fact \ref{weylfacts}, $\tilde{\sigma}_{i}(A) > \frac{\sigma_{i}}{2}$. By
definition of $p$, $\frac{\sigma_{i}}{2} - \sigma_{p + 1} > \frac{1}{4}\sigma_{i}$. So dividing by $\norm{P^{\{l\}}\tilde{\vect u}_i}_2$ gives

\begin{equation}
\norm{P^{\{l\}}\tilde{\vect u}_i}_2  \leq 
\frac{\norm{H_{\{l\}} \tilde { \vect u }_i}_2}{0.25\sigma_{i} - \norm{H}} \le \frac{\norm{H_{\{l\}}\vect u^{\{l\}}_i}_2 + \norm{H_{\{l\}}}\norm{\tilde{\vect u}_i- \vect u^{\{l\}}_i}_2}{0.25\sigma_{i} - \norm{H}} \le
8\frac{\norm{H_{\{l\}}\vect u^{\{l\}}_i}_2 + 2\norm{H}\norm{\tilde{\vect u}_i- \vect u^{\{l\}}_i}_2}{\sigma_{i}}.
\end{equation} We used the triangle inequality in the numerator. It is apparent that $\vect x$ has $\ell_{2}$ norm at most that of the $l$th row of $H$. This gave $\norm{H_{\{l\}}} = \norm{\vect x\vect e_l^T + \vect e_l \vect x^T} \leq 2\norm{\vect x} \leq 2\norm{H}$. We also lower bounded $0.25\sigma_i - \norm{H} \geq \sigma_i/8$ because $\sigma_i > C_0\norm{H}$. To estimate the term $\norm{H_{\{l\}}\vect u^{\{l\}}_i}_2$, write using \eqref{decomposition},

\begin{equation}
H_{\{l\}}\vect u^{\{l\}}_i= \langle \vect x, \vect u^{\{l\}}_i\rangle \vect e_{l} + u^{\{l\}}_{il}\vect x.
\end{equation} Here $\vect e_{l}$ is the $l$th standard basis vector. We obtain \begin{equation}
\begin{split}
\norm{H_{\{l\}}\vect u^{\{l\}}_i}_{2} &\leq \abs{\langle \vect x, \vect u^{\{l\}}_i \rangle} + \norm{\vect x}_{2}\abs{u^{\{l\}}_{il}} \\
&\le \abs{\langle \vect x, \vect u^{\{l\}}_i \rangle} + \norm{H}(\abs{\tilde{u}_{il}} + \abs{\tilde{u}_{il} - u^{\{l\}}_{il}}). \\
\end{split}
\end{equation} So we conclude that 

\begin{equation}
\norm{P^{\{l\}}\tilde{\vect u}_i}_2 \leq \frac{8\abs{\langle \vect x, \vect u^{\{l\}}_i \rangle}  + 24\norm{H}(\abs{\tilde{u}_{il}}  + \norm{\vect u^{\{l\}}_i- \tilde{\vect u}_i}_{2})}{\sigma_{i}},
\end{equation} proving the lemma. 
\end{proof}

\begin{proof}[Proof of Proposition \ref{reduction}]

By the relation 
$ \tilde{A} - A^{\{l\}} = H_{\{l\}}$, we have

\begin{equation}
\label{orthonormalexpansion}
V^{\{l\}T}_{p}\tilde{A}\tilde{\vect u}_i - V^{\{l\}T}_{p}A^{\{l\}}\tilde{\vect u}_i= V^{\{l\}T}_{p}H_{\{l\}}\tilde{\vect u}_i.
\end{equation}

First, we observe that because $\tilde{\vect u}_i$ is a singular vector of $\tilde{A}$, the first term on the LHS of \eqref{orthonormalexpansion} is $\pm \tilde{\sigma}_{i}V^{\{l\}T}_{p}\tilde{\vect u}_i$. Since the columns of $V^{\{l\}}_{p}$ are  singular vectors of $A^{\{l\}}$, the second term on the left hand side of \eqref{orthonormalexpansion} is $D^{\{l\}}V^{\{l\}T}_{p}\tilde{\vect u}_i$. $D^{\{l\}}$ is the $(p-1) \times (p-1)$ diagonal matrix with entries $\pm \sigma_1^{\{l\}}, .. \pm \sigma_{i-1}^{\{l\}}, \pm \sigma_{i+1}^{\{l\}}...  \pm \sigma_{p}^{\{l\}}$. Then, we have

\begin{equation}
\label{V1bound}
\norm{U^{\{l\}T}_{p}H_{\{l\}}\tilde{\vect u}_i}_2 \geq \norm{V^{\{l\}T}_{p}H_{\{l\}}\tilde{\vect u}_i}_2 = \norm{(\pm \tilde{\sigma}_{i} I - D^{\{l\}})V^{\{l\}T}_{p}\tilde{\vect u}_i}_2 \ge \frac{\delta_i}{2}\norm{V^{\{l\}T}_{p}\tilde{\vect u}_i}_2.
\end{equation}

The first inequality uses that $V^{\{l\}T}_{p}H_{\{l\}}\tilde{\vect u}_i$ is a sub-vector of $U^{\{l\}T}_{p}H_{\{l\}}\tilde{\vect u}_i$. The equality uses \eqref{orthonormalexpansion}, and the last inequality uses that the smallest singular value of the diagonal matrix $\pm\tilde{\sigma}_{i}I - D^{\{l\}}$ is at least $\delta_i/2$ by the assumption of Theorem \ref{coordinatetheorem}. 

\end{proof}

\begin{proof}[Proof of Proposition \ref{leaveoneoutrowbound}]

Since the columns of $U^{\{l\}}_p$ are singular vectors, if $\Sigma^{\{l\}}_{p}$ is a diagonal matrix with entries $\pm \sigma_{1}^{\{l\}}, ... \pm \sigma_{p}^{\{l\}}$, we have by definition of $\vect r^{\{l\}}$,
\begin{equation}
\begin{split}
    \norm{\vect r^{\{l\}}}_2 &= \norm{\vect e_{l}^{T}U^{\{l\}}_{p}}_2 = \norm{\vect e_{l}^{T}A^{\{l\}}U^{\{l\}}_{p}\Sigma^{\{l\}-1}_{p}}_2.\\
\end{split}
\end{equation} Because $A^{\{l\}}$ and $A$ have the same $l$th row,\begin{equation}
    \norm{\vect r^{\{l\}}}_2 = \norm{\vect e_{l}^{T}AU^{\{l\}}_{p}\Sigma^{\{l\}-1}_{p}}_2.
\end{equation} By the spectral decomposition of $A$, if $\Sigma$ is the diagonal matrix whose entries are the eigenvalues of $A$, then the the RHS is $\norm{\vect e_{l}^{T}U\Sigma U^{T} U^{\{l\}}_{p}\Sigma^{\{l\}-1}_{p}}_2  = \norm{U_{l, \cdot}\Sigma U^{T}U^{\{l\}}_{p}\Sigma^{\{l\}-1}_{p}}_2$. By the triangle inequality,
\begin{equation}
    \norm{\vect r^{\{l\}}}_2 \leq \norm{U_{l, \cdot}}_{2}\norm{\Sigma}\norm{U^{T}U^{\{l\}}_{p}}\norm{\Sigma^{\{l\}-1}_{p}}_2.
\end{equation} Because $\sigma_1 > C_0\norm{E}$ and the definition of $p$, $\Sigma^{\{l\}-1}_{p}$ has norm at most $\sigma^{\{l\}}_p \leq \sigma_{p}/2$ by Facts \ref{weylfacts} and \ref{interlacingfacts}. It is clear that $\norm{\Sigma} = \sigma_1$ and $\norm{U^{T}U^{\{l\}}_{p}} \leq 1$.  Therefore, since $\norm{U_{l, \cdot}}_2 \leq \sqrt{r}\norm{U_{l, \cdot}}_{\infty}$, 
\begin{equation}
    \norm{\vect r^{\{l\}}}_2 \leq 2r^{1/2}\Big(\frac{\sigma_{1}}{\sigma_{p}}\Big)\norm{U_{l, \cdot}}_{\infty} \leq 8\kappa_i r^{1/2}\norm{U_{l, \cdot}}_{\infty},
\end{equation} where the last inequality uses the definition of $p$.

\end{proof}

\begin{proof}[Proof of Lemma \ref{vbound}]

By Proposition \ref{reduction}, it suffices to upper bound $\norm{U^{\{l\}T}_{p}H_{\{l\}}\tilde{\vect u}_i}_2$. Using the decomposition for $H_{\{l\}}$ \eqref{decomposition}, we can write

$$U^{\{l\}T}_{p}H_{\{l\}}\tilde{\vect u}_i= (H_{\{l\}}U^{\{l\}}_{p})^{T}\tilde{\vect u}_i = (X + Y)\tilde{\vect u}_i.$$ Thus, we must bound $\norm{X\tilde{\vect u}_i}_2 + \norm{Y\tilde{\vect u}_i}_2$, where $X = U^{\{l\}T}_{p}\vect x \vect e_l^T$ and $Y = U^{\{l\}T}_{p}\vect e_l\vect x^T$. Therefore, \begin{equation}
\label{Xubound}
\norm{X\tilde{\vect u}_i}_2 \leq \abs{\tilde{u}_{il}}\norm{U_{p}^{\{l\}T}\vect x}_2 = \abs{\tilde{u}_{il}}a_l.
\end{equation} On the other hand,  for $\norm{Y\tilde{\vect u}_i}_2$, the triangle inequality gives
\begin{equation}
\label{Yexpansion}
\norm{Y\tilde{\vect u}_i}_2 \leq \norm{Y\vect u^{\{l\}}_i}_2 + \norm{Y}\norm{\tilde{\vect u}_i- \vect u^{\{l\}}_i}_{2}. 
\end{equation} By definition of $Y$, we can write $Y\vect u^{\{l\}}_i = \langle \vect u^{\{l\}}_i, \vect x \rangle U^{\{l\}T}_{p} \vect e_{l} = \langle \vect u^{\{l\}}_i, \vect x \rangle \vect r^{\{l\}}$. As before, $\vect e_{l}$ is the $l$th standard basis vector. Invoking Proposition \ref{leaveoneoutrowbound}, it follows immediately that \begin{equation}
\label{Yubound1}
\norm{Y\vect u^{\{l\}}_i}_2 \leq \abs{\langle \vect u^{\{l\}}_i, \vect x \rangle }\norm{\vect r^{\{l\}}}_2 \leq 8r^{1/2}\kappa_i a_l\norm{U_{l, \cdot}}_{\infty}.
\end{equation} Now, we need to deal with the  term $\norm{Y}\norm{\tilde{\vect u}_i- \vect u^{\{l\}}_i}_{2}$ in \eqref{Yexpansion}. We will bound $\norm{Y}$. It is easy to see that $Y$ can be written as $Y = \vect r^{\{l\}} \vect x^{T}$. In particular, $Y$ is rank $1$ so we can calculate the spectral norm of $Y$ directly as $\norm{Y} = \norm{\vect x}_2\norm{\vect r^{\{l\}}}_2$. We have previously observed that $\norm{\vect x}_2 \leq \norm{H}$ and we can bound $\norm{\vect r^{\{l\}}}_2$ as we did before using Proposition \ref{leaveoneoutrowbound}. Therefore,
\begin{equation}
\label{Yubound2}
    \norm{Y} \leq 8r^{1/2}\kappa_i\norm{H}\norm{U_{l, \cdot}}_\infty.
\end{equation} Combining estimates \eqref{Xubound}, \eqref{Yexpansion}, \eqref{Yubound1}, and \eqref{Yubound2} gives
\begin{equation*}
\begin{split}
\norm{U^{\{l\}T}_{p}H_{\{l\}}\tilde{\vect u}_i}_2 &\leq \norm{X\tilde{\vect u}_i}_2 + \norm{Y\tilde{\vect u}_i}_2 \\
&\leq \abs{\tilde{u}_{il}}a_l + 8r^{1/2}\kappa_i a_l\norm{U_{l, \cdot}}_{\infty} + 8r^{1/2}\kappa_i\norm{H}\norm{U_{l, \cdot}}_\infty\norm{\tilde{\vect u}_i- \vect u^{\{l\}}_i}_{2}.
\end{split}
\end{equation*}
\end{proof}

\section {Proof of Theorem \ref{main-result} via Theorem \ref{coordinatetheorem}}
\label{mainresultproof}

In this section, we  deduce Theorem \ref{main-result} from the deterministic Theorem \ref{coordinatetheorem}. 
The task is basically checking that the conditions of Theorem \ref{coordinatetheorem} hold with high probability. 
The hardest part is the stability condition, and for  this, we will need to appeal to singular value perturbation  bounds from
\cite{OVW1}. We will show that on the complement of a bad event $\mathcal{B}$, the conditions for Theorem \ref{coordinatetheorem} hold for $H = E$. Next, the bad event holds with small probability. 

Define the event $\mathcal{B} := \mathcal{B}_1 \cup \mathcal{B}_2 \cup \mathcal{B}_{E}$ where 

\begin{equation}
\begin{split}
\mathcal{B}_1 &:= \cup_{1 \leq l \leq n}\{\min\{\abs{\tilde{\sigma}_i - \sigma^{\{l\}}_{i+1}}, \abs{\tilde{\sigma}_i - \sigma^{\{l\}}_{i-1}}\} < \delta_i/2\}, \\ 
\mathcal{B}_2 &:= \cup_{1 \leq l \leq n}\Big\{\norm{U^{\{l\}T}\vect x(l)}_2 \geq \sqrt{2r(c_0 + 1)\log n}\Big\}, \text{ and } \\
\mathcal{B}_E &:= \{\norm{E} > T\}. \\
\end{split}
\end{equation}

\begin{lemma}
\label{lemma:B-bound}
\begin{equation}\mathbb{P}(\mathcal{B}) \leq C(r)n^{-c_0} + 2\tau.
\end{equation}
\end{lemma}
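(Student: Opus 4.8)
The plan is to bound $\mathbb{P}(\mathcal{B}_E)$, $\mathbb{P}(\mathcal{B}_2)$, $\mathbb{P}(\mathcal{B}_1)$ separately and union-bound. First, $\mathcal{B}_E=\{\norm{E}>T\}$ is immediate from the definition $T=\inf\{t>0:\mathbb{P}(\norm{E}>t)\le\tau\}$: since $t\mapsto\mathbb{P}(\norm{E}>t)$ is non-increasing, $\mathbb{P}(\mathcal{B}_E)=\mathbb{P}(\norm{E}>T)\le\tau$. All remaining work is carried out on $\mathcal{B}_E^c$, where $\norm{E}\le T$ and hence, by Fact \ref{interlacingfacts}, $\norm{E^{\{l\}}}\le T$ for every $l$.

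Next, $\mathcal{B}_2$. The key point is an independence observation: $A^{\{l\}}=A+E^{\{l\}}$ is a function only of the entries of $E$ lying outside the $l$th row and column, whereas $\vect x(l)$ is built from the $l$th row of $E$; hence each column $\vect u^{\{l\}}_j$ of $U^{\{l\}}$ ($1\le j\le r$) is a unit vector independent of $\vect x(l)$, and $\vect x(l)$ still has independent, mean-zero, $K$-bounded coordinates. Applying Corollary \ref{hoeffcor} to each inner product $\langle\vect u^{\{l\}}_j,\vect x(l)\rangle$, with the constant chosen so the failure probability is $2n^{-(c_0+1)}$, and then a union bound over the $r$ coordinates and the $n$ choices of $l$, gives, via $\norm{U^{\{l\}T}\vect x(l)}_2^2=\sum_{j=1}^r\langle\vect u^{\{l\}}_j,\vect x(l)\rangle^2$, that $\mathbb{P}(\mathcal{B}_2)\le 2rn^{-c_0}$, which is absorbed into $C(r)n^{-c_0}$.

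The bulk of the work is $\mathcal{B}_1$. I would show that, off $\mathcal{B}_E$ together with one further event of probability $O(9^{2r}n^{-c_0})$, one has for every $l$ that $|\tilde\sigma_i-\sigma_i|<\delta_i/4$, $|\sigma^{\{l\}}_{i-1}-\sigma_{i-1}|<\delta_i/4$, and $\sigma^{\{l\}}_{i+1}\le\sigma_{i+1}+\delta_i/4$; combined with $\delta_i=\min\{\Delta_{i-1},\Delta_i\}$ this yields $\sigma^{\{l\}}_{i-1}-\tilde\sigma_i\ge\Delta_{i-1}-\delta_i/2\ge\delta_i/2$ and $\tilde\sigma_i-\sigma^{\{l\}}_{i+1}\ge\Delta_i-\delta_i/2\ge\delta_i/2$, i.e.\ $\mathcal{B}_1^c$. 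To produce these estimates I would invoke both tails of the OVW singular-value bound, Theorem \ref{ovw-singular-K}, applied to $\tilde A=A+E$ and to each $A^{\{l\}}=A+E^{\{l\}}$ (each $E^{\{l\}}$ being symmetric with independent $K$-bounded mean-zero entries). On $\mathcal{B}_E^c$, Weyl (Fact \ref{weylfacts}) together with conditions (a), (b) of Definition \ref{stable} and $c=2^{11}(c_0+1)r^3$ make the deviation $2r\bigl(t+\norm{E}^2/\tilde\sigma_k+\norm{E}^3/\tilde\sigma_k^2\bigr)$ in Theorem \ref{ovw-singular-K} smaller than $\delta_i/4$ whenever the relevant index $k$ satisfies $\sigma_k\ge\sigma_i/2$ (so $\tilde\sigma_k,\sigma^{\{l\}}_k\ge\sigma_i/4\ge (c/4)T$, controlling the $\norm{E}^2/\tilde\sigma_k$ and $\norm{E}^3/\tilde\sigma_k^2$ terms), which is automatic for $\tilde\sigma_i$ and for $\sigma^{\{l\}}_{i-1}$ since $\sigma_{i-1}\ge\sigma_i$. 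For $\sigma^{\{l\}}_{i+1}$ one splits into cases: if $\sigma_{i+1}\ge\sigma_i/2$ the same estimate gives $|\sigma^{\{l\}}_{i+1}-\sigma_{i+1}|<\delta_i/4$; if $\sigma_{i+1}<\sigma_i/2$ then $\Delta_i>\sigma_i/2$ and it is enough to use the crude Weyl bound $\sigma^{\{l\}}_{i+1}\le\sigma_{i+1}+\norm{E^{\{l\}}}\le\sigma_{i+1}+T$, since then $\tilde\sigma_i-\sigma^{\{l\}}_{i+1}\ge\Delta_i-\delta_i/4-T\ge\Delta_i/4-T\ge\sigma_i/8-T\ge\delta_i/2$ (using $\delta_i\le\Delta_i$ and $T\le\sigma_i/c$). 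The boundary indices are handled the same way: for $i=1$ the convention $\sigma_0=\infty$ makes that side of $\mathcal{B}_1$ vacuous, and for $i=r$ one has $\sigma^{\{l\}}_{r+1}\le\norm{E^{\{l\}}}\le T$ by Weyl and $\sigma_{r+1}(A)=0$, which is exactly the $\sigma_{i+1}<\sigma_i/2$ case. A union bound over the $\le 2n$ pairs $(l,\pm)$ plus the single estimate for $\tilde\sigma_i$ costs $O(9^{2r}n^{-c_0})$, so $\mathbb{P}(\mathcal{B}_1)\le\tau+O(9^{2r}n^{-c_0})$. Adding up, $\mathbb{P}(\mathcal{B})\le\mathbb{P}(\mathcal{B}_1)+\mathbb{P}(\mathcal{B}_2)+\mathbb{P}(\mathcal{B}_E)\le C(r)n^{-c_0}+2\tau$.

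I expect the main obstacle to be the constant bookkeeping in the $\mathcal{B}_1$ step: one must check that, with the explicit $c=2^{11}(c_0+1)r^3$, every error term coming out of Theorem \ref{ovw-singular-K} — the linear $t$-term, the $\norm{E}^2/\tilde\sigma$ and $\norm{E}^3/\tilde\sigma^2$ tails, the factor $r$, and the $9^{2r}$ loss in the probability — really does fall below $\delta_i/4$ once the stability inequalities (a) and (b) are substituted (and that one may replace $\tilde\sigma_k$ by $\sigma_k/2$ in denominators by Weyl), and that the case split around $\sigma_{i+1}$ together with the $i=1,r$ boundaries leaves no gaps. Everything else is routine.
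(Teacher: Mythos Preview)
Your plan is essentially the paper's proof: bound $\mathcal{B}_E$, $\mathcal{B}_2$, $\mathcal{B}_1$ separately, using Hoeffding via Corollary~\ref{hoeffcor} for $\mathcal{B}_2$ and Theorem~\ref{ovw-singular-K} together with Weyl and a case split for $\mathcal{B}_1$. The only organizational difference is that the paper splits on whether $\Delta_i>4T$ (in which case pure Weyl suffices for both $\tilde\sigma_i$ and $\sigma^{\{l\}}_{i\pm1}$) versus $\Delta_i\le 4T$ (in which case $\sigma_{i+1}\ge 0.9\,\sigma_i$, so $\tilde\sigma_{i+1}\ge 0.8\,\sigma_i$ and the OVW denominators are safe), whereas you split on $\sigma_{i+1}\gtrless\sigma_i/2$; both routes work.

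One small slip in your bookkeeping: the displayed chain $\Delta_i-\delta_i/4-T\ge\Delta_i/4-T\ge\sigma_i/8-T\ge\delta_i/2$ fails at the last step, since in your ``$\sigma_{i+1}<\sigma_i/2$'' case one can have $\delta_i=\Delta_i$ as large as $\sigma_i$, so $\delta_i/2$ need not be below $\sigma_i/8$. The fix is not to drop down to $\sigma_i/8$: from $\delta_i\le\Delta_i$ and $T\le\sigma_i/c\le 2\Delta_i/c$ one gets directly $\Delta_i-\delta_i/4-T\ge 3\Delta_i/4-2\Delta_i/c\ge \Delta_i/2\ge\delta_i/2$ for $c\ge 8$. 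With that correction your argument goes through.
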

\begin{proof}[Proof of Theorem \ref{main-result} given Theorem \ref{coordinatetheorem} and Lemma \ref{lemma:B-bound}]

By the construction of $\mathcal{B}$, we can check that on $\overline{\mathcal{B}}$ (complement of $\mathcal {B})$, the conditions for Theorem \ref{coordinatetheorem}  with $H = E$ and $H^{\{l\}} = E^{\{l\}}$ are satisfied for all $l$.

\vspace{3mm}
{\noindent \it Verification of the conditions for Theorem \ref{coordinatetheorem}.} Let $1 \leq l \leq n$. We first have to check that on $\overline{\mathcal{B}}$,  $$\min\{\abs{\tilde{\sigma}_i - \sigma^{\{l\}}_{i+1}}, \abs{\tilde{\sigma}_i - \sigma^{\{l\}}_{i-1}}\} \geq \delta_i/2.$$The definition of $\mathcal{B}_1$ makes this condition trivial. Then, we have to check that $$\sigma_i > C_0\norm{E}.$$On $\overline{\mathcal{B}}$, the event $\mathcal{B}_E$ guarantees that $\norm{E} \leq T$. Therefore, by $(c, \tau, 1)$ stability,  $$\sigma_i > cT \geq c\norm{E} > C_0\norm{E}. $$ See condition $(a)$ of Definition \ref{stable}. Finally, we verify that 
\begin{equation}
\label{delta-cond-first}
\delta_i > C_0\max\Big\{\norm{U^{\{l\}T}\vect x(l)}_2, \kappa_i\norm{E}\norm{U}_{\infty}\Big\}.
\end{equation} On $\overline{\mathcal{B}}$, the event $\mathcal{B}_2$ guarantees that $$\norm{U^{\{l\}T}\vect x(l)}_2 \leq K\sqrt{2r(c_0 + 1)\log n}.$$ Because of the $(c, \tau, 1)$ stability assumption, $$\delta_i > c\max\{K\sqrt{\log n}, T\kappa_i\norm{U}_{\infty}\}.$$Since $\norm{E} \leq T$, this ensures that \eqref{delta-cond-first} holds. See conditions $(b)$ and $(c)$ of Definition \ref{stable}. 

\vspace{3mm}

{\noindent \it Conclusion of the proof.}
By Lemma \ref{lemma:B-bound},  $\mathcal{B}$ has probability at most $C(r)n^{-c_0} + 2\tau$ (recall that $C(r) = 1000 \times 9^{2r})$. Furthermore, we have checked that, if $\overline{\mathcal{B}}$ occurs, then for all $1 \leq l \leq n$, the conditions for Theorem \ref{coordinatetheorem} hold. For each $l$, Theorem \ref{coordinatetheorem} gives that

\begin{equation}
\abs{\tilde{u}_{il} - u_{il}} \le C_0\norm{U_{l, \cdot}}_{\infty}\Big[\kappa_i\norm{\tilde{\vect u}_i - \vect{u}_i}_2 + \epsilon_1(i) + a_l\kappa_i\epsilon_2(i)\Big] + 256r\frac{\abs{\langle \vect u^{\{l\}}_i, \vect x \rangle}}{\sigma_i}, \end{equation}
where we recall $a_l = \norm{U^{\{l\}T}\vect x}_2$.
Taking the maximum over $l$ on both sides gives us that on $\overline{\mathcal{B}}$,

    \begin{equation}
        \norm{\tilde{\vect u}_i- \vect u_i }_{\infty}\le C_0\norm{U}_{\infty}\Big[\kappa_i\norm{\tilde{\vect u}_i - \vect{u}_i}_2 + \epsilon_1(i) + (\max_{l}a_l)\kappa_i\epsilon_2(i)\Big] + 256r\frac{\max_l\abs{\langle \vect u^{\{l\}}_i, \vect x \rangle}}{\sigma_i}.
    \end{equation} Notice that $\max_l\abs{\langle \vect u^{\{l\}}_i, \vect x \rangle} \leq \max_l a_l$. 
Then we can use the bound for $\max_l a_l \leq \sqrt{2r(c_0 + 1)\log n}$ on $\overline{\mathcal{B}_2}$ to conclude that 

\begin{equation}
\norm{\tilde{\vect u}_i - \vect u_i }_{\infty}\le c\norm{U}_{\infty}\Big[\kappa_i\norm{\tilde{\vect u}_i - \vect{u}_i}_2 + \epsilon_1(i) + (K\sqrt{\log n})\kappa_i\epsilon_2(i)\Big] + c\frac{K\sqrt{\log n}}{\sigma_i},
\end{equation} which is precisely \eqref{main-result-eq}. \end{proof}
Thus, what remains is to prove Lemma \ref{lemma:B-bound}.

\subsection{Proof of Lemma \ref{lemma:B-bound}}
We will bound $\mathbb{P}(\mathcal{B}_E), \mathbb{P}(\mathcal{B}_2)$, and $\mathbb{P}(\mathcal{B}_1)$ separately, and use the union bound.

\vspace{2mm}
{\noindent \bf Probability of $\mathcal{B}_E$.} Recall that $$\mathcal{B}_E = \{\norm{E} > T\}.$$By the definition of $T$, it must be the case that 

\begin{equation} \label{bound1}
\mathbb{P}(\mathcal{B}_E) \leq \tau.  
\end{equation}

\vspace{2mm}
{\noindent \bf Probability of $\mathcal{B}_2$.} Recall that $$\mathcal{B}_2 = \cup_{1 \leq l \leq n}\Big\{\norm{U^{\{l\}T}\vect x}_2 \geq \sqrt{2r(c_0 + 1)\log n}\Big\}.$$Let $1 \leq l \leq n$. Observe that $\norm{U^{\{l\}T}\vect x}_2 $ is the length of the projection of a random vector onto a subspace from which it is independent. Consider the vector $U^{\{l\}T}\vect x$, which has $r$ entries. We use Corollary \ref{hoeffcor} to bound each entry $[U^{\{l\}T}\vect x]_j$ of this vector. We can bound for any $1 \leq j \leq r$,
\begin{equation}
\mathbb{P}\{\abs{[U^{\{l\}T}\vect x]_j} \geq K\sqrt{2(c_0+1)\log n}\} 
\leq 2n^{-c_0 - 1}.
\end{equation} By taking the union bound over the $r$ entries, 

\begin{equation}
\mathbb{P}\Big\{\norm{U^{\{l\}T}\vect x}_2 \geq K\sqrt{2r(c_0 +1)\log n}\Big\} \leq 2rn^{-c_0 -1}.
\end{equation} Thus $\mathcal{B}_2$ holds with probability at most $2rn^{-c_0}$ by taking a union bound over $1 \leq l \leq n$.
\begin{equation} \label{bound2}
\mathbb{P}(\mathcal{B}_2) \leq 2rn^{-c_0}.
\end{equation}

\vspace{2mm}
{\noindent \bf Probability of $\mathcal{B}_1$. } What remains is to bound the probability of $\mathcal{B}_1$. This is the hardest step, so we treat it separately.

\begin{lemma}
\label{lemma:B1-bound}
\begin{equation}
\label{eq:B1-bound}
\mathbb{P}(\mathcal{B}_1) \leq 64 \times 9^{2r}n^{-c_0}+ \tau.
\end{equation}
\end{lemma}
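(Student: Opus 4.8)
The plan is to show that, off an event of probability $\le 64\times 9^{2r}n^{-c_0}+\tau$, the singular values $\tilde\sigma_i$ and, for every $1\le l\le n$, $\sigma^{\{l\}}_{i-1}$ and $\sigma^{\{l\}}_{i+1}$ stay so close to $\sigma_i$, $\sigma_{i-1}$, $\sigma_{i+1}$ that no $l$-th event defining $\mathcal B_1$ occurs. Since $\sigma_{i-1}-\sigma_i=\Delta_{i-1}\ge\delta_i$ and $\sigma_i-\sigma_{i+1}=\Delta_i\ge\delta_i$, it is enough to establish $\abs{\tilde\sigma_i-\sigma_i}\le\delta_i/8$ together with $\sigma^{\{l\}}_{i-1}\ge\sigma_{i-1}-\delta_i/8$ and $\sigma^{\{l\}}_{i+1}\le\sigma_{i+1}+\delta_i/8$ for each $l$: the triangle inequality then gives $\sigma^{\{l\}}_{i-1}-\tilde\sigma_i\ge 3\delta_i/4>\delta_i/2$ and $\tilde\sigma_i-\sigma^{\{l\}}_{i+1}\ge 3\delta_i/4>\delta_i/2$. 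The event $\mathcal B_E=\{\norm E>T\}$ will be removed once, at the end, so that its cost $\tau$ is never multiplied by $n$.

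All three deviations come from Theorem \ref{ovw-singular-K}. For $\tilde\sigma_i$ I apply its combined $\abs{\tilde\sigma_k-\sigma_k}$ bound to $\tilde A=A+E$ with $k=i$: outside an event of probability $8\times 9^{2r}\exp(-t^2/(128K^2))$ one has $\abs{\tilde\sigma_i-\sigma_i}\le 2r\big(t+\norm E^2/\tilde\sigma_i+\norm E^3/\tilde\sigma_i^2\big)$. On $\{\norm E\le T\}$, condition (a) of $(c,\tau,1)$ stability with $c=2^{11}(c_0+1)r^3$ gives $\tilde\sigma_i\ge\sigma_i/2$ by Fact \ref{weylfacts}, and then conditions (b)--(c) bound $2r\norm E^2/\tilde\sigma_i$, $2r\norm E^3/\tilde\sigma_i^2$ and --- on taking $t$ a suitable multiple of $(c_0+1)r^2K\sqrt{\log n}$ --- also $2rt$ each by $\delta_i/24$, so that $\abs{\tilde\sigma_i-\sigma_i}\le\delta_i/8$; the same $t$ makes $\exp(-t^2/(128K^2))\le n^{-c_0-2}$ and satisfies $t\le\delta_i/8$. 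For the leave-one-out matrices $A^{\{l\}}=A+E^{\{l\}}$, note that $E^{\{l\}}$ still has $K$-bounded, mean-zero, independent upper-triangular entries (the zeroed ones being deterministic constants) and $\norm{E^{\{l\}}}\le\norm E\le T$ by Fact \ref{interlacingfacts}, so Theorem \ref{ovw-singular-K} applies verbatim with $E$ replaced by $E^{\{l\}}$. Its first (one-sided, lower-tail) inequality with $k=i-1$ then gives $\sigma^{\{l\}}_{i-1}\ge\sigma_{i-1}-t\ge\sigma_{i-1}-\delta_i/8$ outside probability $4\times 9^{2r}\exp(-t^2/(128K^2))\le 4\times 9^{2r}n^{-c_0-2}$ --- here $\sigma_{i-1}\ge\sigma_i>cT$, so no self-referential denominator appears, and the $i=1$ case is vacuous since $\sigma_0=\infty$.

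The delicate term is $\sigma^{\{l\}}_{i+1}$, since $\sigma_{i+1}$ may be tiny, or zero when $i=r$, making the crude Weyl bound $\abs{\sigma^{\{l\}}_{i+1}-\sigma_{i+1}}\le\norm{E^{\{l\}}}\le T$ useless when $\delta_i\ll T$. I split on the size of $\sigma^{\{l\}}_{i+1}$. If $\sigma^{\{l\}}_{i+1}\le\sigma_i/4$, then on $\{\abs{\tilde\sigma_i-\sigma_i}\le\delta_i/8\}$ one has $\tilde\sigma_i\ge 7\sigma_i/8$, hence $\tilde\sigma_i-\sigma^{\{l\}}_{i+1}\ge 5\sigma_i/8\ge 5\delta_i/8>\delta_i/2$, with no extra randomness. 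If $\sigma^{\{l\}}_{i+1}>\sigma_i/4$, the denominators in Theorem \ref{ovw-singular-K} applied to $A^{\{l\}}$ with $k=i+1\le r$ satisfy $\norm{E^{\{l\}}}^2/\sigma^{\{l\}}_{i+1}\le 4T^2/\sigma_i$ and $\norm{E^{\{l\}}}^3/(\sigma^{\{l\}}_{i+1})^2\le 16T^3/\sigma_i^2$, which the stability conditions again bound by $\delta_i/24$, yielding $\abs{\sigma^{\{l\}}_{i+1}-\sigma_{i+1}}\le\delta_i/8$ outside probability $8\times 9^{2r}n^{-c_0-2}$. Putting this together, $\mathcal B_1$ is contained in $\mathcal B_E$, the bad event for $\tilde\sigma_i$, and the union over $l$ of the bad events for $\sigma^{\{l\}}_{i-1}$ and $\sigma^{\{l\}}_{i+1}$, so
\[
\mathbb P(\mathcal B_1)\le\tau+8\times 9^{2r}n^{-c_0-2}+n\big(4\times 9^{2r}n^{-c_0-2}+8\times 9^{2r}n^{-c_0-2}\big)\le 64\times 9^{2r}n^{-c_0}+\tau .
\]
I expect the main obstacle to be exactly this $\sigma^{\{l\}}_{i+1}$ dichotomy --- realizing that $\sigma^{\{l\}}_{i+1}$ needs to be pinned to $\sigma_{i+1}$ only when it is comparable to $\sigma_i$, and is otherwise automatically far below $\tilde\sigma_i\approx\sigma_i$ --- everything else being routine bookkeeping with the large constant $c$.
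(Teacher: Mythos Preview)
Your proposal is correct and follows essentially the same route as the paper: reduce to controlling $\tilde\sigma_i$, $\sigma^{\{l\}}_{i-1}$, $\sigma^{\{l\}}_{i+1}$ via Theorem~\ref{ovw-singular-K}, remove $\mathcal B_E$ once at cost $\tau$, and union-bound over $l$. The one structural difference is where the case split for the $(i{+}1)$-gap lives. The paper (Proposition~\ref{boundB1set}) splits on the \emph{deterministic} quantity $\Delta_i$: if $\Delta_i>4T$, Weyl alone closes both gaps; if $\Delta_i\le 4T$, then $\sigma_{i+1}\ge \tfrac{9}{10}\sigma_i$ deterministically, which controls the denominators in Theorem~\ref{ovw-singular-K}. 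You instead split on the \emph{random} quantity $\sigma^{\{l\}}_{i+1}\lessgtr \sigma_i/4$. This is still valid because the probability bound in Theorem~\ref{ovw-singular-K} is unconditional and the denominators there are precisely $\sigma^{\{l\}}_{i+1}$, so the dichotomy is only used to simplify the (random) right-hand side on the good event; it also neatly sidesteps the $i=r$ edge case, since on $\overline{\mathcal B_E}$ one has $\sigma^{\{l\}}_{r+1}\le T<\sigma_r/4$ automatically. A further minor difference: for $\sigma^{\{l\}}_{i-1}$ you invoke only the one-sided lower-tail part of Theorem~\ref{ovw-singular-K}, avoiding any case analysis there, whereas the paper treats $i{-}1$ and $i{+}1$ symmetrically via the same $\Delta$-split. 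Both variants land comfortably inside the stated constant $64\times 9^{2r}$.
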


By using the union bound,  \eqref{bound1}, \eqref{bound2}, and \eqref{eq:B1-bound} imply Lemma \ref{lemma:B-bound}. We dedicate the remainder of the section to proving Lemma \ref{lemma:B1-bound}, which completes the proof of Lemma \ref{lemma:B-bound}.

\subsection{Proof of Lemma \ref{lemma:B1-bound}}

Define

\begin{equation}
\begin{split}
\mathcal{G}_{i-1} &:=
    \bigcap_{1 \leq l \leq n}\Bigg\{\max_{k = i-1, i} \max\{\abs{\sigma^{\{l\}}_k - \sigma_k}, \abs{\tilde{\sigma}_k - \sigma_k}\}  \leq 24r\Big[K\sqrt{r(c_0 + 1)\log n}+ \frac{\norm{E}^2}{\tilde{\sigma}_k} + \frac{\norm{E}^{3}}{\tilde{\sigma}_{k}^{2}}\Big]\Bigg\}, \\
\mathcal{G}_{i+1} &:=
    \bigcap_{1 \leq l \leq n}\Bigg\{\max_{k = i, i+1} \max\{\abs{\sigma^{\{l\}}_{k} - \sigma_{k}}, \abs{\tilde{\sigma}_{k} - \sigma_{k}}\}  \leq 24r\Big[K\sqrt{r(c_0 + 1)\log n}+ \frac{\norm{E}^2}{\tilde{\sigma}_{k}} + \frac{\norm{E}^{3}}{\tilde{\sigma}_{k}^{2}}\Big]\Bigg\}.
\end{split}
\end{equation} These good events essentially guarantee that  the relevant perturbed singular values are close to the original ones. 

\begin{proposition}
\label{boundB1set}
\begin{equation}
\mathbb{P}(\mathcal{B}_1) \leq \mathbb{P}(\overline{\mathcal{G}_{i-1}}) + \mathbb{P}(\overline{\mathcal{G}_{i+1}}) + \mathbb{P}(\mathcal{B}_E). 
\end{equation}
\end{proposition}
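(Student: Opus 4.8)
The plan is to establish the set inclusion $\mathcal{B}_1 \subseteq \overline{\mathcal{G}_{i-1}} \cup \overline{\mathcal{G}_{i+1}} \cup \mathcal{B}_E$; the asserted probability bound is then just the union bound. Concretely, I will show that on the event $\mathcal{G}_{i-1} \cap \mathcal{G}_{i+1} \cap \overline{\mathcal{B}_E}$, for \emph{every} $1 \le l \le n$ one has $\min\{\abs{\tilde\sigma_i - \sigma^{\{l\}}_{i+1}}, \abs{\tilde\sigma_i - \sigma^{\{l\}}_{i-1}}\} \ge \delta_i/2$, so that none of the events in the union defining $\mathcal{B}_1$ can occur.

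Fix $l$ and work on $\mathcal{G}_{i-1} \cap \mathcal{G}_{i+1} \cap \overline{\mathcal{B}_E}$; in particular $\norm{E} \le T$, hence $\norm{E^{\{l\}}} \le T$ by Fact~\ref{interlacingfacts}. For an index $k$, abbreviate by $\eta_k := 24r[K\sqrt{r(c_0+1)\log n} + \norm{E}^2/\tilde\sigma_k + \norm{E}^3/\tilde\sigma_k^2]$ the right-hand side appearing in the definitions of $\mathcal{G}_{i\pm 1}$, so that on the good events $\abs{\tilde\sigma_k - \sigma_k} \le \eta_k$ and $\abs{\sigma^{\{l\}}_k - \sigma_k} \le \eta_k$ for the relevant $k$. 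The preliminary observation is that whenever $\sigma_k > cT$ (which by stability~(a) holds for $k = i-1$ and $k = i$, since $\sigma_{i-1}\ge\sigma_i$), Weyl's inequality (Fact~\ref{weylfacts}) gives $\tilde\sigma_k \ge \sigma_k - T \ge \sigma_k/2$ and $T/\tilde\sigma_k \le 2/c$, so the denominators in $\eta_k$ may be replaced by $\sigma_k/2$ and the cubic term absorbed, yielding $\eta_k \le 24r\bigl[K\sqrt{r(c_0+1)\log n} + 4T^2/\sigma_i\bigr]$. Summing any two such $\eta$'s and invoking stability~(b), which gives $K\sqrt{\log n} < \delta_i/c$ and $T^2/\sigma_i < \delta_i/c$, together with the explicit value $c = 2^{11}(c_0+1)r^3$, shows that this sum is $< \delta_i/2$, with plenty of room.

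With this, the bound $\abs{\tilde\sigma_i - \sigma^{\{l\}}_{i-1}} \ge \delta_i/2$ is immediate from $\mathcal{G}_{i-1}$: since $\sigma^{\{l\}}_{i-1} \ge \sigma_{i-1} - \eta_{i-1}$ and $\tilde\sigma_i \le \sigma_i + \eta_i$ with $\eta_{i-1}+\eta_i < \delta_i/2$, we get $\sigma^{\{l\}}_{i-1} - \tilde\sigma_i \ge \Delta_{i-1} - (\eta_{i-1}+\eta_i) > \delta_i - \delta_i/2 = \delta_i/2$. For $\abs{\tilde\sigma_i - \sigma^{\{l\}}_{i+1}}$ I split on the size of $\sigma_{i+1}$, which (unlike $\sigma_{i-1}$) may be tiny. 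If $\sigma_{i+1} > \sigma_i/2$ then $\tilde\sigma_{i+1} \ge \sigma_{i+1}/2 > \sigma_i/4$ too, so the $k=i+1$ part of $\mathcal{G}_{i+1}$ also has the good form, and the same computation gives $\tilde\sigma_i - \sigma^{\{l\}}_{i+1} \ge \Delta_i - (\eta_i + \eta_{i+1}) > \delta_i/2$. If instead $\sigma_{i+1} \le \sigma_i/2$, then $\Delta_i = \sigma_i - \sigma_{i+1} \ge \sigma_i/2 \ge \delta_i$ and we do not need the refined estimate: crude Weyl/interlacing gives $\tilde\sigma_i \ge \sigma_i - T$ and $\sigma^{\{l\}}_{i+1} \le \sigma_{i+1} + T$, so $\tilde\sigma_i - \sigma^{\{l\}}_{i+1} \ge \Delta_i - 2T \ge \Delta_i(1 - 4/c) \ge \Delta_i/2 \ge \delta_i/2$, using $2T/\Delta_i \le 2(\sigma_i/c)/(\sigma_i/2) = 4/c \le 1/2$. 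In every case both quantities are $\ge \delta_i/2$, so the $l$th event in $\mathcal{B}_1$ fails; taking the union over $l$ and then the union bound over $\overline{\mathcal{G}_{i-1}}, \overline{\mathcal{G}_{i+1}}, \mathcal{B}_E$ finishes the proof.

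The argument is mostly bookkeeping once the reduction to the good events is in place; the only step needing care is the case split on $\sigma_{i+1}$, since the refined singular-value bounds defining $\mathcal{G}_{i+1}$ carry $\tilde\sigma_{i+1}$ in the denominator and become useless when $\sigma_{i+1}$ is small — but precisely then the gap $\Delta_i$ is comparable to $\sigma_i \gg T$ and the naive Weyl bound already suffices. The degenerate cases $i = 1$ and $i = r$ (where $\sigma_{i-1} = \infty$ or $\sigma_{i+1} = 0$ under the standing convention $\sigma_0 = \infty$) are easy: the corresponding term in the minimum defining $\mathcal{B}_1$ is then vacuous or falls into the second sub-case above, and the corresponding $\mathcal{G}$ event may be taken to be the whole space.
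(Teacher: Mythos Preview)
Your proof is correct and follows essentially the same strategy as the paper: establish the inclusion $\mathcal{B}_1 \subseteq \overline{\mathcal{G}_{i-1}} \cup \overline{\mathcal{G}_{i+1}} \cup \mathcal{B}_E$ by a case split distinguishing whether the adjacent singular value is large enough for the refined bound in $\mathcal{G}_{i\pm1}$ to be usable, or small enough that the gap is automatically comparable to $\sigma_i \gg T$ and crude Weyl suffices. The paper splits on $\Delta_i \gtrless 4T$ rather than your $\sigma_{i+1} \gtrless \sigma_i/2$, and runs the two sides $i\pm1$ in parallel; your observation that the $i-1$ side never needs the split (since $\sigma_{i-1}\ge\sigma_i>cT$ always) is a small simplification. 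One harmless slip: the chain ``$\Delta_i \ge \sigma_i/2 \ge \delta_i$'' asserts $\sigma_i/2 \ge \delta_i$, which can fail, but you never actually use it---the final step $\Delta_i/2 \ge \delta_i/2$ only needs $\Delta_i \ge \delta_i$, which is immediate from $\delta_i=\min(\Delta_{i-1},\Delta_i)$.
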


\begin{proof}[Proof of Lemma \ref{lemma:B1-bound} given the proposition]

We apply the results of \cite{OVW1} on the perturbation of singular values to determine the probability of $\overline{\mathcal{G}_{i+1}}$ and $\overline{\mathcal{G}_{i-1}}$. This is because for all $l$, both $E$ and $E^{\{l\}}$ satisfy the conditions of Theorem \ref{ovw-singular-K}. The bound obtained from Theorem \ref{ovw-singular-K} for both $E$ and $E^{\{l\}}$ will be the same because $\norm{E^{\{l\}}} \leq \norm{E}$ by Fact \ref{interlacingfacts}. 

We will bound $\mathbb{P}(\overline{\mathcal{G}_{i+1}})$, and the exact same bound will hold for $\mathbb{P}(\overline{\mathcal{G}_{i-1}})$. 
Let $1 \leq l \leq n$. We apply Theorem \ref{ovw-singular-K} to $k \in \{i, i+1\}$ with $t = K\sqrt{128(c_0 + 1)\log n}$. For each such $k$, 

\begin{equation}
\label{apply-OVW-1}
\begin{split}
\mathbb{P}\Bigg\{\max\{\abs{\sigma^{\{l\}}_k - \sigma_k}, \abs{\tilde{\sigma}_k - \sigma_k}\} \geq K\sqrt{128r(c_0 + 1)} + 2\sqrt{r}\frac{\norm{E}^2}{\tilde{\sigma}_k} + r\frac{\norm{E}^3}{\tilde{\sigma}_k^2}\Bigg\} &\leq 16\times9^{2r}\exp[-(c_0 + 1)\log n].\\
\end{split}
\end{equation}

This implies that 

\begin{equation}
\mathbb{P}\Bigg\{\max_{k = i, i+1} \max\{\abs{\sigma^{\{l\}}_{k} - \sigma_{k}}, \abs{\tilde{\sigma}_{k} - \sigma_{k}}\}  \leq 24r\Big[K\sqrt{r(c_0 + 1)\log n}+ \frac{\norm{E}^2}{\tilde{\sigma}_{k}} + \frac{\norm{E}^{3}}{\tilde{\sigma}_{k}^{2}}\Big]\Bigg\} \leq 32\times 9^{2r}n^{-c_0 - 1}.
\end{equation}

Since $\mathcal{G}_{i+1}$ is an intersection over $l$, we have to take a union bound over $1 \leq l \leq n$ to bound the complement. Then, $\overline{\mathcal{G}_{i+1}}$ holds with probability at most $32\times 9^{2r}n^{-c_0}$. The same bound holds for $\overline{\mathcal{G}_{i-1}}$. 

Recall that in \eqref{bound1}, we already bounded $\mathbb{P}(\mathcal{B}_E)$. Putting the bounds for $\overline{\mathcal{G}_{i-1}}$, $\overline{\mathcal{G}_{i+1}}$, and $\mathbb{P}(\mathcal{B}_E)$ together, Proposition \ref{boundB1set} implies

\begin{equation}\label{bound4}
\mathbb{P}(\mathcal{B}_1) \leq 64\times9^{2r}n^{-c_0} + \tau. 
\end{equation}
\end{proof}

\begin{proof}[Proof of Proposition \ref{boundB1set}]

Recall that 

$$\mathcal{B}_1 = \cup_{1 \leq l \leq n}\min\{\abs{\tilde{\sigma}_i - \sigma^{\{l\}}_{i+1}}, \abs{\tilde{\sigma}_i - \sigma^{\{l\}}_{i-1}}\} \leq \delta_i/2\}, \text{ and }$$
$$\mathcal{B}_E =\{\norm{E} \geq T\}.$$Let $$\mathcal{B}_{1, i-1} = \cup_{1 \leq l \leq n}\{\abs{\tilde{\sigma}_i - \sigma^{\{l\}}_{i-1}} \leq \delta_i/2\}, \text{ and }$$$$\mathcal{B}_{1, i+1} = \cup_{1 \leq l \leq n}\{\abs{\tilde{\sigma}_i - \sigma^{\{l\}}_{i+1}} \leq \delta_i/2\}.$$ Observe that $\mathcal{B}_{1} \subset \mathcal{B}_{1, i-1} \cup \mathcal{B}_{1, i+1}$. We will show that \begin{equation}
\label{eq:Bii-1}
\mathcal{B}_{i, i-1} \subset \overline{\mathcal{G}_{i-1}} \cup \mathcal{B}_E, \text{ and }
\end{equation}\begin{equation}
\label{eq:Bii+1}
\mathcal{B}_{i, i+1} \subset \overline{\mathcal{G}_{i+1}} \cup \mathcal{B}_E.
\end{equation}
 Then, the union bound will imply the proposition. Recall that $\Delta_i = \sigma_{i} - \sigma_{i+1}$, and $\delta_i = \min(\Delta_{i-1}, \Delta_i)$. 
Since the proof of \eqref{eq:Bii-1} is virtually identical, we will only give the proof of \eqref{eq:Bii+1}. We break the analysis up into cases depending on if $\Delta_{i} > 4T$ or not.

{\it Case 1.}  $\Delta_{i} > 4T$. \newline Suppose $\overline{\mathcal{B}_E}$ holds. Let $1 \leq l \leq n$. By Fact \ref{interlacingfacts}, $\norm{E^{\{l\}}} \leq \norm{E}$, and on $\overline{\mathcal{B}_E}$, $\norm{E} \leq T$. Therefore, by applying Weyl's inequality to $\sigma^{\{l\}}_{i+1}$ and $\tilde{\sigma}_i$, we find that $$\abs{\tilde{\sigma}_i - \sigma^{\{l\}}_{i+1}} \ge \sigma_i - \sigma_{i+1} - 2T >  \Delta_{i}/2 \geq \delta_i/2.$$Therefore, $\overline{\mathcal{B}_E} \subset \overline{\mathcal{B}_{1, i+1}}$, so this implies that $\mathcal{B}_{1, i+1} \subset \mathcal{B}_E \subset \mathcal{B}_E \cup \overline{\mathcal{G}_{i+1}}$ in this case. 

{\it Case 2.} $\Delta_{i} \leq 4T$.\newline We will show that 
$\overline{\overline{\mathcal{G}_{i+1}} \cup \mathcal{B}_{E}} =  \mathcal{G}_{i+1} \cap \overline{\mathcal{B}_E} \subset \overline{\mathcal{B}_{i,i+1}}$, implying \eqref{eq:Bii+1}. Suppose $\mathcal{G}_{i+1} \cap \overline{\mathcal{B}_E}$ holds. Let $1 \leq l \leq n$.  Since $\sigma_i > 100T$ by $(c, \tau, 1)$ stability (and choice of c) and $\Delta_i \leq 4T$, $$\sigma_{i+1}  = \sigma_i - \Delta_{i} \geq \frac{9}{10}\sigma_i.$$ We can use Weyl's inequality to obtain $$\tilde{\sigma}_{i+1} \geq \sigma_{i+1} - \norm{E} \geq \frac{9}{10}\sigma_i - T \geq \frac{8}{10}\sigma_i.$$With this lower bound for $\tilde{\sigma}_{i+1}$, we can upper bound the right hand side of the inequality defining $\mathcal{G}_{i+1}$ for both $k = i$ and $k = i+1$. On $\mathcal{G}_{i+1} \cap \overline{\mathcal{B}_E}$,  we have that
\begin{equation}
\forall k \in \{i+1, i\}: \max\{\abs{\sigma^{\{l\}}_k - \sigma_k}, \abs{\tilde{\sigma}_k - \sigma_k}\}  \leq 32r\Big[K\sqrt{r(c_0 + 1)\log n}+ \frac{T^2}{\sigma_{i}} + \frac{T^{3}}{\sigma_{i}^{2}}\Big].
\end{equation} Since  $\sigma_{i} > 100T$, the third term on the right hand side of the above inequality is at most $1/100$ of the second term. This implies that on $\mathcal{G}_{i+1} \cap \overline{\mathcal{B}_E}$,

\ \begin{equation}
\forall k \in \{i+1, i\}: \max\{\abs{\sigma^{\{l\}}_k - \sigma_k}, \abs{\tilde{\sigma}_k - \sigma_k}\}  \leq 33r\Big[K\sqrt{r(c_0 + 1)\log n}+ \frac{T^2}{\sigma_{i}}\Big].
\end{equation} By the definition of $c$, it must be the case that $\Delta_i$ is much larger than the RHS, because  $\Delta_{i} > c(K\log^{1/2}n + \sigma_{i}^{-1}T^{2})$ by assumption $(b)$ of $(c, \tau, 1)$ stability.\begin{equation}
\forall k \in \{i+1, i\}: \max\{\abs{\sigma^{\{l\}}_k - \sigma_k}, \abs{\tilde{\sigma}_k - \sigma_k}\}  \leq \frac{\Delta_i}{4}.
\end{equation} Thus, on $\mathcal{G}_{i+1} \cap \overline{\mathcal{B}_E}$,both $\tilde{\sigma}_i$ and $\sigma^{\{l\}}_{i+1}$ are at most $\frac{\Delta_i}{4}$ away from their original values (which are $\sigma_i$ and $\sigma_{i+1})$, so the gap between them is at least $\Delta_i/2 \geq \delta_i/2$. Since this is true for all $l$, this implies that $\overline{\mathcal{B}_{i,i+1}}$ holds.
Thus, in either case, $\mathcal{B}_{1, i+1} \subset \overline{\mathcal{G}_{i+1}} \cup \mathcal{B}_{E}.$

As we have stated, a virtually identical argument using a case analysis for $\Delta_{i-1}$ gives that $\mathcal{B}_{1, i-1} \subset \overline{\mathcal{G}_{i-1}} \cup \mathcal{B}_E$. Therefore,  $$\mathcal{B}_1 \subset \overline{\mathcal{G}_{i-1}} \cup \overline{\mathcal{G}_{i+1}} \cup \mathcal{B}_E,$$ which implies that

\begin{equation}
\mathbb{P}(\mathcal{B}_1) \leq \mathbb{P}(\overline{\mathcal{G}_{i-1}}) + \mathbb{P}(\overline{\mathcal{G}_{i+1}}) + \mathbb{P}(\mathcal{B}_E). 
\end{equation}
\end{proof}

\section{ Proof of the Delocalization Lemma \ref{delocalization-result-1}}
\label{section: 
delocalizationproof1}

We now prove Lemma \ref{delocalization-result-1}, using the iterative leave-one-out argument, discussed briefly in Section \ref{mainideas}.

Recall the bound from Theorem \ref{coordinatetheorem} with $A$, $H = E$, on coordinate $l$:

\begin{equation}
        \abs{\tilde{u}_{il} - u_{il}} \le C_0\norm{U_{l, \cdot}}_{\infty}\Big[\kappa_i\norm{\tilde{\vect u}_i - \vect{u}_i}_2 + \epsilon_1(i) + a_l\kappa_i\epsilon_2(i)\Big] + 256r\frac{\abs{\langle \vect u^{\{l\}}_i, \vect x \rangle}}{\sigma_i},
\end{equation}  where we recall that $C_0$ is the value from Theorem \ref{coordinatetheorem}, $C_0 = 272\times4r^{3/2}$. The term in the brackets will be shown to be less than $\frac{5}{2}\kappa_i$. Further, $\norm{U_{l, \cdot}}_{\infty} \leq \norm{U}_{\infty}.$ We can thus write

\begin{equation}
\begin{split}
        \abs{\tilde{u}_{il} - u_{il}} &\le \frac{5}{2}C_0\kappa_i\norm{U}_{\infty}+ 256r\frac{\abs{\langle \vect u^{\{l\}}_i, \vect x \rangle}}{\sigma_i}, \text{ so } \\
        \abs{\tilde{u}_{il}} &\le 3C_0\kappa_i\norm{U}_{\infty}+ 256r\frac{\abs{\langle \vect u^{\{l\}}_i, \vect x \rangle}}{\sigma_i}.
\end{split}
\end{equation}

\vskip2mm 

The last term on the RHS is the inner product of the singular vector of a leave-one-out matrix with a random vector from which it is independent.  Appyling the Hoeffding inequality here is somewhat wasteful, as we can apply the stronger Bernstein inequality, given that we have 
control on the infinity norm of $\vect u^{\{l\}}_i$. Thus, the problem reduces to bounding the infinity norm of eigenvectors of a minor. On the surface, this makes the problem harder, as there are $n$ minors. But we observe  that the bound for the minor is slightly weaker than what we need for the whole matrix. This gain is critical and we are able to exploit it in a full iterative argument. The details now follow. 

{\it Notation.} Let $\alpha$ be an index set. Set $E^\alpha$ to be the random matrix equal to $E$, but with the rows and columns indexed by $\alpha$ set to zero. This is a generalization of the leave-one-out construction from Theorem \ref{coordinatetheorem}. If  $\abs{\alpha} = j$, we have a  leave-$j$-out matrix. Let $A^\alpha = A + E^\alpha$. $U^\alpha$ will be the matrix of $r$ leading singular vectors of $A^\alpha$. Similar notations, such as $\vect u^\alpha_i$ and $\sigma^\alpha_i$, are self-explanatory.

\vskip2mm 

Roughly speaking, the iterative leave-one-out argument uses a weaker bound on $\max_{\abs{\alpha} = j}\norm{\vect u^\alpha_i}_\infty$ to obtain a stronger bound for $\max_{\abs{\alpha} = j-1}\norm{\vect u^\alpha_i}_\infty.$ We will define a deterministic, increasing sequence $\{f_j\}_{0 \leq j \leq j^*}$, where $j^{*}$ is a number smaller than $\log n$. The sequence will be defined so that that $f_j^{*} = 1$, and $f_1$ and $f_0$ will be of size at most $4C_0\kappa_i\norm{U}_{\infty}$. We will show that under the $(c, \tau, 2)$ strong stability assumption, the leave-$j$-out singular vectors satisfy the following for $0 \leq j \leq j^*$.\begin{equation}\label{descent} \text{ For all } \abs{\alpha} = j, \norm{\vect u^\alpha_i}_\infty \leq f_{j}.\end{equation} Bounding the probability of failure of this statement is tricky.  It involves conditioning on what happens at step $j$ (leaving-$j$-out)  to control what happens at step $j-1$ (leaving-$(j-1)$-out). This iterative bound is proven in Lemma \ref{lemma:iteration}. The low probability of failure of \eqref{descent} for $j = 0$ will conclude the proof of Lemma \ref{delocalization-result-1}.

We now formally define our parameters. For $0 \leq j \leq j^{*} := \lceil \frac{50\log n}{\log(\log n)} + 3\rceil$, define $f_j$ in the following fashion. Start with $f_{j^*} = 1$. For $0 \leq j \leq j^*-1$, 
\begin{equation}
\label{f_jdefinition}
f_j := 3C_0\kappa_i\norm{U}_{\infty} + \frac{f_{j+1}}{\log^{0.01}n}.
\end{equation} By the choice of $j^*$ and the fact that $C_0 \ge 1$, it is easy to check that $f_0$ and $f_1$ are both less than $4C_0\kappa_i\norm{U}_{\infty}$.

\begin{lemma} [Iterative Lemma] \label{lemma:iteration}
For $0 \le j \le j^{\ast}$, define
$$\gamma_j := \mathbb {P}\{\max_{\alpha, |\alpha| = j }  \| \vect u_i ^{\alpha} \| _{\infty} >  f_j\}. $$ Then 
$\gamma_{j ^{\ast} } =0$. Further, for $1 \leq j \leq j^*$, $$\gamma_{j-1} \le  \gamma_j + 2\tau + \epsilon_j ,   $$  with $\epsilon_j= 66 \times 9^{2r}n^{j}\exp(-c_2\log^2 n)$, and $c_2 = c_0 + 1$.
\end{lemma}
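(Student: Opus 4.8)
The plan is to treat the two assertions separately. The equality $\gamma_{j^{\ast}}=0$ is immediate: every $\vect u_i^{\alpha}$ is a unit vector, so $\norm{\vect u_i^{\alpha}}_{\infty}\le\norm{\vect u_i^{\alpha}}_{2}=1=f_{j^{\ast}}$ deterministically, hence the event inside $\gamma_{j^{\ast}}$ is empty. For the recursion, fix $1\le j\le j^{\ast}$ and an index set $\alpha$ with $\abs{\alpha}=j-1$. I apply the deterministic Theorem \ref{coordinatetheorem} with signal matrix $A$ and noise matrix $H=E^{\alpha}$: then $\tilde A=A^{\alpha}$, and for $l\notin\alpha$ the leave-one-out matrix is $A+(E^{\alpha})^{\{l\}}=A^{\alpha\cup\{l\}}$, whose $i$-th singular vector $\vect u_i^{\alpha\cup\{l\}}$ is a leave-$j$-out object, while the vector $\vect x=\vect x(l)$ from Theorem \ref{coordinatetheorem} is the $l$-th row of $E$ (with $E_{ll}$ halved and the $\alpha$-coordinates zeroed), hence a function of $\{E_{lt}\}_t$ and in particular independent of $A^{\alpha\cup\{l\}}$.

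On the good event where $\norm{E}\le T$, where every $a_l^{(\alpha)}:=\norm{U^{(\alpha\cup\{l\})T}\vect x(l)}_2\le\sqrt{2r(c_0+1)}\,K\log n$ (Corollary \ref{hoeffcor} with parameter $C=\sqrt{2(c_0+1)\log n}$), and where the stability condition (iii) of Theorem \ref{coordinatetheorem} holds for the pairs $(\sigma_i^{\alpha},\sigma_{i\pm1}^{\alpha\cup\{l\}})$, one checks exactly as in Section \ref{mainresultproof}, but using the $\nu=2$ form of strong stability ($\delta_i>c(K\log n+\sigma_i^{-1}T^2)$ and $\sigma_i>c\sqrt{Kn}\log^{2.01}n$), that all hypotheses of Theorem \ref{coordinatetheorem} are met and that the bracket $[\kappa_i\norm{\vect u_i^{\alpha}-\vect u_i}_2+\epsilon_1(i)+a_l^{(\alpha)}\kappa_i\epsilon_2(i)]$ is at most $\frac{5}{2}\kappa_i$. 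Since $C_0\kappa_i\ge1$, this yields for $l\notin\alpha$ that
$$\abs{u_{il}^{\alpha}}\le\abs{u_{il}}+\abs{u_{il}^{\alpha}-u_{il}}\le 3C_0\kappa_i\norm{U}_{\infty}+256r\,\frac{\abs{\langle\vect u_i^{\alpha\cup\{l\}},\vect x(l)\rangle}}{\sigma_i},$$
while for $l\in\alpha$ one has $(E^{\alpha})_{l,\cdot}=0$, so $u_{il}^{\alpha}=\langle A_{l,\cdot},\vect u_i^{\alpha}\rangle/(\pm\sigma_i^{\alpha})$, and Cauchy--Schwarz together with $\sigma_i^{\alpha}\ge\sigma_i/2$ gives $\abs{u_{il}^{\alpha}}\le 2\sqrt{r}\,\kappa_i\norm{U}_{\infty}\le 3C_0\kappa_i\norm{U}_{\infty}$.

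It remains to control the inner products on $\overline{\mathcal D_j}:=\{\max_{\abs{\beta}=j}\norm{\vect u_i^{\beta}}_{\infty}\le f_j\}$. The key point is that $\{\norm{\vect u_i^{\alpha\cup\{l\}}}_{\infty}\le f_j\}$ is measurable with respect to the entries of $E$ with both indices outside $\alpha\cup\{l\}$, which are independent of $\vect x(l)$; conditioning on those entries and applying Bernstein's inequality (Lemma \ref{bernstein}) to $\langle\vect u_i^{\alpha\cup\{l\}},\vect x(l)\rangle=\sum_k u_{ik}^{\alpha\cup\{l\}}x_k$, whose summands are independent, bounded in absolute value by $Kf_j$, and of total variance at most $K$ (using $\E\xi_{ij}^2\le K$ from Assumption \ref{assumption-K}), gives
$$\mathbb P\big(\{\abs{\langle\vect u_i^{\alpha\cup\{l\}},\vect x(l)\rangle}\ge t\}\cap\overline{\mathcal D_j}\big)\le 2\exp\Big(-\frac{t^2/2}{K+Kf_jt/3}\Big).$$
Taking $t\asymp\sqrt{(c_0+1)K}\,\log n+(c_0+1)Kf_j\log^2 n$ makes the right side at most $2\exp(-(c_0+1)\log^2 n)$, while the strong-stability bounds $\sigma_i>c\sqrt{Kn}\log^{2.01}n$ and $\sigma_i>cK\log^{2.01}n$ (recall $K\le n$), together with $f_j\ge 3C_0\kappa_i\norm{U}_{\infty}$ for $j<j^{\ast}$, $f_{j^{\ast}}=1$, and $\norm{U}_{\infty}\ge n^{-1/2}$, force $256r\,t/\sigma_i\le f_j/\log^{0.01}n$. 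Hence off these exceptional events and off $\overline{\mathcal D_j}$ we obtain $\norm{\vect u_i^{\alpha}}_{\infty}\le 3C_0\kappa_i\norm{U}_{\infty}+f_j/\log^{0.01}n=f_{j-1}$, so $\mathcal D_{j-1}$ is contained in $\mathcal D_j$ together with the listed exceptional events.

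A union bound over the at most $n^{j}$ relevant pairs $(\alpha,l)$ completes the recursion: $\{\norm{E}>T\}$ contributes $\tau$; the singular-value-stability failures, handled by Theorem \ref{ovw-singular-K} applied to $E^{\alpha}$ and $E^{\alpha\cup\{l\}}$ with threshold $t=K\sqrt{128(c_0+1)}\log n$ (each application failing with probability $\le 8\times 9^{2r}\exp(-(c_0+1)\log^2 n)$) and a case split on whether $\Delta_{i-1},\Delta_i$ exceed $4T$ as in the proof of Proposition \ref{boundB1set}, contribute $64\times 9^{2r}n^{j}\exp(-(c_0+1)\log^2 n)+\tau$; the Hoeffding events for the $a_l^{(\alpha)}$ contribute $2r\,n^{j}\exp(-(c_0+1)\log^2 n)$; and the Bernstein events contribute $2n^{j}\exp(-(c_0+1)\log^2 n)$. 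Summing, $\gamma_{j-1}\le\gamma_j+2\tau+\epsilon_j$ with $\epsilon_j=66\times 9^{2r}n^{j}\exp(-c_2\log^2 n)$ and $c_2=c_0+1$, as claimed. The main obstacle is the singular-value-stability step: one must re-run the \cite{OVW1} perturbation estimate at the $\log^2 n$ confidence level needed to survive the $n^{j}$ union bound, and verify that the $(c,\tau,2)$ strong-stability hypotheses are precisely strong enough to absorb the resulting $K\log n$-size deviation terms; the independence bookkeeping that legitimizes the Bernstein step on $\overline{\mathcal D_j}$ is the other delicate point.
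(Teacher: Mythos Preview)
Your proposal is correct and follows essentially the same route as the paper: apply Theorem~\ref{coordinatetheorem} with $H=E^{\alpha}$, split into $l\in\alpha$ versus $l\notin\alpha$, verify the hypotheses via the $(c,\tau,2)$ strong-stability assumptions and Theorem~\ref{ovw-singular-K}, control the inner product $\langle\vect u_i^{\alpha\cup\{l\}},\vect x(l)\rangle$ by a conditional Bernstein argument exploiting the independence of $\vect x(l)$ from $A^{\alpha\cup\{l\}}$, and assemble the failure probabilities by a union bound over the $O(n^{j})$ pairs $(\alpha,l)$. The only cosmetic difference is your two-term Bernstein threshold $t\asymp\sqrt{c_2K}\log n+c_2Kf_j\log^2 n$ in place of the paper's unified choice $t=c_2\sqrt{2Kn}f_j\log^2 n$; both lead to the same conclusion once one uses $f_j\ge\norm{U}_{\infty}\ge n^{-1/2}$ and $K\le n$.
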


The first conclusion that $\gamma_{j^{\ast} } =0$ is trivial, as a coordinate of a unit vector is at most 1, and we defined 
$f_{j^{\ast}} = 1$. The important content of this lemma is thus the iterative bound for the $\gamma_j$.

\begin{proof}[Proof of Lemma \ref{delocalization-result-1} given Lemma \ref{lemma:iteration}] We have the relation $$\sum_{j=1} ^{j^{\ast}-1 } \epsilon_{j} \leq n\epsilon_{j^* - 1} = \epsilon_{j^*}.$$ Lemma \ref{lemma:iteration} implies that \begin{equation}
\begin{split}
\gamma_0 &\le 2j^{*}\tau + \epsilon_{j^*} +  \sum_{j=1} ^{j^{\ast}-1 } \epsilon_{j} \le 2j^{*}\tau + 2\epsilon_{j^*}.
\end{split}
\end{equation} Restating this with the definition of $\gamma_0$ and $\epsilon_{j^*}$,

\begin{equation}
\begin{split}
\label{delocalization-proof-bound}
\mathbb{P}\{\norm{\tilde{\vect u}_i}_{\infty} > f_0\} &\leq 2j^{*}\tau +  132 \times 9^{2r} \times n^{j^*}\exp(-c_2\log^2 n) \\
&= 2j^*\tau + 132 \times 9^{2r} \times \exp((j^* - c_2\log n)\log n)\\
&\leq \tau\log n + 132\times 9^{2r}\exp (- \omega (\log n)) \\
&= \tau \log n + 132\times9^{2r}n^{-\omega (1) }\\
&\le \tau \log n + 132\times 9^{2r}n^{-c_0}
\end{split}
\end{equation} for any constant $c_0 >0$, thanks to  the fact that $j^{\ast} = O(\log n/ \log \log n) =o(\log n)$.
We have previously observed that $f_0 \leq 4C_0\kappa_i\norm{U}_{\infty}$; therefore \eqref{delocalization-proof-bound}  implies Lemma \ref{delocalization-result-1}.
\end{proof} 

\vskip2mm 

We now prove Lemma \ref{lemma:iteration}.

\subsection*{Preliminaries.}
Recall that for an index $l$, we defined $\vect x(l)$ as the $l$th row of $E$ with the $l$th entry divided by 2. We now let $\vect x(\alpha, l)$ be the $l$th row of $E^\alpha$ with its $l$th entry divided by $2$. By definition, any entry of $\vect x(\alpha, l)$ is either zero, an entry of $E$, or an entry of $E$ divided by $2$. In particular, the entries of $\vect x(\alpha, l)$ are mean zero, $K$-bounded, independent random variables. We consider  this vector because in the deterministic Theorem \ref{coordinatetheorem}, the $l$th row of $H$ plays an important role when we bound the perturbation of the $l$th coordinate of $\vect u_i$. We will apply Theorem \ref{coordinatetheorem} with $H = E^\alpha$.

The proof of Lemma \ref{lemma:iteration} requires bounding the probability of various failure events, which we now define.

\vskip2mm 
{\noindent \bf The event $\mathcal{B}_{\alpha, l}$.}\newline  Let $\alpha$ be an index set. Let $1 \leq l \leq n$, and set $\beta = \alpha \cup \{l\}$. Let 

\begin{equation}
\begin{split}
\mathcal{B}_{\alpha, l} &:= \mathcal{B}_{\alpha, l, 1} \cup \mathcal{B}_{\alpha, l, 2} \cup \mathcal{B}_{\alpha, E}, \text{ where } \\
\mathcal{B}_{\alpha, l, 1} &:= \{ \min\{\abs{\sigma^{\alpha}_i - \sigma^{\beta}_{i+1}}, \abs{\sigma^{\alpha}_i - \sigma^{\beta}_{i-1}}\} < \delta_i/2\}, \\
\mathcal{B}_{\alpha, l, 2} &:=  \Big\{\norm{U^{\beta T}\vect x(\alpha, l)}_{2} \geq K\sqrt{2c_2r\log^{2} n}\Big\}, 
\\
\mathcal{B}_{\alpha, E} &:= \{\norm{E^\alpha} > T\}. \\
\end{split}
\end{equation}
For $0 \leq j \leq j^*$, let
$$\mathcal{B}_j := \bigcup_{\substack{\abs{\alpha} = j\\ l \not\in \alpha}  }(\mathcal{B}_{\alpha, l, 1} \cup \mathcal{B}_{\alpha, l, 2}) \cup \bigcup_{\abs{\alpha} = j}B_{\alpha, E}.$$

\begin{lemma}[Probability of $\mathcal{B}_j$]
\label{probBj}
Let $0 \leq j \leq j^*$. Under the conditions of Lemma \ref{delocalization-result-1}, 
$$\mathbb{P}(\mathcal{B}_j) \leq 65\times9^{2r}n^{j+1}\exp(-c_2\log^2 n) + 2\tau.$$
\end{lemma}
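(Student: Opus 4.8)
\textbf{Proof proposal for Lemma \ref{probBj}.} The plan is to bound the three families of events comprising $\mathcal{B}_j$ separately and combine by the union bound, following the template of the proof of Lemma \ref{lemma:B-bound} but carrying the extra combinatorial factor coming from the at most $n^j$ choices of $\alpha$ with $\abs{\alpha}=j$.

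First, the events $\mathcal{B}_{\alpha,E}=\{\norm{E^\alpha}>T\}$ are trivial: by Fact \ref{interlacingfacts}, $\norm{E^\alpha}\le\norm{E}$, so $\bigcup_{\abs{\alpha}=j}\mathcal{B}_{\alpha,E}\subseteq\{\norm{E}>T\}$, which has probability at most $\tau$ by the definition of $T$. Next, for the events $\mathcal{B}_{\alpha,l,2}$, the key observation is that with $\beta=\alpha\cup\{l\}$ the matrix $E^\beta$ has all entries of row and column $l$ zeroed out, so $A^\beta=A+E^\beta$, and hence its matrix of leading singular vectors $U^\beta$, depends only on the entries $E_{pq}$ with $p,q\notin\beta$; in particular $U^\beta$ is independent of the $l$th row of $E$, that is, of $\vect x(\alpha,l)$. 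Since $\vect x(\alpha,l)$ has independent, mean-zero, $K$-bounded coordinates, Corollary \ref{hoeffcor} applies to each of the $r$ coordinates of $U^{\beta T}\vect x(\alpha,l)$ with $C=\sqrt{2c_2\log n}$, giving that a fixed coordinate exceeds $K\sqrt{2c_2}\,\log n$ with probability at most $2\exp(-c_2\log^2 n)$; a union bound over the $r$ coordinates shows $\mathbb{P}(\mathcal{B}_{\alpha,l,2})\le 2r\exp(-c_2\log^2 n)$, and a further union bound over the at most $n^{j+1}$ pairs $(\alpha,l)$ gives $\mathbb{P}\big(\bigcup\mathcal{B}_{\alpha,l,2}\big)\le 2r\,n^{j+1}\exp(-c_2\log^2 n)$.

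The main obstacle is the singular-value stability event $\mathcal{B}_{\alpha,l,1}$, which I would handle by reprising Proposition \ref{boundB1set} with $E^\alpha$ in place of $E$ and $E^\beta$ in place of $E^{\{l\}}$. For each $\alpha$ one introduces a good event $\mathcal{G}_\alpha$, an intersection over $l\notin\alpha$ and $k\in\{i-1,i,i+1\}$, asserting that $\max\{\abs{\sigma^\alpha_k-\sigma_k},\abs{\sigma^\beta_k-\sigma_k}\}$ is at most $24r\big(K\sqrt{r\,c_2\log^2 n}+\norm{E^\alpha}^2/\sigma^\alpha_k+\norm{E^\alpha}^3/(\sigma^\alpha_k)^2\big)$. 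Since $E^\alpha$ and $E^\beta$ are symmetric, $K$-bounded, mean-zero matrices with $\norm{E^\beta}\le\norm{E^\alpha}\le\norm{E}$, Theorem \ref{ovw-singular-K} with $t=K\sqrt{128\,c_2\log^2 n}$ bounds $\mathbb{P}(\overline{\mathcal{G}_\alpha})$ by $32\cdot 9^{2r}\,n\exp(-c_2\log^2 n)$ after a union over the at most $3n$ pairs $(l,k)$, hence by $32\cdot 9^{2r}\,n^{j+1}\exp(-c_2\log^2 n)$ after a union over $\alpha$. One then checks deterministically that $\mathcal{G}_\alpha\cap\overline{\mathcal{B}_{\alpha,E}}\subseteq\bigcap_{l\notin\alpha}\overline{\mathcal{B}_{\alpha,l,1}}$: on $\overline{\mathcal{B}_{\alpha,E}}$ we have $\norm{E^\alpha},\norm{E^\beta}\le T$, and one splits into the case $\Delta_i>4T$ (resp. $\Delta_{i-1}>4T$), settled by Weyl's inequality alone via $\abs{\sigma^\alpha_i-\sigma^\beta_{i+1}}\ge\Delta_i-2T>\Delta_i/2\ge\delta_i/2$, and the case $\Delta_i\le 4T$, in which $\sigma_{i+1},\sigma^\alpha_i,\sigma^\beta_{i+1}\gtrsim\sigma_i$ so that the $T^2/\sigma$ terms are controlled, and assumption (b) of $(c,\tau,2)$ strong stability (Definition \ref{strong-stable}) forces all deviations to be at most $\Delta_i/4$, whence again $\abs{\sigma^\alpha_i-\sigma^\beta_{i+1}}\ge\Delta_i/2\ge\delta_i/2$; this is exactly the argument in the proof of Proposition \ref{boundB1set}, now with $\log^2 n$ in place of $\log^{1/2}n$ because here $\nu=2$. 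It follows that $\mathbb{P}\big(\bigcup\mathcal{B}_{\alpha,l,1}\big)\le\mathbb{P}\big(\bigcup_\alpha\overline{\mathcal{G}_\alpha}\big)+\mathbb{P}\big(\bigcup_\alpha\mathcal{B}_{\alpha,E}\big)\le 32\cdot 9^{2r}\,n^{j+1}\exp(-c_2\log^2 n)+\tau$.

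Combining the three bounds by the union bound gives $\mathbb{P}(\mathcal{B}_j)\le\tau+2r\,n^{j+1}\exp(-c_2\log^2 n)+32\cdot 9^{2r}\,n^{j+1}\exp(-c_2\log^2 n)+\tau\le 65\cdot 9^{2r}\,n^{j+1}\exp(-c_2\log^2 n)+2\tau$, after absorbing $2r$ into $9^{2r}$ and being generous with the absolute constant. The delicate point, as in Section \ref{mainresultproof}, is the stability step: one must verify that the constant $c$ in the strong-stability hypothesis is large enough to dominate the $\sqrt{c_0+1}$ and $r$ factors produced by Theorem \ref{ovw-singular-K}, uniformly over all leave-$j$-out and leave-$(j{+}1)$-out minors; the resulting $n^{j+1}$ loss is harmless downstream because in Lemma \ref{lemma:iteration} it is multiplied by $\exp(-c_2\log^2 n)=n^{-c_2\log n}$ and $j\le j^*=o(\log n)$.
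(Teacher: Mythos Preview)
Your proposal is correct and follows essentially the same approach as the paper: decompose $\mathcal{B}_j$ into the norm event, the Hoeffding-type event, and the singular-value stability event, handle the first via $\norm{E^\alpha}\le\norm{E}$, the second via independence of $U^\beta$ from $\vect x(\alpha,l)$ plus Corollary~\ref{hoeffcor}, and the third by reprising the case analysis of Proposition~\ref{boundB1set} with Theorem~\ref{ovw-singular-K} at $t=K\sqrt{128c_2\log^2 n}$, then union-bound over the at most $n^{j+1}$ pairs $(\alpha,l)$. The only cosmetic differences are that the paper splits the stability good event into $\mathcal{G}_{j,i-1}$ and $\mathcal{G}_{j,i+1}$ rather than a single $\mathcal{G}_\alpha$ over $k\in\{i-1,i,i+1\}$, and its constant in that step comes out to $64\cdot 9^{2r}$ rather than your $32\cdot 9^{2r}$; either way the total is comfortably below $65\cdot 9^{2r}$.
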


\vspace{2mm}
{\noindent \bf The event $\mathcal{F}_{\alpha, l}$.}\newline For an index set $\alpha$ with $\abs{\alpha} = j$, and a coordinate $1 \leq l \leq n$, define
$$\mathcal{F}_{\alpha, l} := \{\abs{u^\alpha_{il}} > f_{j} \}.$$ We wish to show that the entries of $\vect u^\alpha_i$ are small. When $\mathcal{F}_{\alpha, l}$ holds, it means that a coordinate of $\vect u^\alpha_i$ is too big, and represents a failure at level $j$.

We will need the following lemma about the $\mathcal{F}_{\alpha, l}$ for those $\alpha, l$ where $l \in \alpha$. 

\begin{lemma}
\label{l-in-alpha}
\begin{equation}
\bigcup_{\substack{\abs{\alpha} = j\\ l \in \alpha} }\mathcal{F}_{\alpha, l} \subset \bigcup_{\substack{\abs{\alpha} = j}}\mathcal{B}_{\alpha, E}.
\end{equation}
\end{lemma}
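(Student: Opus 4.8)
The plan is to reduce the claimed inclusion to a pairwise statement: for each fixed index set $\alpha$ with $\abs{\alpha}=j$ and each coordinate $l\in\alpha$, I will show $\mathcal{F}_{\alpha,l}\subseteq\mathcal{B}_{\alpha,E}$, i.e.\ that on the complement $\overline{\mathcal{B}_{\alpha,E}}=\{\norm{E^\alpha}\le T\}$ one automatically has $\abs{u^\alpha_{il}}\le f_j$. Taking the union over all such pairs $(\alpha,l)$ then gives the lemma, since $\bigcup_{\abs{\alpha}=j,\,l\in\alpha}\mathcal{B}_{\alpha,E}\subseteq\bigcup_{\abs{\alpha}=j}\mathcal{B}_{\alpha,E}$.

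The key observation driving the proof is that when $l\in\alpha$, the $l$th row and column of $E^\alpha$ have been zeroed out, so the $l$th row of $A^\alpha=A+E^\alpha$ agrees with that of $A$. Reading off the $l$th coordinate of the eigenvector equation $A^\alpha\vect u^\alpha_i=\pm\sigma^\alpha_i\vect u^\alpha_i$ then gives $u^\alpha_{il}=\langle A_{l,\cdot},\vect u^\alpha_i\rangle/(\pm\sigma^\alpha_i)$, an expression involving only $A$ and the unit vector $\vect u^\alpha_i$. Expanding $A_{l,\cdot}$ through the spectral decomposition of $A$ and bounding crudely --- $\abs{\langle\vect u_k,\vect u^\alpha_i\rangle}\le1$, $\abs{u_{kl}}\le\norm{U}_\infty$, $\sigma_k\le\sigma_1$ --- I will obtain $\abs{u^\alpha_{il}}\le (\sigma_1/\sigma^\alpha_i)\,r\norm{U}_\infty$. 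On $\overline{\mathcal{B}_{\alpha,E}}$ the signal-to-noise part of $(c,\tau,2)$ strong stability gives $\sigma_i>cT\ge2\norm{E^\alpha}$, so Fact \ref{weylfacts} yields $\sigma^\alpha_i\ge\sigma_i/2>0$ and hence $\abs{u^\alpha_{il}}\le2r\kappa_i\norm{U}_\infty$.

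It then remains to compare $2r\kappa_i\norm{U}_\infty$ with $f_j$. For $j\le j^*-1$, the recursion \eqref{f_jdefinition} makes $f_j\ge3C_0\kappa_i\norm{U}_\infty$, and since $C_0=272\times4r^{3/2}$ is much larger than $r$ this exceeds $2r\kappa_i\norm{U}_\infty$; for $j=j^*$ one has $f_{j^*}=1$, which dominates $\abs{u^\alpha_{il}}$ trivially because $\vect u^\alpha_i$ is a unit vector. Either way $\overline{\mathcal{F}_{\alpha,l}}$ holds on $\overline{\mathcal{B}_{\alpha,E}}$, completing the argument.

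I do not expect a genuine obstacle here: this lemma simply isolates the ``already handled'' coordinates --- those $l$ lying in the removed index set $\alpha$, for which the leave-one-out identity pins down $u^\alpha_{il}$ directly and delocalization is immediate. The only points requiring minor care are checking that $\sigma^\alpha_i$ is bounded away from $0$ (so dividing by it is legitimate), which is exactly where the signal-to-noise condition in stability enters, and tracking the signs coming from eigenvalues versus singular values, which disappear once absolute values are taken.
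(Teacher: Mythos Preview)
Your proposal is correct and follows essentially the same approach as the paper. The paper packages the key computation as a separate deterministic proposition (Proposition~\ref{left-out-small}), obtaining the slightly sharper bound $\abs{u^\alpha_{il}}\le 2\sqrt{r}\,\kappa_i\norm{U}_\infty$ via $\norm{U_{l,\cdot}}_2\le\sqrt{r}\,\norm{U}_\infty$ rather than your cruder $2r\kappa_i\norm{U}_\infty$, but since $C_0=272\times4r^{3/2}$ dominates both, the difference is immaterial.
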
 Having discussed how to control $\mathcal{F}_{\alpha, l}$ when $l \in \alpha$, we move to the case where $l \not\in \alpha.$ The remaining events are for controlling the probability of $\mathcal{F}_{\alpha, l}$ for such $\alpha, l$.

\vspace{2mm}
{\noindent \bf The event $\mathcal{K}_{\alpha, l}$.} \newline
For an index set $\alpha$ with $\abs{\alpha} = j$, and a coordinate $1 \leq l \leq n$, set $\beta = \alpha \cup \{l\}$. Define
\begin{equation}
\label{Kalphal-definition}
\mathcal{K}_{\alpha, l} := \Big\{\abs{u^\alpha_{il}} > 3C_0 \kappa_i \| U\| _{\infty} + 256 r \frac {\abs{\langle \vect u^\beta_i, \vect x(\alpha, l) \rangle}}{\sigma_i }\Big\}. 
\end{equation} If the success event $\overline{\mathcal{K}_{\alpha, l}}$ occurs, we can show that $\abs{u^\alpha_{il}}$ is small provided the inner product $\langle \vect u^\beta_i, \vect x(\alpha, l) \rangle$ is small. The following lemma shows that $\mathcal{K}_{\alpha, l}$ is unlikely.

\begin{lemma}[$\mathcal{K}_{\alpha, l} \subset \mathcal{B}_{\alpha, l}$ when $l 
\not\in \alpha$.]
\label{KinB}
Let $0 \leq j \leq j^{*}$. Let $\alpha$ be an index set such that $\abs{\alpha} = j$, and let $l \not\in \alpha$.  Then, 

\begin{equation}
\mathcal{K}_{\alpha, l} \subset \mathcal{B}_{\alpha, l}.
\end{equation}
\end{lemma}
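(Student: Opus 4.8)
The plan is to establish the equivalent containment $\overline{\mathcal{B}_{\alpha,l}} \subset \overline{\mathcal{K}_{\alpha,l}}$: on the event that none of $\mathcal{B}_{\alpha,l,1}$, $\mathcal{B}_{\alpha,l,2}$, $\mathcal{B}_{\alpha,E}$ occurs, one shows
\[
\abs{u^\alpha_{il}} \le 3C_0\kappa_i\norm{U}_\infty + 256r\frac{\abs{\langle \vect u^\beta_i, \vect x(\alpha,l)\rangle}}{\sigma_i}.
\]
The whole argument is a single application of the deterministic Theorem \ref{coordinatetheorem} with $A$ unchanged and $H := E^\alpha$. Under this substitution $H^{\{l\}} = E^{\alpha\cup\{l\}} = E^\beta$, so that $A^{\{l\}} = A^\beta$, $\tilde A = A + E^\alpha = A^\alpha$, the matrix of leading singular vectors $U^{\{l\}}$ becomes $U^\beta$, the vector $\vect x$ of the theorem is exactly $\vect x(\alpha,l)$, $a_l = \norm{U^{\beta T}\vect x(\alpha,l)}_2$, and $\epsilon_1(i) = \norm{E^\alpha}/\sigma_i$, $\epsilon_2(i) = 1/\delta_i$. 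Throughout I would use Fact \ref{interlacingfacts} in the form $\norm{E^\alpha} \le \norm{E}$, so that on $\overline{\mathcal{B}_{\alpha,E}}$ one may replace $\norm{E^\alpha}$ by the high-probability bound $T$.

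First I would check that the three hypotheses of Theorem \ref{coordinatetheorem} hold on $\overline{\mathcal{B}_{\alpha,l}}$. The signal-to-noise hypothesis $\sigma_i > C_0\norm{H}$ follows from $\norm{E^\alpha}\le T$ (on $\overline{\mathcal{B}_{\alpha,E}}$) and condition (a) of $(c,\tau,2)$ strong stability, since $c = 2^{11}(c_0+1)r^3 > C_0$. The gap hypothesis $\delta_i > C_0\max\{a_l,\kappa_i\norm{H}\norm{U}_\infty\}$ splits into two parts: on $\overline{\mathcal{B}_{\alpha,l,2}}$ one has $a_l < K\sqrt{2c_2 r}\log n$, which is dominated by $\delta_i/C_0$ thanks to condition (b) with $\nu = 2$ (which gives $\delta_i > cK\log n$); and $\kappa_i\norm{E^\alpha}\norm{U}_\infty \le \kappa_i T\norm{U}_\infty$ is dominated by $\delta_i/C_0$ thanks to condition (c). The stability hypothesis $\min\{\abs{\tilde\sigma_i - \sigma^{\{l\}}_{i+1}},\abs{\tilde\sigma_i - \sigma^{\{l\}}_{i-1}}\} > \delta_i/2$ is, after the substitution, nothing but the statement that $\mathcal{B}_{\alpha,l,1}$ fails, so nothing is required there.

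With the hypotheses in place, Theorem \ref{coordinatetheorem} yields
\[
\abs{u^\alpha_{il} - u_{il}} \le C_0\norm{U_{l,\cdot}}_\infty\big[\kappa_i\norm{\vect u^\alpha_i - \vect u_i}_2 + \epsilon_1(i) + a_l\kappa_i\epsilon_2(i)\big] + 256r\frac{\abs{\langle \vect u^\beta_i, \vect x(\alpha,l)\rangle}}{\sigma_i}.
\]
I would then bound the bracketed quantity by $\tfrac52\kappa_i$, using $\norm{\vect u^\alpha_i - \vect u_i}_2 \le 2$, $\epsilon_1(i) \le T/\sigma_i < 1/c$, and $a_l\kappa_i\epsilon_2(i) < \kappa_i\sqrt{2c_2 r}\log n/\delta_i < \kappa_i\sqrt{2c_2 r}/c$ (the latter two being exactly the bounds already invoked), the choice of $c$ forcing the last two terms to sum to at most $\tfrac12\kappa_i$. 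Then $\norm{U_{l,\cdot}}_\infty \le \norm{U}_\infty$, $\abs{u_{il}} \le \norm{U}_\infty$, and $\norm{U}_\infty \le \tfrac12 C_0\kappa_i\norm{U}_\infty$ (valid since $C_0\kappa_i \ge 2$) give $\abs{u^\alpha_{il}} \le \abs{u_{il}} + \abs{u^\alpha_{il}-u_{il}} \le 3C_0\kappa_i\norm{U}_\infty + 256r\,\abs{\langle \vect u^\beta_i, \vect x(\alpha,l)\rangle}/\sigma_i$, which is $\overline{\mathcal{K}_{\alpha,l}}$.

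I do not expect a genuinely hard step: the content of the lemma is precisely that Theorem \ref{coordinatetheorem}, instantiated on the leave-$\alpha$-out matrix $A^\alpha$, is driven entirely by events already packaged into $\mathcal{B}_{\alpha,l}$. The only place where care is needed is the bookkeeping around the substitution $H = E^\alpha$ — in particular making sure $\epsilon_1(i)$, $\epsilon_2(i)$, $a_l$, and $U^{\{l\}}$ are read as their leave-$\alpha$-out versions — together with a quick verification that the constants $c$, $C_0$, $c_2$ satisfy the (very loose) numerical inequalities used in the two displayed estimates.
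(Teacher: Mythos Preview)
Your proposal is correct and follows essentially the same route as the paper: show $\overline{\mathcal{B}_{\alpha,l}} \subset \overline{\mathcal{K}_{\alpha,l}}$ by verifying the three hypotheses of Theorem \ref{coordinatetheorem} with $H = E^\alpha$ on $\overline{\mathcal{B}_{\alpha,l}}$, then bound the bracketed term by $\tfrac{5}{2}\kappa_i$ via $\norm{\vect u^\alpha_i - \vect u_i}_2 \le 2$ and the numerical inequalities forced by stability, and finally absorb $\abs{u_{il}} \le \norm{U}_\infty$ into the $3C_0\kappa_i\norm{U}_\infty$ term. The only organizational difference is that the paper packages the hypothesis-checking step as a separate Lemma \ref{Tjinclusion}, while you do it inline; also note that the appeal to Fact \ref{interlacingfacts} is unnecessary since $\mathcal{B}_{\alpha,E}$ is defined directly via $\norm{E^\alpha}$.
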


If $l \not\in \alpha$, $\abs{\beta} = j+1$.  Showing that the inner product $\langle \vect u^\beta_i, \vect x(\alpha, l) \rangle$ is small (thus showing that $\abs{u^\alpha_{il}}$ is small) requires information about the infinity norm of $\vect u^\beta_i$. This information will be provided by the following events.

\vspace{2mm}
{\noindent \bf The events $\mathcal{L}_{\alpha, l}$ and $\mathcal{I}_{\alpha, l}$.} \newline
For an index set $\alpha$ with $\abs{\alpha} = j$, and a $1 \leq l \leq n$, let $\beta = \alpha \cup \{l\}$. Define $$\mathcal{L}_{\alpha, l} := \{\| \vect u_i^{\beta}  \| _{\infty } > f_{j+1}\}, $$
$$\mathcal{I}_{\alpha, l} := \{\abs{\langle \vect u^\beta_i, \vect x(\alpha, l) \rangle} \geq c_2\sqrt{2Kn}f_{j+1}\log^2 n\}, \text{ and }$$
$$\mathcal{I}_{j} := \bigcup_{\substack{\abs{\alpha} = j\\ l \not\in \alpha}  } (\mathcal{I}_{\alpha, l} \cap \overline{\mathcal{L}_{\alpha, l}}).$$

\begin{lemma}[Probability of $\mathcal{I}_j$]
\label{probIj}
Let $0 \leq j \leq j^*$. Under the conditions of Lemma \ref{delocalization-result-1}, 

\begin{equation}
\mathbb{P}(\mathcal{I}_j) \leq 2n^{j+1}\exp(-c_2\log^2n).
\end{equation}
\end{lemma}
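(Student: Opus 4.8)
The plan is to fix, for the moment, a single pair $(\alpha,l)$ with $\abs{\alpha}=j$ and $l\notin\alpha$, write $\beta=\alpha\cup\{l\}$ (so $\abs{\beta}=j+1$), prove $\mathbb{P}(\mathcal{I}_{\alpha,l}\cap\overline{\mathcal{L}_{\alpha,l}})\le 2\exp(-c_2\log^2 n)$, and then finish by a union bound over the at most $n^{j+1}$ such pairs.

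The point that makes this work is an independence observation. The matrix $E^\beta$ is obtained from $E$ by deleting every row and column indexed by $\beta$, and since $l\in\beta$, no entry of the $l$th row (or, by symmetry, the $l$th column) of $E$ appears in $E^\beta$. On the other hand, every coordinate of $\vect x(\alpha,l)$ is either $0$, an entry of the $l$th row of $E$, or half such an entry. Hence $E^\beta$ and $\vect x(\alpha,l)$ are measurable with respect to disjoint families of the independent variables $\xi_{pq}$, so they are independent; in particular the unit vector $\vect u_i^\beta$, a deterministic function of $A^\beta=A+E^\beta$, is independent of $\vect x(\alpha,l)$. Note also that $\overline{\mathcal{L}_{\alpha,l}}=\{\norm{\vect u_i^\beta}_\infty\le f_{j+1}\}$ is $\sigma(E^\beta)$-measurable.

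Condition on $E^\beta$. On $\overline{\mathcal{L}_{\alpha,l}}$ the now-fixed vector $\vect u_i^\beta$ has $\norm{\vect u_i^\beta}_\infty\le f_{j+1}$ and $\norm{\vect u_i^\beta}_2=1$, while the coordinates of $\vect x(\alpha,l)$ are still independent, mean zero, $K$-bounded, with second moments at most $K$ by Assumption~\ref{assumption-K}. Writing $\langle\vect u_i^\beta,\vect x(\alpha,l)\rangle=\sum_m u_{im}^\beta\,[\vect x(\alpha,l)]_m$, each summand is at most $f_{j+1}K$ in absolute value and $\sum_m\E\big[(u_{im}^\beta[\vect x(\alpha,l)]_m)^2\big]\le K\norm{\vect u_i^\beta}_2^2=K$. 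Bernstein's inequality (Lemma~\ref{bernstein}) therefore gives, on $\overline{\mathcal{L}_{\alpha,l}}$,
$$\mathbb{P}\big(\abs{\langle\vect u_i^\beta,\vect x(\alpha,l)\rangle}\ge t\ \big|\ E^\beta\big)\le 2\exp\Big(-\frac{t^2/2}{K+f_{j+1}Kt/3}\Big),$$
while on $\mathcal{L}_{\alpha,l}$ the conditional probability of $\mathcal{I}_{\alpha,l}\cap\overline{\mathcal{L}_{\alpha,l}}$ is $0$; taking expectations over $E^\beta$ reduces matters to the displayed bound.

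It remains to substitute $t=c_2\sqrt{2Kn}\,f_{j+1}\log^2 n$ (the threshold defining $\mathcal{I}_{\alpha,l}$) and check the exponent is at least $c_2\log^2 n$. With $D:=\tfrac{c_2}{3}\sqrt{2Kn}\,f_{j+1}^2\log^2 n$ the exponent equals $c_2^2\,n\,f_{j+1}^2\log^4 n/(1+D)$: if $D\le 1$ it is $\ge \tfrac12 c_2^2 n f_{j+1}^2\log^4 n\ge c_2\log^2 n$ using $f_{j+1}\ge\norm{U}_\infty\ge n^{-1/2}$; if $D>1$ it is $\ge c_2^2 n f_{j+1}^2\log^4 n/(2D)=\tfrac{3}{2\sqrt2}c_2\log^2 n\sqrt{n/K}\ge c_2\log^2 n$ using $K\le n$. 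Hence $\mathbb{P}(\mathcal{I}_{\alpha,l}\cap\overline{\mathcal{L}_{\alpha,l}})\le 2\exp(-c_2\log^2 n)$ for every pair, and summing over the at most $n^{j+1}$ pairs yields the claim. The only delicate points are the independence $\vect u_i^\beta\perp\vect x(\alpha,l)$ (which legitimizes the conditioning) and the case split above; it is also worth stressing that Assumption~\ref{assumption-K} is used in an essential way — with only the trivial bound $\E[\xi_{pq}^2]\le K^2$ the variance proxy would be $K^2$ and the estimate would fail badly when $K$ is of order $n$.
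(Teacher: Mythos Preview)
Your proposal is correct and takes essentially the same approach as the paper: condition on $E^\beta$ to exploit the independence of $\vect u_i^\beta$ from $\vect x(\alpha,l)$, apply Bernstein's inequality with the variance proxy $K$ coming from Assumption~\ref{assumption-K}, substitute the threshold $t=c_2\sqrt{2Kn}\,f_{j+1}\log^2 n$, and finish with a union bound over the at most $n^{j+1}$ pairs. The only cosmetic difference is that you bound the Bernstein exponent via a case split on $D$, whereas the paper bounds both denominator terms uniformly by $O(Knf_{j+1}^2\log^2 n)$ using $f_{j+1}\ge n^{-1/2}$ and $K\le n$; both routes invoke exactly the same two facts.
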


\begin{proof}[Proof of Lemma \ref{lemma:iteration} given the lemmas]
We have previously observed that the statement is trivially true for $j = j^{*}$ because $\gamma_j^{*} = 0$. Having handled this, we move to the proof of the iterative bound. Let $1 \leq j \leq j^*$. Consider a set $\alpha$ with $j-1$ elements, which defines matrices $E^{\alpha}, A^{\alpha} = A + E^{\alpha}$. Let $l$ be a coordinate such that $l \not\in \alpha$, and let $\beta = \alpha \cup \{l \}. $ We aim to apply Theorem \ref{coordinatetheorem} for the pair $A, E^{\alpha}$ on coordinate $l$ with  $E^{\alpha} $ playing the role of $H$. The theorem obtains a bound on $\abs{u^\alpha_{il} - u_{il}}$. This bound implies that

\begin{equation}
\label{iterative-eq}
\abs{u^\alpha_{il}} \le 3C_0 \kappa_i \| U\| _{\infty} + 256 r \frac {\abs{\langle \vect u^\beta_i, \vect x(\alpha, l) \rangle}}{\sigma_i }. 
\end{equation} Looking at the second term on the RHS, if 

\begin{equation}
\label{inner-product-eq}
\abs{\langle \vect u^\beta_i, \vect x(\alpha, l) \rangle} \le c_2\sqrt{2Kn}f_{j}\log^2 n,
\end{equation} then the RHS of $\eqref{iterative-eq}$ is at most $f_{j-1}$. This is because by the $(c, \tau, 2)$ strong stability assumption, $\sigma_i > c\sqrt{Kn}\log^{2.01}n$. Since $c > 256rc_2\sqrt{2}$, \eqref{iterative-eq} and \eqref{inner-product-eq} imply that

\begin{equation}
\begin{split}
\abs{u^\alpha_{il}} &\le 3C_0 \kappa_i \| U\| _{\infty} + 256 r \frac {\abs{\langle \vect u^\beta_i, \vect x(\alpha, l) \rangle}}{\sigma_i }\\
&\le 3C_0 \kappa_i \| U\| _{\infty} + f_{j}\frac {256 rc_2\sqrt{2Kn}\log^2n}{\sigma_i }\\
&\leq 3C_0 \kappa_i \| U\| _{\infty} + \frac {f_j}{\log^{0.01}n} \\
&= f_{j-1},
\end{split}
\end{equation} and we thus have \begin{equation}
\label{target-F-alpha-l}
\abs{u^\alpha_{il}} \le f_{j-1}. 
\end{equation} The occurrence of \eqref{target-F-alpha-l} is the event we are interested in. Recall that $\mathcal{F}_{\alpha, l}$ is the event where \eqref{target-F-alpha-l} fails. What we have shown is that for those $\alpha, l$ such that $l \notin\alpha$, the failure probability of \eqref{target-F-alpha-l} and thus the bound for $\mathcal{F}_{\alpha, l}$ consists of two components. The first component is that the inequality \eqref{iterative-eq} does not hold, which we recall is the failure event $\mathcal{K}_{\alpha, l}$. By Lemma \ref{KinB}, $\mathcal{K}_{\alpha, l} \subset \mathcal{B}_{\alpha, l}$. The second component is that the inner product bound \eqref{inner-product-eq} fails. We called this failure event $\mathcal{I}_{\alpha, l}$. To analyze $\mathcal{I}_{\alpha, l}$, we will condition on the event that $\| \vect u_i^{\beta}  \| _{\infty } \le f_j$ fails. We called this failure event $\mathcal{L}_{\alpha, l}$. 
The union of the failure events gives for $\alpha, l$ satisfying $l \not\in \alpha$,
\begin{equation}
\begin{split}
\mathcal{F}_{\alpha, l} &\subset (\mathcal{K}_{\alpha, l}) \cup  (\mathcal{I}_{\alpha, l})  \\
 &\subset (\mathcal{B}_{\alpha, l}) \cup  (\mathcal{I}_{\alpha, l} \cap \mathcal{L}_{\alpha, l}) \cup (\mathcal{I}_{\alpha, l} \cap \overline{\mathcal{L}_{\alpha, l}}).
\end{split}
\end{equation} Obviously, the intersection of two sets is contained in both sets, so $\mathcal{I}_{\alpha, l} \cap \mathcal{L}_{\alpha, l} \subset \mathcal{L}_{\alpha, l}$. By taking the union of both sides over $\abs{\alpha} = j-1, l \not\in \alpha$,

$$\bigcup_{\substack{\abs{\alpha} = j-1\\ l \not\in \alpha}  } \mathcal{F}_{\alpha, l} \subset \bigcup_{\substack{\abs{\alpha} = j-1\\ l \not\in \alpha}  }(\mathcal{B}_{\alpha, l}) \cup  \bigcup_{\substack{\abs{\alpha} = j-1\\ l \not\in \alpha}  }(\mathcal{L}_{\alpha, l}) \cup \bigcup_{\substack{\abs{\alpha} = j-1\\ l \not\in\alpha}  } (\mathcal{I}_{\alpha, l} \cap \overline{\mathcal{L}_{\alpha, l}}).$$ For a given $\alpha$, we have only considered the coordinates $l$ such that $l \not\in \alpha$. In order to bound $\gamma_{j-1}$, we must also consider the other coordinates (the ones included in $\alpha$). This case is easy to handle, as by Lemma \ref{l-in-alpha}, 

\begin{equation}
\label{l-in-alpha-case}
\bigcup_{\substack{\abs{\alpha} = j-1\\ l \in \alpha} }\mathcal{F}_{\alpha, l} \subset \bigcup_{\abs{\alpha} = j-1 }\mathcal{B}_{\alpha, E}.
\end{equation} By definition of the events  $\mathcal{B}_j$,
$$\mathcal{B}_{j-1} = \bigcup_{\substack{\abs{\alpha} = j-1\\ l \not\in \alpha}  }(\mathcal{B}_{\alpha, l}) \cup \bigcup_{\abs{\alpha} = j-1 }\mathcal{B}_{\alpha, E}.$$ Therefore,

$$\bigcup_{\substack{\abs{\alpha} = j-1\\ 1 \leq l \leq n} } \mathcal{F}_{\alpha, l} \subset \mathcal{B}_{j-1} \cup  \bigcup_{\substack{\abs{\alpha} = j-1\\ l \not\in\alpha}  }(\mathcal{L}_{\alpha, l}) \cup \bigcup_{\substack{\abs{\alpha} = j-1\\ l \not\in\alpha}  } (\mathcal{I}_{\alpha, l} \cap \overline{\mathcal{L}_{\alpha, l}}).$$ By the definition of $\mathcal{I}_j$,
\begin{equation}\label{Fsplit}\bigcup_{\substack{\abs{\alpha} = j-1\\ 1 \leq l \leq n} } \mathcal{F}_{\alpha, l} \subset \mathcal{B}_{j-1} \cup \mathcal{I}_{j-1} \cup \bigcup_{\substack{\abs{\alpha} = j-1\\ l \not\in\alpha}  }\mathcal{L}_{\alpha, l}.
\end{equation} We make two observations about \eqref{Fsplit}. First, the event on the LHS of the inclusion is the event $$\bigcup_{\substack{\abs{\alpha} = j-1\\ 1 \leq l \leq n} } \mathcal{F}_{\alpha, l} = \bigcup_{\substack{\abs{\alpha} = j-1\\ 1 \leq l \leq n} }\{\abs{u^\alpha_{il}} > f_{j-1}\} = \{\max_{\alpha, |\alpha| = j-1 }  \| \vect u_i ^{\alpha} \| _{\infty} >  f_{j-1}\}.$$ Second, because we are considering $l \notin \alpha$, the union of the $\mathcal{L}_{\alpha, l}$ on the RHS is precisely   $$\bigcup_{\substack{\abs{\alpha} = j-1\\ l \not\in\alpha}  }\mathcal{L}_{\alpha, l} = \bigcup_{\substack{\abs{\alpha} = j-1\\ l \not\in\alpha}  }\Big\{\norm{\vect u^{\alpha \cup \{l\}}_i}_{\infty} > f_{j}\Big\} = \{\max_{\alpha, |\alpha| = j }  \| \vect u_i ^{\alpha} \| _{\infty} >  f_{j}\}.$$ Therefore, by the definition of the $\gamma_{j}$,

\begin{equation}
\mathbb{P}\Big(\bigcup_{\substack{\abs{\alpha} = j-1\\ 1 \leq l \leq n} } \mathcal{F}_{\alpha, l} \Big) = \gamma_{j-1}, \text{ and }
\mathbb{P}\Big(\bigcup_{\substack{\abs{\alpha} = j-1\\ l \not\in \alpha}  } \mathcal{L}_{\alpha, l}\Big) = \gamma_j.
\end{equation} By the union bound, \eqref{Fsplit} implies \begin{equation}
\label{gamma-inductive}
\gamma_{j-1} \leq \mathbb{P}(\mathcal{B}_{j-1}) + \mathbb{P}(\mathcal{I}_{j-1}) + \gamma_j.
\end{equation} By Lemma \ref{probBj}, \begin{equation}
\label{probBj-cite}
\mathbb{P}(\mathcal{B}_{j-1}) \leq 65 \times 9^{2r}n^{j}\exp(-c_2\log^2 n) + 2\tau.
\end{equation} By Lemma \ref{probIj}, 
\begin{equation}
\label{probIj-cite}
\mathbb{P}(\mathcal{I}_{j-1}) \leq 2n^{j}\exp(-c_2\log^2 n).
\end{equation} The inequalities \eqref{gamma-inductive}, \eqref{probBj-cite}, and \eqref{probIj-cite} imply the lemma, since $$\mathbb{P}(\mathcal{I}_{j-1}) + \mathbb{P}(\mathcal{B}_{j-1}) \leq 66\times 9^{2r}n^{j}\exp(-c_2\log^2n) + 2\tau.$$ 
\end{proof} 

\noindent In the next two sections, we  prove Lemmas \ref{probBj}, \ref{l-in-alpha}, \ref{KinB},  and \ref{probIj}.

\section{Proof of Lemmas \ref{l-in-alpha} and \ref{KinB}}
\label{section: delocalizationproof2}

To prove Lemma \ref{l-in-alpha}, we begin with a proposition, stated for deterministic $H$.

\begin{proposition}
\label{left-out-small}
Suppose $H^{\alpha}$ is a matrix equal to $H$, but whose rows and columns indexed by the index set $\alpha$ are set to zero. Suppose $l \in \alpha$, and that $\sigma_{i} > 2\norm{H^\alpha}$. Let $\vect u^{\alpha}_i$ be the $i$th singular vector of $A^{\alpha} = A + H^{\alpha}$.  Then

\begin{equation}\abs{u^{\alpha}_{il}} \leq 2\sqrt{r}\kappa_i\norm{U}_{\infty}.
\end{equation}
\end{proposition}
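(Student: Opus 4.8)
The plan is to exploit the defining feature of the leave-out construction: since $l \in \alpha$, the $l$th row and column of $H^\alpha$ are zero, hence the $l$th row of $A^\alpha = A + H^\alpha$ equals the $l$th row of $A$, i.e. $A^\alpha_{l,\cdot} = A_{l,\cdot}$. Because $\vect u^\alpha_i$ is a singular vector of the symmetric matrix $A^\alpha$, we have $A^\alpha \vect u^\alpha_i = \pm\sigma^\alpha_i \vect u^\alpha_i$, and reading off this identity at coordinate $l$ gives
\[
u^\alpha_{il} = \frac{\langle A^\alpha_{l,\cdot}, \vect u^\alpha_i\rangle}{\pm\sigma^\alpha_i} = \frac{\langle A_{l,\cdot}, \vect u^\alpha_i\rangle}{\pm\sigma^\alpha_i}.
\]
This is legitimate since the hypothesis $\sigma_i > 2\norm{H^\alpha}$ forces $\sigma^\alpha_i \geq \sigma_i/2 > 0$ by Fact \ref{weylfacts}; in absolute value the signs are irrelevant.

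Next I would substitute the spectral decomposition of $A$, whose $l$th row is $\sum_{j=1}^r \pm\sigma_j u_{jl}\vect u_j^T$, and then apply the triangle inequality followed by Cauchy--Schwarz:
\[
\abs{u^\alpha_{il}} = \frac{1}{\sigma^\alpha_i}\Bigl\lvert\sum_{j=1}^r \pm\sigma_j u_{jl}\langle \vect u_j, \vect u^\alpha_i\rangle\Bigr\rvert \leq \frac{\sigma_1}{\sigma^\alpha_i}\sum_{j=1}^r \abs{u_{jl}}\,\abs{\langle \vect u_j, \vect u^\alpha_i\rangle} \leq \frac{\sigma_1}{\sigma^\alpha_i}\norm{U_{l,\cdot}}_2\,\norm{U^T\vect u^\alpha_i}_2,
\]
where I used $\sigma_j \leq \sigma_1$. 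Since $U$ has orthonormal columns, $\norm{U^T\vect u^\alpha_i}_2 \leq \norm{\vect u^\alpha_i}_2 = 1$, and $\norm{U_{l,\cdot}}_2 \leq \sqrt{r}\norm{U_{l,\cdot}}_\infty \leq \sqrt{r}\norm{U}_\infty$ because the $l$th row of $U$ has $r$ entries. Finally $\sigma^\alpha_i \geq \sigma_i/2$ yields $\sigma_1/\sigma^\alpha_i \leq 2\kappa_i$, and combining the estimates gives $\abs{u^\alpha_{il}} \leq 2\sqrt{r}\kappa_i\norm{U}_\infty$, which is exactly the claim.

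This argument is essentially the $l \in \alpha$ counterpart of the row bound in Proposition \ref{leaveoneoutrowbound}, and I do not expect any genuine obstacle: the only ingredients are the leave-out identity $A^\alpha_{l,\cdot} = A_{l,\cdot}$, Cauchy--Schwarz, orthonormality of the columns of $U$, and Weyl's inequality (Fact \ref{weylfacts}). The one point to state carefully is that $\sigma^\alpha_i$ is strictly positive under the hypothesis $\sigma_i > 2\norm{H^\alpha}$, which is what makes dividing by $\pm\sigma^\alpha_i$ — and hence the whole computation — valid.
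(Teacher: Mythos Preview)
Your proof is correct and takes essentially the same approach as the paper's own proof: both use $A^\alpha_{l,\cdot}=A_{l,\cdot}$ (since $l\in\alpha$), insert the spectral decomposition of $A$, bound $\sigma_1/\sigma^\alpha_i\le 2\kappa_i$ via Weyl, and finish with $\|U_{l,\cdot}\|_2\le\sqrt{r}\|U\|_\infty$ and $\|U^T\vect u^\alpha_i\|_2\le 1$. The only difference is cosmetic---the paper writes the bound in matrix form as $|\vect e_l^T U\Sigma U^T\vect u^\alpha_i|\le\|U_{l,\cdot}\|_2\,\|\Sigma\|\,\|U^T\vect u^\alpha_i\|_2$, whereas you expand the sum and apply Cauchy--Schwarz to the coefficient vectors---but the argument is the same.
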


\begin{proof}[Proof of Lemma \ref{l-in-alpha} given the proposition]
Recall that we wish to show that 

\begin{equation}
\bigcup_{\substack{\abs{\alpha} = j\\ l \in \alpha} }\mathcal{F}_{\alpha, l} \subset \bigcup_{\substack{\abs{\alpha} = j} }\mathcal{B}_{\alpha, E}.
\end{equation}

Consider an index set $\alpha$ such that $\abs{\alpha} = j$, and an index $l$ such that $l \in \alpha$. We will show that $\overline{\mathcal{B}_{\alpha, E}} \subset \overline{\mathcal{F}_{\alpha, l}}$. Suppose $\overline{\mathcal{B}_{\alpha, E}}$ holds, which means $\norm{E^\alpha} \leq T$. We need to show that this implies $\abs{u^\alpha_{il}} \leq f_{j}$, which means that $\overline{\mathcal{F}_{\alpha, l}}$ holds. By the $(c, \tau, 2)$ strong stability assumption (see $(a)$ in Definition \ref{stable}), $\sigma_i > cT$, and $c$ is much larger than $2$. Therefore,
$$\sigma_i \geq cT \geq c\norm{E^\alpha} > 2\norm{E^\alpha}.$$ Since $l \in \alpha$, the conditions of Proposition \ref{left-out-small} are satisfied with the index set $\alpha$ and $H = E$.   Proposition \ref{left-out-small} implies that

\begin{equation}
\abs{u^{\alpha}_{il}} \leq 2\sqrt{r}\kappa_i\norm{U}_{\infty} \leq C_0\kappa_i\norm{U}_{\infty} \leq f_{j},
\end{equation} since $C_0 > 2\sqrt{r}$.
\end{proof}

\begin{proof}[Proof of Proposition \ref{left-out-small}]
Recall that $u^{\alpha}_{il}$ is the $l$th entry of a singular vector $\vect u^{\alpha}_i$ of $A^{\alpha}$, corresponding to a singular value $\sigma_i^{\alpha}$. By definition, we have 

$$ \abs{u^{\alpha}_{il}} = \abs{\vect e_{l}^{T}\vect u^{\alpha}_i} = \frac{1}{\sigma_i^{\alpha}}\abs{\vect e_l^TA^{\alpha}\vect u^{\alpha}_i}. $$


\noindent Since the $l$th row of $H^{\alpha}$ is zero, $A^{\alpha}$ and $A$ have the same $l$th row, which implies that we can replace $A^{\alpha}$ by $A$ to obtain $$ \abs{u^{\alpha}_{il}} =  \frac{1}{\sigma^\alpha_i}\abs{\vect e_l^TA \vect u^{\alpha}_i} .$$ By Fact \ref{weylfacts}, Fact \ref{interlacingfacts}, and the assumption of the proposition, it is easy to deduce that $\sigma_i^{\alpha} > \sigma_i/2$. Writing, $A =U\Sigma U^{T}$ using the spectral decomposition, we obtain 

$$ \abs{u^{\alpha}_{il}} \le 2  \sigma_i^{-1} \abs{\vect e_l^{T}U\Sigma U^{T}\vect u^{\alpha}_i} .$$

\noindent Notice that 

$$ \abs{\vect e_l^{T}U\Sigma U^{T}\vect u^{\alpha}_i} \le \| \vect e_l ^T U\|_2  \| \Sigma \| \| U^T \vect u_i^{\alpha} \| _2 \le  \sigma_1 \| U_{l,.}\|_2 , $$ 
This is because $\| \Sigma\| =\sigma_1$, $\vect e^T_l U = U_{l, .}$, the $l$th row of $U$, and $\| U \vect u_i^{\alpha} \|_2 \le 1$. 
This and the previous bound imply 

$$ \abs{u^{\alpha}_{il}} \le 2  \frac{\sigma_1} {\sigma_i } \| U_{l, .} \|_2 = 2 \kappa _i \| U_{l, .} \|_2 \leq 2\kappa_i\sqrt{r}\norm{U}_{\infty}.$$ where the last line uses that $\norm{U_{l, \cdot}}_2 \leq \sqrt{r}\norm{U}_{\infty}$ by the Cauchy-Schwarz inequality. 


\end{proof}

We move to the proof of Lemma \ref{KinB}. The following lemma is used to prove Lemma \ref{KinB} by showing that the conditions for Theorem \ref{coordinatetheorem} hold on the complement of $\mathcal{B}_{\alpha, l}$. We use Theorem \ref{coordinatetheorem} to show that $\mathcal{K}_{\alpha, l} \subset \mathcal{B}_{\alpha, l}$.

\begin{lemma}[Theorem \ref{coordinatetheorem} can be applied on $\overline{\mathcal{B}_{\alpha, l}}$]
\label{Tjinclusion}
Let $0 \leq j \leq j^{*}$. 
Let $\alpha$ be an index set satisfying $\abs{\alpha} = j$ and let $l$ be an index such that $l \not \in \alpha$. 
Under the conditions of Lemma \ref{delocalization-result-1}, if $\overline{\mathcal{B}_{\alpha, l}}$ occurs, the conditions for Theorem \ref{coordinatetheorem} hold with $A$, $H = E^{\alpha}$, and coordinate $l$.
\end{lemma}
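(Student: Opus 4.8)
The plan is to verify, one by one, the three hypotheses of Theorem \ref{coordinatetheorem} for the pair $(A, E^{\alpha})$ on coordinate $l$, working on the event $\overline{\mathcal{B}_{\alpha, l}} = \overline{\mathcal{B}_{\alpha, l, 1}} \cap \overline{\mathcal{B}_{\alpha, l, 2}} \cap \overline{\mathcal{B}_{\alpha, E}}$. Throughout, note that $\beta = \alpha \cup \{l\}$, that $E^{\beta} = (E^{\alpha})^{\{l\}}$ (zeroing out row/column $l$ of $E^{\alpha}$), so in the notation of Theorem \ref{coordinatetheorem} the matrix $H = E^{\alpha}$ has its leave-one-out counterpart $H^{\{l\}} = E^{\beta}$, the vector $\vect x = \vect x(\alpha, l)$, and the matrix $U^{\{l\}}$ of leading singular vectors of $A + H^{\{l\}}$ is exactly $U^{\beta}$. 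By Fact \ref{interlacingfacts}, $\norm{E^{\alpha}} \le \norm{E^{\beta}} \le \dots$, and crucially $\norm{E^{\alpha}} \le \norm{E}$; but what we actually use is that $\overline{\mathcal{B}_{\alpha, E}}$ gives $\norm{E^{\alpha}} \le T$, and by Fact \ref{interlacingfacts} also $\norm{E^{\beta}} \le T$.

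First, the signal-to-noise condition $\sigma_i > C_0 \norm{E^{\alpha}}$: on $\overline{\mathcal{B}_{\alpha, E}}$ we have $\norm{E^{\alpha}} \le T$, and the $(c, \tau, 2)$ strong stability assumption (condition (a) of Definition \ref{stable}) gives $\sigma_i > cT$; since $c = 2^{11}(c_0+1)r^3 \gg C_0 = 272 \times 4 r^{3/2}$, this holds with room to spare. Second, the stability condition $\min\{\abs{\tilde{\sigma}_i - \sigma_{i+1}^{\{l\}}}, \abs{\tilde{\sigma}_i - \sigma_{i-1}^{\{l\}}}\} > \delta_i/2$: here $\tilde{\sigma}_i$ for the pair $(A, E^{\alpha})$ is $\sigma_i^{\alpha}$, and $\sigma_{i\pm 1}^{\{l\}}$ is $\sigma_{i\pm 1}^{\beta}$, so this is precisely the negation of $\mathcal{B}_{\alpha, l, 1}$, hence automatic on $\overline{\mathcal{B}_{\alpha, l, 1}}$. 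Third, the gap condition $\delta_i > C_0 \max\{a_l, \kappa_i \norm{E^{\alpha}} \norm{U}_{\infty}\}$ where $a_l = \norm{U^{\beta T} \vect x(\alpha, l)}_2$: on $\overline{\mathcal{B}_{\alpha, l, 2}}$ we have $a_l \le K\sqrt{2 c_2 r \log^2 n} = K \sqrt{2(c_0+1) r}\,\log n$, and condition (b) of strong stability (with $\nu = 2$) gives $\delta_i > c(K \log n + \sigma_i^{-1} T^2) \ge c K \log n$, which dominates $C_0 a_l$ since $c \gg C_0 \sqrt{2(c_0+1)r}$; meanwhile condition (c) gives $\delta_i > c \kappa_i T \norm{U}_{\infty} \ge c \kappa_i \norm{E^{\alpha}} \norm{U}_{\infty} > C_0 \kappa_i \norm{E^{\alpha}} \norm{U}_{\infty}$ using $\norm{E^{\alpha}} \le T$.

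I do not expect any serious obstacle here; the lemma is essentially a bookkeeping exercise, parallel to the verification of conditions done in Section \ref{mainresultproof} for the pair $(A, E)$. The one point requiring a little care is matching the notation of Theorem \ref{coordinatetheorem} — in particular confirming that $U^{\{l\}}$ in that theorem corresponds to $U^{\beta}$ here, that $\vect x(\alpha, l)$ is the correctly-halved row of $E^{\alpha}$ as required (which holds by construction of $\vect x(\alpha, l)$), and that the constant $c$ in the strong stability hypothesis is large enough relative to $C_0$ and the $\sqrt{2(c_0+1)r}$ factor coming from $\mathcal{B}_{\alpha, l, 2}$ — all of which follow immediately from the definitions $c = 2^{11}(c_0+1)r^3$ and $C_0 = 1088 r^{3/2}$ and $c_2 = c_0 + 1$.
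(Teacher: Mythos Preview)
Your proposal is correct and takes essentially the same approach as the paper's proof: both verify the three hypotheses of Theorem \ref{coordinatetheorem} directly from the definitions of $\mathcal{B}_{\alpha,l,1}$, $\mathcal{B}_{\alpha,l,2}$, $\mathcal{B}_{\alpha,E}$ and the $(c,\tau,2)$ stability assumptions. One minor slip: the chain you wrote, $\norm{E^{\alpha}} \le \norm{E^{\beta}} \le \dots$, is reversed (zeroing out more rows and columns can only decrease the norm, so $\norm{E^{\beta}} \le \norm{E^{\alpha}} \le \norm{E}$); but as you note yourself, you do not actually use that chain, and your subsequent claims $\norm{E^{\alpha}} \le T$ and $\norm{E^{\beta}} \le T$ are correct.
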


\begin{proof}[Proof of Lemma \ref{KinB} given Lemma \ref{Tjinclusion}]
 Let $\alpha$ be an index set such that $\abs{\alpha} = j$, and let $l \not\in \alpha$. Recall that we wish to show that $\mathcal{K}_{\alpha, l} \subset \mathcal{B}_{\alpha, l}$. Set $\beta = \alpha \cup \{l\}$. Suppose $\overline{\mathcal{B}_{\alpha, l}}$ holds. We wish to show that $\overline{\mathcal{K}_{\alpha, l}}$ holds, which is equivalent to showing that 

\begin{equation}
\label{KinBbound}
\abs{u^\alpha_{il}} \le 3C_0 \kappa_i \| U\| _{\infty} + 256 r \frac {\abs{\langle \vect u^\beta_i, \vect x(\alpha, l) \rangle}}{\sigma_i }.
\end{equation} By Lemma \ref{Tjinclusion}, we can apply Theorem \ref{coordinatetheorem} on $\overline{\mathcal{B}_{\alpha, l}}$ with $A$ and $H = E^\alpha$ for coordinate $l$. This gives

\begin{equation}
\label{apply-coordinate-thm-alpha}
        \abs{u^{\alpha}_{il} - u_{il}} \le C_0\norm{U_{l, \cdot}}_{\infty}\Big[\kappa_i\norm{\vect u^{\alpha}_i - \vect{u}_i}_2 + \epsilon_1^{\alpha}(i) + a^{\alpha}_l\kappa_i\epsilon_2(i) \Big] + 256r\frac{\abs{\langle \vect u^{\beta}_i, \vect x(\alpha, l) \rangle}}{\sigma_i}, 
\end{equation} where $a^{\alpha}_l = \norm{U^{\beta T} \vect x(\alpha, l)}_2$ and $\epsilon^\alpha_1(i) = \frac{\norm{E^\alpha}}{\sigma_i}$. We will bound the term in the large brackets on the RHS and show that it is smaller than $\frac{5}{2}\kappa_i$. We will show that on $\overline{\mathcal{B}_{\alpha, l}}$, \begin{equation}
\label{epsilon-sum-goal}\epsilon_1^{\alpha}(i) + a^{\alpha}_l\kappa_i\epsilon_2(i) \leq \frac{\kappa_i}{2}.
\end{equation} Assume for now that this holds. By using the trivial bound  \begin{equation}
    \norm{\vect u^\alpha_i - \vect u_i}_2 \leq 2, 
\end{equation} \eqref{apply-coordinate-thm-alpha} and \eqref{epsilon-sum-goal} imply that \begin{equation}
        \abs{u^{\alpha}_{il} - u_{il}} \le \frac{5}{2}C_0\kappa_i\norm{U}_{\infty} + 256r\frac{\abs{\langle \vect u^{\beta}_i, \vect x(\alpha, l) \rangle}}{\sigma_i}. 
\end{equation} Moving $\abs{u_{il}}$, which is smaller than $\norm{U}_{\infty}$, to the right, we obtain
\begin{equation}
        \abs{u^{\alpha}_{il}} \le (\frac{5}{2}C_0 + 1)\kappa_i\norm{U}_{\infty} + 256r\frac{\abs{\langle \vect u^{\beta}_i, \vect x(\alpha, l) \rangle}}{\sigma_i}. 
\end{equation} Since $C_0 > 2$, we obtain \eqref{KinBbound}, proving the lemma.

Therefore, what remains is to verify \eqref{epsilon-sum-goal} on $\overline{\mathcal{B}_{\alpha, l}}$. Assume that $\overline{\mathcal{B}_{\alpha, l}}$ holds.
Then, by definition of $\mathcal{B}_{\alpha, E}$, we have $\norm{E^\alpha} \leq T$, which implies that

\begin{equation}\epsilon^\alpha_1(i) =  \frac{\norm{E^\alpha}}{\sigma_i} \leq \frac{T}{\sigma_i} < \frac{1}{4}.
\end{equation} The last inequality uses the $(c, \tau, 2)$ stability assumption, which gives $\sigma_i > cT > 4T$. For $a^{\alpha}_l\kappa_i\epsilon_2(i)$, recall that $\epsilon_2(i) = \frac{1}{\delta_i}$. Furthermore, on $\overline{\mathcal{B}_{\alpha, l}}$, the definition of $\mathcal{B}_{\alpha, l, 2}$ guarantees that $a^{\alpha}_l \leq K\sqrt{2rc_2\log^2 n}$. Therefore, \begin{equation}
    a^{\alpha}_l \epsilon_2(i) \leq \frac{K\sqrt{2rc_2\log^2 n}}{\delta_i}< \frac{1}{4}.
\end{equation} The last inequality uses $(c, \tau, 2)$ stability, which guarantees $\delta_i > cK\sqrt{\log^2 n}$, and the fact that $c > 4\sqrt{2rc_2}$. We conclude that \begin{equation}
\label{sum-eps-small}
\epsilon_1^{\alpha}(i) + a^{\alpha}_l\kappa_i\epsilon_2(i) \leq \frac{\kappa_i}{2}.
\end{equation} This verifies \eqref{epsilon-sum-goal} and thus completes the proof.
\end{proof}

\begin{proof}[Proof of Lemma \ref{Tjinclusion}]
Let $\alpha$ be an index set $\alpha$ with $\abs{\alpha} = j$, and let $l$ be an index such that $l \not\in \alpha$. Set $\beta = \alpha \cup \{l\}.$ Suppose that $\overline{\mathcal{B}_{\alpha, l}}$ holds.
Recall that we want to show that this implies that the conditions for Theorem \ref{coordinatetheorem} are satisfied with $H = E^\alpha$ and coordinate $l$.\newline
 Assume  $\overline{\mathcal{B}_{\alpha, l}}$ occurs. We first have to check that this implies that
 
 \begin{equation}
 \min\{\abs{\sigma^{\alpha}_i - \sigma^{\beta}_{i+1}}, \abs{\sigma^{\alpha}_i - \sigma^{\beta}_{i-1}}\} \ge \delta_i/2.
 \end{equation} It is clear that this is true by definition of $\mathcal{B}_{\alpha, l, 1}$. Then, we have to check that $\overline{\mathcal{B}_{\alpha, l}}$ implies \begin{equation}
    \sigma_i > C_0\norm{E^{\alpha}}.
 \end{equation} On $\overline{\mathcal{B}_{\alpha, l}}$, $\norm{E^{\alpha}} \leq T$ (see $\mathcal{B}_{\alpha, E})$. By $(c, \tau, 2)$ stability, $\sigma_i > cT$.  Therefore, $$\sigma_i > cT > C_0\norm{E^{\alpha}},$$ as desired. Finally, we verify that 
\begin{equation}
\label{eq:deltaverify}
\delta_i > C_0\max\Big\{\norm{U^{\beta T}\vect x(\alpha, l)}_2, \kappa_i\norm{E^{\alpha}}\norm{U}_{\infty}\Big\}.
\end{equation} On $\overline{\mathcal{B}_{\alpha, l}}$ (see $\mathcal{B}_{\alpha, l, 2}$), $$\norm{U^{\beta T}\vect x(\alpha, l)}_2 \leq K\sqrt{2rc_2\log^2 n}.$$ Because of the $(c, \tau, 2)$ stability assumption,$$\delta_i > c\max\{K\sqrt{\log^2 n}, T\kappa_i\norm{U}_{\infty}\},$$which ensures that \eqref{eq:deltaverify} holds, because $c > C_0\sqrt{2rc_2}$.
\end{proof}

\section{Proof of Lemmas \ref{probBj} and  \ref{probIj}}
\label{section: delocalizationproof3}

\begin{proof}[Proof of Lemma \ref{probBj}]
Let $0 \leq j \leq j^*$.
Recall that 
$$\mathcal{B}_j := \bigcup_{\substack{\abs{\alpha} = j\\ l \not\in \alpha}  }(\mathcal{B}_{\alpha, l, 1} \cup \mathcal{B}_{\alpha, l, 2}) \cup \bigcup_{\abs{\alpha} = j}B_{\alpha, E}.$$ We will continue to use $\beta = \beta(\alpha, l) =  \alpha \cup \{l\}$. In particular, $\beta$ depends on both $\alpha $ and $l$. We observe that $\mathcal{B}_j \subset \mathcal{B}_{j1} \cup \mathcal{B}_{j2} \cup \mathcal{B}_{jE}$, where

\begin{equation}
\begin{split}
\mathcal{B}_{j1} &:= \cup_{\abs{\alpha} = j}\cup_{l \not\in \alpha}\{ \min\{\abs{\sigma^{\alpha}_i - \sigma^{\beta}_{i+1}}, \abs{\sigma^{\alpha}_i - \sigma^{\beta}_{i-1}}\} < \delta_i/2\}, \\
\mathcal{B}_{j2} &:= \cup_{\abs{\alpha} = j}\cup_{l \not\in \alpha}\Big\{\norm{U^{\beta T}\vect x(\alpha, l)}_{2} \geq K\sqrt{2c_2r\log^{2} n}\Big\}, 
\text{ and }\\
\mathcal{B}_{jE} &:= \cup_{\abs{\alpha} = j}\{\norm{E^\alpha} > T\}.
\end{split}
\end{equation} We will bound the probabilities of these three events and use the union bound to conclude.

\vspace{2mm}
{\noindent \bf Probability of $\mathcal{B}_{jE}$.} By Fact $\ref{interlacingfacts}$, for all $\abs{\alpha} = j$, we have the deterministic bound $\norm{E^{\alpha}} \leq \norm{E}$. Therefore, defining 
$$\mathcal{B}_E := \{\norm{E} \geq T\}, \text{ we have } $$\begin{equation}
\label{BjEbound}
\mathbb{P}(\mathcal{B}_{jE}) \leq \mathbb{P}(\mathcal{B}_E) \leq \tau,
\end{equation} by definition of $T$. 

\vspace{2mm}
{\noindent \bf Probability of $\mathcal{B}_{j2}$.} Recall that

$$\mathcal{B}_{j2} = \cup_{\abs{\alpha} = j}\cup_{l \not\in \alpha}\Big\{ \norm{U^{\beta T}\vect x(\alpha, l)}_{2} \geq K\sqrt{2c_2r\log^2 n}\Big\}.$$ Let $\alpha$ be an index set such that $\abs{\alpha} = j$ and let $l \not\in \alpha$. Set $\beta = \alpha \cup \{l\}$. 

We use Hoeffding's inequality. Observe that $\vect x(\alpha, l)$ and $A^{\beta}$ are independent. Therefore, each entry of the length $r$ vector $U^{\beta T}\vect x(\alpha, l)$ is the inner product of a $K$ bounded random vector with independent entries (the vector $\vect x(\alpha, l)$) with a unit vector from which it is independent. By applying Corollary \ref{hoeffcor} to each entry, for $1 \leq k \leq r$,

\begin{equation}
\label{hoeffding-calc}
\mathbb{P}\{
\abs{[U^{\beta T}\vect x(\alpha, l)]_k} \geq K\sqrt{2c_2\log^2 n}\} \leq 2\exp(-c_2\log^2 n). 
\end{equation} Therefore, by taking the union bound over $1 \leq k \leq r$, 

\begin{equation}
\label{hoeffding-B2}
\mathbb{P}\Big\{\norm{U^{\beta T}\vect x(\alpha, l)}_2 \geq K\sqrt{2rc_2\log^2 n}\Big\} \leq 2r\exp(-c_2\log^2 n). 
\end{equation} By taking the union bound of \eqref{hoeffding-B2} over $\abs{\alpha} = j$, and $l \not \in \alpha$,

\begin{equation}
\label{Bj2bound}
\mathbb{P}[\mathcal{B}_{j2}] \leq 2rn^{j+1}\exp(-c_2\log^2n).
\end{equation}

\vspace{2mm}
{\noindent \bf Probability of $\mathcal{B}_{j1}$.}  Recall that
$$\mathcal{B}_{j1} = \cup_{\abs{\alpha} = j}\cup_{l \not\in \alpha}\{ \min\{\abs{\sigma^{\alpha}_i - \sigma^{\beta}_{i+1}}, \abs{\sigma^{\alpha}_i - \sigma^{\beta}_{i-1}}\} < \delta_i/2\}.$$ For an index set $\alpha$ such that $\abs{\alpha} = j$ and a coordinate $l \not\in \alpha$, set $\beta = \alpha \cup \{l\}$. Similar to the proof of Theorem \ref{main-result}, (with $\sigma_i^\alpha$ playing the role of $\tilde{\sigma}_i$ and $\sigma_i^\beta$ playing the role of $\sigma^{\{l\}}_i$), define the events $\mathcal{G}_{j, i+1}$ and $\mathcal{G}_{j, i-1}$ 

\begin{equation}
\begin{split}
\mathcal{G}_{j, i+1} &:=
    \bigcap_{\abs{\alpha} = j}\Bigg\{\max_{k = i, i+1} \max_{l \not \in \alpha}\{\abs{\sigma^{\beta}_k - \sigma_k}, \abs{\sigma^{\alpha}_k - \sigma_k}\} \leq 24r\Big[K\sqrt{rc_2\log^2 n}+ \frac{\norm{E}^2}{\tilde{\sigma}_k} + \frac{\norm{E}^{3}}{\tilde{\sigma}_{k}^{2}}\Big]\Bigg\}, \\
\mathcal{G}_{j, i-1} &:=
\bigcap_{\abs{\alpha} = j}\Bigg\{\max_{k = i, i-1} \max_{l\not\in \alpha}\{\abs{\sigma^{\beta}_k - \sigma_k}, \abs{\sigma^{\alpha}_k - \sigma_k}\} \leq 24r\Big[K\sqrt{rc_2\log^2 n}+ \frac{\norm{E}^2}{\tilde{\sigma}_k} + \frac{\norm{E}^{3}}{\tilde{\sigma}_{k}^{2}}\Big]\Bigg\}.
\end{split}
\end{equation} These events keep the relevant singular values of the perturbations $A^{\alpha}$ and $A^\beta$ for all $\alpha, l$ close to the original singular values to ensure the gap remains large. Recall that we applied Theorem \ref{coordinatetheorem} for $H = E^{\alpha}$ and $H^{\{l\}} = E^{\beta}$. The singular values of both $A^\alpha$ and $A^\beta$ are controlled by $\mathcal{G}_{j, i-1}$ and $\mathcal{G}_{j, i+1}$.

Using the same argument we employed in the proof of Proposition \ref{boundB1set}, it is straightforward to show using case analysis and $(c, \tau, 2)$ stability that
\begin{equation}
\label{splitBj1}
\mathbb{P}(\mathcal{B}_{j1}) \leq \mathbb{P}(\overline{\mathcal{G}_{j, i-1}}) + \mathbb{P}(\overline{\mathcal{G}_{j, i+1}})  + \mathbb{P}(\mathcal{B}_E), \end{equation} where we recall $\mathcal{B}_E = \{\norm{E} \geq T\}.$ $\mathcal{B}_E$ is an event which has probability at most $\tau$ by definition of $T$. Bounding the probabilities of $\overline{\mathcal{G}_{j, i-1}}$ and $\overline{\mathcal{G}_{j, i+1}}$ proceeds virtually identically to the proof of Lemma \ref{lemma:B1-bound}, so we omit the details of the calculation. The probabilities of $\overline{\mathcal{G}_{j, i-1}}$ and $\overline{\mathcal{G}_{j, i+1}}$ will be bounded using the result of \cite{OVW1}, which we recall in Theorem \ref{ovw-singular-K}, applied to the random perturbations $E^\alpha$ and $E^\beta$. Both have norm at most $\norm{E}$ by Fact \ref{interlacingfacts}. By Theorem \ref{ovw-singular-K} applied with $t = K\sqrt{128rc_2\log^2 n}$ to $E^{\alpha}$ and $E^\beta$, and the union bound over $\abs{\alpha} = j$, $l \not\in \alpha$,

\begin{equation}
\label{bound-B1}
\mathbb{P}(\overline{\mathcal{G}_{j, i-1}}) + \mathbb{P}(\overline{\mathcal{G}_{j, i+1}}) \leq 64 \times 9^{2r}n^{j+1}\exp(-c_2\log^2 n). 
\end{equation} Together with \eqref{splitBj1}, this implies \begin{equation}
\label{Bj1bound}
\mathbb{P}(\mathcal{B}_{j1}) \leq 64\times 9^{2r}n^{j+1}\exp(-c_2\log^2 n) + \tau.
\end{equation} We have now bounded $\mathcal{B}_{j1}, \mathcal{B}_{j2}$, and $\mathcal{B}_{jE}$. To conclude the proof, use the union bound and \eqref{BjEbound}, \eqref{Bj2bound}, and \eqref{Bj1bound}. This gives

$$\mathbb{P}(\mathcal{B}_j) \leq (64\times 9^{2r} + 2r)n^{j+1}\exp(-c_2\log^2 n) + \tau \leq 65\times9^{2r}n^{j+1}\exp(-c_2\log^2 n) + \tau.$$
\end{proof}

What remains is to prove Lemma \ref{probIj}, which bounds the probability of $\mathcal{I}_j$. To begin, we reproduce the definitions of the relevant events. Let $\alpha$ be an index set with $\abs{\alpha} = j$ and let $1 \leq l \leq n$. Recall that $\beta = \alpha \cup l$, and we defined the events

$$\mathcal{L}_{\alpha, l} = \Big\{\norm{\vect u^\beta_i}_{\infty} > f_{j+1}\Big\},$$

$$\mathcal{I}_{\alpha, l} = \{\langle \vect u^\beta_i, \vect x(\alpha, l) \rangle \geq c_2\sqrt{2Kn}f_{j+1}\log^2 n \}, \text{ and }$$

$$\mathcal{I}_{j} = \bigcup_{\substack{\abs{\alpha} = j\\l \not\in \alpha}  } (\mathcal{I}_{\alpha, l} \cap \overline{\mathcal{L}_{\alpha, l}}).$$

We will need the following lemma. We first introduce some notation. The notation $Y \in \overline{\mathcal{L}_{\alpha, l}}$ means that $Y$ is a possible realization of $E^\beta$ such that $\overline{\mathcal{L}_{\alpha, l}}$ holds. 

\begin{lemma}
\label{bernstein-bound}
Let $\alpha$ be an index set with $\abs{\alpha} = j$, and let $l$ be a coordinate such that $l \not\in \alpha$. Set $\beta = \alpha \cup \{l\}$. Let $Y \in \overline{\mathcal{L}_{\alpha, l}}$. Then, 

\begin{equation}
\begin{split}
\mathbb{P}(\mathcal{I}_{\alpha, l}\lvert E^{\beta} = Y) &\leq 2\exp(-c_2\log^2 n).
\end{split}
\end{equation}

\end{lemma}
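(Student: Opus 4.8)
The plan is to condition on $E^\beta = Y$ and then estimate the resulting inner product by Bernstein's inequality. Fix $Y \in \overline{\mathcal{L}_{\alpha, l}}$, and let $\vect w$ denote the vector $\vect u^\beta_i$ in the realization $E^\beta = Y$; since $Y \in \overline{\mathcal{L}_{\alpha, l}}$, $\vect w$ is a deterministic unit vector with $\norm{\vect w}_\infty \le f_{j+1}$. The key structural point is that $\vect x(\alpha, l)$ is independent of $E^\beta$: by definition every coordinate of $\vect x(\alpha, l)$ is either $0$, an entry $\xi_{lk}$ of the $l$th row of $E$, or $\xi_{ll}/2$, whereas $E^\beta$, having its $l$th row and column zeroed out (as $l \in \beta$), is a function only of the entries $\xi_{pq}$ with $p, q \notin \beta$; these two collections of independent entries of $E$ are disjoint. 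Consequently, conditionally on $E^\beta = Y$ the vector $\vect x(\alpha, l)$ still has independent, mean-zero, $K$-bounded coordinates, and by Assumption \ref{assumption-K} each coordinate has second moment at most $K$.

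Having fixed $\vect w$, I would write $\langle \vect u^\beta_i, \vect x(\alpha, l)\rangle = \sum_k X_k$ with $X_k := w_k [\vect x(\alpha, l)]_k$, a sum of independent mean-zero random variables, each bounded by $\norm{\vect w}_\infty K \le f_{j+1}K$ in absolute value, with $\sum_k \E[X_k^2] \le K\sum_k w_k^2 = K\norm{\vect w}_2^2 \le K$. Applying Bernstein's inequality (Lemma \ref{bernstein}, with the $X_k$ as summands, so that $f_{j+1}K$ plays the role of the boundedness constant) with threshold $t := c_2\sqrt{2Kn}\,f_{j+1}\log^2 n$ yields
\[
\mathbb{P}\big(\mathcal{I}_{\alpha,l} \mid E^\beta = Y\big) \;\le\; 2\exp\!\Big(-\frac{t^2/2}{K\norm{\vect w}_2^2 + f_{j+1}Kt/3}\Big).
\]

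It then remains to verify that the exponent is at least $c_2\log^2 n$, i.e. that $t^2/2 \ge c_2\log^2 n\,\big(K\norm{\vect w}_2^2 + f_{j+1}Kt/3\big)$, where $t^2/2 = c_2^2 Kn\,f_{j+1}^2\log^4 n$. Using $\norm{\vect w}_2 \le 1$ together with $n f_{j+1}^2 \ge 9C_0^2$ (which follows from the definition \eqref{f_jdefinition}, namely $f_{j+1} \ge 3C_0\kappa_i\norm{U}_\infty$, together with $\kappa_i \ge 1$ and $\norm{U}_\infty \ge n^{-1/2}$), the first summand on the right contributes at most $\tfrac12 t^2/2$; substituting the value of $t$ and using $K \le n$ from Assumption \ref{assumption-K}, the second summand contributes at most $\tfrac12 t^2/2$. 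Adding these gives the required bound, completing the proof. The only subtle point in this argument is the conditioning/independence step: one must carefully track which independent entries of $E$ make up $E^\beta$ and which make up $\vect x(\alpha, l)$, so as to justify that the conditional law of $\vect x(\alpha, l)$ is unchanged; once that is in place, everything reduces to the displayed Bernstein computation.
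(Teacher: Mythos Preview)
Your proof is correct and follows essentially the same approach as the paper: condition on $E^\beta = Y$ so that $\vect u^\beta_i$ becomes a deterministic unit vector with $\|\vect u^\beta_i\|_\infty \le f_{j+1}$, use the independence of $\vect x(\alpha,l)$ from $E^\beta$, and apply Bernstein's inequality with $t = c_2\sqrt{2Kn}\,f_{j+1}\log^2 n$, bounding the variance term by $K$ and the range by $Kf_{j+1}$. The only cosmetic remark is that your justification $f_{j+1} \ge 3C_0\kappa_i\|U\|_\infty$ via \eqref{f_jdefinition} does not literally cover the endpoint $j+1 = j^*$ (where $f_{j^*}=1$ by fiat), but there the needed inequality $nf_{j+1}^2 \ge 1$ is trivial; the paper simply uses the weaker $f_{j+1} \ge \|U\|_\infty \ge n^{-1/2}$, which covers all cases uniformly.
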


\begin{proof}[Proof of Lemma \ref{probIj} given Lemma \ref{bernstein-bound}.] 
The goal is to bound the the probability of $$\mathcal{I}_j = \bigcup_{\substack{\abs{\alpha} = j-1\\ l \not\in \alpha}  } (\mathcal{I}_{\alpha, l} \cap \overline{\mathcal{L}_{\alpha, l}}).$$ By conditioning on $\overline{\mathcal{L}_{\alpha, l}}$,

\begin{equation}
\begin{split}
\mathbb{P}(\mathcal{I}_{\alpha, l} \cap \overline{\mathcal{L}_{\alpha, l}})  &= \mathbb{P}(\mathcal{I}_{\alpha, l} | \overline{\mathcal{L}_{\alpha, l}})\mathbb{P}(\overline{\mathcal{L}_{\alpha, l}}) \leq \mathbb{P}(\mathcal{I}_{\alpha, l} \lvert \overline{\mathcal{L}_{\alpha, l}}),
\end{split}
\end{equation} where we use the trivial bound $\mathbb{P}(\overline{\mathcal{L}_{\alpha, l}}) \leq 1$. We now bound the RHS to obtain \begin{equation}
\mathbb{P}(\mathcal{I}_{\alpha, l} \cap \overline{\mathcal{L}_{\alpha, l}}) \leq \mathbb{P}(\mathcal{I}_{\alpha, l} \lvert \overline{\mathcal{L}_{\alpha, l}}) \leq \sup_{Y \in \overline{\mathcal{L}_{\alpha, l}}}\mathbb{P}(\mathcal{I}_{\alpha, l} \lvert E^{\beta} = Y).
\end{equation} Lemma \ref{bernstein-bound} thus implies that

$$\mathbb{P}(\mathcal{I}_{\alpha, l} \cap \overline{\mathcal{L}_{\alpha, l}}) \leq 2\exp(-c_2\log^2n).$$
The union bound over $\abs{\alpha} = j$ and $l \not\in \alpha$ gives

\begin{equation}
\mathbb{P}(\mathcal{I}_{j}) \leq 2n^{j+1}\exp(-c_2\log^2 n).
\end{equation}
\end{proof}

\begin{proof}[Proof of Lemma \ref{bernstein-bound}]
Recall that we are considering index set $\alpha$ satisfying $\abs{\alpha} = j$ and a coordinate $l \not\in 
\alpha$. Set $\beta = \alpha \cup \{l\}$.
We are looking for a bound of 
$$\mathbb{P}(\mathcal{I}_{\alpha, l} | E^{\beta} = Y) = \mathbb{P}\Big\{\abs{\langle \vect u^\beta_i, \vect x(\alpha, l)\rangle} \geq c_2\sqrt{2Kn}f_{j+1}\log^2n \Big\lvert E^{\beta} = Y\Big\}r,$$ where $Y \in \overline{\mathcal{L}_{\alpha, l}}$. In other words, $Y$ is a possible realization of $E^{\beta}$ satisfying $\norm{\vect u^\beta_i}_{\infty} \leq f_{j+1}$. Recall that $\vect x(\alpha, l)$ is the $l$th row of $E^\alpha$ (with $l$th entry divided by 2). 
Conditional on $E^\beta$ equalling such a $Y$, $\vect u^\beta_i$ is a deterministic unit vector whose entries have absolute value at most $f_{j+1}$. The only randomness in each event thus comes from the vector $\vect x(\alpha, l)$. Let us name the entries of $\vect x(\alpha, l)$ as $x_k$. It follows that the inner product $\langle \vect u^\beta_i, \vect x(\alpha, l) \rangle$, conditional on $E^{\beta} = Y$,  is the sum of independent, $Kf_{j+1}$ bounded, mean zero random variables $x_ku^\beta_{ik}$. Thus, this quantity can be bounded with Bernstein's inequality (Lemma \ref{bernstein}). 

We are applying Bernstein's inequality conditionally, so we also need to find a bound for the sum of the conditional second moments of the $x_ku^\beta_{ik}$. Since $\vect u^\beta_i$ is deterministic when we condition on $E^\beta$, and $\vect x(\alpha, l)$ is independent of $E^\beta$, we have

\begin{equation}
\label{conditional-variance}
    \sum_{k = 1}^n \mathbb{E}\Big[x_k^2 u^{\beta 2}_{ik} \Big| E^\beta = Y\Big]  = \sum_{k = 1}^{n}u^{\beta 2}_{ik}\mathbb{E}[x_k^2] \leq K\sum_{k = 1}^{n}u^{\beta 2}_{ik} = K.
\end{equation}For the inequality, we use that the second moments of the entries of $E$ are at most $K$ by Assumption \ref{assumption-K}. The last equality uses the fact that $\vect u^\beta_i$ is a unit vector. Applying Bernstein's inequality then gives that 
\begin{equation}
\label{apply-bernstein}
\begin{split}
\mathbb{P}\Big\{\abs{\langle \vect u^{\beta}_i, \vect x(\alpha, l) \rangle} > t \Big| E^{\beta} = Y\Big\} &\leq 2\exp\Bigg(\frac{-t^{2}/2}{\sum_{k = 1}^{n} \E\Big[x^2_k u^{\beta 2}_{ik}\Big|E^{\beta} = Y\Big] + Kf_{j+1}t/3}\Bigg) \\
&\leq 2\exp\Big(\frac{-t^{2}/2}{K + Kf_{j+1}t/3}\Big).
\end{split}
\end{equation} Setting $t = c_2\sqrt{2Kn}f_{j+1}\log^2n$, 

\begin{equation}
\label{bernstein-calc}
\begin{split}
\mathbb{P}\Big\{\abs{\langle \vect u^{\beta}_i, \vect x(\alpha, l) \rangle} > c_2\sqrt{2Kn}f_{j+1}\log^2n \Big| E^{\beta} = Y\Big\} &\leq 2\exp\Bigg(\frac{-c_2^2Knf_{j+1}^{2}\log^4 n}{K + \frac{\sqrt{2}c_2}{3}Kf^2_{j+1}\sqrt{Kn}\log^2n}\Bigg). \\
\end{split}
\end{equation} We now upper bound the terms in the denominator. For the first term, since $n^{-1/2} \leq \norm{U}_{\infty} \leq f_{j+1}$, we obtain that $K \leq Knf_{j+1}^{2}$. For the second term, we use that $K \leq n$ by Assumption \ref{assumption-K}, so $\sqrt{Kn} \leq n$. This gives

\begin{equation}
\begin{split}
\mathbb{P}\Big\{\abs{\langle \vect u^{\beta}_i, \vect x(\alpha, l) \rangle} > c_2\sqrt{2Kn}f_{j+1}\log^2n \Big| E^{\beta} = Y\Big\}&\leq 2\exp\Bigg(\frac{-c_2^2Knf_{j+1}^{2}\log^4 n}{Knf_{j+1}^2 + \frac{\sqrt{2}c_2}{3}Kf_{j+1}^2n\log^2n}\Bigg).\\
\end{split}
\end{equation}

Since $c_2 = c_0 + 1 > 1$, we can upper bound the first term in the denominator on the RHS with $Knf_{j+1}^2 \leq \frac{c_2}{3}Knf_{j+1}^2\log^2n$. This gives

\begin{equation}
\begin{split}
\mathbb{P}\Big\{\abs{\langle \vect u^{\beta}_i, \vect x(\alpha, l) \rangle} > c_2\sqrt{2Kn}f_{j+1}\log^2n \Big| E^{\beta} = Y\Big\} &\leq 2\exp\Bigg(\frac{-c_2^2Knf_{j+1}^{2}\log^4 n}{(\frac{1 + \sqrt{2}}{3})c_2Knf_{j+1}^2\log^2n}\Bigg) \\
&\leq 2\exp(-c_2\log^2n).
\end{split}
\end{equation}
\end{proof}

\section{The proof of Theorem \ref{refined-main}}
\label{refined-proof}
The proof follows fairly easily from the proof of Lemma \ref{delocalization-result-1}. We only need to augment the events  to reduce the wasteful $\log^2 n$ terms in the bounds for inner products. Recall the quantities $U^{\{l\}}$, $\vect u^{\{l\}}_i$, and $\vect x(l)$ from the proof of Theorem \ref{main-result}. Define the events $$\mathcal{I}'_{l} := \{\abs{\langle \vect u_i^{\{l\}}, \vect x(l) \rangle} \geq c_2K\sqrt{2Kn}f_1\log n\}.$$ Let \begin{equation*}\mathcal{I}' := \bigcup_{1 \leq l \leq n} \mathcal{I}'_l. 
\end{equation*} Recall that in the proof of Lemma \ref{delocalization-result-1} we defined the events $\mathcal{B}_j$. We will be considering $\mathcal{B}_0$. Lastly, define $$\mathcal{B}' :=  \cup_{1 \leq l \leq n}\Big\{\norm{U^{\{l\}T} \vect x(l)}_2 \geq K\sqrt{2rc_2\log n}\Big\}.$$ We slightly abuse notation, as $\mathcal{B}'$ was defined and bounded in the proof of Theorem \ref{main-result} (under the name $\mathcal{B}_2$). Define the failure event $$\mathcal{F} = \mathcal{I}'  \cup \mathcal{B}'  \cup \mathcal{B}_0.$$
\begin{lemma}
\label{lemma:Fbound}
\begin{equation}
\mathbb{P}(\mathcal{F}) \leq \tau\log n + 150\times9^{2r}n^{-c_0}.
\end{equation}
\end{lemma}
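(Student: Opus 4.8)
The plan is to bound $\mathbb{P}(\mathcal{F})$ by the union bound over its three constituent events $\mathcal{I}'$, $\mathcal{B}'$, and $\mathcal{B}_0$, reusing bounds already established. Two of these are essentially free: the event $\mathcal{B}_0$ is a special case ($j = 0$) of the event $\mathcal{B}_j$ from the proof of Lemma \ref{delocalization-result-1}, so Lemma \ref{probBj} gives $\mathbb{P}(\mathcal{B}_0) \leq 65 \times 9^{2r} n\exp(-c_2\log^2 n) + 2\tau$, and since $n\exp(-c_2\log^2 n) = \exp((1 - c_2\log n)\log n) = n^{-\omega(1)} \leq n^{-c_0}$ for large $n$, this contributes at most $66 \times 9^{2r}n^{-c_0} + 2\tau$ (absorbing the stray constant). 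The event $\mathcal{B}'$ was already analyzed in the proof of Theorem \ref{main-result} under the name $\mathcal{B}_2$ — the only cosmetic difference is the exponent on $\log n$ in the threshold, which here is $\log n$ rather than $(c_0+1)\log n$; rerunning the Corollary \ref{hoeffcor} calculation with $C = \sqrt{2c_2}$ and taking a union bound over the $r$ coordinates and then over $1 \leq l \leq n$ gives $\mathbb{P}(\mathcal{B}') \leq 2rn \cdot 2\exp(-c_2\log^2 n) = 2rn^{-\omega(1)} \leq 9^{2r}n^{-c_0}$ for large $n$.

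The genuinely substantive piece is bounding $\mathbb{P}(\mathcal{I}')$. The strategy mirrors the proof of Lemma \ref{probIj}: condition on the realization of $E^{\{l\}}$. Conditional on $E^{\{l\}} = Y$, the vector $\vect u^{\{l\}}_i$ is a deterministic unit vector, and the remaining randomness in $\langle \vect u^{\{l\}}_i, \vect x(l)\rangle$ comes only from $\vect x(l)$, whose entries are independent, mean-zero, $K$-bounded, with second moments at most $K$ by Assumption \ref{assumption-K}. The subtlety, exactly as in Lemma \ref{bernstein-bound}, is that a naive Bernstein bound needs an $\ell_\infty$ bound on $\vect u^{\{l\}}_i$: we only get a useful tail if $\norm{\vect u^{\{l\}}_i}_\infty \leq f_1$. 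But $\vect u^{\{l\}}_i = \vect u^{\alpha}_i$ with $\alpha = \{l\}$, $|\alpha| = 1$, and the event $\norm{\vect u^{\{l\}}_i}_\infty \leq f_1$ is precisely $\overline{\mathcal{L}_{\emptyset, l}}$ in the notation of the iterative argument (taking $\alpha = \emptyset$, $\beta = \{l\}$, $j = 0$). The bound $\mathbb{P}(\mathcal{L}_{\emptyset, l}) = \mathbb{P}(\norm{\vect u^{\{l\}}_i}_\infty > f_1) \leq \gamma_1 \leq 2j^*\tau + 2\epsilon_{j^*} + \cdots$ — more directly, it is controlled by the full chain from Lemma \ref{lemma:iteration} started at $j = 1$ — so $\mathbb{P}(\mathcal{L}_{\emptyset, l}) \leq \tau\log n + 132 \times 9^{2r}n^{-c_0}$ by the same computation as in \eqref{delocalization-proof-bound}. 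On the complement, Bernstein's inequality (identical to the computation in \eqref{apply-bernstein}--\eqref{bernstein-calc}, now with the cleaner threshold $c_2\sqrt{2Kn}f_1\log n$ and one fewer power of $\log n$, which is why the conditional variance term $K \leq Knf_1^2$ and the boundedness term $Kf_1 \sqrt{Kn}\log n \leq$ the main exponent still dominate as before) gives a conditional failure probability at most $2\exp(-c_2\log^2 n)$ — wait, I must check the bookkeeping here: with only a single $\log n$ the exponent becomes $c_2^2 K n f_1^2 \log^2 n / (\Theta(1) K n f_1^2 \log n) = \Theta(c_2 \log n)$, hence the conditional bound is $2\exp(-\Omega(c_2\log n)) = 2n^{-\Omega(1)}$. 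Integrating over the conditioning and taking the union bound over $1 \leq l \leq n$ yields $\mathbb{P}(\mathcal{I}' \cap \overline{\bigcup_l \mathcal{L}_{\emptyset,l}}) \leq 2n \cdot n^{-\Omega(1)}$, which is $n^{-\Omega(1)} \leq n^{-c_0}$ once the implied constant (governed by $c_2$, hence by $c_0$, via the choice $c = 2^{17}(c_0+1)r^3$) is large enough.

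Combining: $\mathbb{P}(\mathcal{I}') \leq \mathbb{P}(\bigcup_l \mathcal{L}_{\emptyset,l}) + n^{-c_0} \leq \tau\log n + 133 \times 9^{2r}n^{-c_0}$, and then $\mathbb{P}(\mathcal{F}) \leq \mathbb{P}(\mathcal{I}') + \mathbb{P}(\mathcal{B}') + \mathbb{P}(\mathcal{B}_0) \leq \tau\log n + 133 \times 9^{2r}n^{-c_0} + 9^{2r}n^{-c_0} + 66 \times 9^{2r}n^{-c_0} + 2\tau \leq \tau\log n + 150 \times 9^{2r}n^{-c_0}$, absorbing the additive $2\tau$ into $\tau\log n$ (valid for $n \geq 8$) and the constants into $150$. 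I expect the main obstacle to be the bookkeeping in the Bernstein step — specifically, confirming that reducing the $\log^2 n$ threshold of the delocalization proof to $\log n$ here still leaves a denominator dominated so that the exponent is $\Omega(\log n)$ (rather than $O(1)$), and checking that this $n^{-\Omega(1)}$, after the union bound over $l$, is still $\leq n^{-c_0}$ given the constant $c = 2^{17}(c_0+1)r^3$; everything else is a direct transcription of arguments already in the paper.
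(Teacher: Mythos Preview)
Your approach is essentially identical to the paper's: union-bound over $\mathcal{I}'$, $\mathcal{B}'$, $\mathcal{B}_0$; quote Lemma~\ref{probBj} for $\mathcal{B}_0$; reuse the $\mathcal{B}_2$ Hoeffding computation for $\mathcal{B}'$; and for $\mathcal{I}'$ split on $\mathcal{L}_{\emptyset,l}$, bound $\bigcup_l \mathcal{L}_{\emptyset,l}$ by $\gamma_1$ via Lemma~\ref{lemma:iteration}, and handle $\mathcal{I}'_l \cap \overline{\mathcal{L}_{\emptyset,l}}$ by conditional Bernstein exactly as in Lemma~\ref{bernstein-bound}. Two small bookkeeping slips: (i) the threshold in $\mathcal{B}'$ carries a single $\log n$, not $\log^2 n$, so the Hoeffding bound is $2n^{-c_2}$ (not $\exp(-c_2\log^2 n)$), yielding $\mathbb{P}(\mathcal{B}') \leq 2rn^{-c_0}$ after the union bound --- your conclusion survives since $2r \leq 9^{2r}$; (ii) your bound $66\times 9^{2r}n^{-c_0}$ for $\mathcal{B}_0$ is looser than the paper's $9^{2r}n^{-c_0}$ (the $65\times 9^{2r}n\exp(-c_2\log^2 n)$ term is $n^{-\omega(1)}$, so swallowed entirely), and with your $66$ the final sum is $200\times 9^{2r}n^{-c_0}$, not $150$. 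Tightening (ii) recovers the stated constant.
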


\begin{proof}[Proof of Theorem \ref{refined-main} given Lemma \ref{lemma:Fbound}]
Let $1 \leq l \leq n$. By Lemma \ref{lemma:Fbound}, $\mathcal{F}$ has probability at most $\tau\log n + 150 \times n^{-c_0}$. By Lemma \ref{Tjinclusion}, the conditions for Theorem \ref{coordinatetheorem} hold on $\overline{F} \subset{\overline{B}_0}$ with $A$, $H = E$, for coordinate $l$. The theorem gives the bound \begin{equation}
\abs{\tilde{u}_{il} - u_{il}} \le C_0\norm{U_{l,\cdot}}_{\infty}\Big[\kappa_i\norm{\tilde{\vect u}_i - \vect{u}_i}_2 + \epsilon_1(i) + a_l\kappa_i\epsilon_2(i)\Big] + 256r\frac{\abs{\langle \vect u^{\{l\}}_i, \vect x(l) \rangle}}{\sigma_i},
\end{equation} where we recall that $a_l = \norm{U^{\{l\}T} \vect x(l)}_2$. On $\overline{\mathcal{F}}$, we have $a_l \leq K\sqrt{2rc_2\log n}$ because of $\mathcal{B}'$. Further, we have, by definition of $\mathcal{I}'$, \begin{equation}
\begin{split}
\abs{\langle \vect u_i^{\{l\}}, \vect x(l) \rangle} &\leq c_2K\sqrt{2Kn}f_1\log n \leq 4C_0c_2\sqrt{2Kn}\kappa_i\norm{U}_{\infty}\log n,
\end{split}
\end{equation} where we use our previous observation that $f_1 \leq 4C_0\kappa_i\norm{U}_{\infty}.$
By definition of $c$, we therefore have $$\abs{\langle \vect u_i^{\{l\}}, \vect x(l) \rangle}  < \frac{c}{256r}\kappa_i\norm{U}_{\infty}\sqrt{Kn}\log n.$$
It follows that
\begin{equation}
\abs{\tilde{u}_{il} - u_{il}} \le c\norm{U_{l, \cdot}}_{\infty}\Big[\kappa_i\norm{\tilde{\vect u}_i - \vect{u}_i}_2 + \epsilon_1(i) + \kappa_i\epsilon_2(i)K\sqrt{\log n}\Big] + \frac{c\kappa_i\norm{U}_{\infty}\sqrt{Kn}\log n}{\sigma_i}.
\end{equation} Since $l$ was arbitary, we have 
\begin{equation}
\norm{\tilde{\vect u}_i - \vect u_i}_{\infty} \le c\norm{U}_{\infty}\Big[\kappa_i\norm{\tilde{\vect u}_i - \vect{u}_i}_2 + \epsilon_1(i) + \kappa_i\epsilon_2(i)K\sqrt{\log n}\Big] + \frac{c\kappa_i\norm{U}_{\infty}\sqrt{Kn}\log n}{\sigma_i}.
\end{equation}
\end{proof}

\subsection{Proof of Lemma \ref{lemma:Fbound}}
We will make use of the fact that $c_2 = (1 + c_0)$, so $\exp(-c_2\log n) = n^{-c_0 - 1}$, for example. We first bound $\mathbb{P}(\mathcal{B}_0)$ and $\mathbb{P}(\mathcal{B}')$. By Lemma \ref{probBj}, 
$$\mathbb{P}(\mathcal{B}_j) \leq 65\times9^{2r}n^{j+1}\exp(-c_2\log^2 n) + 2\tau,$$ so we have
\begin{equation}
\label{B0bound}
\begin{split}
\mathbb{P}(\mathcal{B}_0) &\leq 65 \times 9^{2r}n\exp(-c_2\log^2 n) + 2\tau \\
&\leq 65\times  9^{2r}n^{-c_2\log n} + 2\tau \\
&\leq 65 \times 9^{2r}n^{-\log n}n^{-c_0} + 2\tau \\
&\leq 9^{2r}n^{-c_0} + 2\tau.
\end{split}
\end{equation} As we mentioned,  in the poof of Theorem \ref{main-result}, we bounded (see \eqref{bound2})
\begin{equation}
\label{B2again}
\mathbb{P}(\mathcal{B}') \leq 2rn^{-c_0}.
\end{equation}
We will bound $\mathbb{P}(\mathcal{I}')$ in the proceeding lemma. Recall that $j^{*} = \lceil \frac{50\log n}{\log(\log n)}\rceil + 3.$
\begin{lemma} 
\label{I'bound}
\begin{equation}
\mathbb{P}(\mathcal{I}') \leq 
 2j^{*}\tau + 133\times 9^{2r}n^{-c_0}.
\end{equation}
\end{lemma}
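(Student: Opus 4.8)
The goal is to bound $\mathbb{P}(\mathcal{I}')$, where $\mathcal{I}' = \bigcup_{1 \leq l \leq n} \mathcal{I}'_l$ and $\mathcal{I}'_l = \{|\langle \vect u_i^{\{l\}}, \vect x(l) \rangle| \geq c_2 K\sqrt{2Kn} f_1 \log n\}$. The obstacle is the same one that forced the $\log^2 n$ factors in Lemma \ref{bernstein-bound}: to apply Bernstein to $\langle \vect u_i^{\{l\}}, \vect x(l)\rangle$ we need the conditioning event $\norm{\vect u_i^{\{l\}}}_{\infty} \le f_1$ to hold, but $\vect u_i^{\{l\}}$ is a function of $E^{\{l\}}$, which is \emph{independent} of $\vect x(l)$. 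So $\mathbf{I}$ will first establish the delocalization bound $\norm{\vect u_i^{\{l\}}}_\infty \le f_1$ (with high probability) by reusing the iterative leave-one-out machinery of Section \ref{section: delocalizationproof1}: namely, $E^{\{l\}}$ is itself a symmetric, $K$-bounded, mean-zero random matrix (with one row and column zeroed out), so the hypotheses of Lemma \ref{delocalization-result-1} hold for it, but we can get the slightly stronger conclusion $\norm{\vect u_i^{\{l\}}}_\infty \le f_1$ (rather than $\le f_0$) because one extra row has already been removed — i.e., this is exactly the step $j=1$ of the sequence $\{f_j\}$, controlled by $\gamma_1 \le \gamma_{j^*} + \sum_{j=2}^{j^*}(2\tau + \epsilon_j) \le 2j^*\tau + 2\epsilon_{j^*}$ from Lemma \ref{lemma:iteration}, which uses the $(c,\tau,2)$ strong stability hypothesis.

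\textbf{Key steps in order.} First, I would introduce the event $\mathcal{L}'_l := \{\norm{\vect u_i^{\{l\}}}_\infty > f_1\}$ and note $\mathcal{L}' := \bigcup_{l} \mathcal{L}'_l = \{\max_{|\alpha|=1}\norm{\vect u_i^\alpha}_\infty > f_1\}$, so $\mathbb{P}(\mathcal{L}') = \gamma_1 \le 2j^*\tau + 2\epsilon_{j^*}$, and then (mimicking the bound in \eqref{delocalization-proof-bound}) $\epsilon_{j^*} = 66 \times 9^{2r} n^{j^*} \exp(-c_2 \log^2 n) \le 66 \times 9^{2r} n^{-c_0}$ since $j^* = o(\log n)$. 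Second, for each fixed $l$, condition on $\overline{\mathcal{L}'_l}$: then $\vect u_i^{\{l\}}$ is a deterministic unit vector with entries of size $\le f_1$, and $\langle \vect u_i^{\{l\}}, \vect x(l)\rangle$ conditional on $E^{\{l\}}$ is a sum of independent, $Kf_1$-bounded, mean-zero variables with conditional second-moment sum $\le K$ (by Assumption \ref{assumption-K} and $\norm{\vect u_i^{\{l\}}}_2 = 1$, exactly as in \eqref{conditional-variance}). Third, apply Bernstein (Lemma \ref{bernstein}) with $t = c_2 K\sqrt{2Kn} f_1 \log n$: the denominator $K + K f_1 t /3 = K + \tfrac{\sqrt2}{3}c_2 K^2 f_1^2 \sqrt{Kn}\log n$; bounding $K \le K n f_1^2$ (from $f_1 \ge \norm{U}_\infty \ge n^{-1/2}$) and $\sqrt{Kn} \le n$ (from $K \le n$), the exponent becomes $\le -c_2 \log^2 n$ — wait, here one must be careful: with $t$ containing only one power of $\log n$, the $t^2$ in the numerator gives $\log^2 n$ while the dominant denominator term gives $\log n$, so after cancellation the exponent is $-\Theta(c_2 \log n) = -\Theta(\log n)$, which is $\le -c_2 \log n$ for large $n$; so $\mathbb{P}(\mathcal{I}'_l \mid E^{\{l\}} = Y) \le 2\exp(-c_2 \log n) = 2 n^{-c_0-1}$ for $Y \in \overline{\mathcal{L}'_l}$. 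Fourth, $\mathbb{P}(\mathcal{I}'_l \cap \overline{\mathcal{L}'_l}) \le \sup_{Y \in \overline{\mathcal{L}'_l}} \mathbb{P}(\mathcal{I}'_l \mid E^{\{l\}} = Y) \le 2 n^{-c_0-1}$; fifth, union bound over $l$: $\mathbb{P}(\bigcup_l (\mathcal{I}'_l \cap \overline{\mathcal{L}'_l})) \le 2 n^{-c_0}$. Finally, $\mathcal{I}' \subset \bigcup_l(\mathcal{I}'_l \cap \overline{\mathcal{L}'_l}) \cup \mathcal{L}'$, so
\begin{equation*}
\mathbb{P}(\mathcal{I}') \le 2n^{-c_0} + 2j^*\tau + 132 \times 9^{2r} n^{-c_0} \le 2j^*\tau + 133\times 9^{2r} n^{-c_0}.
\end{equation*}

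\textbf{The main obstacle.} The delicate point is the Bernstein exponent computation in the third step: the whole purpose of this refinement over Lemma \ref{bernstein-bound} is to shave a $\log n$ factor off the tail parameter $t$ (from $\log^2 n$ to $\log n$), and this is only possible because we now insist on the \emph{stronger} incoherence bound $\norm{\vect u_i^{\{l\}}}_\infty \le f_1 \le 4C_0 \kappa_i \norm{U}_\infty$ in the conditioning, rather than $\le f_0$, and because strong stability is assumed with parameter $\nu = 2$ so that $\sigma_i > c\sqrt{Kn}\log^{2.01} n$ absorbs the resulting $\sqrt{Kn}\log n$ term in $a_l$-type quantities when this lemma is fed back into the proof of Theorem \ref{refined-main}. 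One must check that after cancelling $K n f_1^2 \log^2 n$ against the denominator, the surviving $\exp(-\Theta(c_2 \log n))$ is genuinely $\le n^{-c_0-1}$ for $n$ large, absorbing the $\sqrt 2$ and constant factors into the $\Theta$; this is routine but is exactly where the bookkeeping of constants (the $c > 256 r c_2 \sqrt 2$ choice) has to line up. Everything else — the reduction to $\gamma_1$, the conditioning argument, the union bounds — is a direct transcription of the arguments already carried out for Lemmas \ref{probIj} and \ref{delocalization-result-1}.
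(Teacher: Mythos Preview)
Your approach is essentially identical to the paper's: you split $\mathcal{I}' \subset \mathcal{L}' \cup \bigcup_l(\mathcal{I}'_l \cap \overline{\mathcal{L}'_l})$, bound $\mathbb{P}(\mathcal{L}') = \gamma_1$ via the iterative Lemma \ref{lemma:iteration}, and bound each $\mathcal{I}'_l \cap \overline{\mathcal{L}'_l}$ by a conditional Bernstein argument exactly as in Lemma \ref{bernstein-bound} but with one fewer power of $\log n$. The paper packages your second piece as a separate Proposition \ref{prop:Jbound} (with $\mathcal{J} = \bigcup_l(\mathcal{I}'_l \cap \overline{\mathcal{L}_{\{\},l}})$) and your $\mathcal{L}'_l$ is its $\mathcal{L}_{\{\},l}$, but the logic and the final arithmetic $(132\times 9^{2r}+2)n^{-c_0} \le 133\times 9^{2r}n^{-c_0}$ match exactly.
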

The union bound, Lemma \ref{I'bound}, \eqref{B0bound}, and \eqref{B2again} show that
 $$\mathbb{P}(\mathcal{F}) = \mathbb{P}(\mathcal{I}' \cup \mathcal{B}_0 \cup \mathcal{B}') \leq (134\times 9^{2r} + 2r)n^{-c_0} + (2j^* + 2)\tau.$$ Since $j^* = o(\log n)$, we have
$$\mathbb{P}(\mathcal{F}) \leq 150\times 9^{2r}n^{-c_0} + \tau\log n,$$ as desired (with room to spare).
To complete the proof of Lemma \ref{lemma:Fbound}, we must prove Lemma \ref{I'bound}. 

Before proceeding with the proof, we recall some notation. For an index set $\alpha$ of size $j$ and a coordinate $ 1 \leq l \leq n$, we defined the event $\mathcal{L}_{\alpha, l} = \{\| \vect u_i^{\alpha \cup \{l\}}  \| _{\infty } > f_{j+1}\}$ in the proof of Lemma \ref{lemma:iteration}. We will consider these events with $\alpha = \{\}$ in the following proposition.

\begin{proposition}
\label{prop:Jbound} Let
$$\mathcal{J} = \bigcup_{1 \leq l \leq n} (\mathcal{I}'_{l} \cap \overline{\mathcal{L}_{\{\}, l}}).$$ Then, 
\begin{equation}
\begin{split}
\mathbb{P}(\mathcal{J}) &\leq 2n\exp(-c_2\log n). 
\end{split}
\end{equation}

\end{proposition}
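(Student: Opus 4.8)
The plan is to recognize $\mathcal{J}$ as nothing but the $j=0$ instance of the event $\mathcal{I}_j$ from the proof of Lemma \ref{delocalization-result-1}, except that the polylogarithmic factor in the inner-product threshold has been sharpened from $\log^2 n$ to $\log n$. Accordingly, I would run the proof of Lemma \ref{bernstein-bound} with $\alpha = \{\}$ (so $\beta = \{l\}$, $f_{j+1} = f_1$): for a fixed $l$, condition on $E^{\{l\}} = Y$ where $Y$ is any realization lying in $\overline{\mathcal{L}_{\{\}, l}}$, i.e. $\norm{\vect u^{\{l\}}_i}_\infty \le f_1$. Under this conditioning $\vect u^{\{l\}}_i$ is a fixed unit vector with every entry bounded by $f_1$ in absolute value, while $\vect x(l)$ — the $l$th row of $E$ with its $l$th entry halved — has independent, mean-zero, $K$-bounded entries and is independent of $E^{\{l\}}$. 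Hence, conditionally, $\langle \vect u^{\{l\}}_i, \vect x(l)\rangle$ is a sum of independent mean-zero terms, each bounded by $K f_1$, with sum of conditional second moments at most $K$ (using $\E[\xi_{jk}^2]\le K$ from Assumption \ref{assumption-K} and $\norm{\vect u^{\{l\}}_i}_2 = 1$).

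Then I would apply Bernstein's inequality (Lemma \ref{bernstein}) with $t$ equal to the threshold defining $\mathcal{I}'_l$, and reproduce verbatim the denominator estimates of the proof of Lemma \ref{bernstein-bound}: bound the variance term by $K \le K n f_1^2$ (valid since $f_1 \ge \norm{U}_\infty \ge n^{-1/2}$), bound the sub-exponential term using $\sqrt{Kn} \le n$ (valid since $K \le n$ by Assumption \ref{assumption-K}), and absorb the remaining constants using $c_2 = c_0 + 1 > 1$ and $\log n \ge 3$. The only bookkeeping difference from Lemma \ref{bernstein-bound} is that, because $t$ now scales like $\log n$ rather than $\log^2 n$, the resulting exponent is $-\Theta(c_2\log n)$ rather than $-\Theta(c_2\log^2 n)$; checking that the implied constant is at least $1$ gives $\mathbb{P}(\mathcal{I}'_l \mid E^{\{l\}} = Y) \le 2\exp(-c_2\log n)$ uniformly over $Y \in \overline{\mathcal{L}_{\{\}, l}}$. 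Conditioning exactly as in the proof of Lemma \ref{probIj}, $\mathbb{P}(\mathcal{I}'_l \cap \overline{\mathcal{L}_{\{\},l}}) \le \sup_{Y \in \overline{\mathcal{L}_{\{\},l}}} \mathbb{P}(\mathcal{I}'_l \mid E^{\{l\}} = Y) \le 2\exp(-c_2\log n)$, and a union bound over $1 \le l \le n$ yields $\mathbb{P}(\mathcal{J}) \le 2n\exp(-c_2\log n)$.

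I do not expect a genuine obstacle here: the argument is a transcription of the conditional Bernstein estimate already carried out in Lemma \ref{bernstein-bound}, specialized to $j = 0$. The only point worth care is checking that the weaker $\log n$ in the threshold still survives the Bernstein denominator estimates — and the structural reason it is \emph{allowed} to be this weak here, where $\log^2 n$ was needed inside Lemma \ref{delocalization-result-1}, is that the subsequent union bound ranges over only the $n$ coordinates $l$, rather than over the $n^{j+1}$ index sets of the full iterative scheme; a per-event tail of order $\exp(-c_2\log n) = n^{-c_0-1}$ therefore already suffices after multiplying by $n$.
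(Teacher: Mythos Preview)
Your proposal is correct and follows essentially the same approach as the paper: the paper's proof (placed in the appendix) isolates a conditional Bernstein bound as Lemma \ref{bernstein-bound-2}, proving $\mathbb{P}(\mathcal{I}'_l \mid E^{\{l\}} = Y) \le 2\exp(-c_2\log n)$ for any $Y \in \overline{\mathcal{L}_{\{\},l}}$ via exactly the denominator estimates you describe, then passes to $\mathbb{P}(\mathcal{I}'_l \cap \overline{\mathcal{L}_{\{\},l}})$ by conditioning and takes a union bound over $l$. Your observation that only $\log n$ (rather than $\log^2 n$) is needed here because the union bound runs over $n$ coordinates rather than $n^{j+1}$ index sets is precisely the structural point driving this step.
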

The proof of Proposition \ref{prop:Jbound} is a repetition of the computations in Lemma \ref{probIj}, using Bernstein's inequality conditionally. We place the details in Appendix \ref{appendix:refined-theorem-proof}. 
\begin{proof}[Proof of Lemma \ref{I'bound}]

We complete the task of bounding $\mathbb{P}(\mathcal{I}')$. By conditioning on $\mathcal{L}_{\{\}, l}$, we have \begin{equation}
\mathcal{I}'_{l} \subset \mathcal{L}_{\{\}, l} \cup (\mathcal{I}'_{l} \cap \overline{\mathcal{L}_{\{\}, l}}).
\end{equation} Therefore, 

\begin{equation}\label{I'split}\mathcal{I}' \subset  \bigcup_{1 \leq l \leq n} \mathcal{L}_{\{\}, l}  \cup \mathcal{J}.
\end{equation} 
The probability of $\mathcal{J}$ is bounded using the Proposition \ref{prop:Jbound}. \begin{equation}\label{equation: Jbound} \mathbb{P}(\mathcal{J}) \leq 2n\exp(-c_2\log n) = 2n^{-c_0}.\end{equation} Next, we recognize the other event on the RHS of \eqref{I'split}. \begin{equation} 
    \begin{split}
    \mathbb{P}\Big(\bigcup_{1 \leq l \leq n} \mathcal{L}_{\{\}, l}\Big) = \mathbb{P}
\{\max_{l} \norm{\vect u^{\{l\}}}_{\infty} > f_1\} &= \gamma_1. 
\end{split}
\end{equation} The last equality is the definition of $\gamma_1$. We bounded the $\gamma_j$ iteratively in Lemma \ref{lemma:iteration}. A routine calculation virtually identical to the one done in \eqref{delocalization-proof-bound} gives the following bound for $\gamma_1$.
\begin{equation}\label{equation:Lemptybound}
\mathbb{P}\Big(\bigcup_{1 \leq l \leq n} \mathcal{L}_{\{\}, l}\Big) = \gamma_1 \leq 2j^*\tau  + 132 \times 9^{2r} n^{-c_0}.
\end{equation} The bounds \eqref{equation: Jbound} and \eqref{equation:Lemptybound} show that $$\mathbb{P}(\mathcal{I}') \leq (132 \times 9^{2r} + 2)n^{-c_0} + 2j^{*}\tau \leq 133 \times 9^{2r}n^{-c_0} + 2j^{*}\tau,$$ as desired.

\end{proof}
\vspace{5mm}
\section{Application: A simple algorithm for clustering problems} 
\label{clustering}









A number of clustering problems have the following common form. A
vertex set $V$ is partitioned into $r$ subsets $V_1, \dots V_r$, and between each pair $V_i, V_j$ we draw edges independently with probability $p_{ij} $  (we allow $i=j$). The task is to find 
a particular set $V_j$ or all the parts $V_1, \dots, V_r$  given one instance of the random graph \cite{drineas2004, fernandez, Feige2005SpectralTA, feige2000, feige, mcsherry2001, alon,  blumspencer, Condon1999AlgorithmsFG}.  

The most popular approach to this problem is spectral, which typically 
consists of two steps. In the first step, one considers the coordinates of a singular vector of the adjacency matrix of the graph (or more generally the projection of the row vectors of the adjacency matrix onto a low dimensional singular space), and run a standard clustering algorithm in low dimension. The output of this step is an approximation of the truth. In the second step, one applies adhoc combinatorial techniques to clean up the output, in order to recover the mis-classified  vertices. 

It has been conjectured that in many cases, the cleaning step is not necessary. Our result shows that it is indeed the case. The critical point here is that the existence of misclassified vertices, in many settings, is just an artifact of the analysis in the first step, which typically relies on $\ell_2$ norm estimates. 
Notice that any $\ell_2$ norm estimate, even sharp, could only imply that a majority of the vertices are well classified, and this leads to the necessity of the second step. Once we have a strong $\ell_{\infty}$  norm estimate, 
then we would be able to classify all the vertices at once.

As we stated in Section \ref{applications}, our new infinity norm estimates enable us to overcome the shortcomings in clustering algorithms that rely on $\ell_2$ analysis in a number of settings. This results in fast and simple new algorithms for a wide variety of problems. All matrices in this section are positive semi-definite, so 
there is no difference between singular vectors and eigenvectors.

\subsection{The hidden clique problem} 

 The (simplest form) of the hidden clique problem is the following: Hide a clique $X$ of size $k$ in  the random graph 
 $G(n,1/2)$. Can we find $X$ in polynomial time? 

 Notice that the largest clique in $G(n,1/2)$, with overwhelming probability, has size approximately 
$2 \log n$ \cite{ASbook}. Thus, for any $k$ bigger than $(2+\epsilon) \log n$, 
with any constant $\epsilon >0$, X would be abnormally large and therefore detectable, by brute-force at least.  For instance, one can check all vertex sets of size $k$ to see if any of them form a clique. However, finding $X$ in polynomial time is a different matter, and the best current bound for $k$ is $k \ge c \sqrt n$, for any constant $c >0$. This was first  achieved by Alon, Krivelevich, and Sudakov  \cite{alon}; see also 
 \cite{feige}\cite{dekel} for later developments concerning faster algorithms for certain values of $c$. 

The Alon-Krivelevich-Sudakov algorithm runs as follows. It first finds $X$ when $c$ is sufficiently large, then  uses a simple sampling  trick to 
reduce the case of small $c$ to this case. 

To find the clique for a large $c$, they first compute the second eigenvector of the adjacency matrix of the graph and  locate  the first largest $k$  coordinates in absolute value. Call this set $Y$. This is an approximation of the clique $X$, but not yet totally accurate. The second, combinatorial, step is to define the set $X$ as the  vertices in the graph with at least $3/4k$ neighbors in $Y$. The authors then proved that with high probability, $X$ is indeed the hidden clique. 

With our new results,  we can find $X$ immediately  by a slightly modified version of the first step, omitting the second combinatorial step. Before starting the main step of the algorithm, we change all zeros in the adjacency matrix to $-1$.

 \vskip2mm 

\begin{algorithm}[First singular vector clustering-FSC]

Compute the first singular vector. Let $x$ be the largest value of the coordinates and let $X$ be the set of all coordinates with  value at least $x/2$.
    
\end{algorithm}

\vskip2mm 
This is perhaps the simplest algorithm for this problem, as 
computing the first singular vector of a large matrix 
is a routine operation that appears in all standard numerical linear algebra packages.

 \begin{theorem} \label{hiddenC} 
There is a constant $c_0$ such that for all $k \ge c_0 \sqrt n$, FSC outputs the hidden clique correctly with probability at least $.99$.

 \end{theorem}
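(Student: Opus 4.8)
The plan is to apply the $\ell_\infty$ perturbation bound (Theorem~\ref{main-corollary}, equivalently Theorem~\ref{main-result} with $r=1$) to $\tilde A$, the $\pm1$-adjacency matrix with zeros replaced by $-1$, and then show that the entrywise control it provides is strong enough that the cut at $x/2$ separates the clique from the rest exactly.

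\textbf{Set-up.} Let $X$ be the hidden clique, $\abs{X}=k$. Every within-clique pair is present with probability one, hence has mean $+1$, while every other $\pm1$ entry is centered; so the signal is $A:=\mathbf{1}_X\mathbf{1}_X^{T}$, which has rank $1$, with $\sigma_1=k$, $\sigma_2=0$, $\delta_1=k$, leading singular vector $\mathbf{u}_1=k^{-1/2}\mathbf{1}_X$, $\norm{U}_\infty=\norm{\mathbf{u}_1}_\infty=k^{-1/2}$, and $\kappa_1=1$. The difference $E:=\tilde A-A$ is a symmetric matrix with independent $1$-bounded entries: it vanishes on $X\times X$, is a centered $\pm1$ variable elsewhere, plus a deterministic diagonal correction $-\diag(\mathbf{1}_X)$ of operator norm $1$. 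That correction is absorbed without difficulty (e.g.\ by running the argument through the deterministic Theorem~\ref{coordinatetheorem} directly, where a bounded deterministic component of $H$ is harmless), so I will treat $E$ as a mean-zero $1$-bounded symmetric random matrix; a standard spectral-norm bound gives $\norm{E}\le 3\sqrt n$ with probability $1-o(1)$ (indeed with probability $1-n^{-\omega(1)}$).

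\textbf{Invoking the main theorem.} Taking $\tau=n^{-10}$ (legitimate by the strong concentration of $\norm{E}$), so $T\le 3\sqrt n$, I would check that $(\sigma_1,\delta_1)=(k,k)$ is $(c,\tau,1)$-stable with $c=c(1)=O(1)$: condition (a) is $k>cT$, i.e.\ $k\gtrsim\sqrt n$; condition (b) is $k>c(\sqrt{\log n}+9n/k)$, i.e.\ $k^2\gtrsim n$; condition (c) is $k>c\sqrt n/\sqrt k$, i.e.\ $k^{3/2}\gtrsim\sqrt n$ — all of which hold once $k\ge c_0\sqrt n$ for $c_0$ large in terms of $c$. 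Theorem~\ref{main-corollary} then gives, with probability $1-o(1)$,
\[
\norm{\tilde{\mathbf{u}}_1-\mathbf{u}_1}_\infty \le \frac{c}{\sqrt k}\Big[\frac{\norm{E}}{k}+\frac{\sqrt{\log n}}{k}+\frac{\norm{E}^2}{k^2}\Big]+\frac{c\sqrt{\log n}}{k}.
\]
Substituting $\norm{E}\le 3\sqrt n$ and $k\ge c_0\sqrt n$, the bracket is at most $3/c_0+9/c_0^2+o(1)$ and the last term is $o(1/\sqrt k)$, so $\norm{\tilde{\mathbf{u}}_1-\mathbf{u}_1}_\infty\le \eta\,k^{-1/2}=\eta\norm{\mathbf{u}_1}_\infty$ where $\eta=\eta(c_0)+o(1)$ and $\eta(c_0)\to 0$ as $c_0\to\infty$; in particular $\eta<\tfrac13$ for $c_0$ large.

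\textbf{From the entrywise bound to exact recovery.} Orient $\tilde{\mathbf{u}}_1$ so that it is close to $\mathbf{u}_1=k^{-1/2}\mathbf{1}_X$ (this is the orientation FSC implicitly uses, since it is the one whose large-magnitude coordinates are positive — the only harmless sign point). Then $\tilde u_{1l}\in[(1-\eta)k^{-1/2},(1+\eta)k^{-1/2}]$ for $l\in X$ and $\abs{\tilde u_{1l}}\le \eta k^{-1/2}$ for $l\notin X$. Since $\eta<\tfrac12$, the maximum coordinate $x$ is attained inside $X$, so $x/2\in[\tfrac{1-\eta}{2}k^{-1/2},\tfrac{1+\eta}{2}k^{-1/2}]$; for $l\in X$, $\tilde u_{1l}\ge(1-\eta)k^{-1/2}\ge x/2$ because $2(1-\eta)\ge 1+\eta$ when $\eta\le\tfrac13$, and for $l\notin X$, $\tilde u_{1l}\le\eta k^{-1/2}<x/2$ because $2\eta<1-\eta$ when $\eta<\tfrac13$. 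Hence FSC outputs exactly $X$; all events used hold with probability $1-o(1)\ge.99$ for $n$ large, which proves Theorem~\ref{hiddenC}. The substantive work is the quantitative bookkeeping forcing $\eta$ below the threshold $\tfrac13$ dictated by comparing $\tfrac13\norm{\mathbf{u}_1}_\infty$ with the cut at $x/2$ — easy here since $\sigma_1=\delta_1=k\gg\sqrt n=\Theta(\norm{E})$, needing only $c_0$ large — while the diagonal correction and the sign ambiguity are the only (routine) technical nuisances.
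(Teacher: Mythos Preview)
Your proposal is correct and follows essentially the same approach as the paper's own proof: decompose the $\pm1$-adjacency matrix as $A+E$ with $A=\mathbf{1}_X\mathbf{1}_X^T$ rank one, apply the main $\ell_\infty$ theorem (Theorem~\ref{main-result}/\ref{main-corollary}) to get entrywise error $\le \tfrac14 k^{-1/2}$ (the paper) or $<\tfrac13 k^{-1/2}$ (you), and conclude that the cut at $x/2$ separates $X$ exactly. You are a bit more explicit than the paper in verifying the stability hypotheses and in flagging the diagonal correction and sign ambiguity, both of which the paper glosses over; these are routine technicalities and your treatment of them is adequate.
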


{\it Proof of Theorem \ref{hiddenC} } After the switching of zeroes to minus ones, the adjacency matrix $\tilde A$ has an all one block of size $k$ (corresponding the hidden cliques), and the rest are $\pm 1$ bits. For convenience, we assume that the all-one block is at the left-top corner.  Thus, we can write $\tilde A= A +E$, where $A$ has an all-one block on its leading principal sub-matrix of size $k$ and the rest of the entries are zero. $E$ is a random matrix with $\pm 1$ entries with a zero block of size $k$.

Notice that the matrix $A$ has rank 1, with $\sigma_1=k$, and first singular vector   $$(1/\sqrt k, \dots, 1/\sqrt k, 0, 0, \dots, 0). $$  So the large (non-zero) entries of this singular vector reveals the position of the vertices of the clique. The algorithm  computes the leading singular vector of $\tilde A$, and we are going to show that the large entries of this vector, with high probability,  still correspond to 
the vertices of the clique. 

From Theorem \ref{OVW1eigenvector}, it is easy to see that with probability at least $.99$,
the $\ell_2$ error $\norm{\tilde{\vect u}_1 - \vect{u}_1}_2$ is bounded by $O( \sqrt n/k) $. Results from random matrix theory show that $\norm{E}$ is at most $3 \sqrt n$ \cite{Vu2005SpectralNO}, with probability $1-o(1)$.
 Thus, our Theorem \ref{main-result} implies that with probability at least $.99$, the  infinity norm bound between the first singular vector of $A$ and that of $\tilde A$ is 

 $$ O( k^{-1/2}\sqrt {n} / k) \le  k^{-1/2} /4, $$ given that $k /\sqrt n $ is sufficiently large (beating the hidden constant in the big $O$). 
Thus, in the leading singular vector, the entries from the clique are at least $\frac{3}{4} 
k^{-1/2}$, and the rest are at most $\frac{1}{4} k^{-1/2}$ in absolute value. This guarantees that the clustering described in the algorithm reveals all the vertices of $X$. 

While we have made no effort to optimize the value of $c_0$ (indeed, our theoretical constant is quite large), it is an interesting question to determine the values of $c_0$ for which FSC can recover the hidden clique exactly. The optimal $c_0$ is quite small; it is likely close to $1$. See Figure \ref{hiddencliqueplot}.


\begin{remark} \label{onehalf} 
The density $1/2$ is not critical, and can be replaced by any parameter 
$p > n^{-1 +\epsilon} $ (or even $p > n^{-1} \log^c n$, for some 
properly chosen $c$). 
In the case of $p$, one needs to replace a zero entry  by $-p/(1-p)$. The random matrix $E$ now has zero mean and spectral norm at most 
$3 \sqrt {np}$; see again \cite{Vu2005SpectralNO}. Thus, by following our argument, we can see that it is sufficient to assume $k \ge C \sqrt {np} $ for a sufficiently large constant $C$. 

\end{remark} 

\begin{figure}
    \centering
    \includegraphics[scale = 0.30]{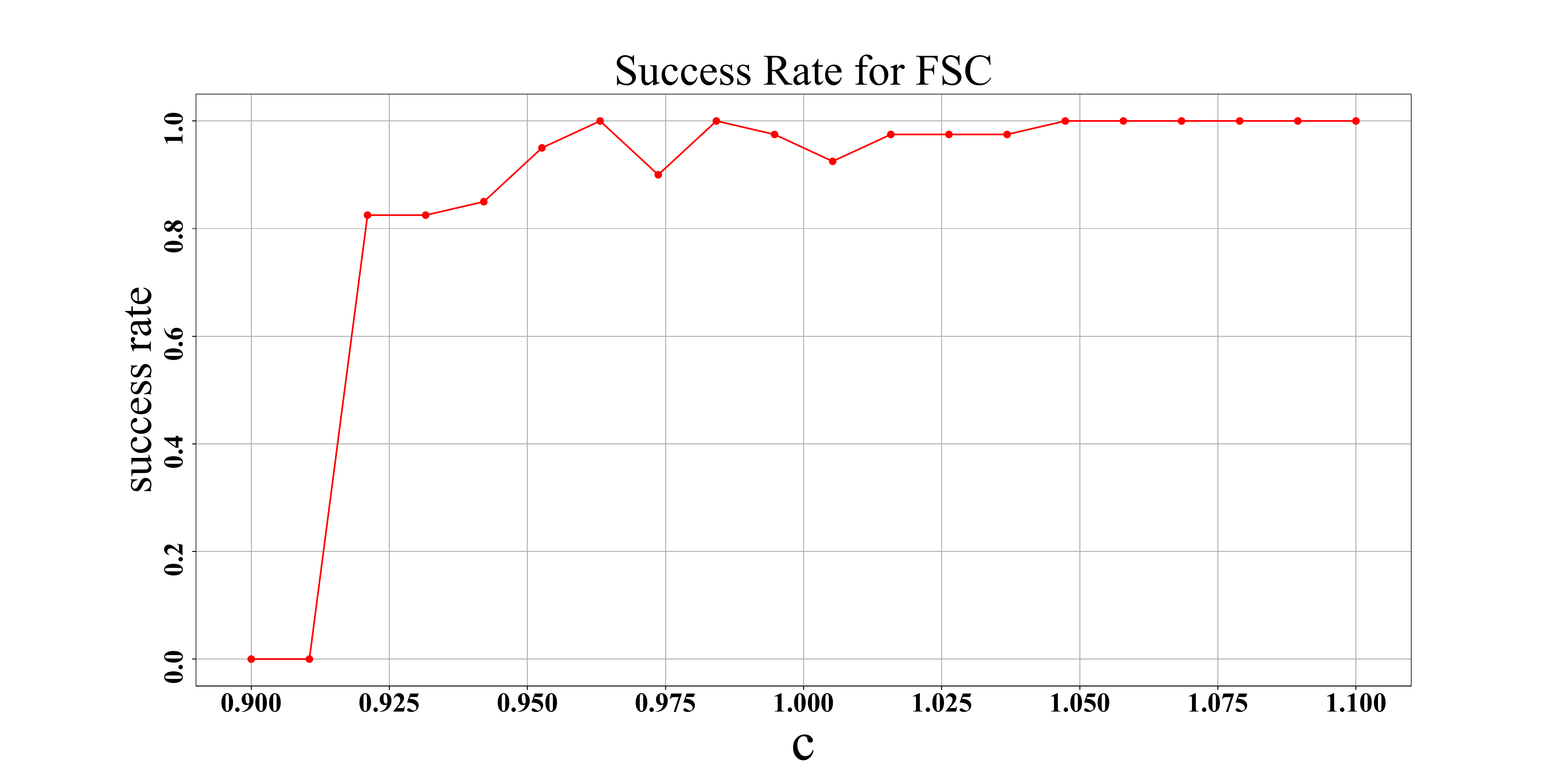}
    \caption[Numerical results for FSC.]{Numerical results for FSC on the Hidden Clique problem. For each $c$, 40 trials of the hidden clique problem are run with $n = 1000$. The step of switching $0$ to $-1$ is omitted. The random graph in which the clique is embedded is $G(n, 1/2)$. Reported are the fraction of trials for which the clique was recovered exactly with no mis-classifications of any vertex.}
    \label{hiddencliqueplot}
\end{figure}

\subsection{Clique partition} \label{hiddencliques}

Let us consider the situation where  one hides many cliques $X_1, \dots, X_r$ of size $k_1 \ge k_2 \ge  \dots \ge  k_r$, which form a partition of the vertex set.  The vertices of different cliques are connected with probability $1/2$, independently. The first  task is to find the $i$th largest clique $X_i$, for any given $1 \le i \le r$, given one instance of the random graph. 

One can do this by finding all $X_i$ and then sorting  them out. However, we can do the task directly by just computing the $i$th singular vector and clustering on its coordinates (the same way as in the last section). Before starting the main step of the algorithm , we change all zeros in the adjacency matrix to $-1$.

 \vskip2mm 
\begin{algorithm}[$i$th clique]
\label{ithclique}
Compute the $i$th singular vector. Let $x$ be the largest  value of a coordinate and let $X_i$ be the set of all coordinates with  value at least $x/2$.
\end{algorithm}

\vskip2mm

One issue is that if
$k_i = k_{i-1} $, then there is no way to differentiate $X_i$ from $X_{i-1}$. 
Thus, the hard instances for the problem are when $|k_i- k_j |$ are small in general. In what follows, we concentrate on that case, and assume that  all $k_i$ are of order $n$. Our theorems enable us to find $X_i$ correctly under the 
assumption that $|k_i - k_{i \pm 1} | = \tilde O( 1) $. 

\begin{theorem} \label{clique1} For any constant $c >0$ there 
is a constant $C$ such that the following holds. 
Assume that $k_r \ge cn $  and 
$k_{i} -k_{i+1} \ge C \log n$  for all $1\le i \le r$. Then with probability at least $.99$, Algorithm \ref{ithclique} recovers $X_i$ correctly, for any $1 \le i \le r$. 
    
\end{theorem}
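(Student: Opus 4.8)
\noindent The plan is to reduce the clique-partition problem to a single application of Theorem~\ref{main-result}, in the same spirit as the proof of Theorem~\ref{hiddenC}. After switching zeros to $-1$, write $\tilde A = A + E$ where $A$ is the matrix of expectations: on each diagonal block $X_j \times X_j$ it is all ones (the cliques are complete), and between distinct cliques an edge is present with probability $1/2$, so a $\pm 1$ entry has mean $0$. Hence $A$ is block-diagonal with blocks $J_{k_1}, \dots, J_{k_r}$; it has rank $r$, with non-trivial singular values $\sigma_j = k_j$ and singular vectors $\vect u_j = k_j^{-1/2}\mathbf 1_{X_j}$. Therefore $\Delta_j = k_j - k_{j+1}$, so $\delta_j \ge C\log n$ by hypothesis, while $\kappa_j = k_1/k_j \le n/(cn) = 1/c = O(1)$ and $\|U\|_\infty = k_r^{-1/2} = O(n^{-1/2})$ (note $k_r \ge cn$ forces $r \le 1/c = O(1)$). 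The noise $E$ is symmetric, mean zero, with independent $\pm 1$ (hence $K=1$-bounded) entries above the diagonal off the blocks and zeros on the blocks; by standard random-matrix estimates \cite{Vu2005SpectralNO}, $\|E\| \le 4\sqrt n$ with probability $1-o(1)$, so one may take $T = O(\sqrt n)$.

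\noindent First I would verify that each pair $(\sigma_i, \delta_i)$ is $(c', \tau, 1)$ stable, where $c' = 2^{11}(c_0+1)r^3$ is the constant of Theorem~\ref{main-result} and $c_0 > 0$, $\tau > 0$ are fixed (with $\tau$ a small constant). Condition (a) reads $k_i \ge cn \gg c' T$, true for $n$ large. Condition (b) reads $\delta_i > c'(K\log^{1/2}n + \sigma_i^{-1}T^2) = O(\log^{1/2}n)$ (using $\sigma_i^{-1}T^2 = O(n)/(cn) = O(1)$), which is dominated by $\delta_i \ge C\log n$ for $n$ large. Condition (c) reads $\delta_i > c'\kappa_i T\|U\|_\infty = O(1)$. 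So stability holds. Next I would invoke Theorem~\ref{OVW1eigenvector} (in its version for the $i$th singular vector) to get, with probability $1-\tau$,
\[
\|\tilde{\vect u}_i - \vect u_i\|_2 \le C_0\Big[\tfrac{Kr^{1/2}}{\delta_i} + \tfrac{\|E\|}{\sigma_i} + \tfrac{\|E\|^2}{\delta_i\sigma_i}\Big] = O\!\Big(\tfrac{1}{\log n}\Big),
\]
since $\sigma_i = k_i \ge cn$, $\delta_i \ge C\log n$, $\|E\| = O(\sqrt n)$.

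\noindent Then I would feed this bound, together with $\epsilon_1(i) = \|E\|/\sigma_i = O(n^{-1/2})$, $\epsilon_2(i) = 1/\delta_i \le (C\log n)^{-1}$, $K=1$, $\kappa_i = O(1)$ and $\|U\|_\infty = k_r^{-1/2}$, into the right-hand side of Theorem~\ref{main-result}, and compare powers of $n$ using $cn \le k_i \le n$; each of the four terms turns out to be $o(n^{-1/2})$. Hence, with probability $1 - C(r)n^{-c_0} - 2\tau - o(1)$,
\[
\|\tilde{\vect u}_i - \vect u_i\|_\infty = o(n^{-1/2}) < \tfrac14 k_i^{-1/2}
\]
for $n$ large (the last step uses $k_i \le n$). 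Finally, for the clustering argument: after fixing the sign of $\tilde{\vect u}_i$ so its angle with $\vect u_i$ is at most $\pi/2$, the entrywise bound forces every coordinate of $\tilde{\vect u}_i$ indexed by $X_i$ to exceed $\tfrac34 k_i^{-1/2}$ and every coordinate outside $X_i$ to be below $\tfrac14 k_i^{-1/2}$ in absolute value; thus the largest coordinate $x$ lies in $(\tfrac34 k_i^{-1/2},\tfrac54 k_i^{-1/2})$, and thresholding at $x/2$ selects exactly $X_i$, i.e.\ Algorithm~\ref{ithclique} recovers $X_i$. A union bound over the $r = O(1)$ choices of $i$ and over $\{\|E\| \le 4\sqrt n\}$, with $\tau$ small and $n$ large, keeps the total failure probability below $0.01$.

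\noindent I expect the only genuinely delicate point to be the bookkeeping in stability condition (b) and in showing the $\ell_\infty$ right-hand side beats $\tfrac14 k_i^{-1/2}$ uniformly in $i$ — here one uses $cn \le k_i \le n$ to see $\|U\|_\infty / k_i^{-1/2} = \sqrt{k_i/k_r} = O(1)$ — and in making sure $C$ (and the threshold on $n$) are chosen large enough relative to the explicit constant $c' = 2^{11}(c_0+1)r^3$. Everything else is routine substitution; the conceptual point, exactly as in Theorem~\ref{hiddenC}, is that although $\|E\| = \Theta(\sqrt n)$ dwarfs the gap $\delta_i = \Theta(\log n)$ so that classical Davis--Kahan is vacuous, the improved $\ell_2$ and $\ell_\infty$ bounds still apply because the effective perturbation of $\sigma_i$ is only $O(\|E\|^2/\sigma_i + \sqrt{\log n}) = O(\sqrt{\log n}) \ll \delta_i$.
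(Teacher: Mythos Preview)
Your proposal is correct and follows essentially the same route as the paper: write $\tilde A = A+E$ with block-diagonal $A$, read off $\sigma_j=k_j$ and $\vect u_j = k_j^{-1/2}\mathbf 1_{X_j}$, bound $\|\tilde{\vect u}_i-\vect u_i\|_2$ via Theorem~\ref{OVW1eigenvector}, feed this into Theorem~\ref{main-result} to get an $\ell_\infty$ bound below $\tfrac14 k_i^{-1/2}$, and conclude that thresholding at $x/2$ isolates $X_i$. If anything you are more thorough than the paper, which only sketches the $i=1$ case and does not spell out the stability verification or the final union bound over $i$.
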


In what follows, we illustrate the ideas through the case $i=1$. The analysis for a general $i$ is similar. Consider the leading eigenvector of $\tilde A =A +E$, where $A$ now consists of $r$ disjoint diagonal all-one blocks of sizes $k_1, \dots, k_r$. 

\begin{equation*}A= 
\begin{bmatrix}
\mathbbm{1}_{k_{1}}\mathbbm{1}_{k_{1}}^{T} & 0 & \dots & 0 \\
0 & \mathbbm{1}_{k_{2}}\mathbbm{1}_{k_{2}}^{T} & 0 & \vdots  \\
\vdots & 0 & \ddots  & 0\\
0 & \dots  & 0 & \mathbbm{1}_{k_{r}}\mathbbm{1}_{k_{r}}^{T} \\
\end{bmatrix}
\end{equation*}

The leading eigenvalue of $A$ is $k_1$ and the leading eigenvector is $$(1/\sqrt{k_1}, \dots, 1/\sqrt{k_1},0, \dots, 0). $$ 
The next eigenvalue is $k_2$ and the gap 
$\delta_1=k_1 -k_2$. Since the cliques partition the vertex set, $k_1 \ge n/r$. The difference (compared to the previous section) is that, with probability $.99$,  the $\ell_2$ error (from Theorem \ref{OVW1eigenvector})
is now bounded by 
$$O\Big[ \frac{1}{\delta_1}  + \frac{\| E\| }{\sigma} + 
\frac{\| E\|^2}{\delta_1\sigma} \Big] = O \Big[ \frac{1}{k_1-k_2} + 
\frac{\sqrt {n} }{k_1}  + 
\frac{n}{ k_1 (k_1-k_2)}  \Big] = O\Big[\frac{1}{ k_1-k_2} \Big].$$

Since we assume that all $k_i= \Theta (n)$, the infinity norm of $U$ is 
$O(\frac{1}{\sqrt n}) $.
So, our Theorem \ref{main-result}  implies
that with probability at least $.99$, the infinity norm bound for the first eigenvector is 


$$O\Big[\frac{r\sqrt{\log n}}{ (k_1-k_2)\sqrt {n}} + \frac{\log n} {n} \Big] \le \frac{1}{4 k_1^{1/2} } ,$$  given that $(k_1-k_2)/\sqrt{\log n} $ is bounded from below by a sufficiently large constant, proving the claim.

\vskip2mm 

We found a simple, but effective, trick to reduce the general  case (when  the separation 
condition  could be violated, such as when $k_i$ are all the same) to the situation in  Theorem \ref{clique1}.
We call this trick {\it random truncation} and it works as follows. 

\vskip2mm 

\noindent {\it Random truncation.} Select each vertex with probability $\rho :=n^{-1 +\epsilon}$, independently, where $\epsilon $ is a small positive constant. Let $S$ be the set of selected vertices and 
$V'= V \backslash S, X_i'= X_i \backslash S, k_i'= |X_i'|$. If $k_i =\Theta (n)$
then 
$|X_i \cap S|$ is a binomial random variable $\chi$ with mean $k_i \rho= \Theta (n\rho)$ and standard deviation $ \Theta (\sqrt {n\rho}) = \Theta  (n^{\epsilon/2}) $. 
Since $\log n = o( n^{\epsilon/2})$, the following fact is obvious. 

\begin{fact} \label{separation} (Separation Lemma) 
Consider the random variable $\chi$ above and let $\chi'$ be its independent copy. Then for any given interval $I$ of length $O(\log n)$, with probability at least $1-o(1)$,  $\chi - \chi' \not\in I$.
\end{fact}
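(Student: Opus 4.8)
The plan is to prove the Separation Lemma by an anti-concentration (small-ball) estimate: since $\chi - \chi'$ is spread out over a window of width $\Theta(n^{\epsilon/2})$, it cannot concentrate on any fixed window of width $O(\log n) = o(n^{\epsilon/2})$. Concretely, it suffices to bound the largest atom of $\chi - \chi'$,
$$ \max_{m \in \mathbb{Z}} \mathbb{P}(\chi - \chi' = m) = O(n^{-\epsilon/2}), $$
because $\chi - \chi'$ is integer-valued, so writing $I$ as the union of its at most $O(\log n)$ integer points and taking a union bound gives
$$ \mathbb{P}(\chi - \chi' \in I) \le |I \cap \mathbb{Z}| \cdot \max_{m \in \mathbb{Z}} \mathbb{P}(\chi - \chi' = m) = O(\log n \cdot n^{-\epsilon/2}) = o(1), $$
using $\log n = o(n^{\epsilon/2})$ for any fixed $\epsilon > 0$.

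First I would bound the atoms of a single binomial. Here $\chi \sim \mathrm{Binom}(k_i, \rho)$ with $k_i = \Theta(n)$ and $\rho = n^{-1+\epsilon}$, so $\mathrm{Var}(\chi) = k_i\rho(1-\rho) = \Theta(n^{\epsilon}) \to \infty$, and a standard Stirling estimate (equivalently, the local central limit theorem for the binomial, or the Kolmogorov--Rogozin inequality applied to $\chi = \sum_{v \in X_i} \mathbf{1}_{v \in S}$, a sum of independent Bernoulli$(\rho)$ variables) yields $\max_j \mathbb{P}(\chi = j) = O\big(\mathrm{Var}(\chi)^{-1/2}\big) = O(n^{-\epsilon/2})$. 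I would then transfer this to the difference: by independence of $\chi$ and $\chi'$, and since $\chi'$ has the same law as $\chi$,
$$ \mathbb{P}(\chi - \chi' = m) = \sum_{j} \mathbb{P}(\chi = j)\,\mathbb{P}(\chi' = j - m) \le \Big(\max_{j} \mathbb{P}(\chi = j)\Big)\sum_{j}\mathbb{P}(\chi' = j - m) = \max_{j} \mathbb{P}(\chi = j), $$
so $\max_m \mathbb{P}(\chi - \chi' = m) \le \max_j \mathbb{P}(\chi = j) = O(n^{-\epsilon/2})$, which is exactly the bound invoked above. Combining the two displays completes the proof.

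There is essentially no serious obstacle here; the only point needing a little care is the uniform atom bound for the binomial, which is precisely where the hypotheses $k_i = \Theta(n)$ and $\rho = n^{-1+\epsilon}$ (hence $\mathrm{Var}(\chi) = \Theta(n^{\epsilon}) \gg (\log n)^2$) enter, and which is why $\epsilon>0$ is taken to be a constant. If one prefers a self-contained argument avoiding a local limit theorem, one can note that the Chernoff bound gives $\mathbb{P}\big(|\chi - k_i\rho| > \sqrt{k_i\rho}\,\log n\big) = o(1)$, and on the complementary event the elementary ratio identity $\mathbb{P}(\chi = j+1)/\mathbb{P}(\chi = j) = \frac{(k_i - j)\rho}{(j+1)(1-\rho)}$ shows that every atom in the bulk has mass $O\big((k_i\rho)^{-1/2}\big) = O(n^{-\epsilon/2})$; the remaining details are routine.
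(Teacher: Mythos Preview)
Your argument is correct. The paper does not actually supply a proof of this fact at all: it simply declares the statement ``obvious'' after noting that the standard deviation $\Theta(n^{\epsilon/2})$ dominates $\log n$. Your anti-concentration argument---bounding the maximal atom of the binomial by $O(\mathrm{Var}(\chi)^{-1/2}) = O(n^{-\epsilon/2})$, passing to the difference via the convolution bound, and union-bounding over the $O(\log n)$ integers in $I$---is exactly the kind of routine verification the paper is alluding to, and it is carried out cleanly.
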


By the union bound over all pairs $(i,j)$, it 
follows that with probability at least $1-o(1)$, 
$\min_ {i \neq j} |k_i' -k_j '| = \omega (\log n) $. Thus, our separation condition holds on the subgraph spanned by $V'$. We can now run our algorithm on the adjacency matrix of this graph to identify $X_i'$. To finish, define $X_i$ as the union of $X_i'$ with  the vertices in $S$ which are connected to all  the vertices in $X_i'$.

 \vskip2mm
 
 Another natural task is to find all $X_i$, and we  can complete this task by 
consecutive applications of the algorithm FSC from the last section. First, find $X_1$ (or more precisely $X_1'$), then remove it from the graph. Then, find $X_2$ and continue in this way. Here is the formal description of the algorithm. 

\begin{algorithm}[Clique partition]
\begin{enumerate}
    \item[]
    \item Define a set $S$ by choosing each vertex in $S$ with probability $\rho:= n^{-1 +\epsilon}$.
Let $V'= V\backslash S$ and consider the graph spanned by $V'$.
    \item For $i=1, \dots, r-1$, run FSC to get $X_i'$. Let $X'_r= V' \backslash \cup _{i=1}^{r-1}V_i$. 

    \item Define $X_i$ be the union of $X_i'$ and the vertices of $S$ which are adjacent to all of $X_i$. 
\end{enumerate}
\end{algorithm}

\begin{theorem} Assume that $k_i =\Theta (n)$ for all $1 \le i \le r$. 
With probability at least $.9$, the Algorithm Clique Partition recovers $X_1, \dots, X_n$ correctly. 

\end{theorem}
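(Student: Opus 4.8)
The plan is to reduce the analysis to two separate pieces: the iterated application of FSC on the truncated graph $G[V']$, which is governed by Theorem~\ref{clique1}, and the combinatorial reinsertion of the sampled vertices in $S$. First I would condition on the event $\mathcal{T}$ that the random truncation is ``good'', meaning that (i) $k_i' := \abs{X_i \setminus S} = \Theta(n)$ for every $i$, and (ii) $\min_{i \ne j} \abs{k_i' - k_j'} = \omega(\log n)$. Part (i) follows from a Chernoff bound, since $\abs{X_i \cap S}$ is binomial with mean $k_i\rho = \Theta(n^{\epsilon})$, so $k_i' = k_i - O(n^{\epsilon}) = \Theta(n)$ with probability $1-o(1)$; part (ii) follows from the Separation Lemma (Fact~\ref{separation}) applied to each of the $\binom{r}{2}$ pairs, because $k_i' - k_j' = (k_i - k_j) + (\abs{X_j \cap S} - \abs{X_i \cap S})$, and $\abs{X_i \cap S}, \abs{X_j \cap S}$ are independent binomials with standard deviation $\Theta(n^{\epsilon/2}) = \omega(\log n)$, so their difference avoids any fixed interval of length $O(\log n)$ with probability $1-o(1)$. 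Hence $\mathbb{P}(\mathcal{T}) = 1-o(1)$. On $\mathcal{T}$, relabel the post-truncation cliques in strictly decreasing order of size as $Y_1, \dots, Y_r$ (strictness is guaranteed by (ii)), so that $\{Y_1,\dots,Y_r\} = \{X_1',\dots,X_r'\}$ as set partitions of $V'$.

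To analyze step~2 I would decouple the adaptive algorithm from a fixed ``ideal run''. For $1 \le i \le r-1$ let $\mathcal{E}_i$ be the event that running FSC on the induced subgraph $G[Y_i \cup Y_{i+1} \cup \cdots \cup Y_r]$ (with the $0 \to -1$ switch) outputs exactly $Y_i$. Conditioned on $\mathcal{T}$, the edges of $G[V']$ are still all-ones inside cliques and i.i.d.\ $\pm 1$ between cliques, so this subgraph is an instance of the clique-partition model with smallest clique $\abs{Y_r} = \Theta(n)$ and consecutive gaps $\omega(\log n)$; Theorem~\ref{clique1} (the case ``find the largest clique'', i.e.\ FSC) then gives $\mathbb{P}(\mathcal{E}_i \mid \mathcal{T}) \ge 1-o(1)$ --- the constant $.99$ in Theorem~\ref{clique1} can be replaced by $1-o(1)$ simply by letting the parameter $\tau$ in Theorems~\ref{OVW1eigenvector} and~\ref{main-result} tend to $0$ with $n$. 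A straightforward induction shows that on $\mathcal{T} \cap \bigcap_{i=1}^{r-1}\mathcal{E}_i$ the actual algorithm agrees with the ideal run: step~$1$ runs FSC on $G[V'] = G[Y_1 \cup \cdots \cup Y_r]$ and by $\mathcal{E}_1$ outputs $Y_1$, after which the step-$2$ graph equals $G[Y_2 \cup \cdots \cup Y_r]$, and so on, while the leftover $X_r' = V' \setminus \bigcup_{i<r}Y_i = Y_r$ is correct automatically. Since $r$ is a constant, a union bound gives $\mathbb{P}\big(\bigcap_{i=1}^{r-1}\mathcal{E}_i \mid \mathcal{T}\big) \ge 1-o(1)$, so step~2 outputs $\{X_1',\dots,X_r'\}$ correctly with probability $1-o(1)$.

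It remains to handle step~3, the reinsertion of $S$. For $j \ne i$ and $v \in X_j \cap S$, the edges from $v$ to the $\Theta(n)$ vertices of $X_i'$ are independent fair coins, and are independent of all edges within $V'$ (so no new conditioning issue arises), whence $v$ is joined to all of $X_i'$ with probability $2^{-\abs{X_i'}} = 2^{-\Theta(n)}$; meanwhile every $v \in X_i \cap S$ is joined to all of $X_i'$ deterministically, since $X_i' \subseteq X_i$ is a clique. A union bound over the $\le \abs{S} \le n$ vertices of $S$ and the $r$ indices shows that with probability $1-o(1)$ step~3 recovers each $X_i = X_i' \cup \{\, v \in S : v \sim X_i'\,\}$ exactly. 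Combining the three pieces by a final union bound yields the partition $\{X_1,\dots,X_r\}$ with probability $1-o(1) \ge .9$ for $n$ large. The only genuinely delicate point is the adaptivity in step~2 --- the $i$th subgraph depends on the random outputs of the earlier FSC calls --- which is precisely what the coupling to the fixed ideal run $\{\mathcal{E}_i\}$ is designed to bypass; everything else is routine concentration.
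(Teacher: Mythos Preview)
Your proposal is correct and follows precisely the approach the paper sketches in the discussion surrounding the algorithm (the paper states this theorem without a formal proof, leaving the details implicit). Your explicit coupling to an ``ideal run'' via the events $\mathcal{E}_i$ is a clean way to handle the adaptivity of the iterated FSC that the paper glosses over, and your observation that the $.99$ in Theorem~\ref{clique1} can be upgraded to $1-o(1)$ by sending $\tau \to 0$ in Theorems~\ref{OVW1eigenvector} and~\ref{main-result} is exactly the right justification for the union bound over the $r-1$ peeling steps.
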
 

\subsection{Planted colorings} 

Finding a $r$ coloring of a graph is a notoriously hard problem, even when we
know that the graph is $r$-colorable. A number of researchers have considered  the random instance of this problem. One natural  setting is as follows. Partition the vertex set $V$  into $r$ independent sets 
$X_1, \dots, X_r$ of sizes $k_1 \ge k_2 \dots \ge k_r$ and then connect the vertices between different $X_i$  with probability $1/2$. The task is to recover the proper coloring from one instance of this random graph; see for instance  \cite{alonkahale97}, \cite{blumspencer}.

Notice that if we look at the complement graph, then 
this is exactly the problem considered in the previous section, as independent sets become cliques. Thus, we obtain 

\begin{theorem} 
Assume that $k_i =\Theta (n)$ for all $1 \le i \le r$. Then with probability at least $.9$,  the algorithm in the last section recovers the planted coloring. \end{theorem}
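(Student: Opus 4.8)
The plan is to reduce the statement to the clique partition result of Section \ref{hiddencliques} by passing to the complement graph. First I would form $\bar G$ on the same vertex set $V$, where a pair is an edge of $\bar G$ exactly when it is a non-edge of $G$. Under this operation each planted independent set $X_i$ of $G$ — which by definition carries no internal edges — becomes a complete graph on $X_i$ in $\bar G$, i.e. a clique of size $k_i$. A pair of vertices lying in distinct classes $X_i, X_j$ is joined in $G$ independently with probability $1/2$, hence joined in $\bar G$ independently with probability $1 - 1/2 = 1/2$; independence across pairs is preserved because complementation acts coordinatewise on the upper-triangular entries of the adjacency matrix. Thus $\bar G$ is distributed exactly as an instance of the planted clique partition model, with parts $X_1, \dots, X_r$ of the same sizes $k_1 \ge \dots \ge k_r$, all of order $\Theta(n)$ by hypothesis.

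Next I would invoke the Clique Partition theorem (the final theorem of Section \ref{hiddencliques}), whose algorithm already incorporates the cosmetic switch of zeros to $-1$ in the adjacency matrix and the random-truncation preprocessing of Fact \ref{separation}. Applied to $\bar G$, it outputs sets equal to the true parts $X_1, \dots, X_r$ with probability at least $0.9$. Since the color classes of $G$ are by construction precisely the parts $X_i$, recovering this partition recovers the planted coloring, a coloring being determined by its color classes up to relabeling of colors.

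The argument is essentially a one-line reduction, so there is no substantial obstacle; the only points worth spelling out explicitly are that complementation genuinely carries the planted-coloring model to the clique-partition model with the same between-class edge probability $1/2$ — so the hypothesis $k_i = \Theta(n)$ required by the Clique Partition theorem transfers verbatim — and that \emph{recovering the planted coloring} is synonymous with \emph{recovering the partition} $X_1, \dots, X_r$. (For the $p \ne 1/2$ variant of Remark \ref{onehalf}, $\bar G$ would instead have between-class density $1-p$ and one would feed that parameter into the FSC analysis via Theorem \ref{clique1} and Remark \ref{onehalf}; for the stated case $p = 1/2$ complementation is exact and nothing further is needed.)
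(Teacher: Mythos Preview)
Your proposal is correct and takes essentially the same approach as the paper: the paper's entire argument is the one-line observation that passing to the complement graph turns the planted independent sets into cliques, reducing the problem verbatim to the Clique Partition theorem of Section~\ref{hiddencliques}. Your write-up simply makes this reduction more explicit.
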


\begin{remark} \label{onehalf2}
In this and the previous problems, the constant $1/2$ again is not important, and can be replaced by a general density $p$. 
The condition that $k_i =\Theta (n)$ for all $i$ can also be weakened. 

\end{remark}

\subsection{Hidden partition} 
We now consider a generalization of the problem in Section \ref{hiddencliques}, where each clique
$X_i$ is replaced by a random graph with edge density $p_i >1/2$. Similar to Section \ref{hiddencliques}, the task  is to locate a particular  $X_i$ or all $X_i$ from one random instance of the graph.

Switch all $0$ in the adjacency matrix to $-1$. The resulting matrix $\tilde A$
can be decomposed into $A+E$, where  $A$ now has the following form 

$$A= 
\begin{bmatrix}
(2p_1-1)\mathbbm{1}_{k_{1}}\mathbbm{1}_{k_{1}}^{T} & 0 & \dots & 0 \\
0 & (2p_2-1)\mathbbm{1}_{k_{2}}\mathbbm{1}_{k_{2}}^{T} & 0 & \vdots  \\
\vdots & 0 & \ddots  & 0\\
0 & \dots  & 0 & (2p_{r}-1)\mathbbm{1}_{k_{r}}\mathbbm{1}_{k_{r}}^{T} \\
\end{bmatrix}
$$

The random matrix $E$ has the following form. The entry $e_{ij}$, $1 \le i \le j \le n$, 
is Rademacher ($\pm 1$) if $i$ and $j$ belong to different $X_k$.  If they belong to the same $X_k$, then let $e_{ij}= (2-2p_k)$ with probability $p_k$ and $-2p_k$ with probability $1-p_k$. It is easy to check that $\tilde A =A+E $ and 
that all entries of $E$ have zero-mean and are $2$-bounded. 

Set $\rho_i :=2p_i-1$. The singular values of $A$ are $k_1\rho_1, \dots, k_r \rho_r$. 
We replace the assumption  $k_1 \ge k_2 \ge  \dots \ge  k_r$ by its weighted version   $k_1\rho_1 \ge  k_2 \rho_2 \ge  \dots \ge k_r \rho_r$. 
The leading singular value of $A$ is now $k_1\rho_1$, the second is $k_2\rho_2$ and the gap is $\delta_1= k_1 \rho_1 -k_2\rho_2$;  the singular vector remains the same. If we follow the proof of Theorem \ref{clique1}, then the condition becomes 

$$ \frac{1}{k_i^{1/2} } \frac{r}{\delta_i} + \frac{\sqrt{\log n}} { k_i \rho_1} \le C^{-1}  \frac{1}{k_i^{1/2}} $$ for some sufficiently large constant $C$. This is equivalent to assuming that both
$\delta_i$ and $\sqrt {k_i} \rho_i /\sqrt {\log n } $ are lower bounded by some sufficiently large constant $C$. The second one is equivalent to 
$\rho_i \ge C \sqrt {\log n/n } $ for some sufficiently large constant $C$.

\begin{theorem}\label{part1}  For any constant $c_1$, there is a constant $C$ such that the following holds. Assume that $k_1 \rho_1/ k_r \rho_r $ is bounded from above  by $c_1$.  If  $\delta_i   \ge C \log n$ and $\rho_i \ge C \sqrt {\log n/n } $ then the $i$th clique algorithm recovers $X_i$ correctly with probability $.9$. 
\end{theorem}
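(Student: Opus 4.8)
The plan is to follow the proof of Theorem~\ref{clique1} almost verbatim, replacing the all‑one blocks there by the weighted blocks $(2p_i-1)\mathbbm{1}_{k_i}\mathbbm{1}_{k_i}^{T}$. First I would record the decomposition $\tilde A=A+E$ from the paragraph above the statement: $A$ has rank $r$ and is positive semidefinite (each $\rho_i=2p_i-1>0$), with singular values $\sigma_1=k_1\rho_1\ge\cdots\ge\sigma_r=k_r\rho_r$ and with $i$th singular vector $\vect u_i$ the normalized indicator of $X_i$, i.e.\ $u_{il}=k_i^{-1/2}$ for $l\in X_i$ and $0$ otherwise; in particular $\|\vect u_i\|_\infty=k_i^{-1/2}$, and under the running assumption of this section that all $k_j=\Theta(n)$ we get $\|U\|_\infty=\Theta(n^{-1/2})$ and $\kappa_i=\sigma_1/\sigma_i=k_1\rho_1/(k_i\rho_i)\le c_1$. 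The matrix $E$ has independent, mean‑zero, $2$‑bounded entries, so $K=2$, and by standard random matrix estimates \cite{Vu2005SpectralNO} $\|E\|\le 3\sqrt n$ with probability $1-o(1)$.

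The key reduction is that it suffices to prove $\|\tilde{\vect u}_i-\vect u_i\|_\infty<\tfrac14 k_i^{-1/2}$ with probability at least $0.99$, say. On that event every coordinate of $\tilde{\vect u}_i$ indexed by $X_i$ exceeds $\tfrac34 k_i^{-1/2}$, every coordinate outside $X_i$ is below $\tfrac14 k_i^{-1/2}$, so the largest coordinate $x$ of $\tilde{\vect u}_i$ satisfies $x\ge\tfrac34 k_i^{-1/2}$ and the threshold $x/2$ of Algorithm~\ref{ithclique} separates $X_i$ exactly. (For the coordinates $l\notin X_i$ one uses the coordinate‑wise form of the bound noted in Remark~\ref{promise}, with $\|U_{l,\cdot}\|_\infty$ in place of $\|U\|_\infty$.)

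To obtain the $\ell_\infty$ bound I would apply Theorem~\ref{main-result} (equivalently Theorem~\ref{main-corollary}) to the pair $A,E$ for $(\sigma_i,\delta_i)$, after checking that $(\sigma_i,\delta_i)$ is $(c,\tau,1)$ stable with $\tau=o(1)$ and $c$ depending only on $c_1,r$. Since $\kappa_i\le c_1$ and $\sigma_i\ge\sigma_1/c_1\ge\tfrac1{rc_1}\sum_j k_j\rho_j\ge\tfrac1{rc_1}\,n\min_j\rho_j\gtrsim\sqrt{n\log n}$, the signal‑to‑noise condition $\sigma_i>cT$ holds with $T\le 3\sqrt n$, and the gap conditions $\delta_i>c(K\log^{1/2}n+\sigma_i^{-1}T^2)$ and $\delta_i>c\kappa_iT\|U\|_\infty$ reduce, via $\|U\|_\infty=\Theta(n^{-1/2})$ and $\sigma_i\gtrsim\sqrt{n\log n}$, to inequalities guaranteed by $\delta_i\ge C\log n$ and $\rho_i\ge C\sqrt{\log n/n}$ once $C$ is large in terms of $c_1,r$. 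Theorem~\ref{OVW1eigenvector} then gives $\|\tilde{\vect u}_i-\vect u_i\|_2=O\big(\delta_i^{-1}+\|E\|\sigma_i^{-1}+\|E\|^2\delta_i^{-1}\sigma_i^{-1}\big)$, a small constant under the same bounds; plugging this, $\epsilon_1(i)=\|E\|/\sigma_i$, $\epsilon_2(i)=1/\delta_i$, $K=2$ and $\|U\|_\infty=\Theta(k_i^{-1/2})$ into \eqref{main-result-eq} collapses the right‑hand side to
\[
\|\tilde{\vect u}_i-\vect u_i\|_\infty\;\le\;O\!\Big(k_i^{-1/2}\Big[\tfrac{r}{\delta_i}+\tfrac{\sqrt{\log n}}{k_i\rho_i}\Big]\Big)+O\!\Big(\tfrac{\sqrt{\log n}}{\sigma_i}\Big),
\]
which is $<\tfrac14 k_i^{-1/2}$ exactly under the two stated quantitative hypotheses. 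A union bound over the $O(1)$ bad events (failure of $\|E\|\le 3\sqrt n$, of the $\ell_2$ estimate, and of Theorem~\ref{main-result}) keeps the overall failure probability well below $0.1$; the case of general $i$ is identical after permuting coordinates so that $X_i$ is a contiguous block.

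I expect the main obstacle to be the quantitative bookkeeping in this last step, in particular verifying that each error term is genuinely $o(k_i^{-1/2})$. When the $\rho_i$ sit near their threshold, $\sigma_i=k_i\rho_i$ is only of order $\sqrt{n\log n}$, so the terms $\|E\|^2/(\delta_i\sigma_i)$ inside the $\ell_2$ bound and $K\sqrt{\log n}/\sigma_i$ inside \eqref{main-result-eq} are the binding ones; making them small is precisely what forces the quantitative shape of the hypotheses $\delta_i\ge C\log n$ and $\rho_i\ge C\sqrt{\log n/n}$ (together with the comparability $k_1\rho_1/k_r\rho_r\le c_1$, which pins down $\sigma_i\gtrsim\sqrt{n\log n}$), with $C$ large enough to absorb $c_1$ and $r$. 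Apart from that, the argument is a direct adaptation of the proofs of Theorems~\ref{hiddenC} and~\ref{clique1}, and the random‑truncation trick of Section~\ref{hiddencliques} (see also Remark~\ref{onehalf2}) removes the separation and density normalizations if one wants the cleanest hypotheses.
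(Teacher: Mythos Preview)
Your approach is the paper's own: decompose $\tilde A=A+E$ with the block-diagonal $A$, read off $\sigma_j=k_j\rho_j$ and the normalized-indicator singular vectors, and then repeat the proof of Theorem~\ref{clique1} by feeding Theorem~\ref{OVW1eigenvector} into Theorem~\ref{main-result}. The paper's argument is precisely the short paragraph preceding the statement and arrives at the same two-term criterion $\tfrac{1}{\sqrt{k_i}}\cdot\tfrac{r}{\delta_i}+\tfrac{\sqrt{\log n}}{k_i\rho_i}\le C^{-1}k_i^{-1/2}$ you display.

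There is, however, one verification that does not go through as you state it. You claim that condition~(b) of Definition~\ref{stable}, $\delta_i>c\bigl(K\sqrt{\log n}+\sigma_i^{-1}T^2\bigr)$, ``reduces, via $\sigma_i\gtrsim\sqrt{n\log n}$, to inequalities guaranteed by $\delta_i\ge C\log n$ and $\rho_i\ge C\sqrt{\log n/n}$.'' But with $T\asymp\sqrt n$ and $\sigma_i=k_i\rho_i=\Theta(n\rho_i)$ one has $\sigma_i^{-1}T^2\asymp 1/\rho_i$, so (b) forces $\delta_i\rho_i\gtrsim 1$. At the simultaneous thresholds $\delta_i=C\log n$ and $\rho_i=C\sqrt{\log n/n}$ this product is $\asymp(\log n)^{3/2}/\sqrt n\to 0$, and the condition fails; equivalently, the term $\|E\|^2/(\delta_i\sigma_i)\asymp 1/(\delta_i\rho_i)$ in the $\ell_2$ bound, which you single out as ``binding,'' is not made small by the stated hypotheses. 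The paper's displayed condition simply drops this term (it keeps only two summands), so this is an oversight you inherit rather than introduce. The whole argument is sound once $\rho_i$ is bounded below by a positive constant (then $\sigma_i=\Theta(n)$ and $\|E\|^2/(\delta_i\sigma_i)=O(1/\delta_i)$, exactly as in Theorem~\ref{clique1}); if you want the full range $\rho_i\ge C\sqrt{\log n/n}$, you need the additional hypothesis $\delta_i\rho_i\ge C$.
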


We can again apply the random truncation trick at the beginning to guarantee the separation condition. 
However, the application of this trick on this more general setting is slightly 
more technical than in the case of cliques, since it is less obvious how to assign the vertices from $S$ to $X_i'$. To decide which $X_i'$ a vertex $v \in S$ belongs to, we first choose a subset $Y_i \subset X_i'$ so that all $Y_i$ has the same size  $c n$ for some constant $c>0$. (This is doable because we assume that all $X_i$ have size $\Theta (n)$.) Let $i$ be the index where $v$ has the most edges connected to $Y_i$ and then add $v$ to $X_i'$. 

Notice that if $v \not\in X_i$, then the number of edges between $v$ and $Y_i$
has distribution $\chi_0= \Binom(.5, cn)$. If $v \in X_i$ then it has 
distribution $\chi_i= \Binom (p_i, cn)$. If 
$p_i -.5:= \rho_i/2 > C_0 \sqrt {n \log n}$ for a sufficiently large constant $C_0$
(which may depend on $c$ and $r$, then with probability at least $.99$, 
$\chi_i \ge \chi_0 $ for  all $v \in S$ and $1 \le i \le r$. This leads us to the following algorithm and theorem.

\vskip2mm 

\begin{algorithm}[Hidden partition]
\label{alghiddenpartition}
\begin{enumerate}
\item[]
\item Define a set $S$ by choosing each vertex in $S$ with probability $\rho:= n^{-1 +\epsilon}$.
Let $V'= V\backslash S$ and consider the graph spanned by $V'$.

\item For $i=1, \dots, r-1$, run FSC to get $X_i'$. Let $X'_r= V' \backslash \cup _{i=1}^{r-1}V_i$.

\item Select subsets $Y_i \subset X_i'$ such that $|Y_i| =c n$, for some properly chosen small constant $c>0$. 

\item Define $X_i$ be the union of $X_i'$ and those  vertices $v$ of $S$ where $d_i(v)= \max _j d_j (v)$, where $d_i(v)$ is the number of neighbors of $v$ in $Y_i$ (break ties arbitrarily). 
\end{enumerate}
\end{algorithm}


\begin{theorem}\label{part2}  For any constant $c$ there are
constants $c_0, C$ such that the following holds.  If  $k_i \ge c n$ and $ \rho_i \ge C \sqrt { \log n/n } $ for all $1\le i \le r$, then Algorithm Hidden Partition recovers all $X_i$ correctly with probability $.9$. 
\end{theorem}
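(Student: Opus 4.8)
The plan is to split the analysis of Algorithm Hidden Partition into its two natural phases: a spectral phase that recovers $X_i' := X_i\cap V'$ for every $i$, and a combinatorial phase that attaches each vertex of the random sample $S$ to the correct part. Throughout I set $G' := G[V']$, write $\tilde A'$ for its signed adjacency matrix and $A' = \mathbb{E}[\tilde A'\mid S]$; conditionally on $S$, $G'$ is exactly a hidden-partition instance on $V'$ with parts $X_i'$, within-part densities $p_i$, and between-part density $1/2$, so $A'$ has the block form of Section \ref{hiddencliques} with weighted singular values $k_i'\rho_i$, where $k_i'=|X_i'|$.

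For the spectral phase I would first dispose of the truncation bookkeeping: since $\rho = n^{-1+\epsilon}=o(1)$, with probability $1-o(1)$ we have $|V'| = (1-o(1))n$ and $k_i' = (1-o(1))k_i \ge (c/2)|V'|$ for all $i$, so $\rho_i\ge C\sqrt{\log n/n}\ge C'\sqrt{\log|V'|/|V'|}$ still holds. The purpose of the truncation is to produce the weighted-gap hypothesis of Theorem \ref{part1}: writing $k_i' = k_i - B_i$ with $B_i=|X_i\cap S|$ an independent $\Binom(k_i,\rho)$ variable, each weighted size $k_i'\rho_i$ is a random shift of $k_i\rho_i$ by $B_i\rho_i$, and applying the Separation Lemma (Fact \ref{separation}) to the differences $B_i\rho_i - B_j\rho_j$ and union-bounding over the $\binom{r}{2}=O(1)$ pairs shows that, with probability $1-o(1)$, every pair of weighted sizes differs by more than the threshold required by Theorem \ref{part1}. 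On this event $G'$ satisfies all hypotheses of Theorem \ref{part1}, so running FSC on $G'$ recovers the largest part exactly; removing it leaves a hidden-partition instance on $r-1$ parts that inherits the block structure, the size lower bound, and the pairwise separation, so iterating recovers the remaining $X_i'$. Since $r\le 1/c = O(1)$, a union bound over the $O(1)$ rounds — each failing with probability $o(1)$ by the quantitative forms of Theorems \ref{OVW1eigenvector} and \ref{main-result} (take $\tau\to 0$) — keeps the spectral-phase failure probability $o(1)$.

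For the combinatorial phase, condition on the success of the spectral phase and on the entire realization of $G'$; then the sets $X_i'$, hence the reference sets $Y_i\subset X_i'$ with $|Y_i|=cn$, are fixed, and the edges of $G$ incident to $S$ are untouched by $G'$ and therefore still fresh independent coin flips. For $v\in X_i\cap S$ the degree $d_i(v)$ into $Y_i$ is $\Binom(cn,p_i)$ while for $j\neq i$ the degree $d_j(v)$ into $Y_j$ is $\Binom(cn,1/2)$; the gap between their means equals $cn(p_i-1/2) = cn\rho_i/2 \ge (cC/2)\sqrt{n\log n}$, which dominates the $\Theta(\sqrt n)$ standard deviations by a factor $\sqrt{\log n}$. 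Hoeffding's inequality (Lemma \ref{Hoeffding}) then gives $\mathbb{P}(d_i(v)\le d_j(v)) \le \exp(-\Omega(C^2\log n)) \le n^{-3}$ once $C$ is large, and a union bound over $v\in S$ ($|S|\le n$) and over $j\neq i$ ($\le r$ choices) shows that with probability $1-o(1)$ every $v\in S$ is attached to the unique $i$ with $v\in X_i$. Combining the two phases, $X_i = X_i'\cup\{v\in S: v\text{ assigned to }i\}$ is recovered exactly for all $i$, with total failure probability $o(1)<0.1$.

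The main obstacle is the separation step inside the spectral phase. The truncation perturbs $k_i'\rho_i$ only by $\Theta(\rho_i\sqrt{n\rho}) = \Theta(\rho_i n^{\epsilon/2})$, which is comfortably $\omega(\log n)$ when the $\rho_i$ are bounded below by a constant — the clique-partition regime, where the argument is literally Fact \ref{separation} — but shrinks as $\rho_i$ decreases; establishing the needed separation uniformly over the admissible range $\rho_i\ge C\sqrt{\log n/n}$, and, in tandem, checking that the stability conditions of Theorem \ref{main-result} (which involve $\kappa_i=\sigma_1/\sigma_i$, a priori unbounded when the $\rho_i$ differ widely) hold on $G'$, is where the real work lies and where the choice of $\epsilon$ and of the constants $c_0, C$ must be balanced. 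Once a suitably separated, well-conditioned instance is in hand, the remaining steps are the routine union bounds above.
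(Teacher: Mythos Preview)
Your approach is exactly the paper's: the ``proof'' of Theorem \ref{part2} in the paper is precisely the informal discussion in the paragraphs preceding the theorem statement---random truncation to force separation of the weighted sizes $k_i'\rho_i$, iterated FSC on $V'$ via Theorem \ref{part1}, and then degree-based reassignment of the vertices in $S$ using the equal-size reference sets $Y_i$ and a Hoeffding/binomial comparison. You have in fact gone further than the paper by explicitly flagging the genuine subtlety in the separation step: the truncation fluctuation in $k_i'\rho_i$ is only $\Theta(\rho_i n^{\epsilon/2})$, which comfortably beats the required gap when the $\rho_i$ are bounded away from zero but degrades toward the lower threshold $\rho_i \asymp \sqrt{\log n/n}$; the paper simply asserts that ``we can again apply the random truncation trick'' without addressing this regime, so your identification of this as ``where the real work lies'' is accurate rather than a deficiency of your sketch relative to the paper's.
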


\begin{remark} \label{onehalf3}
The density  $1/2$ again is not important, and can be replaced by a general density $q$. 
 
 \end{remark} 

Another well-known instance of this problem is the hidden bipartition problem. In this problem,  $r=2$ and the vertex set is partitioned into two sets of equal size $n/2$. Draw edges with probability $p$ inside $X_i$ and $q< p$ between $X_1$ and $X_2$.  The task is to recover the partition from one instance of the random graph. This particular case has been studied heavily by many researchers through 4 decades; see Table \ref{surveytable}.

\begin{table}[!h]
\resizebox{\textwidth}{!}{%
\begin{tabular}{||c | c | c ||} 
\hline
 \hline
 Bui, Chaudhuri, Leighton, Sipser '84 \cite{BCLS84} & min-cut method &  $p = \Omega(1/n), q = o(n^{-1 - 4/((p+q)n)})$ \\ 
 \hline
 Dyer, Frieze '89 \cite{DF89} & min-cut via degrees & $p-q = \Omega(1)$\\
 \hline
 Boppana '87 \cite{BP87} & spectral method &  $(p-q)/\sqrt{p+q} = \Omega(\sqrt{\log(n)/n})$ \\
 \hline
 Snijders, Nowicki '97 \cite{SN97} & EM algorithm & $p - q = \Omega(1)$ \\
 \hline
 Jerrum, Sorkin '98 \cite{JS98} & Metropolis algorithm & $p - q = \Omega(n^{-1/6 + \epsilon})$\\
 \hline
 Condon, Karp '99 \cite{Condon1999AlgorithmsFG} & augmentation algorithm & $p - q = \Omega(n^{-1/2 + \epsilon})$\\
 \hline
 Carson, Impagliazzo '01 \cite{CI01} & hill-climbing algorithm &  $p - q = \Omega(n^{-1/2}\log^{4}n)$ \\
 \hline
 Mcsherry '01 \cite{mcsherry2001} & spectral method & $(p - q)/\sqrt{p} \ge \Omega(\sqrt{\log(n)/n})$ \\
 \hline
 Bickel, Chen '09 \cite{BC09} & N-G modularity & $(p-q)/\sqrt{p+q} = \Omega(\log(n)/\sqrt{n})$\\ 
 \hline
 Rohe, Chatterjee, Yu '11 \cite{RCY11} & spectral method & $p - q = \Omega(1)$ \\
 \hline
 Abbe, Bandeira, Hull '14 \cite{abbe2014exact} & maximum likelihood & $p - q = \Omega(\sqrt{\log(n)/n})$\\
 [1ex] 
 \hline
 Vu '18 \cite{vuhidden} & spectral method & $(p - q)/p^{1/2} = \Omega(\sqrt{\log(n)/n})$ \\
 \hline
 Abbe, Fan, Wang, Zhong '19 \cite{abbefanentrywise} & spectral method & $p - q = \Omega(\sqrt{\log(n)/n})$ \\
 [1ex] 
 \hline
 \hline
  \end{tabular}}
\caption[A survey of the hidden bipartition problem.]{A recreation of the table in \cite{abbe2014exact} surveying the hidden bipartition problem, with some recent additions.}
\label{surveytable}
\end{table}

In this case, our method (with some obvious modifications to replace $1/2$ by $q$) gives

\begin{theorem}\label{part3}  If $p,q  =\Theta (1)$,
$ p-q \ge C \sqrt { \log n/n } $ for all $1\le i \le r$, then 
Algorithm Hidden Partition recovers all $X_i$ correctly with probability $.9$. 
\end{theorem}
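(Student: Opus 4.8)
\emph{Proof proposal.} The plan is to place the hidden bipartition into the rank-one signal-plus-noise setting and then invoke Theorem~\ref{main-result} together with the $\ell_2$ bound of Theorem~\ref{OVW1eigenvector}. Let $\tilde A$ be the adjacency matrix of the random graph, whose expectation $\bar A$ carries the value $p$ on the two diagonal blocks $X_1\times X_1$, $X_2\times X_2$ and the value $q$ elsewhere. Subtracting $\mu J$, where $\mu=(p+q)/2$ and $J$ is the all-ones matrix, kills the $\mathbf 1\mathbf 1^{T}$ direction and leaves $A:=\bar A-\mu J$, a rank-one matrix whose unique nonzero singular value is $\sigma_1=\tfrac{n(p-q)}{2}=\Theta(n(p-q))$ with singular vector $\vect u_1=\vect w:=(\mathbbm{1}_{X_1}-\mathbbm{1}_{X_2})/\sqrt n$; hence $\|U\|_\infty=n^{-1/2}$, $\kappa_1=1$, and $\delta_1=\sigma_1$. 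Writing $\tilde A-\mu J=A+E$ with $E:=\tilde A-\bar A$, the noise $E$ has independent, mean-zero, $1$-bounded entries, and $\|E\|\le 3\sqrt n$ with probability $1-o(1)$ by standard random-matrix bounds \cite{Vu2005SpectralNO}. The spectral step is run on $\tilde A-\mu J$ (the natural mean-centered version of Algorithm~\ref{alghiddenpartition}; no random truncation is needed here, since the two parts have equal size and the centering already makes the partition direction the unique nonzero one).

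Next I would check that $(\sigma_1,\delta_1)$ is $(c,\tau,1)$-stable for a suitable constant $c$. Under the hypothesis $p-q\ge C\sqrt{\log n/n}$ one has $\sigma_1\ge \tfrac C2\sqrt{n\log n}$, so: (a) $\sigma_1\gg\|E\|\sim\sqrt n$; (b) $\delta_1=\sigma_1\gg K\sqrt{\log n}$ and $\delta_1\sigma_1=\sigma_1^{2}\gg n\ge\|E\|^{2}$; (c) $\delta_1=\sigma_1\gg\kappa_1\|E\|\,\|U\|_\infty=O(1)$. Each of these is comfortably satisfied once $C$ (hence $c$) is a large constant and $n$ is large. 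By Theorem~\ref{OVW1eigenvector} with $r=1$, the $\ell_2$ error is $\|\tilde{\vect u}_1-\vect u_1\|_2=O\!\big(K\delta_1^{-1}+\|E\|\sigma_1^{-1}+\|E\|^{2}\delta_1^{-1}\sigma_1^{-1}\big)=O(1/\sqrt{\log n})=o(1)$, with probability $1-o(1)$ (using the quantitative form of the theorem). Feeding the stability and this $\ell_2$ bound into Theorem~\ref{main-result} yields a bound for $\|\tilde{\vect u}_1-\vect u_1\|_\infty$ of the shape $c\,n^{-1/2}\big(\|\tilde{\vect u}_1-\vect u_1\|_2+\|E\|\sigma_1^{-1}+\delta_1^{-1}K\sqrt{\log n}\big)+cK\sqrt{\log n}/\sigma_1$.

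Each of the first three contributions is $o(n^{-1/2})$, while the last term is $\Theta\!\big(K\sqrt{\log n}/(n(p-q))\big)$, which is at most $\tfrac14 n^{-1/2}$ precisely when $p-q\ge C\sqrt{\log n/n}$ with $C$ larger than a constant multiple of the constant $c$ of Theorem~\ref{main-result}. Therefore $\|\tilde{\vect u}_1-\vect u_1\|_\infty<\tfrac14\|\vect u_1\|_\infty$. Since every coordinate of $\vect u_1$ equals $\pm n^{-1/2}$ according to the side of the partition, every coordinate of $\tilde{\vect u}_1$ lies within $\tfrac14 n^{-1/2}$ of $\pm n^{-1/2}$; the FSC rule (keep the coordinates with value at least $x/2$, where $x=\max_\ell\tilde u_{1\ell}$) then returns exactly the side on which $\vect u_1$ is positive, and its complement is the other side — the claimed exact recovery. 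The bad events (failure of the $\|E\|$ bound, of the $\ell_2$ estimate, and the exceptional event of Theorem~\ref{main-result}) all have probability $o(1)$, so the success probability exceeds $0.9$, indeed $1-o(1)$.

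Two points need care, and the second is the real obstacle. First, the centering constant $\mu=(p+q)/2$: if it is estimated from the data, say $\hat\mu=\binom n2^{-1}\sum_{i<j}\tilde A_{ij}$, then $|\hat\mu-\mu|=O(\sqrt{\log n}/n)$ with high probability, so $\tilde A-\hat\mu J$ differs from $A+E$ only by a rank-one matrix of spectral norm $O(\sqrt{\log n})\ll\sigma_1$; absorbing this correction (for instance via a sample-splitting argument making the estimate independent of the edges used in the spectral step) is routine. Second, one must carry the absolute constants through Theorems~\ref{OVW1eigenvector} and~\ref{main-result} so that the additive term $cK\sqrt{\log n}/\sigma_1$ sits strictly below the recovery margin $\tfrac14\|U\|_\infty$; it is exactly this comparison that forces the threshold $C$ in the hypothesis to be a sufficiently large constant. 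Beyond this bookkeeping, the argument is a transcription of the clique-partition proof (Theorem~\ref{clique1}), with the between-density $1/2$ replaced by $q$ and the rank-two block structure replaced by its mean-centered, rank-one version.
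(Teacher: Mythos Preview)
Your argument is mathematically sound --- the rank-one reduction via subtracting $\mu J$ with $\mu=(p+q)/2$, the verification of $(c,\tau,1)$-stability, and the combination of Theorem~\ref{OVW1eigenvector} with Theorem~\ref{main-result} all go through as you describe. But you are not proving the statement as written: Theorem~\ref{part3} asserts that \emph{Algorithm Hidden Partition} (Algorithm~\ref{alghiddenpartition}) succeeds, and that algorithm switches zeros to $-q/(1-q)$, performs random truncation, runs FSC iteratively on the resulting rank-$2$ signal, and then reassigns the truncated vertices by neighbor counts. You instead center by $\mu J$, skip the truncation entirely, and cluster on a single singular vector of a rank-$1$ signal --- a different procedure.

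The paper's own ``proof'' is the one-line remark preceding the theorem: specialize the hidden-partition analysis (Theorems~\ref{part1}--\ref{part2}) with the between-cluster density $1/2$ replaced by $q$. After the $0\mapsto -q/(1-q)$ switch the signal matrix is block-diagonal with two identical blocks of value $(p-q)/(1-q)$, hence rank $2$ with a degenerate top singular value; the random truncation in Step~(1) of Algorithm~\ref{alghiddenpartition} separates the block sizes (Fact~\ref{separation}), creating a nonzero gap so that FSC can peel off $X_1'$, and Step~(4) reattaches the truncated vertices. Your centering trick collapses the signal to rank $1$ and makes $\delta_1=\sigma_1$ automatically, so no truncation is needed --- a genuine simplification for the equal-bipartition case, at the cost of requiring (an estimate of) $\mu$ and of not generalizing as directly to $r>2$ or unequal parts. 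If you want to prove Theorem~\ref{part3} as stated, you should follow the rank-$2$ route through Algorithm~\ref{alghiddenpartition}; if you want to record the rank-$1$ argument, it is best framed as an alternative, simpler algorithm for this special case.
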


The lower bound $\sqrt {\log n/n}$ is the current best on this problem; see \cite{abbe2014exact}. From our analysis, it is clear that the same conclusion holds for equal partitions with any number of parts (more than 2). The condition $p,q= \Theta (1)$ can also be improved, and with $p$ tending to zero, it becomes  $ (p-q)/\sqrt{p} \ge C \sqrt {\log n/n } $. 

\vspace{5mm}
\section{Application: Exact Matrix Completion from Few Entries}
\label{completion}

In this section, we prove  Theorem \ref{threshould-round-theorem}. 
Let us recall that $A$ is an integer matrix with rank $r = O(1)$, with entries bounded by an absolute constant (so $\norm{A}_{\infty} = O(1)$). Let  $S$ be the sampled version of $A$ where each entry is sampled (independently) with probability $p$, and the un-sampled entries are zeroed out. Then $\tilde{A} := p^{-1}S$ is an unbiased estimate of $A$. We analyze the simple spectral algorithm
in Section \ref{completion} to recover $A$ exactly from $\tilde A$. Set $W = [U, V]$, the concatenated matrix of $U$ and $V$, where $U$ and $V$ are the left and right singular vectors of $A$ respectively. Recall that  we set $s := \max_i \{i: \sigma_i \ge  \frac{1}{16r}\| W\|_{\infty}^{-2}\}$, $\overline{\delta} = \inf_{i \leq s}\delta_i$, and $\tilde{s} = \max_i\{i: \tilde{\sigma}_i \geq \frac{1}{8r}\norm{W}_{\infty}^{-2}\}$. Let $a = \max\{\sqrt{\norm{A}_{\infty}}, 2\}.$ For concreteness, set $c$ to be the constant $(a + 1) \times 2^{18}\times 7r^3$. 

By the rounding step, in order to have exact recovery, we need to show that $$\norm{A - B}_{\infty} < \frac{1}{2}. $$Let us consider  the entry $A_{12}$.  By the singular value decompositions of  $A$ and $B$, 

$$ A_{12} = \sum_{i = 1}^{r}\sigma_i u_{i1}v_{i2}, \hskip2mm B_{12} = \sum_{i = 1}^{\tilde s}\tilde{\sigma}_i \tilde{u}_{i1}\tilde{v}_{i2} . $$   Let $A_{\tilde s}$ be the best rank $\tilde s$ approximation of $A$. Thus, $A= A_{\tilde s}+ \sum_{i=\tilde s+1}^r \sigma_i u_i u_i^T $. Write $A_{\tilde{s}ij}$ for the $ij$ entry of $A_{\tilde s}$. By the triangle inequality, we have \begin{equation}
\label{MCsplit}
\begin{split}
\lvert B_{12} - A_{12}\rvert  &\leq \abs{B_{12} - A_{\tilde{s}12}} + \abs{\sum_{i = \tilde{s}+1}^{r}\sigma_{i}u_{i1}v_{i2}}\\
&= \Big\lvert \sum_{i = 1}^{\tilde s}\tilde{\sigma}_i\tilde{u}_{i1}\tilde{v}_{i2} -  
\sum_{i = 1}^{\tilde s}\sigma_i u_{i1}v_{i2} \Big\rvert + \abs{\sum_{i = \tilde{s}+1}^{r}\sigma_{i}u_{i1}v_{i2}}.
\end{split}
\end{equation} Bounding the second term on the RHS is fairly straightforward, and is handled in Lemma \ref{MC:secondterm}. The choice of the threshold $\tilde{s}$ plays an important role. Before stating the lemma, we will need the following tail bound on the norm of $E$. Its proof relies on a result from \cite{bandeiravanhandel} and is given in Appendix \ref{appendix:normE-MC}. This lemma will be used throughout this section.

\begin{lemma}\label{lemma:normE-MC} There exists an absolute constant $C$ such that 
\begin{equation}\label{normE-mc}\mathbb{P}\{\norm{E} \geq C\sqrt{Np^{-1}}\} \leq N^{-3}.
\end{equation}
\end{lemma}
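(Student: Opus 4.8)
The random matrix $E=\tilde A-A=p^{-1}S-A$ has independent entries $\xi_{ij}$, where $\xi_{ij}=(p^{-1}-1)a_{ij}$ with probability $p$ and $\xi_{ij}=-a_{ij}$ with probability $1-p$; thus $\E\xi_{ij}=0$, $|\xi_{ij}|\le \norm{A}_{\infty}/p=:\sigma_*$ almost surely, and $\E\xi_{ij}^2=(1-p)a_{ij}^2/p\le \norm{A}_{\infty}^2/p$. Since $E$ is rectangular, I would first pass to the symmetrized $N\times N$ matrix $S(E)=\begin{pmatrix}0 & E\\ E^{T} & 0\end{pmatrix}$, which has independent centered entries above the block diagonal and satisfies $\norm{S(E)}=\norm{E}$. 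Its relevant parameters are the row variance $\sigma:=\max_i\big(\sum_j \E[S(E)_{ij}^2]\big)^{1/2}\le \norm{A}_{\infty}\sqrt{N/p}$ and the entrywise sup bound $\sigma_*=\norm{A}_{\infty}/p$.

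The first main step is to bound $\E\norm{E}$. Matrix Bernstein gives only $\E\norm{E}=O\big(\sqrt{(N/p)\log N}+\sigma_*\log N\big)$, which is too lossy by a $\sqrt{\log N}$ factor; instead I would invoke the sharp estimate of Bandeira and van Handel \cite{bandeiravanhandel}, which for a symmetric matrix with independent centered entries yields $\E\norm{S(E)}\le C_1(\sigma+\sigma_*\sqrt{\log N})$ for a universal $C_1$. Substituting the values above gives $\E\norm{E}\le C_1\norm{A}_{\infty}\big(\sqrt{N/p}+p^{-1}\sqrt{\log N}\big)$. Using the standing density assumption of this section, $p>N^{-1}\log^{4.03}N$ (so $Np>\log^{4.03}N\ge\log N$), we get $p^{-1}\sqrt{\log N}\le\sqrt{N/p}$, hence $\E\norm{E}\le 2C_1\norm{A}_{\infty}\sqrt{N/p}$.

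The second step is to upgrade this to a tail bound. The map from the $mn$ independent entries of $E$ to $\norm{E}=\norm{S(E)}$ is convex and $1$-Lipschitz with respect to the Euclidean metric on the entries (since $|\norm{E}-\norm{E'}|\le\norm{E-E'}_F$), and each entry varies over an interval of length at most $\sigma_*$. Talagrand's convex concentration inequality then gives $\mathbb{P}\{|\norm{E}-\E\norm{E}|\ge t\}\le 4\exp(-ct^2/\sigma_*^2)$ for an absolute $c>0$. Taking $t=C_2\sigma_*\sqrt{\log N}$ with $C_2$ large enough that $4\exp(-cC_2^2\log N)\le N^{-3}$, and again using $\sigma_*\sqrt{\log N}\le\sqrt{N/p}$, one concludes $\norm{E}\le \E\norm{E}+t\le C\sqrt{Np^{-1}}$ with probability at least $1-N^{-3}$, after absorbing $\norm{A}_{\infty}=O(1)$ into the constant.

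The only delicate points are the choice of expectation bound (matrix Bernstein is not sharp enough, so the Bandeira–van Handel estimate is essential) and the reliance on the density lower bound $p>N^{-1}\log^{4.03}N$: this is what guarantees that the Gaussian "noise-floor" term $\sigma_*\sqrt{\log N}$ is dominated by $\sqrt{N/p}$, and some such lower bound is genuinely needed, since without it a single heavy entry of size $\sim p^{-1}$ can already exceed $\sqrt{Np^{-1}}$. Everything else is routine tracking of constants.
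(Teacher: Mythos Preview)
Your proposal is correct and follows essentially the same approach as the paper: symmetrize to $S(E)$, invoke Bandeira--van Handel, and use the density lower bound $p>N^{-1}\log^{4.03}N$ to absorb the $\sigma_*\sqrt{\log N}$ term into $\sqrt{Np^{-1}}$. The only cosmetic difference is that the paper quotes the \emph{tail bound} form of Bandeira--van Handel (Remark~3.13 in \cite{bandeiravanhandel}), namely $\mathbb{P}\{\|S(E)\|\ge 4\sqrt{v}+t\}\le N\exp(-t^2/cK^2)$, and plugs in $t=K\sqrt{4c\log N}$ directly, whereas you split the argument into the expectation estimate $\E\|S(E)\|\lesssim \sigma+\sigma_*\sqrt{\log N}$ followed by Talagrand's convex concentration; since the cited tail bound is itself proved via exactly that two-step route, the arguments are equivalent.
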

For convenience, we define
$$\mathcal{E} := \{\norm{E} \leq C\sqrt{Np^{-1}}\},$$ where $C$ is the constant above.

\begin{lemma}\label{MC:secondterm} With probability at least $1 - N^{-3}$,
$$ \abs{\sum_{i = \tilde{s}+1}^{r}\sigma_{i}u_{i1}v_{i2}} \leq \frac{1}{4}. $$
\end{lemma}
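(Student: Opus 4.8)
The plan is to reduce the statement to a deterministic bound on the high-probability event $\mathcal{E} = \{\norm{E} \le C\sqrt{Np^{-1}}\}$, where $C$ is the constant from Lemma~\ref{lemma:normE-MC}; by that lemma $\mathbb{P}(\mathcal{E}) \ge 1 - N^{-3}$, so it suffices to show that $\abs{\sum_{i=\tilde s+1}^{r} \sigma_i u_{i1} v_{i2}} \le \frac14$ whenever $\mathcal{E}$ holds (and $N$ is large in terms of $r$ and the absolute constants). Since $A$ has rank $r$, this sum has at most $r$ terms, so it is enough to prove $\sigma_i \norm{W}_\infty^2 \le \frac{1}{4r}$ for each $i$ with $\tilde s < i \le r$: indeed $\abs{u_{i1}} \le \norm{U}_\infty \le \norm{W}_\infty$ and $\abs{v_{i2}} \le \norm{V}_\infty \le \norm{W}_\infty$ give $\abs{\sigma_i u_{i1} v_{i2}} \le \sigma_i \norm{W}_\infty^2$, and then one sums over the at most $r$ relevant indices.

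To bound $\sigma_i$ for $i > \tilde s$, I would combine the definition of $\tilde s$ with Weyl's inequality. By definition of $\tilde s$, every index $i > \tilde s$ satisfies $\tilde\sigma_i < \frac{1}{8r}\norm{W}_\infty^{-2}$, and Fact~\ref{weylfacts} gives $\sigma_i \le \tilde\sigma_i + \norm{E}$; hence on $\mathcal{E}$,
$$\sigma_i \norm{W}_\infty^2 < \frac{1}{8r} + C\sqrt{Np^{-1}}\,\norm{W}_\infty^2 \le \frac{1}{8r} + Cc^2 (Np)^{-1/2},$$
where the last step uses the incoherence hypothesis $\norm{W}_\infty \le cN^{-1/2}$. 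The density hypothesis $p > N^{-1}\log^{4.03}N$ makes $(Np)^{-1/2} < \log^{-2.015} N$, which is at most $\frac{1}{8rCc^2}$ once $N$ is large enough that $\log^{2.015} N \ge 8rCc^2$. Therefore $\sigma_i\norm{W}_\infty^2 < \frac{1}{4r}$, and summing over the at most $r$ indices $i \in \{\tilde s+1,\dots,r\}$ yields $\abs{\sum_{i=\tilde s+1}^{r}\sigma_i u_{i1}v_{i2}} \le \frac14$ on $\mathcal{E}$, which gives the claimed probability bound.

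The argument is routine; the only points that require attention are (i) confirming the sum genuinely has at most $r$ terms — immediate from $\sigma_i = 0$ for $i > r$, so $\sum_{i=\tilde s+1}^{r}$ is empty when $\tilde s \ge r$ — and (ii) tracking how the threshold ``$N$ sufficiently large'' depends on $r$ and on the absolute constants $c, C$, since these constraints are eventually absorbed into the choice of $c$ in the statement of Theorem~\ref{threshould-round-theorem}. No genuine obstacle is expected: the real content of the lemma is the choice of the truncation level $\tilde s$, which makes the tail sum simultaneously short (at most $r$ terms) and termwise small by way of the incoherence and density assumptions.
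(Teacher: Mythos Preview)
Your proof is correct and follows essentially the same approach as the paper: reduce to the event $\mathcal{E}$ via Lemma~\ref{lemma:normE-MC}, use the definition of $\tilde s$ together with Weyl's inequality to bound $\sigma_i$ for $i>\tilde s$, and then combine the incoherence and density assumptions to show each term is at most $\frac{1}{4r}$. The paper phrases the comparison as $\sqrt{Np^{-1}} = o(\norm{W}_\infty^{-2})$ while you compute $\sqrt{Np^{-1}}\,\norm{W}_\infty^2 \le Cc^2(Np)^{-1/2}$ explicitly, but this is the same argument.
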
In the next lemma, we bound the first term on the RHS of \eqref{MCsplit}. This is where we use our refined bounds for the large $K$ case. As we have mentioned,  unlike the clustering problem, $K$ can be quite large in the matrix completion setting.

\begin{lemma}\label{MC:firstterm} Under the conditions of Theorem \ref{threshould-round-theorem}, with probability at least $1 - 3N^{-2}$, 
$$\abs{B_{12} - A_{\tilde{s}12}} < \frac{1}{4}.$$ 
\end{lemma}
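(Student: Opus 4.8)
The plan is to condition on a high-probability ``good event'' and then bound $B_{12}-A_{\tilde s 12}$ term by term using the rectangular perturbation machinery. First I would assemble the good event $\mathcal{G}$ as the intersection of: the event $\mathcal{E}=\{\norm{E}\le C\sqrt{Np^{-1}}\}$ from Lemma~\ref{lemma:normE-MC}; the high-probability event on which Lemma~\ref{delocalization-result-1}, applied to $S(A)$, delocalizes $\tilde{\vect u}_i$ and $\tilde{\vect v}_i$ for $i\le r$; the event on which the rectangular $\ell_\infty$ bound of Theorem~\ref{rectangular2} holds; the event on which the $\ell_2$ bound of Theorem~\ref{OVW1eigenvector} (applied to $S(A)$) holds; and the event on which the singular value perturbation bound of Theorem~\ref{ovw-singular-K} holds. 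Before invoking these I would check that the four hypotheses of Theorem~\ref{threshould-round-theorem} imply $(c',\tau,2)$ strong stability in the sense of Definitions~\ref{stable} and~\ref{strong-stable} with $c'$ a large constant and $\tau=N^{-2}$: here $E$ is $O(p^{-1})$-bounded so $K=O(p^{-1})$, whence $\sqrt{KN}\log^{2.01}N\lesssim\sqrt{Np^{-1}}\log^{2.01}N<\sigma_s$ by the signal-to-noise hypothesis (this gives (a) and the extra strong-stability condition), the gap hypothesis $\overline{\delta}>cp^{-1}\log N$ together with $\sigma_i=\Omega(N)$ gives (b), and $\kappa_i\norm{W}_\infty\,T=O(p^{-1/2})\ll\overline{\delta}$ gives (c). Taking $c_0=2$ in the cited theorems and union-bounding the $O(1)$ failure events yields $\mathbb{P}(\mathcal{G})\ge 1-3N^{-2}$.

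Next I would cut the sum down to a bounded number of terms. On $\mathcal{E}$, Weyl's inequality, the bound $\norm{E}\le C\sqrt{Np^{-1}}=o(\norm{W}_\infty^{-2})$ (from the density hypothesis), and the gap between the two defining thresholds $\tfrac{1}{8r}\norm{W}_\infty^{-2}$ and $\tfrac{1}{16r}\norm{W}_\infty^{-2}$ together force $\tilde s\le s\le r$, so $A_{\tilde s}$ has rank at most $r$ and \[ B_{12}-A_{\tilde s 12}=\sum_{i=1}^{\tilde s}\bigl(\tilde\sigma_i\tilde u_{i1}\tilde v_{i2}-\sigma_i u_{i1}v_{i2}\bigr), \] a sum of at most $r=O(1)$ terms, so it suffices to bound each summand by $\tfrac{1}{8r}$. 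The same inclusion $\tilde s\le s$ shows $\sigma_i\ge\tfrac{1}{16r}\norm{W}_\infty^{-2}=\Omega(N)$ for every $i\le\tilde s$, while $\sigma_1\le\norm{A}_F=O(N)$ since $\norm{A}_\infty=O(1)$; hence $\kappa_i=\sigma_1/\sigma_i=O(1)$ for all indices appearing. This uniform bound on $\kappa_i$ is what keeps the perturbation estimates from blowing up and is the crux of why the argument closes.

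Finally I would estimate each summand by splitting \[ \tilde\sigma_i\tilde u_{i1}\tilde v_{i2}-\sigma_i u_{i1}v_{i2}=(\tilde\sigma_i-\sigma_i)\tilde u_{i1}\tilde v_{i2}+\sigma_i(\tilde u_{i1}-u_{i1})\tilde v_{i2}+\sigma_i u_{i1}(\tilde v_{i2}-v_{i2}), \] and inserting the ingredients: $\abs{\tilde\sigma_i-\sigma_i}=\tilde O(K+\norm{E}^2/\sigma_i)$ from Theorem~\ref{ovw-singular-K}; $\abs{\tilde u_{i1}},\abs{\tilde v_{i2}}\lesssim\kappa_i\norm{W}_\infty=O(N^{-1/2})$ from the delocalization bound and $\abs{u_{i1}},\abs{v_{i2}}\le\norm{W}_\infty$; and $\abs{\tilde u_{i1}-u_{i1}},\abs{\tilde v_{i2}-v_{i2}}$ by the right-hand side of \eqref{main1}, where $m_2(i)=\tilde O\bigl(Kr^{1/2}/\delta_i+\norm{E}/\sigma_i+\norm{E}^2/(\delta_i\sigma_i)\bigr)$ from Theorem~\ref{OVW1eigenvector}. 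Substituting $K=O(p^{-1})$, $\norm{E}=O(\sqrt{Np^{-1}})$, $\sigma_i=\Omega(N)$, $\kappa_i=O(1)$, $\delta_i\ge\overline{\delta}>cp^{-1}\log N$, $\norm{W}_\infty\le cN^{-1/2}$ and $p>N^{-1}\log^{4.03}N$, and collecting terms, each contribution is $o(1)$ — the tightest being the residual terms of the shape $\sigma_i\kappa_i^2\norm{W}_\infty^2\sqrt{KN}\log N=\tilde O\bigl((Np)^{-1/2}\bigr)$ and $\sigma_i\kappa_i^2\norm{W}_\infty^2 K/\delta_i=\tilde O(1/\log N)$ — so for $N$ large and the constant $c$ of Theorem~\ref{threshould-round-theorem} chosen large enough, each summand is below $\tfrac{1}{8r}$ and the sum is below $\tfrac14$. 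The main work here is bookkeeping rather than a genuine obstacle: tracking the roughly half-dozen error terms and checking that each of the signal-to-noise, gap, incoherence and density hypotheses is used exactly where it is needed; once $\kappa_i=O(1)$ and $\tilde s\le r$ are established, no conceptual difficulty remains.
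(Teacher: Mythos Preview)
Your proposal is correct and follows essentially the same route as the paper: establish strong stability (the paper's Lemma~\ref{stability-mc}), show $\tilde s\le s$ on $\mathcal{E}$ (Proposition~\ref{prop:tildeslessthans}), deduce $\kappa_i=O(1)$ for $i\le\tilde s$, and then bound the at-most-$r$ summands using the rectangular $\ell_\infty$ bound together with the $\ell_2$ bound. The only cosmetic differences are that the paper telescopes as $(\tilde\sigma_i-\sigma_i)u_{i1}v_{i2}+\tilde\sigma_i[\cdots]$ (so it never needs to cite delocalization of $\tilde u,\tilde v$ separately, only $m_\infty(i)\le c_0\norm{W}_\infty/\log N$, packaged as Lemma~\ref{m-infty-bound}), and it uses the cruder Weyl bound $\abs{\tilde\sigma_i-\sigma_i}\le\norm{E}$ rather than Theorem~\ref{ovw-singular-K}; neither change affects the substance.
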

Lemmas \ref{MC:secondterm} and \ref{MC:firstterm} applied to the RHS of \eqref{MCsplit} are enough to conclude the result of Theorem \ref{threshould-round-theorem}, because then the error will be strictly less than $\frac{1}{4} + \frac{1}{4} = \frac{1}{2}$, and will thus be rounded away. The choice of considering entry $(1, 2)$ is arbitary; with probability $1 - N^{-1}$, the bound holds for all of the entries. The remainder of the section is dedicated to proving Lemmas \ref{MC:secondterm} and \ref{MC:firstterm}. 
\begin{proof} [Proof of Lemma \ref{MC:secondterm}]
Suppose that $\mathcal{E}$, which has probability $1 - N^{-3}$ by Lemma \ref{lemma:normE-MC}, occurs. Recall that $\tilde{s} = \max_i\{i: \tilde{\sigma}_i \geq \frac{\norm{W}_{\infty}^{-2}}{8r}\}$. Therefore, for all $i \geq \tilde{s}+1$, it must be the case that $\tilde{\sigma}_{i} \leq \frac{1}{8r}\norm{W}_{\infty}^{-2}.$ By Fact \ref{weylfacts} and Lemma \ref{lemma:normE-MC}, for $i \geq \tilde{s} + 1$,
$$\sigma_i \leq \tilde{\sigma}_i + \norm{E} \leq \frac{\norm{W}_{\infty}^2}{8r}  + C\sqrt{Np^{-1}}.$$

By the density assumption, $p > N^{-1}\log^{4.03}N$. By the incoherence assumption, $\norm{W}_{\infty} \leq cN^{-1/2}$. Therefore, $\sqrt{Np^{-1}} = o\Big(\norm{W}_{\infty}^{-2}\Big)$. It follows that for $i \geq \tilde{s} + 1$, 

$$\sigma_i \leq \frac{\norm{W}_{\infty}^2}{4r}.$$
We have

\begin{equation}
\begin{split}
\abs{\sum_{i = \tilde{s}+1}^{r}\sigma_{i}u_{i1}v_{i2}} & \leq \sum_{i = \tilde{s}+1}^{r}\sigma_{i}\abs{u_{i1}}\abs{v_{i2}}\\
&\leq \frac{\norm{W}_{\infty}^{-2}}{4r}\sum_{i = \tilde{s}+1}^r \abs{u_{i1}}\abs{v_{i2}}.
\end{split}
\end{equation} Because both $\abs{u_{i1}} \leq \norm{W}_{\infty}$ and $\abs{v_{il}} \leq \norm{W}_{\infty}$, it follows that \begin{equation}
\begin{split}
\abs{\sum_{i = \tilde{s}+1}^{r}\sigma_{i}u_{i1}v_{i2}} \leq  \frac{1}{4r}\sum_{i = \tilde{s}+1}^r 1\leq \frac{1}{4}.
\end{split}
\end{equation}
\end{proof} In order to establish Lemma \ref{MC:firstterm}, we need the following proposition and lemma. 

\begin{proposition}\label{prop:tildeslessthans}
Suppose that $\mathcal{E}$ occurs. Then, $\tilde{s} \leq s$. 
\end{proposition}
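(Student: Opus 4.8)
The plan is to compare the threshold indices $\tilde s$ and $s$ using Weyl's inequality (Fact \ref{weylfacts}) together with the event $\mathcal{E}$, which guarantees $\norm{E} \le C\sqrt{Np^{-1}}$. Recall $s = \max_i\{i : \sigma_i \ge \frac{1}{16r}\norm{W}_\infty^{-2}\}$ and $\tilde s = \max_i\{i : \tilde\sigma_i \ge \frac{1}{8r}\norm{W}_\infty^{-2}\}$. The key quantitative input is that the perturbation $\norm{E} \le C\sqrt{Np^{-1}}$ is negligible compared to the threshold level: by the density assumption $p > N^{-1}\log^{4.03}N$ and the incoherence assumption $\norm{W}_\infty \le cN^{-1/2}$, we get $\sqrt{Np^{-1}} = o(\norm{W}_\infty^{-2})$ — exactly the estimate already used in the proof of Lemma \ref{MC:secondterm}. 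In particular, for $n$ large, $\norm{E} \le \frac{1}{16r}\norm{W}_\infty^{-2}$.

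The argument itself is short. It suffices to show that for every index $i$ with $\tilde\sigma_i \ge \frac{1}{8r}\norm{W}_\infty^{-2}$ we also have $\sigma_i \ge \frac{1}{16r}\norm{W}_\infty^{-2}$; taking $i = \tilde s$ then gives $\tilde s \le s$ by the definition of $s$ as the largest such index. So suppose $\tilde\sigma_i \ge \frac{1}{8r}\norm{W}_\infty^{-2}$. By Fact \ref{weylfacts} (applied with $H = E$, noting $\sigma_i \ge \tilde\sigma_i - \norm{E}$) and $\mathcal{E}$,
\begin{equation}
\sigma_i \ge \tilde\sigma_i - \norm{E} \ge \frac{1}{8r}\norm{W}_\infty^{-2} - C\sqrt{Np^{-1}} \ge \frac{1}{8r}\norm{W}_\infty^{-2} - \frac{1}{16r}\norm{W}_\infty^{-2} = \frac{1}{16r}\norm{W}_\infty^{-2},
\end{equation}
where the last inequality uses $C\sqrt{Np^{-1}} = o(\norm{W}_\infty^{-2}) \le \frac{1}{16r}\norm{W}_\infty^{-2}$ for $n$ large. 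Hence $i \le s$, and in particular $\tilde s \le s$.

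There is no real obstacle here; the only thing to be careful about is the direction of the threshold inequalities (the factor $\frac{1}{8r}$ for $\tilde s$ versus $\frac{1}{16r}$ for $s$ is chosen precisely so that a perturbation of size $o(\norm{W}_\infty^{-2})$ — in fact anything below $\frac{1}{16r}\norm{W}_\infty^{-2}$ — cannot push $\tilde\sigma_i$ below $\frac{1}{16r}\norm{W}_\infty^{-2}$ once $\tilde\sigma_i \ge \frac{1}{8r}\norm{W}_\infty^{-2}$), and to invoke the already-established asymptotic $\sqrt{Np^{-1}} = o(\norm{W}_\infty^{-2})$ rather than re-deriving it. Everything is deterministic on $\mathcal{E}$.
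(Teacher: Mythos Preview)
Your proof is correct and takes essentially the same approach as the paper: both use Weyl's inequality together with the bound $\norm{E} \le C\sqrt{Np^{-1}} = o(\norm{W}_\infty^{-2})$ on $\mathcal{E}$ to compare $\tilde\sigma_i$ with $\sigma_i$ against the two thresholds. The only cosmetic difference is that the paper argues by contradiction (assuming $\tilde s > s$ and deriving $\tilde\sigma_{s+1} < \frac{1}{8r}\norm{W}_\infty^{-2}$), whereas you give the equivalent direct argument.
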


\begin{proof}
 Suppose $\mathcal{E}$ occurs. Assume towards contradiction that $\tilde{s} > s$. By definition of $\tilde{s}$, this means that there are at least $s + 1$ singular values of $\tilde{A}$ larger than $\frac{\norm{W}_{\infty}^{-2}}{8r}$, so \begin{equation}\label{to-contradiction}\tilde{\sigma}_{s + 1} \geq  \frac{\norm{W}_{\infty}^{-2}}{8r}.\end{equation} By definition of $s$, $\sigma_{s + 1} \leq \frac{\norm{W}_{\infty}^{-2}}{16r}$. Therefore, by Fact \ref{weylfacts},

$$\tilde{\sigma}_{s+1} \leq \sigma_{s + 1} + \norm{E} \leq \frac{\norm{W}_{\infty}^{-2}}{16r} + C\sqrt{Np^{-1}} = \frac{\norm{W}_{\infty}^{-2}}{16r} + o(\norm{W}_{\infty}^{-2}) < \frac{\norm{W}_{\infty}^{-2}}{8r}.$$

where we use our previous observation that $\sqrt{Np^{-1}} = o(\norm{W}_{\infty}^{-2})$. This is a contradiction with \eqref{to-contradiction}.
\end{proof}

\begin{lemma}
\label{m-infty-bound}
Set $m_{\infty}(i) = \max\{\norm{\tilde{\vect u}_i - \vect u_i}_{\infty}, \norm{\tilde{\vect v}_i - \vect v_i}_{\infty}\}$ and $m_{2}(i) = \max\{\norm{\tilde{\vect u}_i - \vect u_i}_{2}, \norm{\tilde{\vect v}_i - \vect v_i}_{2}\}$. Under the conditions of Theorem \ref{threshould-round-theorem}, there exists a constant $c_0$ such that with probability at least $1 - 2N^{-2}$,\begin{equation}
   \sup_{i \leq s} m_{\infty}(i) \leq \frac{c_0\norm{W}_{\infty}}{\log N}.
\end{equation}
\end{lemma}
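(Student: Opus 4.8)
The plan is to apply the rectangular refined perturbation bound, Theorem \ref{rectangular2}, to each of the singular directions $\vect u_i,\vect v_i$ with $i\le s$, and then estimate the resulting error terms under the matrix–completion hypotheses. A preliminary observation: since $A$ has rank $r=O(1)$ it has at most $r$ nonzero singular values, so $s\le r$, and $\sup_{i\le s}m_\infty(i)$ is a maximum over boundedly many indices. It therefore suffices to prove the stated bound for a fixed $i\le s$ and to take a union bound at the end.

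First I would collect the structural facts valid for $i\le s$. By the definition of $s$ and the incoherence hypothesis $\norm{W}_\infty\le cN^{-1/2}$, one has $\sigma_i\ge\sigma_s\ge\frac{1}{16r}\norm{W}_\infty^{-2}=\Omega(N)$, while $\sigma_1\le\norm{A}_F\le\sqrt{mn}\,\norm{A}_\infty=O(N)$; hence $\kappa_i=\sigma_1/\sigma_i=O(1)$ for every $i\le s$. The entries of $E$ from the completion model are mean zero, $K$-bounded with $K=O(1/p)$, satisfy $\E[\xi_{ij}^2]\le K$, and $K\le N$ by the density hypothesis $p>N^{-1}\log^{4.03}N$, so Assumption \ref{assumption-K} holds. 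By Lemma \ref{lemma:normE-MC} the quantile $T$ (taking $\tau:=N^{-3}$) satisfies $T\le C\sqrt{Np^{-1}}$, and on $\mathcal{E}$ we have $\norm{E}\le C\sqrt{Np^{-1}}$ as well. With these in hand I would verify that $(\sigma_i,\delta_i)$ is strongly stable with the constant demanded by Theorem \ref{rectangular2} (which is exceeded by the constant $c$ fixed in this section): condition (a) $\sigma_i\gtrsim T$ and the extra inequality $\sigma_i\gtrsim\sqrt{KN}\log^{2.01}N$ both follow, with ample room, from the signal-to-noise hypothesis $\sigma_s>c\sqrt{Np^{-1}}\log^{2.01}N$ since $\sqrt{KN}=O(\sqrt{Np^{-1}})$; condition (b) $\delta_i\gtrsim K\log N+\sigma_i^{-1}T^2$ follows from the gap hypothesis $\overline{\delta}>cp^{-1}\log N$ once one observes $\sigma_i^{-1}T^2=O(Np^{-1}/N)=O(p^{-1})$; and condition (c) $\delta_i\gtrsim\kappa_iT\norm{W}_\infty$ follows likewise because $\kappa_iT\norm{W}_\infty=O(\sqrt{Np^{-1}}\cdot N^{-1/2})=O(p^{-1/2})\ll p^{-1}\log N$.

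Now Theorem \ref{rectangular2} applies (for a suitable choice of its parameters), giving for $\tilde{\vect u}_i-\vect u_i$ and $\tilde{\vect v}_i-\vect v_i$, and using $\norm{U}_\infty,\norm{V}_\infty\le\norm{W}_\infty$, that with probability $1-C(r)N^{-3}-N^{-3}\log N$,
\[
m_\infty(i)\le c_1\norm{W}_\infty\big[\kappa_i m_2(i)+\epsilon_1(i)+\kappa_i\epsilon_2(i)K\sqrt{\log N}\big]+\frac{c_1\sqrt{KN}\,\kappa_i\norm{W}_\infty\log N}{\sigma_i}.
\]
On $\mathcal{E}$, and invoking the rectangular form of Theorem \ref{OVW1eigenvector} to get $m_2(i)=O(1/\log N)$ (each of $Kr^{1/2}/\delta_i$, $\norm{E}/\sigma_i$, $\norm{E}^2/(\delta_i\sigma_i)$ being $O(1/\log N)$ by the estimates above), I would check term by term that $\kappa_i m_2(i)=O(1/\log N)$, $\epsilon_1(i)=O(\log^{-2.01}N)$, $\sqrt{KN}\kappa_i\log N/\sigma_i=O(\log^{-1.01}N)$, and $\kappa_i\epsilon_2(i)K\sqrt{\log N}=O(K\sqrt{\log N}/\overline{\delta})$, the last made $\le c_0/(c_1\log N)$ by taking the constant $c$ in the gap hypothesis large. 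Summing gives $m_\infty(i)\le c_0\norm{W}_\infty/\log N$. A union bound over the $\le r$ indices $i\le s$, together with $\mathbb{P}(\overline{\mathcal{E}})\le N^{-3}$ and the failure probability of the rectangular $\ell_2$ bound (also taken $\le N^{-3}$), yields total failure probability at most $2N^{-2}$.

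I expect the main work to lie in the verification of the strong-stability conditions from the four matrix-completion hypotheses, and then in the careful tracking of logarithmic powers in the final term-by-term estimate: every error term must land below $c_0\norm{W}_\infty/\log N$, and the $\kappa_i\epsilon_2(i)K\sqrt{\log N}$ term is the tightest, saved only by the large constant in the gap assumption. One must also pin down the probability parameters so that the union over $i\le s$ and the event $\mathcal{E}$ stays within the target $2N^{-2}$.
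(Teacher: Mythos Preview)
Your proposal is correct and follows essentially the same route as the paper: verify that $(\sigma_i,\delta_i)$ is $(c',N^{-3},2)$ strongly stable for each $i\le s$ (what the paper packages as Lemma~\ref{stability-mc}), apply Theorem~\ref{rectangular2}, bound $m_2(i)$ via the quantitative $\ell_2$ perturbation result (the paper uses Corollary~\ref{mc-corollary} rather than Theorem~\ref{OVW1eigenvector} directly, since one needs the version with $\tau\to0$ and control over all $i\le s$), and then check each error term against the target. Your term-by-term bookkeeping and the probability accounting match the paper's.
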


\begin{proof}[Proof of Lemma \ref{MC:firstterm} given Lemma \ref{m-infty-bound}]

Recall that 

$$\abs{B_{12} - A_{\tilde{s}12}} = \Big\lvert \sum_{i = 1}^{\tilde s}\tilde{\sigma}_i\tilde{u}_{i1}\tilde{v}_{i2} -  
\sum_{i = 1}^{\tilde s}\sigma_i u_{i1}v_{i2} \Big\rvert.$$ Letting $\tilde{u}_{i1}= u_{i1} + \Delta u_{i1}$ and $\tilde{v}_{i2}= v_{i2} + \Delta v_{i2}$, we have \begin{equation}
\label{hat-A-expansion}
B_{12} = \sum_{i = 1}^{\tilde s}\tilde{\sigma}_{i}[u_{i1}v_{i2} + u_{i1}\Delta v_{i2} + v_{i2}\Delta u_{i1} + \Delta u_{i1} \Delta v_{i2}].
\end{equation}  Let $c_0$ be the constant from Lemma \ref{m-infty-bound}. Suppose $\mathcal{E}$ and $\{\sup_{i \leq s} m_{\infty}(i) \leq \frac{c_0}{\log N}\}$ both occur. By the lemma and the union bound, this happens with probability at least $1 - 3N^{-2}$. Since $\mathcal{E}$ occurs, $\tilde{s} \leq s$ by Proposition \ref{prop:tildeslessthans}. It follows that 
\begin{equation}
\label{AB-split}
\abs{B_{12} - A_{\tilde{s}12}} \leq \sum_{i = 1}^{s}\abs{\tilde{\sigma}_{i} - \sigma_i}\abs{u_{i1}v_{i2}} + d,
\end{equation} where

$$d := \sum_{i = 1}^{s}\tilde{\sigma}_i[\abs{u_{i1}}\abs{\Delta v_{i2}} + \abs{v_{i2}}\abs{\Delta u_{i1}} + \abs{\Delta u_{i1}}\abs{\Delta v_{i2}}]. $$ Observe that both $\abs{\Delta u_{i1}}$ and $\abs{\Delta v_{i2}}$ are bounded by  $m_{\infty}(i)$. We consider the two terms on the RHS of \eqref{AB-split} separately. For the first term, using Fact \ref{weylfacts}, $\abs{\tilde{\sigma}_i - \sigma_i} \leq \norm{E}$. Since $\abs{u_{i1}}\abs{v_{i2}} \leq \norm{W}_{\infty}^2$, we have \begin{equation}
\sum_{i = 1}^{s}\abs{\tilde{\sigma}_{i} - \sigma_i}\abs{u_{i1}v_{i2}} \leq r\norm{E}\norm{W}_{\infty}^2\end{equation}
By the incoherence assumption, $\norm{W}_{\infty} = O(N^{-1/2})$ so $\norm{W}^2 = O(N^{-1})$. Since $\mathcal{E}$ occurs, $$\norm{E} \leq C\sqrt{Np^{-1}} = o(N),$$ where the last equality uses the assumed lower bound for $p$, $p > \frac{\log^{4.03} N}{N}$. It follows that $\norm{E}\norm{W}_{\infty}^2 = o(1)$.
\begin{equation}
\label{mc-bound-sum1}
\sum_{i = 1}^{s}\abs{\tilde{\sigma}_{i} - \sigma_i}\abs{u_{i1}v_{i2}} = o(1).
\end{equation}

Moving to the term $d$ on the RHS of \eqref{AB-split}, we bound for $i \leq s$, $\tilde{\sigma_i} \leq \sigma_i + \norm{E}$ by Fact \ref{weylfacts}. By the signal-to-noise condition and the fact that $\mathcal{E}$ occurs, $\sigma_s > \norm{E}$. Thus, $\sigma_i + \norm{E} \leq 2\sigma_i$. Therefore, 

\begin{equation}
\begin{split}
d&=\sum_{i = 1}^{s}\tilde{\sigma}_i[\abs{u_{i1}}\abs{\Delta v_{i2}} + \abs{v_{i2}}\abs{\Delta u_{i1}} + \abs{\Delta u_{i1}}\abs{\Delta v_{i2}}]\\
&\leq 2\sum_{i = 1}^{s}\sigma_i[\abs{u_{i1}}\abs{\Delta v_{i2}} + \abs{v_{i2}}\abs{\Delta u_{i1}} + \abs{\Delta u_{i1}}\abs{\Delta v_{i2}}].
\end{split}
\end{equation} Then, using the bounds $\abs{\Delta u_{i1}}, \abs{\Delta v_{i1}} \leq m_{\infty}(i)$ and $\abs{u_{i1}}, \abs{v_{i1}} \leq \norm{W}_{\infty}$, we have 

\begin{equation}
\begin{split}
d&\leq 2\sum_{i = 1}^{s}\sigma_i[2\norm{W}_{\infty}m_{\infty}(i) + m_{\infty}^2(i)].
\end{split}
\end{equation} Because $A$ has rank $r = O(1)$ and has $O(1)$ bounded entries,  $\sigma_1 = O(N)$. Since $\norm{W}_{\infty} = O(N^{-1/2})$ by the incoherence assumption and $\sup_{i \leq s} m_{\infty}(i) = o(\norm{W}_{\infty})$,

\begin{equation}
\label{mc-bound-sum2}
\begin{split}
d = O(\sigma_1\norm{W}_{\infty}\sup_{i \leq s}m_{\infty}(i))
& = o(N\norm{W}_{\infty}^2) = o(1).
\end{split}
\end{equation}  \eqref{mc-bound-sum1} and \eqref{mc-bound-sum2} give that 

\begin{equation}
\abs{B_{12} - A_{\tilde{s}12}}  = o(1) <  \frac{1}{4},
\end{equation}
for a large enough $N$.
\end{proof}


In order to prove Lemma \ref{m-infty-bound}, first establish that we can apply our refined (large $K$ case) results for the $\ell_{\infty}$ perturbation of singular vectors because the strong stability condition holds. 

\begin{lemma}
\label{stability-mc} Recall that $a$ is the absolute constant  $a = \max\{\sqrt{\norm{A}_{\infty}}, 2\}.$ Under the conditions of Theorem \ref{threshould-round-theorem}, for all $i \leq s$, the singular values and gaps $(\sigma_i, \delta_i)$ are all $(\frac{c}{a+1}, N^{-3}, 2)$ strongly stable under $E$.
\end{lemma}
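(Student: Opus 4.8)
The plan is to check, for every index $i\le s$, the three inequalities of Definition~\ref{stable} together with the supplementary inequality of Definition~\ref{strong-stable}, all with constant $c'=c/(a+1)$, confidence parameter $\tau=N^{-3}$, exponent $\nu=2$, and — following the rectangular set-up of Theorem~\ref{rectangular2} — with $\norm{U}_{\infty}$ replaced by $\norm{W}_{\infty}$ and $n$ replaced by $N$. The noise matrix is the completion noise $E=p^{-1}S-A$; from its description (each entry equals $(p^{-1}-1)a_{ij}$ with probability $p$ and $-a_{ij}$ otherwise) one reads off directly that the $\xi_{ij}$ are independent, mean zero, bounded in absolute value by a constant multiple of $p^{-1}$, and of second moment at most the same multiple of $p^{-1}$; hence $E$ is $K$-bounded with $K$ a suitable $\norm{A}_{\infty}$-dependent constant times $p^{-1}$ (this is the $K$ that also makes Assumption~\ref{assumption-K} valid later), and $K\le N$ for $N$ large by the density hypothesis $p>N^{-1}\log^{4.03}N$. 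I will also use Lemma~\ref{lemma:normE-MC}, which gives $\mathbb{P}(\norm{E}\ge C\sqrt{Np^{-1}})\le N^{-3}$, so that the parameter $T=\inf\{t:\mathbb{P}(\norm{E}>t)\le N^{-3}\}$ satisfies $T\le C\sqrt{Np^{-1}}$.

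The one substantive ingredient is a uniform lower bound on the signal singular values in the range $i\le s$. By the very definition of $s$ we have $\sigma_{i}\ge\sigma_{s}\ge\frac{1}{16r}\norm{W}_{\infty}^{-2}$ for all $i\le s$, and the incoherence hypothesis $\norm{W}_{\infty}\le cN^{-1/2}$ upgrades this to $\sigma_{i}\ge N/(16rc^{2})$. Combined with the crude bound $\sigma_{1}\le\norm{A}_{F}\le\tfrac{N}{2}\norm{A}_{\infty}$, this forces $\kappa_{i}=\sigma_{1}/\sigma_{i}=O(1)$, with implied constant depending only on $r$ and $\norm{A}_{\infty}$, for every $i\le s$. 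It is this fact that neutralises the a priori dangerous factors $\kappa_{i}$ and $\sigma_{i}^{-1}T^{2}$ appearing in the stability conditions.

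Given these, the four checks are routine inequality chasing, each traced back to one hypothesis of Theorem~\ref{threshould-round-theorem}. Condition $(a)$ of Definition~\ref{stable} and the extra condition $\sigma_{i}>c'\sqrt{KN}\log^{2.01}N$ of Definition~\ref{strong-stable} follow from the signal-to-noise hypothesis $\sigma_{s}>c\sqrt{Np^{-1}}\log^{2.01}N$ — using $\sqrt{KN}=O(\sqrt{Np^{-1}})$, $T\le C\sqrt{Np^{-1}}$, and the logarithmic margin in that hypothesis — or, more crudely, from $\sigma_{i}\ge N/(16rc^{2})$. Conditions $(b)$ and $(c)$, namely $\delta_{i}>c'(K\log N+\sigma_{i}^{-1}T^{2})$ and $\delta_{i}>c'\kappa_{i}T\norm{W}_{\infty}$, follow from the gap hypothesis $\overline{\delta}>cp^{-1}\log N$: here $K\log N=O(p^{-1}\log N)$, $\sigma_{i}^{-1}T^{2}\le 16rc^{2}C^{2}p^{-1}=O(p^{-1})$, and $\kappa_{i}T\norm{W}_{\infty}=O(p^{-1/2})$, all of order at most $p^{-1}\log N$ and hence dominated by $\delta_{i}\ge\overline{\delta}$ provided $c$ is chosen large enough relative to $r$ and $\norm{A}_{\infty}$. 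Everything holds with room to spare once $N$ is large, which proves the lemma.

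I expect no conceptual difficulty here: the only genuine idea is the use of the threshold defining $s$, together with the incoherence bound, to pin $\sigma_{i}\gtrsim N$ and hence $\kappa_{i}=O(1)$ for all $i\le s$. The one point that demands care is tracking the $\norm{A}_{\infty}$-dependence carried by $K$ (a constant multiple of $p^{-1}$) through conditions $(a)$, $(b)$ and the strong-stability inequality and confirming it is absorbed by the constant $c$ in the hypotheses of Theorem~\ref{threshould-round-theorem}; the remaining estimates are comfortably satisfied for large $N$.
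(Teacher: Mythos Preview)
Your proposal is correct and follows essentially the same approach as the paper's proof: both verify the four stability inequalities by combining Lemma~\ref{lemma:normE-MC} (for $T\le C\sqrt{Np^{-1}}$), the threshold definition of $s$ together with incoherence (for $\sigma_i\gtrsim N$ and hence $\kappa_i=O(1)$), and the signal-to-noise and gap hypotheses of Theorem~\ref{threshould-round-theorem}. If anything your write-up is slightly more self-contained, since you derive $\kappa_i=O(1)$ explicitly via $\sigma_1\le\norm{A}_F\le\tfrac{N}{2}\norm{A}_\infty$, whereas the paper refers back to the same fact established during the proof of Lemma~\ref{m-infty-bound}.
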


\begin{proof}[Proof of Lemma \ref{m-infty-bound} given Lemma \ref{stability-mc}]
Since the entries of $A$ are $O(1)$, $K = O(p^{-1})$. By Lemma \ref{stability-mc} and the choice of $c$, the conditions for Theorem \ref{rectangular2} to bound $m_{\infty}(i)$  hold with $\frac{c}{a+1}$, $\tau = N^{-3}$, $\nu = 2$, and $K = O(p^{-1})$. Applying this for all $i \leq s$, using the fact that $\bar{\delta}$ is the minimum $\delta_i$ among the first $s$ singular values,

\begin{equation}
\label{infty-completion}
        \sup_{i \leq s} m_{\infty}(i) = O\Bigg(\kappa_s\norm{W}_{\infty}\Big[\sup_{i \leq s}m_2(i) + \frac{\norm{E}}{\sigma_s} + \frac{\sqrt{\log N}}{p\overline{\delta}}\Big]+\frac{\kappa_s\sqrt{p^{-1}N}\norm{W}_{\infty}\log N}{2\sigma_s}\Bigg),
\end{equation} with probability at least $1 - N^{-2}$.  Suppose in addition $\mathcal{E}$ holds, which happens with probability at least $1 - N^{-3}$. 

Recall that we wish to show that the LHS is less than $c\norm{W}_{\infty}\log^{-1}N$. Let us start by bounding $\kappa_s$. We have shown that $\sigma_1 = O(N)$. By definition of $s$, $\sigma_s \geq \frac{\norm{W}_{\infty}^{-2}}{16r} = \Omega(N)$, where we also use the incoherence assumption. It follows that $\kappa_s = O(1)$.
To handle the last term in the RHS of \eqref{infty-completion}, we use that $\frac{\sqrt{Np^{-1}}}{\sigma_s} = O(\log^{-2.01}N)$ by the signal-to-noise assumption, so 
\begin{equation}\label{m-infty-last-term}
\frac{\kappa_s\sqrt{p^{-1}N}\norm{W}_{\infty}\log N}{2\sigma_s} = o(\norm{W}_{\infty}\log^{-1}N).
\end{equation} 
For the remaining term, we first bound $\sup_{i \leq s}m_2(i)$. We appeal to the $\ell_2$ perturbation bounds of \cite{OVW1}. We adapt their results to our situation in Corollary \ref{mc-corollary} in Appendix \ref{l2-appendix}, which we can apply because $\bar{\delta} > cp^{-1}\log N$. This result gives that with probability at least $1 - N^{-3}$, 

\begin{equation}\label{m2-bound}
\sup_{i \leq s} m_2(i) = O\Big[\frac{\sqrt{\log N}}{p\bar{\delta}} + \frac{\norm{E}}{\sigma_s} + \frac{\norm{E}^2}{\sigma_s \bar{\delta}}\Big].
\end{equation} Using this bound and \eqref{m-infty-last-term}, \eqref{infty-completion} becomes 

\begin{equation}\label{squarebrackets}
        \sup_{i \leq s} m_{\infty}(i) = O\Bigg(\kappa_s\norm{W}_{\infty}\Big[ \frac{\sqrt{\log N}}{p\overline{\delta}} + \frac{\norm{E}}{\sigma_s} + \frac{\norm{E}^2}{\sigma_s \bar{\delta}}\Big]\Bigg) + o(\norm{W}_{\infty}\log^{-1}N).
\end{equation}

Since $\kappa_s = O(1)$, the proof of the lemma will be complete once we establish that there is a constant $c_1$ such that the sum of the three terms in the brackets is at most $O(c_1\log^{-1}N)$. Let us start with the third term in the square brackets, $\frac{\norm{E}^2}{\sigma_s \bar{\delta}}$. Since $\mathcal{E}$ holds, $$\norm{E}^2 = O(Np^{-1}).$$ By the gap assumption, $\bar{\delta}^{-1}p^{-1} = O(\frac{1}{\log N})$. Thus, since $\sigma_s = \Omega(N)$, $$\norm{E}^2\bar{\delta}^{-1}\sigma_s^{-1} = O(N\sigma_s^{-1}\bar{\delta}^{-1}p^{-1}) = O(\bar{\delta}^{-1}p^{-1}) = O(\log^{-1}N).$$ 

Moving to the first term in the square brackets in \eqref{squarebrackets}, we have just established that $\bar{\delta}^{-1}p^{-1} = O(\log^{-1}N)$. Lastly, for the second term in the square brackets in \eqref{squarebrackets}, $\norm{E}\sigma_s^{-1} = o(\log^{-1}N)$. This is by the signal-to-noise assumption and because $\mathcal{E}$ occurs. Since all three terms are either $O(\log^{-1}N)$ or $o(\log^{-1}N)$, the existence of $c_1$ can be quickly inferred.

A quick inspection of the proof gives that the total probability of occurrence of the events considered is at least $1 - 2N^{-2}$.

\end{proof}

\begin{proof}[Proof of Lemma \ref{stability-mc}]
Recall that $T = \inf\{t > 0: \mathbb{P}(\norm{E} > t) \le N^{-1/3}\}$. By Lemma \ref{lemma:normE-MC}, $T \le C\sqrt{Np^{-1}}$, where $C$ is the absolute constant from the lemma. Recall that $\bar{\delta}$ is the smallest gap in the first $s$ singular values of $A$. We will use $\sigma_s$ and  $\bar{\delta}$ to show that that the singular values and gaps $(\sigma_i, \delta_i)$ for $i \leq s$ satisfy the $(\frac{c}{a+1}, N^{-3}, 2)$ strong stability condition. Let $1 \leq i \leq s$.

We first verify the three conditions for $(\frac{c}{a+1}, N^{-3}, 2)$ stability, and conclude with verifying strong stability. First, the signal-to-noise condition gives
\begin{equation}\label{eq:signal-noise-MC}
\sigma_i \geq \sigma_s > c\sqrt{Np^{-1}}\log^{2.01}N,
\end{equation} which ensures that $$\sigma_i \geq 
\sigma_s > c\sqrt{Np^{-1}}\log^{2.01} N > \frac{c}{2}T \geq \frac{c}{a+1}T.$$ This shows that condition $(a)$ in Definition \ref{stable} holds for $(\sigma_i, \delta_i)$ with $\frac{c}{a+1}$ and $\tau = N^{-3}$. 

Next, recall the gap condition 
\begin{equation}
\label{gap-condition-MC}
\bar{\delta} > cp^{-1}\log N.
\end{equation} 
In order to verify $(c)$ of Definition \ref{stable}, we will show that 
\begin{equation}\label{delta-req}
\bar{\delta} > \frac{c}{2}T\kappa_i\norm{W}_{\infty},
\end{equation} which will imply $\delta_i > \frac{c}{a+1}T\kappa_i\norm{W}_{\infty}$, as desired. Because $\kappa_s = O(1)$, the bound on $T$ implies $$T\kappa_i\norm{W}_{\infty} = O(\sqrt{p^{-1}}\sqrt{N}\norm{W}_{\infty})= O(\sqrt{p^{-1}})$$ by the incoherence assumption. Thus, \eqref{delta-req} holds by \eqref{gap-condition-MC}.

We examine $(b)$ in Definition \ref{stable}, the final condition to verify $(\frac{c}{a+1}, N^{-3}, 2)$ stability. Recall that $K$ is the bound for the absolute value of the entries of $E$. Observe that $K \leq \norm{A}_{\infty}p^{-1}$.  Since $\nu = 2$, we must show that

\begin{equation}
\label{gap-term-MC-1}
\delta_i > \frac{c}{a+1}\Big(K\log N + \frac{T^2}{\sigma_s}\Big).\end{equation} 
Since $T \le C\sqrt{Np^{-1}}$, it is sufficient to show that 
\begin{equation}
\bar{\delta} > \frac{c}{a+1}\Big(K\log N + C^2\frac{Np^{-1}}{\sigma_s}\Big).\end{equation} Equation \eqref{gap-condition-MC} implies that 

$$\bar{\delta} > cp^{-1}\log N = \frac{c}{\norm{A}_{\infty}}\norm{A}_{\infty}p^{-1}\log N \geq \frac{c}{\norm{A}_{\infty}}K\log N \geq \frac{c}{a}K\log N.$$ Since $\sigma_s = \Omega(N)$, $\frac{C^2Np^{-1}}{\sigma_s} = o(K\log N)$ because $p > N^{-1}\log N$. Therefore, 

$$\bar{\delta} > \frac{c}{a}K\log N > \frac{c}{a+1}\Big(K\log N + C^2\frac{Np^{-1}}{\sigma_s}\Big)$$ as desired.

Having established $(\frac{c}{a+1}, N^{-3}, 2)$ stability, we conclude with the verification of strong stability. The signal to noise condition, equation
\eqref{eq:signal-noise-MC}, implies that \begin{equation}\sigma_i \geq \sigma_s > \frac{c}{\sqrt{\norm{A}_{\infty}}}\sqrt{N\norm{A}_{\infty}p^{-1}}\log^{2.01}N \geq \frac{c}{\sqrt{\norm{A}_{\infty}}}\sqrt{NK}\log^{2.01}N \geq \frac{c}{a+1}\sqrt{NK}\log^{2.01}N. \end{equation}Thus, the conditions for strong stability in Definition \ref{strong-stable} are satisfied for $(\sigma_i, \delta_i)$ with $\frac{c}{a+1}$, $\tau = N^{-3}$, and $\nu = 2$.
\end{proof}

\newpage

\printbibliography
\newpage

\appendix

\section{Perturbation of Singular values}
\label{sing-val-perturb}
We begin with the definition of the {\it concentration property}. The authors of \cite{OVW1} state that a square matrix $E$ satisfies the $(C, c, \gamma)$ concentration property if for all unit vectors $\vect u, \vect v \in \mathbb{R}^{n}$, and $t > 0$, 

\begin{equation}
    \label{concentration-property} 
    \mathbb{P}(\abs{\vect u^{T}E\vect v} > t ) \leq C\exp(-ct^{\gamma}).
\end{equation}

By using Hoeffding's inequality, they show that if $K \geq 1$ and $E$ is an $n \times n$ symmetric matrix with independent, $K$ bounded entries, then 

\begin{equation}
    \label{concentration-property-2} 
    \mathbb{P}(\abs{\vect u^{T}E\vect v} > t ) \leq 2\exp(-\frac{1}{8K^2}t^{2})
\end{equation}

In other words, $E$ satisfies the $(2, \frac{1}{8K^2}, 2)$-concentration property.

A key ingredient in the analysis is the singular values of the perturbed matrices $A^{\{l\}}$ and $\tilde{A}$. It is very easy to verify that if $E$ is a symmetric random matrix with independent, K- bounded entries, then for all $l$, $E^{\{l\}}$ has the $(2, \frac{1}{8K^2}, 2)$ concentration property as well.
Therefore, the results derived in \cite{OVW1} for the perturbation of the singular values applies to $A^{\{l\}}$ for all $l$.

The main result for the perturbation of singular values in \cite{OVW1} is the following theorem.

\begin{theorem}
\label{ovw-singular}
Suppose that $E$ is $(C, c, \gamma)$ concentrated. Suppose that $A$ has rank $r$, and let $1 \leq i \leq r$ be an integer. Then, for any $t \geq 0$, 

\begin{equation}
    \tilde{\sigma}_j \geq \sigma_j - t
\end{equation}

with probability at least 

\begin{equation}
    1 - 2C9^i\exp\Big(-c\frac{t^{\gamma}}{4^{\gamma}}\Big)
\end{equation}

and

\begin{equation}
    \tilde{\sigma}_i \leq \sigma_i + tr^{1/\gamma} + 2\sqrt{i}\frac{\norm{E}^{2}}{\tilde{\sigma}_i} + i\frac{\norm{E}^3}{\tilde{\sigma}^2_i}
\end{equation}

with probability at least 

\begin{equation}
    1 - 2C9^{2r}\exp\Big(-cr\frac{t^{\gamma}}{4^{\gamma}}\Big).
\end{equation}
\end{theorem}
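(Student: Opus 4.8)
The plan is to treat the two halves separately; in both, the only probabilistic input is the concentration property \eqref{concentration-property} fed through an $\epsilon$-net, and everything else is the variational characterization of singular values together with the rank-$r$ structure of $A$.

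\textbf{Lower bound.} Let $U_j$ and $V_j$ denote the matrices of the top $j$ left and right singular vectors of $A$ (so $j = i \le r$; for $j > r$ the claim is vacuous). Since multiplying by matrices with orthonormal columns cannot increase a singular value, and since restricting the domain to the $j$-dimensional subspace $\operatorname{col}(V_j)$ can only decrease the $j$-th singular value, one has $\tilde\sigma_j = \sigma_j(A+E) \ge \sigma_{\min}\big(U_j^{T}(A+E)V_j\big)$. Now $U_j^{T}(A+E)V_j = U_j^{T}AV_j + U_j^{T}EV_j$ with $U_j^{T}AV_j = \operatorname{diag}(\sigma_1,\dots,\sigma_j)$, so $\tilde\sigma_j \ge \sigma_j - \|U_j^{T}EV_j\|$. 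It remains to show $\|U_j^{T}EV_j\| \le t$ off an event of probability $2C9^{i}\exp(-ct^{\gamma}/4^{\gamma})$. Here $U_j^{T}EV_j$ is a fixed $j\times j$ compression of $E$ — the subspaces depend only on $A$ — so its operator norm is at most $2\sup|\vect u^{T}E\vect v|$ over a $\tfrac14$-net of the two unit spheres; applying \eqref{concentration-property} at each of the $\lesssim 9^{\Theta(j)}$ net pairs and union bounding gives the claim, the resolution $\tfrac14$ being responsible for the $4^{\gamma}$ and the net cardinality for the $9^{i}$.

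\textbf{Upper bound.} The key identity is that for $i \le r$, if $\tilde U_i,\tilde V_i$ collect the top $i$ singular vectors of $\tilde A = A+E$ then $\tilde U_i^{T}\tilde A\tilde V_i = \operatorname{diag}(\tilde\sigma_1,\dots,\tilde\sigma_i)$, hence $\tilde\sigma_i = \sigma_i(\tilde U_i^{T}\tilde A\tilde V_i) \le \sigma_i(\tilde U_i^{T}A\tilde V_i) + \|\tilde U_i^{T}E\tilde V_i\| \le \sigma_i + \|\tilde U_i^{T}E\tilde V_i\|$ (again compressions do not increase singular values, so $\sigma_i(\tilde U_i^{T}A\tilde V_i)\le\sigma_i$). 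The remaining task is to bound $\|\tilde U_i^{T}E\tilde V_i\|$ despite $\tilde U_i,\tilde V_i$ being random. The device: split each off its $A$-adapted part using that $A\tilde v_k\in\operatorname{range}(A)=\operatorname{col}(U_r)$. From $\tilde A\tilde v_k=\tilde\sigma_k\tilde u_k$ and $A\tilde v_k=\tilde\sigma_k\tilde u_k-E\tilde v_k$ we get the \emph{deterministic} bound $\|\Pi_{U_r^{\perp}}\tilde u_k\|=\|\Pi_{U_r^{\perp}}E\tilde v_k\|/\tilde\sigma_k\le\|E\|/\tilde\sigma_i$ for $k\le i$, hence $\|\Pi_{U_r^{\perp}}\tilde U_i\|\le\sqrt i\,\|E\|/\tilde\sigma_i$, and symmetrically for $\tilde V_i$ against $V_r$. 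Expanding $\tilde U_i^{T}E\tilde V_i$ into the four blocks coming from $\tilde U_i=\Pi_{U_r}\tilde U_i+\Pi_{U_r^{\perp}}\tilde U_i$, $\tilde V_i=\Pi_{V_r}\tilde V_i+\Pi_{V_r^{\perp}}\tilde V_i$: the $(U_r,V_r)$ block is at most $\|U_r^{T}EV_r\|$, the two mixed blocks at most $\sqrt i\,\|E\|^{2}/\tilde\sigma_i$ each, and the $(U_r^{\perp},V_r^{\perp})$ block at most $i\,\|E\|^{3}/\tilde\sigma_i^{2}$; summing reproduces the stated estimate with the main term replaced by $\|U_r^{T}EV_r\|$. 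Finally $\|U_r^{T}EV_r\|\le t\,r^{1/\gamma}$ off an event of probability $2C9^{2r}\exp(-cr\,t^{\gamma}/4^{\gamma})$ by the same $\tfrac14$-net argument applied to this fixed $r\times r$ compression — the level $t\,r^{1/\gamma}$ is precisely what converts the $9^{2r}$ net cardinality into the extra factor $r$ in the exponent.

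\textbf{Main obstacle.} The only non-mechanical point is the upper bound's handling of the random subspaces: one must project onto the full range and corange of $A$ (the $r$-dimensional $U_r,V_r$, not $U_i,V_i$) so that $A\tilde v_k$ lands in them \emph{exactly}, which is what makes the $\|E\|^{2}/\tilde\sigma_i$ and $\|E\|^{3}/\tilde\sigma_i^{2}$ terms appear with the correct denominator; and one must notice that the resulting bound — though it carries $\tilde\sigma_i$ on both sides — is a genuine pointwise inequality on the good event, not something one needs to "solve". After that, matching the precise constants $9^{i}$, $9^{2r}$, $4^{\gamma}$ is net bookkeeping, and the degenerate cases ($\tilde\sigma_k=0$, or $i$ exceeding the rank of $\tilde A$) are disposed of trivially since then the conclusion is vacuous.
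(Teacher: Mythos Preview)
The paper does not actually prove this theorem: it is quoted in Appendix~\ref{sing-val-perturb} as ``the main result for the perturbation of singular values in \cite{OVW1}'' and is used as a black box to derive Theorem~\ref{ovw-singular-K}. So there is no in-paper proof to compare against.

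That said, your argument is correct and is essentially the standard proof of this result (and, as far as one can infer, the one in \cite{OVW1}). The lower bound via $\tilde\sigma_i\ge\sigma_{\min}(U_i^T\tilde A V_i)=\sigma_{\min}(\Sigma_i+U_i^TEV_i)$ followed by a net on the fixed $i$-dimensional compression is exactly right. For the upper bound, your key move---projecting $\tilde U_i,\tilde V_i$ onto $\operatorname{col}(U_r),\operatorname{col}(V_r)$ and using $A\tilde v_k\in\operatorname{range}(A)$ to get the deterministic estimate $\|\Pi_{U_r^\perp}\tilde U_i\|\le\sqrt{i}\,\|E\|/\tilde\sigma_i$---is precisely the mechanism that produces the $\|E\|^2/\tilde\sigma_i$ and $\|E\|^3/\tilde\sigma_i^2$ terms; your four-block expansion then reduces everything to $\|U_r^TEV_r\|$, a fixed $r\times r$ compression handled by the net.

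One bookkeeping remark: the $9^i$ (rather than $9^{2i}$) in the lower-bound probability is consistent with the symmetric setting of the present paper, where $U_i^TEV_i$ is itself symmetric and a single net on $S^{i-1}$ suffices via $\|M\|=\max_w|w^TMw|$; in the fully asymmetric setting one would naturally get $9^{2i}$. This and the precise $4^\gamma$ versus $2^\gamma$ are exactly the ``net bookkeeping'' you flag, and do not affect the substance of the argument.
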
Theorem \ref{ovw-singular-K} follows from Theorem \ref{ovw-singular} and the fact that a symmetric, $K$ bounded random matrix with independent (above the diagonal) entries satisfies the $(2, \frac{1}{8K^2}, 2)$ concentration property.

\section{\texorpdfstring{$\ell_2$}{l2} Perturbation of Eigenvectors}
\label{l2-appendix}
\begin{theorem}[\cite{OVW1}]
\label{ovw-cp-thm}
    Suppose that $E$ is $(C, c, \gamma)$ concentrated for a trio of constants $(C, c, \gamma)$ and suppose that $A$ has rank $r$. Then, for any $t > 0$, 

    \begin{equation}
        \norm{\tilde{\vect u}_1 - \vect u_1}_{2} \leq 8\Big(\frac{tr^{1/\gamma}}{\delta} + \frac{\norm{E}}{\sigma_1} + \frac{\norm{E}^2}{\delta\sigma_1}\Big)
    \end{equation}
with probability at least

\begin{equation}
    1 - 54C\exp\Big(-c\frac{\delta^\gamma}{8^\gamma}\Big) - 2C9^{2r}\exp\Big(-cr\frac{t^{\gamma}}{4^{\gamma}}\Big). 
\end{equation}
\end{theorem}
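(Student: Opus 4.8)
The plan is to prove this via a ``sine'' decomposition of $\tilde{\vect u}_1$, using the structural fact that, because $A$ has rank $r$, the perturbed vector $\tilde{\vect u}_1$ is forced to lie, up to an error of order $\norm{E}/\sigma_1$, inside the \emph{fixed} $r$-dimensional subspace $\mathcal R := \mathrm{range}(A) = \mathrm{span}(\vect u_1,\dots,\vect u_r)$; this is what lets one beat the statistical dependence between $\tilde{\vect u}_1$ and $E$. First I would dispose of the trivial regimes: if $\sigma_1 \le 8\norm{E}$, or $\norm{E}^2 \ge \delta\sigma_1/64$, or $\delta$ is so small that the asserted probability is nonpositive, then the right-hand side already exceeds $2 \ge \norm{\tilde{\vect u}_1 - \vect u_1}_2$ (the constant $8$ is generous), so one may assume $\sigma_1,\delta$ exceed suitable multiples of $\norm{E}$ and of the error scales below. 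By Weyl (Fact \ref{weylfacts}) the top eigenvalue of $\tilde A$ then has the same sign as $\lambda_1$ and $\tilde\sigma_1 \ge \sigma_1/2$; choosing signs so $\langle\vect u_1,\tilde{\vect u}_1\rangle\ge 0$, the elementary estimate $\norm{\tilde{\vect u}_1-\vect u_1}_2 \le \sqrt 2\,\norm{(I-\vect u_1\vect u_1^T)\tilde{\vect u}_1}_2$ reduces everything to bounding the ``sine'' $S := \norm{(I-\vect u_1\vect u_1^T)\tilde{\vect u}_1}_2$.

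Write $P$ for the orthogonal projection onto $\mathcal R$ and $P^\perp = I-P$, so that $(I-\vect u_1\vect u_1^T)\tilde{\vect u}_1 = \sum_{j=2}^r \beta_j\vect u_j + P^\perp\tilde{\vect u}_1$ with $\beta_j := \langle\vect u_j,\tilde{\vect u}_1\rangle$, and $S^2 = \sum_{j=2}^r\beta_j^2 + \norm{P^\perp\tilde{\vect u}_1}_2^2$. For the perpendicular part, apply $P^\perp$ to the identity $A\tilde{\vect u}_1 = \tilde A\tilde{\vect u}_1 - E\tilde{\vect u}_1 = \pm\tilde\sigma_1\tilde{\vect u}_1 - E\tilde{\vect u}_1$ and use $P^\perp A = 0$ to get $\norm{P^\perp\tilde{\vect u}_1}_2 = \norm{P^\perp E\tilde{\vect u}_1}_2/\tilde\sigma_1 \le \norm{E}/\tilde\sigma_1 \le 2\norm{E}/\sigma_1$; this is the $\norm{E}/\sigma_1$ term. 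For the in-$\mathcal R$ part, pairing the same identity with $\vect u_j$ ($j\ge 2$) gives $(\pm\tilde\sigma_1 - \lambda_j)\beta_j = \langle\vect u_j, E\tilde{\vect u}_1\rangle$; since $|\tilde\sigma_1 - \sigma_1| \lesssim tr^{1/\gamma} + \norm{E}^2/\sigma_1 \le \delta/2$ in the non-trivial regime by the singular-value perturbation bound (Theorem \ref{ovw-singular} with $i=1$), we have $|\pm\tilde\sigma_1 - \lambda_j| \ge \sigma_1 - \sigma_2 - \delta/2 \ge \delta/2$, hence $\big(\sum_{j=2}^r\beta_j^2\big)^{1/2} \le \tfrac 2\delta\,\norm{U_{2:r}^T E\tilde{\vect u}_1}_2$, where $U_{2:r} = [\vect u_2,\dots,\vect u_r]$.

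The crux is estimating $\norm{U_{2:r}^T E\tilde{\vect u}_1}_2$ without independence. Split $\tilde{\vect u}_1 = P\tilde{\vect u}_1 + P^\perp\tilde{\vect u}_1$: the perpendicular piece contributes $\norm{E}\,\norm{P^\perp\tilde{\vect u}_1}_2 \le 2\norm{E}^2/\sigma_1$, which after the factor $2/\delta$ is precisely the second-order term $\norm{E}^2/(\delta\sigma_1)$. The remaining piece is $\norm{U_{2:r}^T E\,P\tilde{\vect u}_1}_2 \le \big\|\,U_{2:r}^T E|_{\mathcal R}\,\big\|_{\mathrm{op}}$, and now $U_{2:r}^T E|_{\mathcal R}$ is a linear map between the fixed $r$-dimensional space $\mathcal R$ and the fixed $(r-1)$-dimensional coordinate space, so its operator norm is $\sup\{\langle U_{2:r}\vect w, E\vect v\rangle : \vect v\in\mathcal R,\ \vect w\in\R^{r-1},\ \norm{\vect v}=\norm{\vect w}=1\}$. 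Covering each unit sphere by a $1/2$-net of size at most $9^{r}$ (resp. $9^{r-1}$), applying the $(C,c,\gamma)$ concentration property \eqref{concentration-property} to each of the $\le 9^{2r}$ pairs at threshold $\asymp t$, and doubling to pass from net to operator norm, yields $\big\|\,U_{2:r}^T E|_{\mathcal R}\,\big\|_{\mathrm{op}} \le c' t r^{1/\gamma}$ off an event of probability $\le 2C9^{2r}\exp(-cr(t/4)^\gamma)$; after the factor $2/\delta$ this is the $tr^{1/\gamma}/\delta$ term. Summing the three contributions, arranging the doubling, Weyl and singular-value ingredients so the total coefficient is at most $8$, and collecting the failure probabilities — the $\exp(-c\delta^\gamma/8^\gamma)$ piece (prefactor absorbed into $54C$) from the singular-value/net estimates at the gap scale $t\asymp\delta$ needed to ensure $|\pm\tilde\sigma_1-\lambda_j|\ge\delta/2$ and $\tilde\sigma_1\ge\sigma_1/2$, and the $2C9^{2r}\exp(-cr(t/4)^\gamma)$ piece from the net bound — completes the proof.

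The main obstacle is exactly the last step: controlling $\langle\vect u_j, E\tilde{\vect u}_1\rangle$ despite $\tilde{\vect u}_1$ depending on $E$. The resolution is to exploit the rank-$r$ structure, which pins $\tilde{\vect u}_1$ to the fixed subspace $\mathcal R$ up to an $O(\norm{E}/\sigma_1)$ error, so that one only needs a union bound over an $\epsilon$-net of $\mathcal R$ (this is the source of the $9^{i},9^{2r}$ factors and of the $r^{1/\gamma}$ scaling). A secondary point one must verify is that the ``leftover'' error $E\,P^\perp\tilde{\vect u}_1$ produced by this reduction contributes only the new second-order term $\norm{E}^2/(\delta\sigma_1)$, and in particular that no hidden hypothesis of the form $\delta\gtrsim\norm{E}$ is needed — which is what makes this bound a genuine improvement on the classical $\norm{E}/\delta$ of Davis--Kahan.
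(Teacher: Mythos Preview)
The paper does not give its own proof of this theorem: it is stated in Appendix~\ref{l2-appendix} as a result of \cite{OVW1} and simply cited. Your proposal reconstructs precisely the argument of \cite{OVW1}: split $\tilde{\vect u}_1$ along $\mathrm{range}(A)$ and its complement, control the complement via $P^\perp A=0$ to get the $\norm{E}/\sigma_1$ term, control the in-range part via the eigenvector equation and an $\epsilon$-net over the fixed $r$-dimensional subspace (whence the $9^{2r}$ and the concentration property), and pick up the cross term $\norm{E}^2/(\delta\sigma_1)$ from $E\,P^\perp\tilde{\vect u}_1$. One small clarification: the factor $r^{1/\gamma}$ in the bound and the factor $r$ in the exponent are just a reparametrization --- writing the net threshold as $s=tr^{1/\gamma}$ turns $\exp(-c(s/4)^\gamma)$ into $\exp(-cr(t/4)^\gamma)$ --- rather than something the net argument produces intrinsically; and the $54C\exp(-c\delta^\gamma/8^\gamma)$ term comes from the singular-value lower bound of Theorem~\ref{ovw-singular} applied at $t=\delta/2$ (so $18C$, with the remaining factor of $3$ absorbing a couple of auxiliary applications in \cite{OVW1}). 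With those cosmetic points noted, your outline is correct and matches the cited source.
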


Recall that a symmetric $E$ with independent, mean zero, $K$ bounded entries satisfies the $(2, \frac{1}{8K^2}, 2)$-concentration property. We will be considering only such matrices $E$ in what follows. Applying Theorem \ref{ovw-cp-thm} gives the following theorem. 

\begin{theorem} Let $E$ be a random, $K$-bounded, symmetric matrix with independent entries above the diagonal. For any $t > 0$, 

    \begin{equation}
        \norm{\tilde{\vect u}_1 - \vect u_1}_{2} \leq 8\Big(\frac{tr^{1/2}}{\delta} + \frac{\norm{E}}{\sigma_1} + \frac{\norm{E}^2}{\delta\sigma_1}\Big)
    \end{equation}
with probability at least

\begin{equation}
    1 - 108\exp\Big(-\frac{\delta^2}{8^3K^2}\Big) - 4 \times 9^{2r}\exp\Big(-r\frac{t^2}{128K^2}\Big). 
\end{equation}
\end{theorem}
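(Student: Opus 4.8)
The plan is to obtain the statement as a direct specialization of the cited OVW eigenvector perturbation theorem, Theorem~\ref{ovw-cp-thm}, to the concrete concentration parameters available for a symmetric $K$-bounded matrix. First I would recall the observation made at the start of Appendix~\ref{sing-val-perturb}: by Hoeffding's inequality, any symmetric matrix $E$ with independent, mean-zero, $K$-bounded entries above the diagonal satisfies \eqref{concentration-property-2}, that is, the $(C,c,\gamma)$-concentration property \eqref{concentration-property} holds with $C=2$, $c=\tfrac{1}{8K^2}$, and $\gamma=2$. This is the only structural input needed.

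Next I would substitute $(C,c,\gamma)=(2,\tfrac{1}{8K^2},2)$ into the conclusion of Theorem~\ref{ovw-cp-thm}. Since $\gamma=2$, the exponent $1/\gamma$ appearing in the bound $8\big(\tfrac{tr^{1/\gamma}}{\delta}+\tfrac{\norm{E}}{\sigma_1}+\tfrac{\norm{E}^2}{\delta\sigma_1}\big)$ becomes $1/2$, which reproduces the claimed $\ell_2$ bound verbatim. For the failure probability, I would track the constants one term at a time: $54C=108$; the first exponent is $c\,\delta^\gamma/8^\gamma=\tfrac{1}{8K^2}\cdot\tfrac{\delta^2}{64}=\tfrac{\delta^2}{512K^2}=\tfrac{\delta^2}{8^3K^2}$; then $2C\cdot 9^{2r}=4\times 9^{2r}$; and the second exponent is $c\,r\,t^\gamma/4^\gamma=\tfrac{1}{8K^2}\cdot\tfrac{rt^2}{16}=\tfrac{rt^2}{128K^2}$. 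Assembling these four pieces gives exactly the probability bound in the statement.

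The only thing requiring any attention is the bookkeeping of constants, in particular checking that $8^\gamma=64$ combines with $c=\tfrac{1}{8K^2}$ to produce the $512=8^3$ in the first denominator, and that $4^\gamma=16$ combines with the same $c$ to produce the $128$ in the second. I do not expect any genuine obstacle: the substantive content lives entirely in Theorem~\ref{ovw-cp-thm} (whose proof, from \cite{OVW1}, we do not reproduce), and the present statement is simply its $K$-bounded instantiation, packaged in a form convenient for the $\ell_2$ inputs needed elsewhere in the paper (e.g.\ in the clustering and matrix completion applications).
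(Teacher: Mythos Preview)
Your proposal is correct and matches the paper's approach exactly: the paper simply states that a symmetric $K$-bounded matrix satisfies the $(2,\tfrac{1}{8K^2},2)$-concentration property and that applying Theorem~\ref{ovw-cp-thm} with these parameters gives the result. Your explicit tracking of the constants ($54C=108$, $8^\gamma\cdot 8K^2=512K^2=8^3K^2$, $2C\cdot 9^{2r}=4\times 9^{2r}$, $4^\gamma\cdot 8K^2=128K^2$) is more detailed than what the paper writes, but the argument is identical.
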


As a consequence, if $\frac{\delta}{K\sqrt{8^3}} = \tau$, then for all $t \geq 0$, with probability at least $1 - 108\exp(-\tau^2) - 4*9^{2r}\exp\Big(-t\Big)$,

    \begin{equation}
        \norm{\tilde{\vect u}_1 - \vect u_1}_{2} \leq 8\Big(\frac{K\sqrt{128}t}{\delta} + \frac{\norm{E}}{\sigma_1} + \frac{\norm{E}^2}{\delta\sigma_1}\Big)
    \end{equation}

They also obtain the following recursive result for the perturbation of the remaining eigenvectors.

\begin{theorem}

Assume that $E$ is $(C, c, \gamma)$ concentrated. Suppose that $A$ has rank $r$, and let $1 \leq i \leq r$ be an integer. Then for any $t > 0$, 

\begin{equation}
    \norm{\vect u_i - \tilde{\vect u}_i}_{2} \leq 16\Bigg[\sum_{j = 1}^{i -1}\norm{\tilde{\vect u}_j - \vect u_j}_{2} + \frac{tr^{1/\gamma}}{\delta_j} + \frac{\norm{E}}{\sigma_j} + \frac{\norm{E}^2}{\sigma_j\delta_j}\Bigg]
\end{equation}

with probability at least 

\begin{equation}
    1 - 6C9^i\exp\Big(-c\frac{\delta_j^{\gamma}}{8^{\gamma}}\Big) - 2C9^{2r}\exp\Big(-cr\frac{t^{\gamma}}{4^{\gamma}}\Big)
\end{equation}
\end{theorem}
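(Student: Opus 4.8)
The natural plan is induction on $i$. The base case $i=1$ is precisely Theorem~\ref{ovw-cp-thm}. For the inductive step, fix $i$ and assume the bound for all smaller indices. I would start from the orthogonal decomposition of $\tilde{\vect u}_i$ in the eigenbasis of $A$: writing $P_{A^\perp}$ for the orthogonal projection onto the kernel of $A$ (the orthogonal complement of $\mathrm{span}\{\vect u_1,\dots,\vect u_r\}$), and choosing signs so that $\langle \tilde{\vect u}_i, \vect u_i\rangle \ge 0$, one has
\begin{equation*}
\norm{\tilde{\vect u}_i - \vect u_i}_2^2 = 2\big(1 - \langle \tilde{\vect u}_i, \vect u_i\rangle\big) \le 2\sum_{k \ne i,\, k \le r} \langle \tilde{\vect u}_i, \vect u_k\rangle^2 + 2\norm{P_{A^\perp}\tilde{\vect u}_i}_2^2,
\end{equation*}
so that, up to a constant, it suffices to bound $\sum_{k \ne i,\,k\le r}|\langle \tilde{\vect u}_i, \vect u_k\rangle|$ and $\norm{P_{A^\perp}\tilde{\vect u}_i}_2$ separately.

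For the indices $k < i$, the estimate is immediate from orthogonality of the perturbed eigenvectors: since $\langle \tilde{\vect u}_i, \tilde{\vect u}_k\rangle = 0$, we get $\langle \tilde{\vect u}_i, \vect u_k\rangle = \langle \tilde{\vect u}_i, \vect u_k - \tilde{\vect u}_k\rangle$, hence $|\langle \tilde{\vect u}_i, \vect u_k\rangle| \le \norm{\vect u_k - \tilde{\vect u}_k}_2$. Summing over $k < i$ produces the $\sum_{j=1}^{i-1}\norm{\tilde{\vect u}_j - \vect u_j}_2$ term in the statement. The kernel term is handled by the eigenvector equation for $\tilde A$: because $P_{A^\perp}A = 0$ and $\tilde A = A + E$, we have $\tilde\sigma_i \norm{P_{A^\perp}\tilde{\vect u}_i}_2^2 = \langle P_{A^\perp}\tilde{\vect u}_i, E\tilde{\vect u}_i\rangle \le \norm{E}\,\norm{P_{A^\perp}\tilde{\vect u}_i}_2$, which together with the Weyl bound $\tilde\sigma_i \ge \sigma_i/2$ (valid once $\sigma_i \ge 2\norm{E}$, an event supplied by the operator-norm tail implicit in $(C,c,\gamma)$-concentration) gives $\norm{P_{A^\perp}\tilde{\vect u}_i}_2 \le 2\norm{E}/\sigma_i$, the $\norm{E}/\sigma_i$ term.

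The remaining indices $i < k \le r$ are the crux. Here I would use the standard Davis--Kahan-type identity: from $\tilde A\tilde{\vect u}_i = \pm\tilde\sigma_i\tilde{\vect u}_i$ and $A\vect u_k = \pm\sigma_k\vect u_k$ one extracts $(\pm\tilde\sigma_i \mp \sigma_k)\langle \tilde{\vect u}_i, \vect u_k\rangle = \langle \tilde{\vect u}_i, E\vect u_k\rangle$, so $|\langle \tilde{\vect u}_i, \vect u_k\rangle| \le |\langle \tilde{\vect u}_i, E\vect u_k\rangle| / |\tilde\sigma_i - \sigma_k|$. The denominator is at least $\delta_i/2$: for $k = i+1$ it is the gap $\Delta_i \ge \delta_i$ minus a Weyl fluctuation, and for larger $k$ the gap is even bigger; the fluctuation of $\tilde\sigma_i$ is controlled by the singular-value perturbation bound Theorem~\ref{ovw-singular}, which is also the source of the $\norm{E}^2/(\sigma_i\delta_i)$-type term and of the $9^i,\,9^{2r}$ probability factors. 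For the numerator, splitting $\tilde{\vect u}_i = \vect u_i + (\tilde{\vect u}_i - \vect u_i)$ gives $|\langle \tilde{\vect u}_i, E\vect u_k\rangle| \le |\langle \vect u_i, E\vect u_k\rangle| + \norm{E}\,\norm{\tilde{\vect u}_i - \vect u_i}_2$; the first summand is bounded by $t$ with the desired probability by applying the $(C,c,\gamma)$-concentration property to the fixed unit vectors $\vect u_i,\vect u_k$ (a union bound over the $O(r)$ choices of $k$ giving the $tr^{1/\gamma}$ scaling), while the second summand is the self-referential term. Collecting all pieces yields an inequality of the form $\norm{\tilde{\vect u}_i - \vect u_i}_2 \le (\text{target RHS}) + c\,\tfrac{\norm{E}}{\sigma_i}\norm{\tilde{\vect u}_i - \vect u_i}_2$; since $\norm{E}/\sigma_i$ is small on the good event, the last term is absorbed into the left-hand side, and after tracking constants (a factor $2$ worse than the base case, hence the $16$) and taking the union of the concentration events with the singular-value events, one recovers the stated bound and probability.

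The main obstacle, as usual in this circle of ideas, is the last step: the vector $\tilde{\vect u}_i$ appearing in $\langle \tilde{\vect u}_i, E\vect u_k\rangle$ depends on $E$, so one cannot invoke concentration directly, and the decoupling $\tilde{\vect u}_i = \vect u_i + (\tilde{\vect u}_i - \vect u_i)$ only closes because the leftover cross-term carries the small factor $\norm{E}/\sigma_i$ and can be bootstrapped away. This is exactly why a signal-to-noise hypothesis is needed and why the bound is forced to be recursive rather than a clean one-shot estimate.
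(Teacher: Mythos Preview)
Your overall architecture is right---decompose against the eigenbasis of $A$, handle $k<i$ by orthogonality of the $\tilde{\vect u}_j$'s, handle the kernel via the eigenvector equation---but the step you flag as ``the crux'' contains a real error. After writing
\[
|\langle \tilde{\vect u}_i, E\vect u_k\rangle| \le |\langle \vect u_i, E\vect u_k\rangle| + \norm{E}\,\norm{\tilde{\vect u}_i - \vect u_i}_2
\]
and dividing by the denominator, which you yourself lower-bound by $\delta_i/2$, the self-referential term carries the coefficient $\norm{E}/\delta_i$, not $\norm{E}/\sigma_i$ as you then claim. This is exactly the noise-to-gap ratio that the theorem is designed to sidestep; in the regime of interest one has $\norm{E}\gg\delta_i$, so the bootstrapping inequality $\beta \le A + c(\norm{E}/\delta_i)\beta$ cannot be closed. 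Your attribution of the $\norm{E}^2/(\sigma_i\delta_i)$ term to the singular-value perturbation theorem is also off: Theorem~\ref{ovw-singular} is used only to keep the denominator $|\tilde\sigma_i-\sigma_k|$ bounded below, and its lower tail contributes no $\norm{E}^2$ factor.

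The fix (this is the point of the argument in \cite{OVW1}, and you can see the same mechanism at work in the proofs of Lemmas~\ref{leaveoneoutprojectionbound} and~\ref{vbound} here) is to decompose $\tilde{\vect u}_i$ not as $\vect u_i + (\tilde{\vect u}_i-\vect u_i)$ but as $UU^T\tilde{\vect u}_i + P_{A^\perp}\tilde{\vect u}_i$. Then $\|V^TE\tilde{\vect u}_i\| \le \|V^TEU\|\cdot\|U^T\tilde{\vect u}_i\| + \norm{E}\,\|P_{A^\perp}\tilde{\vect u}_i\|$. The first summand is controlled by an $\epsilon$-net concentration argument on the $r$-dimensional column space of $U$ (this is where the $9^{2r}$ and the $tr^{1/\gamma}$ genuinely come from---not a union bound over $r$ directions), and the second is at most $2\norm{E}^2/\sigma_i$ by your own kernel estimate. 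Dividing by $\delta_i/2$ now produces $tr^{1/\gamma}/\delta_i$ and $\norm{E}^2/(\sigma_i\delta_i)$ directly, with no self-referential term at all.
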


Fix $1 \leq s \leq r$. $s$ represents a cutoff point beyond which the eigenvectors are not of interest. Let $\epsilon_j(t) := 16\Big(\frac{tr^{1/\gamma}}{\delta_j} + \frac{\norm{E}}{\sigma_j} + \frac{\norm{E}^2}{\sigma_j\delta_j}\Big)$. By taking the union bound over the first $s$ singular vectors and iterating this recursive bound, and letting $\overline{\delta}$ be the smallest gap in the first $s$ singular values, we obtain

\begin{theorem} For all $i \leq s$, and $t > 0$,
\begin{equation}
    \norm{\tilde{\vect u}_i - \vect u_i}_{2} \leq \sum_{j = 1}^{i}[\sum_{k = 0}^{i - j}16^{k}]\epsilon_{j}(t)
\end{equation}

with probability at least 

\begin{equation}
        1 - 6sC9^s\exp\Big(-c\frac{\bar{\delta}^\gamma}{8^{\gamma}}\Big) - 2Cs9^{2r}\exp\Big(-cr\frac{t^{\gamma}}{4^{\gamma}}\Big).
\end{equation}
\end{theorem}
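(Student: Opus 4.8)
The plan is to deduce this from the single–eigenvector recursive estimate stated just above it, in two moves: first make that recursion hold simultaneously for all indices $i\le s$ by a union bound, then unroll the resulting linear recursion by induction on $i$.

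Fix $t>0$. For a given $1\le i\le s$, the recursive theorem says that, outside an event of probability at most $6C9^{i}\exp(-c\delta_i^{\gamma}/8^{\gamma})+2C9^{2r}\exp(-cr\,t^{\gamma}/4^{\gamma})$, one has
\[
\norm{\tilde{\vect u}_i-\vect u_i}_2\ \le\ 16\sum_{j=1}^{i-1}\norm{\tilde{\vect u}_j-\vect u_j}_2+\epsilon_i(t),
\]
where $\epsilon_i(t)=16\big(\tfrac{t r^{1/\gamma}}{\delta_i}+\tfrac{\norm{E}}{\sigma_i}+\tfrac{\norm{E}^2}{\sigma_i\delta_i}\big)$. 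Since $\bar\delta=\min_{j\le s}\delta_j\le\delta_i$ and $\gamma>0$, we have $\exp(-c\delta_i^{\gamma}/8^{\gamma})\le\exp(-c\bar\delta^{\gamma}/8^{\gamma})$, and $9^{i}\le9^{s}$; so the exceptional event for index $i$ has probability at most $6C9^{s}\exp(-c\bar\delta^{\gamma}/8^{\gamma})+2C9^{2r}\exp(-cr\,t^{\gamma}/4^{\gamma})$. Taking the union over $i=1,\dots,s$ gives a single good event $\Omega_0$ with
\[
\mathbb{P}(\Omega_0)\ \ge\ 1-6sC9^{s}\exp\!\Big(-c\frac{\bar\delta^{\gamma}}{8^{\gamma}}\Big)-2sC9^{2r}\exp\!\Big(-cr\frac{t^{\gamma}}{4^{\gamma}}\Big),
\]
on which the displayed recursion holds for every $i\le s$ at once; this is exactly the probability claimed.

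It remains to show, deterministically on $\Omega_0$, that the recursion $a_i\le 16\sum_{j<i}a_j+\epsilon_i(t)$ (with $a_i:=\norm{\tilde{\vect u}_i-\vect u_i}_2$) implies $a_i\le\sum_{j=1}^{i}\big[\sum_{k=0}^{i-j}16^{k}\big]\epsilon_j(t)$. This is an induction on $i$: the base $i=1$ is the empty-sum case with coefficient $16^{0}=1$; for the step one substitutes the inductive hypotheses into $16\sum_{m=1}^{i-1}a_m$, interchanges the order of summation via $\sum_{m=1}^{i-1}\sum_{j=1}^{m}=\sum_{j=1}^{i-1}\sum_{m=j}^{i-1}$, and reads off the coefficient of each $\epsilon_j(t)$; the extra factor $16$ picked up at this step, accumulated over the at most $i-j$ iterations separating level $j$ from level $i$, reassembles into the partial geometric sum $\sum_{k=0}^{i-j}16^{k}$, while the coefficient of $\epsilon_i(t)$ is $16^{0}=1$. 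Equivalently, one can track the partial sums $S_i=\sum_{j\le i}a_j$, which obey $S_i\le 17 S_{i-1}+\epsilon_i(t)$, unroll those, and then use $a_i\le 16 S_{i-1}+\epsilon_i(t)$.

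The genuinely fiddly part is this last bookkeeping: one has to check that the coefficients generated by the double sum are dominated by $\sum_{k=0}^{i-j}16^{k}$ — a geometric-series comparison — and, along the way, keep the monotonicity $\bar\delta\le\delta_i$ pointed in the direction that inflates (not shrinks) the exponential tails so the uniform probability bound is valid. Everything else — the union bound, the substitution into a linear recursion, and the collection of terms — is routine.
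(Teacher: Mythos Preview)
Your approach---union bound over $i\le s$ to pass to a single good event, then unroll the linear recursion by induction---is exactly what the paper does (its entire proof is the one-line ``By taking the union bound over the first $s$ singular vectors and iterating this recursive bound''). So the strategy is correct and matches the paper.

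However, the step you flag as ``fiddly'' but routine---checking that the coefficients produced by the unrolling are dominated by $\sum_{k=0}^{i-j}16^{k}$---actually fails as written, and this is worth noting. With $a_i\le 16\sum_{j<i}a_j+\epsilon_i$ and $\beta_d$ the tight coefficient of $\epsilon_j$ in the bound for $a_{j+d}$, one has $\beta_0=1$, $\beta_1=16$, and $\beta_d=17\beta_{d-1}$ for $d\ge2$, so $\beta_d=16\cdot 17^{d-1}$. The claimed bound is $c_d=\sum_{k=0}^{d}16^{k}=(16^{d+1}-1)/15$. Already at $d=3$ one gets $\beta_3=4624>4369=c_3$, and since $17>16$ the discrepancy only grows. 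So the inequality you said ``has to be checked'' is in fact false; neither your direct induction nor your $S_i\le 17S_{i-1}+\epsilon_i$ route can recover the stated coefficient $\sum_{k=0}^{i-j}16^{k}$.

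This appears to be a typo in the paper's statement rather than a flaw in the method: replacing $16^{k}$ by $17^{k}$ in the inner sum (or simply writing $16\cdot 17^{i-j-1}$ for $j<i$) makes the bound valid, and since the downstream corollary only uses ``there exists $C$ depending on $s$'' the exact base is immaterial for the applications. Your write-up should either note the apparent typo or state the corrected coefficient, rather than asserting the geometric-series comparison goes through.
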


As a consequence, letting $\tau = \frac{\overline{\delta}}{K\sqrt{8^3}}$ if $E$ is symmetric and $K$-bounded with independent entries, there exists $C$ depending only on $s$ such that with probability at least $1 - C[\exp(-\tau^{2}) - 9^{2r}\exp(-t)]$,

    \begin{equation}
        \sup_{i \leq s}\norm{\tilde{\vect u}_i - \vect u_i}_{2} \leq C\Big(\frac{Kt}{\overline{\delta}} + \frac{\norm{E}}{\sigma_s} + \frac{\norm{E}^2}{\overline{\delta}\sigma_s}\Big),
    \end{equation}
for any $t > 0$. In the rectangular case, where $A$ is an $m \times n$ matrix, and when $r = O(1)$, we derive the following corollary using the standard symmetrization trick. We encounter this setting in the matrix completion problem. Let $N = m + n$.

\begin{corollary}
\label{mc-corollary}
If $r = O(1)$ and $\overline{\delta} > 400K\sqrt{\log N}$, with probability at least $1 - N^{-3}$, we have

    \begin{equation}
        \sup_{i \leq s}\max\{\norm{\tilde{\vect u}_i - \vect u_i}_{2}, \norm{\tilde{\vect v}_i - \vect v_i}_{2}\}  = O\Big(\frac{Kt}{\overline{\delta}} + \frac{\norm{E}}{\sigma_s} + \frac{\norm{E}^2}{\overline{\delta}\sigma_s}\Big).
    \end{equation}
\end{corollary}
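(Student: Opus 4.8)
The plan is to obtain Corollary~\ref{mc-corollary} from the iterated $\ell_2$ eigenvector bound stated just above it (the displayed ``consequence'' phrased through $\tau = \overline{\delta}/(K\sqrt{8^3})$) via the symmetric dilation trick already used elsewhere in the paper. Write
\begin{equation*}
S(A) = \begin{pmatrix} 0 & A \\ A^{T} & 0 \end{pmatrix}, \qquad S(E) = \begin{pmatrix} 0 & E \\ E^{T} & 0 \end{pmatrix},
\end{equation*}
so that $S(\tilde A) = S(A) + S(E)$. First I would record the three structural facts the reduction needs. (i) $S(E)$ is a symmetric $N\times N$ matrix whose above-diagonal entries are exactly the $mn$ independent, mean-zero, $K$-bounded entries of $E$ (the rest of the upper triangle being the identically zero diagonal blocks), hence $S(E)$ is of the type to which the $\ell_2$ theorems of Appendix~\ref{l2-appendix} apply, and $\norm{S(E)} = \norm{E}$. (ii) The nonzero eigenvalues of $S(A)$ are $\pm\sigma_1,\dots,\pm\sigma_r$, with $0$ of multiplicity $N-2r$; listing eigenvalues in decreasing order, the $i$-th one for $i\le r$ is $\sigma_i$, it is \emph{simple}, and a one-line computation shows its eigenvalue gap in $S(A)$ equals $\delta_i$ (the added eigenvalue $0$ sits at distance $\sigma_i\ge\delta_i$ and therefore never becomes the nearest eigenvalue to $\sigma_i$). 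Thus $\min_{i\le s}\delta_i^{S(A)} = \overline{\delta}$, and $S(A)$ has rank $2r = O(1)$. (iii) From $A\vect v_i = \sigma_i\vect u_i$ and $A^{T}\vect u_i = \sigma_i\vect v_i$ one checks that $\vect w_i := \tfrac1{\sqrt2}(\vect u_i,\vect v_i)$ is a unit eigenvector of $S(A)$ for $\sigma_i$, and likewise $\tilde{\vect w}_i := \tfrac1{\sqrt2}(\tilde{\vect u}_i,\tilde{\vect v}_i)$ for $S(\tilde A)$; fixing the sign of $\tilde{\vect w}_i$ (which ties together the signs of $\tilde{\vect u}_i$ and $\tilde{\vect v}_i$) to align with $\vect w_i$ gives
\begin{equation*}
\max\{\norm{\tilde{\vect u}_i - \vect u_i}_2,\ \norm{\tilde{\vect v}_i - \vect v_i}_2\} \le \sqrt2\,\norm{\tilde{\vect w}_i - \vect w_i}_2 .
\end{equation*}

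With these in place I would apply the iterated $\ell_2$ bound to the pair $S(A), S(E)$, with $N$ in place of $n$, $2r$ in place of $r$, cutoff $s$, and minimal gap $\overline{\delta}$: for every $t>0$, with probability at least $1 - C\big[\exp(-\tau^2) + 9^{4r}\exp(-t)\big]$ (with $\tau = \overline{\delta}/(K\sqrt{8^3})$ and $C = C(s)$),
\begin{equation*}
\sup_{i\le s}\norm{\tilde{\vect w}_i - \vect w_i}_2 \le C\Big(\frac{Kt}{\overline{\delta}} + \frac{\norm{E}}{\sigma_s} + \frac{\norm{E}^2}{\overline{\delta}\,\sigma_s}\Big),
\end{equation*}
and fact (iii) transfers this to the left and right singular vectors of $A$ at the cost of a harmless $\sqrt2$. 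It remains to upgrade the probability to $1 - N^{-3}$: the hypothesis $\overline{\delta} > 400K\sqrt{\log N}$ gives $\tau^2 > (400^2/8^3)\log N > 300\log N$, so $\exp(-\tau^2) = o(N^{-3})$; taking $t = 4\log N$ makes $9^{4r}\exp(-t) = 9^{4r}N^{-4}$, which is $o(N^{-3})$ since $r = O(1)$. Finally, absorbing $r^{1/2}$ and the fixed $C$ into the $O(\cdot)$ (again using $r = O(1)$) yields exactly the claimed bound.

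Since this is a routine dilation argument, I do not anticipate a genuine obstacle; the only two points requiring care are the gap bookkeeping in step (ii) — confirming that the spurious zero eigenvalue of $S(A)$ cannot shrink any gap $\delta_i$ with $i\le s$, which is automatic from $\delta_i\le\sigma_i$ — and the constant chase in the final probability reduction, where one must check that the $400$ in the hypothesis dominates the $8^3$ in the exponent and that $r = O(1)$ lets the $9^{4r}$ prefactor be swallowed. Everything else is direct substitution into the already-proved symmetric result of Appendix~\ref{l2-appendix}.
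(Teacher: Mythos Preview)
Your proposal is correct and is precisely the ``standard symmetrization trick'' that the paper invokes (and does not spell out) to derive the corollary from the preceding symmetric consequence in Appendix~\ref{l2-appendix}. The gap bookkeeping, the sign convention for $\tilde{\vect w}_i$, and the probability reduction via $\overline{\delta} > 400K\sqrt{\log N}$ and $t = \Theta(\log N)$ are all handled correctly; the paper offers no further argument to compare against.
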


\section{Results for Section \ref{refined-proof}}
\label{appendix:refined-theorem-proof}
\begin{lemma}
\label{bernstein-bound-2}
Let $Y \in \overline{\mathcal{L}_{\{\}, l}}$ holds. Then, 

\begin{equation}
\begin{split}
\mathbb{P}(\mathcal{I}'_{l}\lvert E^{\{l\}} = Y) &\leq 2\exp(-c_2\log n).
\end{split}
\end{equation}

\end{lemma}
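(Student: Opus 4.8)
\textbf{Proof proposal for Lemma \ref{bernstein-bound-2}.}

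The plan is to mimic the argument of Lemma \ref{bernstein-bound} almost verbatim, the only difference being that we condition on $E^{\{l\}} = Y$ rather than on a leave-$(j+1)$-out matrix, and that the target threshold carries a factor of $\log n$ rather than $\log^2 n$. First I would unpack the definitions: the event $\mathcal{I}'_l$ is $\{\abs{\langle \vect u_i^{\{l\}}, \vect x(l) \rangle} \geq c_2 K\sqrt{2Kn}\, f_1 \log n\}$, and on $Y \in \overline{\mathcal{L}_{\{\}, l}}$ the vector $\vect u_i^{\{l\}}$ is a \emph{deterministic} unit vector all of whose entries are bounded in absolute value by $f_1$. The randomness left in $\mathcal{I}'_l$ comes entirely from $\vect x(l)$, the $l$th row of $E$ with its $l$th entry halved; its entries $x_k$ are independent, mean-zero, $K$-bounded, and independent of $E^{\{l\}}$. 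Hence, conditional on $E^{\{l\}} = Y$, the inner product $\langle \vect u_i^{\{l\}}, \vect x(l)\rangle = \sum_k x_k u_{ik}^{\{l\}}$ is a sum of independent, mean-zero, $Kf_1$-bounded random variables.

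Next I would estimate the conditional second moment exactly as in \eqref{conditional-variance}: since $\vect u_i^{\{l\}}$ is fixed and $\vect x(l)$ is independent of $E^{\{l\}}$, we get $\sum_k \E[x_k^2 u_{ik}^{\{l\} 2}\mid E^{\{l\}}=Y] = \sum_k u_{ik}^{\{l\}2}\E[x_k^2] \le K\sum_k u_{ik}^{\{l\}2} = K$, using $\E[\xi_{ij}^2]\le K$ from Assumption \ref{assumption-K} and the fact that $\vect u_i^{\{l\}}$ is a unit vector. Then I would apply Bernstein's inequality (Lemma \ref{bernstein}) conditionally, with $t = c_2 K\sqrt{2Kn}\, f_1 \log n$, obtaining a bound of the form $2\exp\big(-\tfrac{t^2/2}{K + Kf_1 t/3}\big)$. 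Plugging in $t$ and simplifying the denominator—bounding $K \le Knf_1^2$ since $n^{-1/2}\le \norm{U}_\infty \le f_1$, and bounding $\sqrt{Kn}\le n$ since $K\le n$ by Assumption \ref{assumption-K}—the exponent collapses to something of order $-c_2 \log^2 n$ before absorbing, which is more than enough; in fact after the same absorption of the first denominator term into the second using $c_2 > 1$, one is left with an exponent of at most $-c_2\log n$, giving the claimed $2\exp(-c_2\log n)$. (The extra power of $K$ and one fewer power of $\log n$ in the threshold compared to Lemma \ref{bernstein-bound} exactly balance out so that the arithmetic goes through with room to spare.)

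I do not expect any genuine obstacle here; the lemma is a routine conditional Bernstein estimate and its proof is a near-verbatim repetition of Lemma \ref{bernstein-bound}, which is why the paper defers it to the appendix. The one point requiring a little care is making sure the denominator simplification is valid with the new scaling—specifically that the $Kf_1 t/3$ term still dominates (or is comparable to) the variance term $K$ after substituting $t = c_2 K\sqrt{2Kn}f_1\log n$, which it does because $Kf_1 t/3 = \tfrac{\sqrt2}{3}c_2 K^2 f_1^2 \sqrt{Kn}\log n \gg K$ under $f_1 \ge n^{-1/2}$ and $K \ge 1$. Once that is checked, the final exponent is at least $c_2 \log n$ in magnitude, completing the proof.
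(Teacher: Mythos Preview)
Your proposal is correct and follows essentially the same route as the paper: condition on $E^{\{l\}}=Y$ so that $\vect u_i^{\{l\}}$ becomes a deterministic unit vector with entries bounded by $f_1$, bound the conditional variance by $K$ via Assumption~\ref{assumption-K}, apply Bernstein's inequality (Lemma~\ref{bernstein}), and simplify the denominator using $K\le Knf_1^2$ (from $f_1\ge n^{-1/2}$) and $\sqrt{Kn}\le n$ (from $K\le n$). The only cosmetic difference is that the paper's appendix carries out the arithmetic with the threshold $t=c_2\sqrt{2Kn}f_1\log n$ (without the extra factor of $K$ that appears in the definition of $\mathcal{I}'_l$ in Section~\ref{refined-proof}), but your version with the larger threshold only makes the tail bound easier, so the argument goes through unchanged.
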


\begin{proof}[Proof of Proposition \ref{prop:Jbound} given the lemma.] 
The proposition follows immediately from the lemma once we observe that 

$$\mathbb{P}(\mathcal{I}_l' \cap \overline{\mathcal{L}_{\{\}, l}}) \leq \sup_{Y \in \overline{\mathcal{L}_{\{\}, l}}}\mathbb{P}(\mathcal{I}_l' | E^{\{l\}} = Y) \leq 2\exp(-c_2\log n).$$
\end{proof}

\begin{proof}[Proof of Lemma \ref{bernstein-bound-2}]
We are looking for a bound of 
$$\mathbb{P}(\mathcal{I}'_{ l} | E^{\{l\}} = Y) = \mathbb{P}\{\abs{\langle \vect u^{\{l\}}_i, \vect x(l)\rangle} \geq c_2\sqrt{2Kn}f_{j+1}\log n \Big\lvert E^{\{l\}} = Y\}.$$ The lemma only considers realizations $Y$ of $E^{\{l\}}$ satisfying $\norm{\vect u^{\{l\}}_i}_{\infty} \leq f_{1}$. Recall that $\vect x(l)$ is essentially the $l$th row of $E$ and $l \in \beta$. Therefore, conditional on such $E^{\{l\}}$, $\vect u^{\{l\}}_i$ is a deterministic unit vector whose entries have absolute value at most $f_{1}$. The only randomness in each event thus comes from $\vect x(l)$. It follows that the inner product $\langle \vect u^{\{l\}}_i, \vect x(l) \rangle$, conditional on $E^{\{l\}}$,  is the sum of independent, $Kf_{1}$ bounded, mean zero random variables $x_ku^{\{l\}}_{ik}$. Thus, this quantity can be bounded with Bernstein's inequality. We are applying Bernstein's inequality conditionally, so we also need to find a bound for the sum of the conditional second moments of the $x_ku^{\{l\}}_{ik}$. Since $\vect u^{\{l\}}_i$ is deterministic when we condition on $E^{\{l\}}$, and $\vect x(l)$ is independent of $E^{\{l\}}$, we have
\begin{equation}
\label{conditional-variance-2}
    \sum_{k = 1}^n \mathbb{E}[x_k^2 u^{\{l\} 2}_{ik} | E^{\{l\}} = Y]  = \sum_{k = 1}^{n}u^{\{l\}2}_{ik}\mathbb{E}[x_k^2] \leq K\sum_{k = 1}^{n}u^{\{l\}2}_{ik} = K.
\end{equation} For the inequality, we use that the second moments of the entries of $E$ are at most $K$ by Assumption \ref{assumption-K}. Applying Bernstein's inequality (Lemma \ref{bernstein}) then gives us that
\begin{equation}
\label{apply-bernstein-2}
\begin{split}
\mathbb{P}\Big\{\abs{\langle \vect u^{\{l\}}_i, \vect x(l) \rangle} > t | E^{\{l\}} = Y\Big\} &\leq 2\exp\Big(\frac{-t^{2}/2}{\sum_{k = 1}^{n} \E[x^2_k u^{\{l\}2}_{ik}|E^{\{l\}} = Y] + Kf_{1}t/3}\Big) \\
&\leq 2\exp\Big(\frac{-t^{2}/2}{K + Kf_{1}t/3}\Big).
\end{split}
\end{equation} Set $t = c_2\sqrt{2Kn}f_{1}\log n$.  Since $n^{-1/2} \leq \norm{U}_{\infty} \leq f_{1}$, we obtain that $K \leq Knf_{1}^{2}$. Thus, using this bound for the first term in the denominator of the RHS of \eqref{apply-bernstein-2}, \begin{equation}
\label{bernstein-calc-2}
\begin{split}
\mathbb{P}\Big\{\abs{\langle \vect u^{\{l\}}_i, \vect x(l) \rangle} > c_2\sqrt{2Kn}f_{1}\log n \Big| E^{\{l\}} = Y\Big\} &\leq 2\exp\Big(\frac{-c_2^2Knf_{1}^{2}\log^2 n}{Knf_{1}^2 + \frac{\sqrt{2}c_2}{3}Kf^2_{1}\sqrt{Kn}\log n}\Big) \\
&\leq 2\exp\Big(\frac{-c_2^2Knf_{1}^{2}\log^2 n}{Knf_{1}^2 + \frac{\sqrt{2}c_2}{3}Kf_{1}^2n\log n}\Big) \\
&\leq 2\exp(-c_2\log n).
\end{split}
\end{equation} In the second line, we used that $K \leq n$ by Assumption \ref{assumption-K}.
\end{proof}
\section{Proof of Lemma \ref{lemma:normE-MC}}
In this section, we bound the spectral norm of the matrix $E$ from the matrix completion problem. We will use the following result from \cite{bandeiravanhandel}, which is a tail bound for the norm of $K$ bounded random matrices with independent entries.

\label{appendix:normE-MC}
\begin{theorem}[Remark 3.13 in \cite{bandeiravanhandel}]
\label{theorem:bandeiravanhandel}
Let $X$ be a symmetric, mean zero $n \times n$ random matrix whose entries above the diagonal are independent. Suppose the entries of $X$, $\xi_{ij}$, are $K$ bounded random variables. Let $$v = \max_i\sum_{j}\E[\xi_{ij}^2].$$

Then there exists a universal constant $c > 0$ such that for any $t \geq 0$,

$$\mathbb{P}\{\norm{E} \geq 4\sqrt{v} + t\} \leq n\exp\Big(-\frac{t^2}{cK^2}\Big).$$

\end{theorem}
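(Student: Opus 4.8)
The plan is to derive this bound from two ingredients: a sharp, dimension-free bound on the \emph{expectation} $\E\norm{X}$, and a concentration-of-measure step that upgrades the expectation bound to a tail bound. The mild discrepancy between the clean threshold $4\sqrt{v}$ and the true value of $\E\norm{X}$ (which carries a $K\sqrt{\log n}$ correction) will be absorbed into the $n$ prefactor at the very end. Throughout, $X$ and $E$ denote the same matrix (the hypothesis names it $X$, the displayed inequality names it $E$).

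For the expectation bound I would run the moment (trace) method. Since $X$ is symmetric, $\Tr X^{2k}=\sum_j\lambda_j(X)^{2k}\ge\norm{X}^{2k}$, so $\norm{X}\le(\Tr X^{2k})^{1/2k}$ deterministically, and by concavity of $x\mapsto x^{1/2k}$ and Jensen, $\E\norm{X}\le(\E\Tr X^{2k})^{1/2k}$. Expanding, $\E\Tr X^{2k}=\sum\E[\xi_{i_0i_1}\xi_{i_1i_2}\cdots\xi_{i_{2k-1}i_0}]$ over closed walks of length $2k$ on $\{1,\dots,n\}$; independence and the mean-zero assumption kill every walk that traverses some edge only once, so only walks whose every edge is used at least twice contribute. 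Grouping walks by the multigraph they trace and counting carefully, one aims to show $\E\Tr X^{2k}\le(\text{Catalan weight})\cdot v^k+(\text{lower-order terms in }K)$. The crude count loses a $\sqrt{\log n}$ factor in the leading term; to recover the optimal constant — so that, after optimizing $k\asymp\log n$, $\E\norm{X}\le 2(1+\epsilon)\sqrt{v}+C_\epsilon K\sqrt{\log n}$ for every $\epsilon\in(0,\tfrac12]$ — one needs the refinement of Bandeira and van Handel, which distinguishes edges traversed exactly twice ("tree" edges, whose contributions telescope to $v^k$ times a Catalan number) from "excess" edges and bounds the excess contribution by powers of $\max_{ij}\norm{\xi_{ij}}_\infty\le K$ carrying a strictly smaller combinatorial weight. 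I would invoke that estimate essentially as a black box and specialize to $\epsilon=1$, landing on $\E\norm{X}\le 4\sqrt{v}+CK\sqrt{\log n}$.

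For concentration, view $f(\xi):=\norm{X}$ as a function of the independent coordinates $\xi=(\xi_{ij})_{1\le i\le j\le n}\in\prod[-K,K]$. Then $f$ is convex (a norm composed with an affine map in $\xi$) and Lipschitz with a \emph{dimension-free} constant: since $X_{ij}=X_{ji}=\xi_{\min(i,j)\max(i,j)}$, we have $\norm{X-X'}_F^2\le 2\norm{\xi-\xi'}_2^2$, hence $|f(\xi)-f(\xi')|\le\norm{X-X'}\le\norm{X-X'}_F\le\sqrt2\,\norm{\xi-\xi'}_2$. Talagrand's convex concentration inequality for convex Lipschitz functions of independent bounded variables then gives $\mathbb{P}(|\norm{X}-\mathrm{Med}\norm{X}|\ge s)\le 4\exp(-s^2/(32K^2))$; integrating this tail shows the median and mean differ by $O(K)$, so $\mathbb{P}(\norm{X}\ge\E\norm{X}+s)\le C\exp(-cs^2/K^2)$ with universal $C,c$.

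Combining the two steps yields $\mathbb{P}(\norm{X}\ge 4\sqrt{v}+C'K\sqrt{\log n}+s)\le C\exp(-cs^2/K^2)$. To reach the stated form with threshold $4\sqrt{v}+t$, set $s=t-C'K\sqrt{\log n}$: if $t\ge 2C'K\sqrt{\log n}$ then $s\ge t/2$ and the bound follows after enlarging $c$ (the harmless $n$ prefactor only helps); if $t<2C'K\sqrt{\log n}$ then $n\exp(-t^2/(cK^2))\ge n^{1-4C'^2/c}\ge1$ once $c\ge 4C'^2$, so the claimed inequality holds vacuously. Either way $\mathbb{P}(\norm{X}\ge 4\sqrt{v}+t)\le n\exp(-t^2/(cK^2))$ for a suitable universal $c$. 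The step I expect to be the genuine obstacle is precisely the sharp expectation bound with leading constant $2$ (equivalently $4\sqrt{v}$ at $\epsilon=1$): controlling the closed-walk combinatorics finely enough to avoid an extra $\sqrt{\log n}$ in the main term is the hard content of Bandeira–van Handel, and carrying it out in detail is far heavier than the deterministic trace inequality, the convexity/Lipschitz verification, the Talagrand step, and the final bookkeeping, all of which are routine.
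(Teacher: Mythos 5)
The paper does not prove this statement; it simply cites Remark~3.13 of Bandeira and van Handel as a black box, so there is no internal proof to compare against. Your sketch is a faithful reconstruction of the argument behind that remark: a dimension-free expectation bound from the refined moment method, upgraded to a subgaussian tail via Talagrand's convex concentration inequality (your Lipschitz computation $\lvert f(\xi)-f(\xi')\rvert\le\sqrt{2}\,\lVert\xi-\xi'\rVert_2$ is correct), followed by the bookkeeping that trades the $K\sqrt{\log n}$ slack in $\E\lVert X\rVert$ against the $n$ prefactor — the case split on whether $t\gtrless 2C'K\sqrt{\log n}$ is exactly right. You are also candid that the genuinely hard step, the sharp expectation bound with leading constant $2$ rather than a $\sqrt{\log n}$ factor, is imported from Bandeira--van Handel rather than re-derived; as a standalone proof this is therefore a reduction of one black box to a slightly smaller one, but that matches how anyone short of redoing the Bandeira--van Handel combinatorics would present it. One cosmetic slip: you state the expectation bound for $\epsilon\in(0,\tfrac12]$ and then ``specialize to $\epsilon=1$''; just take $\epsilon=\tfrac12$ and use $2(1+\tfrac12)\sqrt{v}=3\sqrt{v}\le 4\sqrt{v}$, which gives the stated threshold with no loss.
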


\begin{proof}[Proof of Lemma \ref{lemma:normE-MC}]
Recall that $E$ is an $m \times n$ random matrix with independent, $K$ bounded entries, where $K = O(p^{-1})$. Form the symmetrization of $E$ as $$S = S(E) = \begin{bmatrix}
0 & E \\
E^T & 0 
\end{bmatrix}.
$$ $S$ is a symmetric $N \times N$ random matrix satisfying the conditions of Theorem \ref{theorem:bandeiravanhandel}. It is easy to check that $\norm{E} \leq \norm{S}$. Let the entries of $S$ be given by $s_{ij}$. We need to calculate $v = \max_{i}\sum_j\mathbb{E}[s_{ij}^2]$ to apply the theorem. Recall that in the matrix completion setting, the entries of $E$ have second moment at most $K$. It follows that $\E[s_{ij}^2] \leq K$, so $v \leq NK$.

By Theorem \ref{theorem:bandeiravanhandel}, 

\begin{equation}
\mathbb{P}\{\norm{S} \geq 4\sqrt{NK} + t\} \leq N\exp\Big(\frac{-t^2}{cK^2}\Big).
\end{equation} Set $t = K\sqrt{4c\log N}$. It follows that
\begin{equation}
\mathbb{P}\{\norm{S} \geq 4\sqrt{NK} + K\sqrt{4c\log N} \} \leq N\exp\Big(-4\log N\Big).
\end{equation}

Since $\norm{E} \leq \norm{S}$, 

\begin{equation}
\mathbb{P}\{\norm{E} \geq 4\sqrt{NK} + \sqrt{4cK\log N}\} \leq N^{-3}.
\end{equation}

Recall that $K = O(p^{-1})$. Under the assumptions of Theorem \ref{threshould-round-theorem}, $p > \frac{\log N}{N}$, so $\sqrt{K\log N} = o(\sqrt{N})$. 

It follows that there exists an absolute constant $C$ such that

\begin{equation}
\mathbb{P}\{\norm{E} \geq C\sqrt{NK}\} \leq N^{-3}.
\end{equation}

\end{proof}

\end{document}